\documentclass[a4paper,twoside]{article}

\usepackage[english]{babel}
\usepackage[utf8x]{inputenc}
\usepackage{amsmath}
\usepackage{graphicx}
\usepackage[colorinlistoftodos]{todonotes}

\usepackage{amsmath}
\usepackage{epsfig,amsthm}
\usepackage{latexsym}
\usepackage{amsfonts}
\usepackage{amssymb}
\usepackage{amscd}
\usepackage{mathrsfs}
\usepackage[all,cmtip]{xy}
\usepackage{enumerate}
\usepackage{tikz}
\usepackage{arydshln}
\usepackage{bbm}

\usepackage{mathdots}
\usepackage[shortlabels]{enumitem}

\usepackage{color}

\usepackage{lipsum}

\usepackage[colorlinks]{hyperref}
\hypersetup{pdfstartview={FitH},         linkcolor=blue,  citecolor=green }

\numberwithin{equation}{section} 
\usepackage[a4paper,left=1in,right=1in,top=1in,bottom=1in,footskip=.35in]{geometry}

\usepackage{fancyhdr}
\AtEndDocument{%
  \par
  \medskip
  \begin{tabular}{@{}l@{}}%
  \textsc{Xiamen University Malaysia}
  \\
    \textsc{Jalan Sunsuria, Bandar Sunsuria, 43900 Sepang, Selangor, Malaysia
}\\
    \textit{E-mail address}: \texttt{\href{mailto:kamfai.tam@xmu.edu.my}{kamfai.tam@xmu.edu.my}
    }
    
  \end{tabular}}

\newtheorem{thm}{Theorem}[section]
\newtheorem{cor}[thm]{Corollary}
\newtheorem{lem}[thm]{Lemma}
\newtheorem{prop}[thm]{Proposition}

\theoremstyle{definition}
\newtheorem{dfn}[thm]{Definition}
\newtheorem{rmk}[thm]{Remark}

\newcommand{\mapsfrom}{\mathrel{\reflectbox{\ensuremath{\mapsto}}}}

\newcommand{\Fo}{{F_{\bullet}}}
\newcommand{\Eo}{{E_{\bullet}}}
\newcommand{\qo}{{q_{\bullet}}}

\newcommand{\GL}{{\mathrm{GL}}}

\newcommand{\SO}{{\mathrm{SO}}}
\newcommand{\SP}{{\mathrm{Sp}}}

\newcommand{\sgn}{{\mathrm{sgn}}}

\newcommand{\Ind}{{\mathrm{Ind}}}
\newcommand{\cInd}{{\mathrm{cInd}}}
\newcommand{\Res}{{\mathrm{Res}}}

\newcommand{\antidiag}{{\text{anti-diag}}}
\newcommand{\diag}{{\mathrm{diag}}}
\newcommand{\Sym}{{\mathrm{Sym}}}

\newcommand{\Aut}{{\mathrm{Aut}}}

\newcommand{\Hom}{{\mathrm{Hom}}}
\newcommand{\End}{{\mathrm{End}}}

\newcommand{\Gal}{{\mathrm{Gal}}}

\newcommand{\Ad}{{\mathrm{Ad}}}

\usepackage{tocloft}

\usepackage{musicography}

\title{Endoscopic liftings and parameters of epipelagic representations for classical groups}
\author{
 Geo Kam-Fai Tam (\href{mailto:kamfai.tam@xmu.edu.my}{kamfai.tam@xmu.edu.my})
 }
\date{Xiamen University Malaysia
\\[2ex]
\today}

\begin{document}
\maketitle


\begin{abstract}
Let $G$ be a $p$-adic classical group (orthogonal, symplectic, or unitary) and $\pi$ be an epipelagic representation of $G$ in the sense of Reeder-Yu. Using M{\oe}glin's theory of extended cuspidal supports and Bushnell-Kutzko's theory of covering types, we determine explicitly the endoscopic lift of $\pi$ to the general linear group, whose Langlands dual expresses the dual group of $G$ as a complex matrix group,  in terms of the inducing type of $\pi$ that extends the character of the first Moy-Prasad filtration subgroup defined by a stable functional. We interpret the inducing type of $\pi$ via Stevens' construction of supercuspidal representations by skew semisimple strata and introduce what we will call epipelagic strata, requiring only that the residual characteristic $p$ be odd. As an application, we reprove M. Oi's results on the endoscopic lifts of simple supercuspidal representations, in the sense of Gross-Reeder, of quasi-split classical groups. Finally, on the Galois side, we show that the epipelagic Langlands parameters of $G$ constructed by Reeder-Yu can be recovered via the self-duality of Bushnell-Henniart's admissible triples and endoscopic embeddings of L-groups. We also establish some related results concerning epipelagic parameters that were previously proved under restrictions on $p$, including a parity result on rectifying characters and an adjoint Swan conductor result related to the Hiraga-Ichino-Ikeda conjecture.

\end{abstract}

\setcounter{tocdepth}{2}
\tableofcontents

\section{Introduction}

The local Langlands correspondence (LLC) states that, in layman's terms, the irreducible admissible representations of a connected reductive group $G$ over a p-adic field $F$ may be parameterized by morphisms, known as Langlands parameters, from the absolute Galois group $\Gal(F^{\mathrm{sep}}/F)$ of $F$ into the L-group ${}^LG := \hat{G}\rtimes \Gal(F^{\mathrm{sep}}/F)$ of $G$, where the latter group is defined by a related group $\hat{G}$ known as the dual of $G$ over the field of complex numbers.  Although currently a conjecture in general, the LLC has been established when $G$ is a general linear group \cite{Harris-Taylor, Henniart-simple-proof, Laumon-Rapoport-Stuhler, Scholze-LLC-GLn} or a classical group \cite{Arthur-book, Mok-unitary, Kaletha-Minguez-Shin-White, Ishimoto-odd-orthogonal}. Recently, \cite{Fargues-Scholze}  announced a construction of Langlands parameters associated with irreducible representations of a general reductive group using deep algebraic-geometric methods, compatible with many expected properties of the LLC.

One promising approach to proving the LLC, within the scope of the representation theory of p-adic reductive groups, is the theory of endoscopy \cite{Langlands-Shelstad} and its twisted analogue \cite{Kottwitz-Shelstad}. A major idea of this approach, among other important ones, states that, again in laymen's terms, if an L-group ${}^LG$ is contained in another such group ${}^LH$, then viewing a parameter of $G$ as a parameter of $H$ via this inclusion corresponds to a lifting process from the parametrized representations of $G(F)$ to those of $H(F)$, known as the {\bf endoscopic lift} from $G$ to $H$. As a hypothetical strategy, if the LLC of $H$ is known, one may hope that the knowledge from $H$ can descend to yield information about, and eventually prove, the LLC of $G$. This strategy was shown to be successful in proving the LLC for general linear groups and classical groups in the cited works.

In this paper, we study the endoscopic lifting of epipelagic representations. These representations were first introduced in \cite{Reeder-Yu}. They are irreducible, supercuspidal, and compactly induced from so-called types, a colloquial term for irreducible representations of compact-mod-center open subgroups, which are built from relatively simple data, known as stable functionals. These functionals can be viewed as dual vectors in the quotient of the two shallowest Moy-Prasad filtration subgroups of a parahoric subgroup at the barycenter of a specific facet in the Bruhat-Tits building $\mathcal B(G,F)$ of $G$. They generalize the simple supercuspidal representations introduced in \cite{Gross-Reeder}, in which case the facet is an alcove. We choose to focus on epipelagic representations not only because of their simplicity and popularity \cite{Romano-thesis, BK-epipelagic, Kaletha-epipelagic, epipelagic-unitary}, but also because of their importance to other areas of  representation theory of reductive groups \cite{epipelagic-theta-correspondence, Epipelagic-rigid-local-systems, epipelagic-Brylinski-Deligne-cover}.

We therefore investigate how the inducing types of epipelagic representations of an endoscopic group $G$ are related to those of a target group $H$. The methodology used in this paper applies to a classical group $G$ (orthogonal, symplectic, or unitary) with the target group $H$ being the general linear group expressing the dual group of $G$ as a matrix group. According to the theory of endoscopy, since representations of $G(F)$ with the same endoscopic lift comprise an L-packet, i.e., a finite set of representations related by the endoscopic character identity (meaning they have the same Langlands parameter under the assertion of the LLC), our final results provide an explicit description of the LLC for epipelagic representations of classical groups.

Our methodology combines two theories which are purely local in nature: (a) M{\oe}glin's theory \cite{Moeglin-classification-classical-groups,Moeglin-classification-unitary-groups,Moeglin-twisted-endoscopy-Langlands-parameters} of the reducibility of parabolically induced representations, which is used to determine the cuspidal supports of endoscopic liftings, also known as extended cuspidal supports, and (b) Bushnell-Kutzko's theory \cite{BK-types} of covering types, which is used to translate the aforementioned reducibility into an analogous property for modules of (affine) Hecke algebras. This combination has been shown to be successful in computing endoscopic liftings in specific cases \cite{BHS, Lust-Stevens, Tam-unramified-unitary, BT-ramified}, the last two of which were studied previously by the author. In a noteworthy case \cite{BHS} where $G$ is a symplectic group, this methodology successfully describes the inertial class of the endoscopic lifting of {any} supercuspidal representation of $G(F)$, i.e., the endoscopic lifting is determined up to twists by unramified characters. By focusing on epipelagic representations, we restrict ourselves to relatively simple descriptions of their inducing types, as well as their associated covering types.

\subsubsection*{The methodology}

We now explain our methodology more precisely and describe the main results in Theorem \ref{main theorem in the intro}. For the simplicity of this introduction, let $G$ be special odd orthogonal or symplectic, defined by a non-degenerate symmetric or skew-symmetric $F$-bilinear form $h$. Let $K$ be the maximal unramified extension of $F$, and $\mathbb F$ be the residual field of $F$ whose characteristic $p$ is odd. The first two steps of the Moy-Prasad filtration $G(K)_{x}\supset G(K)_{x,0_+}\supset G(K)_{x,0_{++}}$ yield successive quotients $\mathsf G_x := G(K)_{x}/G(K)_{x,0_+}$ and $\mathsf{V}_{x} := G(K)_{x,0_+}/G(K)_{x,0_{++}}$. The conjugacy action of $G(K)_{x}$ on the filtration subgroups defines a representation $\mathsf G_x\rightarrow \GL(\mathsf{V}_{x})$ over ${\mathbb F}$.

Let $\beta\in \mathsf{V}_{x}^*(\mathbb F)$ be an $\mathbb F$-linear functional on $\mathsf{V}_{x}$. By fixing a non-trivial character $\psi$ of $\mathbb F$, we define a character $\psi_\beta$ of $G(F)_{x,0_+}$ which is trivial on $G(F)_{x,0_{++}}$. With our choices of $G$, the normalizer $N(\psi_\beta)$ of $\psi_\beta$ in $G(F)_{x}$ has the quotient by $G(F)_{x,0_+}$ isomorphic to an elementary abelian 2-group $\{\pm1 \}^{k}$ for some $k\in \mathbb Z_{>0}$. (This $k$ will be known to be the cardinality of $I\smallsetminus\{o\}$, where $I$ is the set of indices parametrizing the components of $\beta$ defined in the next paragraph.) With the assumption on $p$ being odd, $\psi_\beta$ extends to a character $\lambda$ of $N(\psi_\beta)$. We now assume:
\begin{equation*}
\text{the functional $\beta\in \mathsf{V}_{x}^*(\mathbb F)$ is stable, in the sense of  geometric invariant theory (GIT) \cite{Mumford-Stabilityofprojectivevarieties}. 
}
\end{equation*}
The stability of $\beta$ then implies that the compactly induced representation $\pi_{}:=\cInd_{N(\psi_\beta)}^{G(F)}\lambda$ is irreducible, and is an epipelagic representation in the sense of \cite{Reeder-Yu}. We note that the character $\lambda$ is uniquely determined by $\beta$ and a tuple of signs $\{\lambda(\omega_i)\}_{i=1}^k$, where $\omega_i\in G(F)$ corresponds to the element $((1_{j})_{j\neq i},-1_i)\in N(\psi_\beta)/G(F)_{x,0_+}$.

A similar construction applies to general linear groups. For our purpose, it is more desirable to express an inducing type of a supercuspidal representation using a maximal simple type defined by \cite[Sec 5]{BK}. Indeed, the stable functional $\beta$ can be viewed as an elliptic regular  semisimple element in the dual Lie algebra $\mathfrak g^*$ of $G$, and admits a decomposition $\oplus_{i\in I}\beta_i$, where $F[\beta_i]$ generates a field over $F$ except when $G$ is orthogonal and for a unique index $o\in I$, in which case $\beta_i=0$ (note that $F[\beta_o] = F$ with our convention). Assume that $i\neq o$. Let $\tilde G_i = \GL_{[F[\beta_i]:F]}$ and $\tilde G_{i,\tilde x}$ be a parahoric subgroup of $\tilde G_i (F)$ whose pro-p unipotent radical $\tilde G_{i,\tilde x,0_+}$ affords the character $\psi_{2\beta_i}$. The normalizer $\tilde N(\psi_{2\beta_i})$ of $\psi_{2\beta_i}$ in $\tilde G_i(F)$ has the quotient by $\tilde G_{i,\tilde x,0_+}$ isomorphic to $F[\beta_i]^\times/(1+\mathfrak p_{F[\beta_i]/F}) \cong {\boldsymbol{\mu}}_F \times \left<\varpi_i\right>$, where $ {\boldsymbol{\mu}}_{E_i}$ is the subgroup of ${E_i}^\times$ consisting of roots of unity of order coprime to $p$, and $\varpi_i$ is a chosen uniformizer of $F[\beta_i]$. By the general construction of maximal simple types (see \cite[Sec 6.1]{BK}), $\psi_{2\beta_i}$ can always be extended to a character $\tilde{\boldsymbol{\lambda}}_i$ of $\tilde N(\psi_{2\beta_i})$, and $\tilde\pi_i:=\cInd_{\tilde N(\psi_{2\beta_i})}^{\tilde G_i(F)}\tilde{\boldsymbol{\lambda}}_i$ is also a supercuspidal representation of $\tilde G_i(F)$, which is called quasi-epipelagic (and is epipelagic if $F[\beta_i]/F$ is totally ramified).

We now view $M:=\tilde G_i\times G$ as a Levi subgroup of a parabolic subgroup $P$ of a classical group $\mathcal G$, of the same type as $G$ but of higher rank. We define a family of normalized parabolically induced representations $I(s,\tilde\pi_i,\pi):=\iota_{P}^{\mathcal G(F)}(\tilde\pi_i|\det|^s\times \pi)$ parametrized by $s\in \mathbb C$, which is reducible at a half-integer $s\in \tfrac{1}{2}\mathbb Z_{\geq 0}$. Generally, M{\oe}glin's theory in \emph{loc. cit.} asserts that $\tilde\pi_i$ belongs to the extended cuspidal support of $\pi$ if and only if $I(s,\tilde\pi_i,\pi)$ is reducible at a half-integer $s\geq 1$ (with an extra parity condition that is irrelevant in our present setup). When $\pi$ is epipelagic,  $I(s,\tilde\pi_i,\pi)$ is then reducible at exactly one point in $\{0,\tfrac{1}{2},1\}$. Therefore, our objective is to determine those $\tilde\pi_i$ such that $I(1,\tilde\pi_i,\pi)$ is reducible.

\subsubsection*{The main results}

We can now state our main result on the reducibility points of $I(s,\tilde\pi_i,\pi)$, as a slight simplification of Propositions \ref{first general form of lifting, unitary group} and \ref{first general form of lifting}.

\begin{thm}
\label{main theorem in the intro}
Let $G$ be an odd special orthogonal group or a symplectic group over $F$, and $\pi_{}:=\cInd_{N(\psi_\beta)}^{G(F)}\lambda$ be an epipelagic representation constructed from the data $(\beta = \oplus_{i\in I}
\beta_i, \{\lambda(\omega_i)\}_{i\in I})$. Put $$\hat I = \begin{cases}
I\smallsetminus \{o\}&\text{ when $G$ is orthogonal, and }
\\
I\sqcup \{o\}&\text{ when $G$ is symplectic}.
\end{cases}
$$
In both cases, for each index $i\in \hat I$ with $i\neq o$, let $\tilde\pi_i:=\cInd_{\tilde N(\psi_{2\beta_i})}^{\tilde G_i(F)}\tilde{\boldsymbol{\lambda}}_i$  be the representation of $\tilde G_i(F)$ induced from the character $\tilde{\boldsymbol{\lambda}}_i$ of $\tilde N(\psi_{2\beta_i})$ defined by:
\begin{equation*}
\tilde{\boldsymbol{\lambda}}_i|_{\tilde G_{i,\tilde x,0_+}} = \psi_{2\beta_i},\quad 
\tilde{\boldsymbol{\lambda}}_i|_{{\boldsymbol{\mu}}_{F}}= \left(\frac{\cdot}{{\boldsymbol{\mu}}_{F}}\right)^{(\epsilon_G-1)/2},
\quad\text{and}\quad
\tilde{\boldsymbol{\lambda}}_i(\varpi_i)= \tilde{\boldsymbol{\lambda}}_i(-2)\lambda(\omega_i )
\mathfrak{n}_z(\varpi_i,\beta,\psi,h).
\end{equation*}
Here 
\begin{itemize}
\itemsep0em 
\item $\epsilon_G=1$ (resp. $-1$) if $G$ is orthogonal (resp. symplectic), i.e., $h$ is an $\epsilon_G$-bilinear form,
\item $\left(\tfrac{\cdot}{{\boldsymbol{\mu}}_{F}}\right)$ is the quadratic character of ${\boldsymbol{\mu}}_{F}$, and 

\item $\mathfrak{n}_z(\varpi_i,\beta,\psi,h)$ is a normalized quadratic Gauss sum (i.e., a fourth root of unity), defined by a quadratic form on a certain $\mathbb F$-space $\mathfrak{W}_z$, which depends on the given data $(\varpi_i,\beta,\psi,h)$ and is related to a covering type for $I(s,\tilde\pi_i,\pi)$ in $\mathcal G(F)$.
\end{itemize}
When $G$ is symplectic and $i=o$, we also define a character $\tilde\pi_o$ of $F^\times$ by 
$$\tilde\pi_o|_{1+\mathfrak p_F}\equiv 1,\quad  \tilde\pi_o|_{{\boldsymbol{\mu}}_F} = \left(\tfrac{\cdot}{{\boldsymbol{\mu}}_F}\right)^{\#I} 
\quad\text{and}\quad
\tilde\pi_o(\varpi_{}) = \prod_{i\in I}\left(\tfrac{\varpi\det\beta_i}{{\boldsymbol{\mu}}_F}\right).$$
Then $I(s,\tilde\pi_i,\pi)$ is reducible at $s=1$ for all $i\in \hat I $.
\qed\end{thm}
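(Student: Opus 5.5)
We plan to follow the Mœglin--Bushnell--Kutzko strategy outlined in the introduction. Throughout, we treat each index $i\in\hat I$ separately. We also use that, for $\pi$ epipelagic, $I(s,\tilde\pi_i,\pi)$ has exactly one reducibility point in $\{0,\tfrac12,1\}$. So it suffices to rule out reducibility at $s=0$ and $s=\tfrac12$. Equivalently, we identify the unique point directly and show it is $s=1$.

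\textbf{Case $i\neq o$.} First, view $\beta_i\oplus\beta\oplus(-\beta_i^*)$ as a skew semisimple stratum in $\mathrm{Lie}\,\mathcal G$. This is the \emph{epipelagic stratum} of the paper. Following Stevens' construction, build a type for $M=\tilde G_i\times G$ from $\tilde{\boldsymbol\lambda}_i\otimes\lambda$, and then a $\mathcal G$-cover $(J_P,\lambda_P)$ in the sense of Bushnell--Kutzko. Its restriction to the pro-$p$ part is the character $\psi_{2\beta_i}\otimes\psi_\beta\otimes\psi_{2\beta_i}^{-1}$ extended across the unipotent radicals.

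Second, compute the Hecke algebra $\mathcal H(\mathcal G(F),\lambda_P)$. Because the relevant reductive quotient is essentially a rank-one group attached to the pair $\pm\beta_i$, this algebra should be a two-generator affine Hecke algebra of type $\tilde A_1$ (an infinite dihedral algebra). Its generators $T_0,T_1$ are supported on the two involutions $\omega_i$ and $\varpi_i\omega_i'$, and satisfy quadratic relations $(T_j-q^{a_j})(T_j+1)=0$.

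Third, invoke the Bushnell--Kutzko equivalence between $\mathcal H(M,\lambda_M)$-modules and this algebra. Under it, $\tilde\pi_i|\det|^s\times\pi$ corresponds to an unramified character of $\mathbb C[T_0T_1^{\pm1}]$, and reducibility occurs exactly when this character meets a parameter $\pm q^{\pm(a_0\pm a_1)/2}$. The parameters $a_j$ are read off from the reductive quotients at the two vertices, which are orthogonal or symplectic groups over $\mathbb F$ of small rank. For the parameters to produce reducibility at $s=1$, the normalizations of $T_0,T_1$ must be chosen compatibly with $\tilde{\boldsymbol\lambda}_i$. Specifically, the value of the cover on the lift of $\varpi_i$ must equal $\tilde{\boldsymbol\lambda}_i(-2)\lambda(\omega_i)$ times a Gauss-sum sign.

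\textbf{Main obstacle.} We expect the hardest step to be this sign computation. It requires evaluating the trace of the Heisenberg/Weil-type intertwiner on the space $\mathfrak W_z$. That space is the quotient of the unipotent radicals on which $\psi_{\beta}$ defines a nondegenerate quadratic form, and the trace evaluation should produce exactly $\mathfrak n_z(\varpi_i,\beta,\psi,h)$. Similarly, the restriction $\tilde{\boldsymbol\lambda}_i|_{\boldsymbol\mu_F}=(\tfrac{\cdot}{\boldsymbol\mu_F})^{(\epsilon_G-1)/2}$ comes from requiring the cover to be a character on the $\boldsymbol\mu_F$-part stabilized by the involution. I would first compute these in the simple supercuspidal (alcove) case, and then reduce to it via the decomposition over $I$.

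\textbf{Case $i=o$ ($G$ symplectic).} Here $\tilde G_o=\GL_1$ and $\tilde\pi_o$ is a tamely ramified character. Since $\beta$ has no zero component, $\tilde\pi_o$ is not in the ramified part of the cuspidal support. Instead, it is forced by the determinant/central-character condition on the endoscopic lift to $\GL_{2n+1}$: the product of the determinants of the lifts $\tilde\pi_i$ must be trivial. I would compute $\det\tilde\pi_i=\tilde{\boldsymbol\lambda}_i|_{F^\times}$ via the norm from $F[\beta_i]$. This product gives $(\tfrac{\cdot}{\boldsymbol\mu_F})^{\#I}$ on $\boldsymbol\mu_F$ and $\prod_i(\tfrac{\varpi\det\beta_i}{\boldsymbol\mu_F})$ at $\varpi$. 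Combined with the parity count in Mœglin's theory, it identifies $\tilde\pi_o$ as the remaining factor of the extended cuspidal support. Hence $I(s,\tilde\pi_o,\pi)$ reduces at $s=1$. As a cross-check, one can compute this case directly by a depth-zero cover over the parahoric $G(F)_x$.
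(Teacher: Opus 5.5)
Your framework is the paper's: a cover of $\tilde\lambda_i\times\lambda$, a Hecke algebra of rank two over $\mathcal H(M,\lambda_M)\cong\mathbb C[Z^{\pm1}]$, and reducibility read off from eigenvalues. But the step that actually produces the formula for $\tilde{\boldsymbol{\lambda}}_i(\varpi_i)$ is missing, and what you put in its place is wrong in specific ways. The generators cannot be supported on $\omega_i$. That element lies in $\mathcal J\subset\mathcal J_M\subset\mathcal J_P$, so a function supported on $\mathcal J_P\omega_i\mathcal J_P=\mathcal J_P$ is a multiple of $\mathbbm{1}$. The supports are $\mathcal J_Ps_w\mathcal J_P$ for Weyl-type elements $s_y,s_z$ that exchange $\tilde V_-$ and $\tilde V_+$, with $s_ys_z=i_M(\varpi_iI_{\tilde V_-},I_V)$. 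There is also no ``value of the cover on the lift of $\varpi_i$'': $\lambda_P$ lives on the compact group $\mathcal J_P$. The value $\tilde{\boldsymbol{\lambda}}_i(\varpi_i)$ enters only through the scalar $q_{F[\tilde\beta]}^{s}\Delta_P(\tilde{\mathbbm z})^{1/2}\tilde{\boldsymbol{\lambda}}_i(\varpi_i)^{-1}T_y(s_y)T_z(s_z)$ by which $Z$ acts on $D_s$. The missing idea is the Blasco--Blondel criterion. Normalize $T_w$ by $T_w(s_w)^2=\lambda_P(s_w^2)$. Then $X_s$ is reducible if and only if this scalar equals the product of an eigenvalue of $T_y$ and an eigenvalue of $T_z$. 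Which eigenvalue carries which sign is governed by the sign $\epsilon_w$ of $b_w$ in $T_w*T_w=b_wT_w+c_w\mathbbm{1}$. Everything therefore hinges on evaluating $b_w=\sum_{u\in\mathcal S_w}T_w(u)$ as explicit character sums over $\mathcal S_w$, which your plan never does. No trace of a Heisenberg or Weil intertwiner is involved, since every type here is a character.

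Those two sums are where each ingredient of the statement comes from.
\begin{itemize}
\item In $b_y$, the relation $X\,{}^\alpha X=Y-{}^\alpha Y$ forces $2Y_0=-(X^i)^2$, so $I-{}^\alpha XY^{-1}X\equiv\omega_i$ modulo the pro-$p$ part. The terms $\psi_{\tilde\beta}(Y_1)$ and $\psi_{\beta_i}(2Y_1)^{-1}$ cancel exactly because $\tilde\beta=2\beta_i$. This gives $\epsilon_yT_y(s_y)=\tilde\lambda_i(-2)\lambda(\omega_i)$.
\item In $b_z$, the sum factors as $\sum_{Y_0}\tilde\lambda_i(Y_0)$ times a Gauss sum of $Y_0^{-1}\mathbf q^i_{z,\mathbf s}$ on $\mathfrak W^i_z$. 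This gives $\epsilon_zT_z(s_z)=\mathfrak n_z$, and $b_z\neq0$ only when $\tilde\lambda_i$ on roots of unity is the appropriate power of the quadratic character.
\end{itemize}
That non-vanishing is what fixes $\tilde{\boldsymbol{\lambda}}_i|_{\boldsymbol{\mu}_F}$, not a requirement that ``the cover be a character''; both self-dual choices give covers. For the other choice $b_z=0$, the eigenvalues of $T_z$ are $\pm c_z^{1/2}$, and the reducibility point moves off $s=1$. The $b_z$ step also needs nondegeneracy of $\mathbf q^i_{z,\mathbf s}$ on every summand $\mathfrak W^i_{z,j}$. This includes the cross terms $X\mapsto\varpi_i(\beta_iX-X\beta_j)\,{}^\alpha X$ for $j\neq i$, which need a Stevens-type lemma. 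They have no counterpart in the simple supercuspidal case, so ``reducing to the alcove case'' does not supply them.

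For $i=o$ your route genuinely differs from the paper's, which reruns the cover computation with $\dim\tilde V=1$ and a null stratum on $\GL_1$, getting both signs as Gauss sums. Your argument is sound in principle. M{\oe}glin's dimension count $2n+1$, together with the parameter landing in $\SO_{2n+1}(\mathbb C)$, forces the leftover Jordan block to be $(\chi,1)$ with $\chi=\prod_{i\in I}\tilde{\boldsymbol{\lambda}}_i|_{F^\times}$. However, it imports Arthur's classification where the paper stays local. It is also only as good as the $i\neq o$ formulas. The value $\tilde{\boldsymbol{\lambda}}_i(\varpi)$ involves $\tilde{\boldsymbol{\lambda}}_i(\varpi_i)^{e(E_i/F)}$, hence $\mathfrak n_z^2$ (the parity of $\dim\mathfrak W^i_z$) and the Teichm\"uller part of $\varpi\varpi_i^{-e(E_i/F)}$. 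You compute none of these.
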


The main technicality of the proof is presented in Section \ref{section Main calculation}, especially in Section \ref{subsection Expanding the intertwining operator as a sum}. A little more work in Section \ref{subsection Non-degeneracy of a quadratic form} leads us to compute the values of $\mathfrak n_z(\varpi_i,\beta,\psi,h)$. A simple application of M{\oe}glin's theory on the bound for the number of such $\tilde\pi_i$ then implies that $\{\tilde\pi_i\}_{i\in I}$ comprises the extended cuspidal support of $\pi$, i.e., the endoscopic lift of $\pi$ has cuspidal support precisely $\{\tilde\pi_i\}_{i\in I}$.

We remark that analogous results hold for other types of $G$, although the result is much simpler when $G$ is unramified unitary (in which case any epipelagic representation is indeed simple supercuspidal) and a bit more complicated when $G$ is even special orthogonal or ramified unitary. For the last two types of $G$, there is an additional structural complicacy in the inducing types arising from the stability of the functionals. We omit the details for the sake of this introduction, but refer interested readers to the classification results of stable gradings in \cite[Sec 7.2]{Gross-Levy-Reeder-Yu} and their interpretations in terms of semisimple strata in Section \ref{subsection Stable functionals for classical groups}.

We therefore conclude that, with our methodology, we can compute the endoscopic lifts, or more precisely their inducing types, and consequently the L-packets of \emph{all} epipelagic representations of \emph{all} classical groups. The \emph{only} requirement on $p$ is that $p\neq 2$. These results are presented in Section \ref{subsection Reducibility results for different classical groups}.

\subsubsection*{Further details}

To compute the reducibility points stated in Theorem \ref{main theorem in the intro}, we apply Bushnell-Kutzko's theory to  identify functorially the Bernstein component of $\tilde\pi_i\times \pi$ in $\mathcal G$ with the category of modules over the intertwining algebra, i.e., the desired Hecke algebra $\mathcal H(\mathcal G,\lambda_P)$, of a covering type $\lambda_P$ over $\tilde\lambda_i\times \lambda$ in $\mathcal G$, where $\tilde\lambda_i $ is the restriction of $\tilde{\boldsymbol{\lambda}}_i$ to the maximal compact subgroup of $\tilde N(\psi_{2\beta_i})$. The structure of $\mathcal H(\mathcal G,\lambda_P)$ is well known due to Lusztig \cite{Lusztig-finite-classical-groups}: it is of generic type on an infinite dihedral group \cite[Th 1.2]{Stevens-Miyauchi}. We may take from $\mathcal H(\mathcal G,\lambda_P)$ two generators, denoted by $T_y$ and $T_z$ in this paper, each of which satisfies a quadratic equation whose coefficients can be determined by the structure of $\lambda_P$ (Section \ref{subsection Structures of Hecke algebras}).

The reducibility of $I(s,\tilde\pi_i,\pi)$ is hence converted into that of the corresponding module $X_s$ over $\mathcal H(\mathcal G,\lambda_P)$. By the aforementioned functoriality, $X_s$ is induced from the character $D_s$ of the intertwining algebra $\mathcal H(M,\tilde\lambda_i\times \lambda)$ of $\tilde\lambda_i\times \lambda$ in $M$ corresponding to $\tilde\pi_i|\det|^s\times \pi$. A crucial observation from \cite[(1.13)]{Blasco-Blondel-SP4} implies that, when $X_s$ is reducible, there are two  ways to express the eigenvalues of the convolution product $T_y* T_z$: one by multiplying the respective eigenvalues of the generators from the quadratic equations, and the other from the inducing character $D_s$. Equating the two expressions leads to a formula (\ref{main formula of reducibility points}), which we will examine in Section \ref{subsection Expanding the intertwining operator as a sum}, for computing the points of reducibility; see Section \ref{section Reducibility} for complete details of how this formula is derived.

One major advantage of considering epipelagic representations is that the relevant inducing types are all one-dimensional characters whose ramification filtrations are minimal (i.e., single-level). This advantage renders the calculation in Section \ref{subsection Expanding the intertwining operator as a sum} manageable. The analogous calculation for general supercuspidal representations, especially when the ramification filtration is multi-level, exhibits substantial complications arising from the matching of  conjugacy classes between $G$ and $\tilde G_i$. See \cite{BT-ramified} for a calculation when $G$ is a ramified unitary group with the same absolute rank as $\tilde G_i$.

Cuspidal inducing types and the related covering types can be constructed from arithmetic data known as semisimple strata, based on the general theories developed in \cite{BK, Stevens-supercuspidal}. To apply these theories to epipelagic representations, we will translate the language of stable functionals into that of semisimple strata in Section \ref{subsection Stable functionals for classical groups}. For instance, the stability condition on functionals translates into an elliptic-regularity conditions on vectors in $\mathfrak g^*$, and the classification in \cite[Sec 7.2]{Gross-Levy-Reeder-Yu} of points $x\in \mathcal B(G,F)$ for which $\mathsf{V}^*_{x}$ contains stable functionals translates into conditions on the semisimple strata that define the relevant types of parahoric subgroups admitting characters arising from stable functionals (see Proposition \ref{equivalent conditions for stability} and the list that follows). Since these translated conditions can be intrinsically stated within the framework of semisimple strata, we define in Definition \ref{definition of Epipelagic stratum} what we call {\bf epipelagic strata}, which are then extended to characters inducing epipelagic representations (see Section \ref{subsection Epipelagic inducing types for classical groups}). This translation process has the advantage that the residual characteristic $p$ of the base field $F$ is required only to be odd, the same condition on $p$ for exhaustively constructing supercuspidal representations of classical groups in \cite{Stevens-supercuspidal}.

\subsubsection*{Related literatures}

With our methodology explained above, in Section \ref{section Examples for simple supercuspidals} we reprove, and compare with ours, the results of M. Oi \cite{Oi-SO-odd,Oi-Sp-and-SO-even,Oi-U-unram} on  endoscopic liftings of simple supercuspidal representations of $G$. Oi's methodology for computing endoscopic liftings mainly uses the endoscopic character identity to compare the character expansions of $\pi$ and $\tilde\pi_i$ at affine generic elements in terms of Gauss and Kloostermann sums. His method applies to quasi-split classical groups, except for ramified unitary groups, whereas our method applies to all classical groups, or more precisely, pure inner forms of quasi-split classical groups, as long as the cohomological invariant of the Hermitian form defining $G$ is considered (see Sections \ref{subsection Cohomological classification} and \ref{section Embeddings of lattices}).

Moreover, our method applies to a more general class of representations, namely epipelagic representations. In fact, we use the inducing types of skew epipelagic strata that are simple as `building blocks' to derive results for the types that are more generally semisimple. These simple characters closely resemble the affine generic characters underlying simple supercuspidals. Similar to Oi's results, our inducing types of epipelagic representations can be expressed in terms of signs and quadratic Gauss sums.


Speaking of the endoscopic character identity, Kaletha's theory applies to tamely ramified reductive p-adic groups to  construct explicitly the L-packets of epipelagic representations \cite{Kaletha-epipelagic} and regular supercuspidal representations \cite{Kaletha-regular-supercuspidal}. One may also consult the recent preprints \cite{Oi-2025, Oi:2026aa} concerning an endoscopic character relation for toral supercuspidal representations. In those works, the residual characteristic $p$ is required to satisfy  stricter conditions than just being odd. Since we construct the inducing types of epipelagic representations based on the theory of semisimple strata in \cite{Stevens-supercuspidal}, our method requires only that $p$ be odd. It would be interesting to compare our endoscopic liftings with those obtained by various authors.

Following the spirit of \cite{Kaletha-epipelagic} again, \cite{epipelagic-unitary} computed the LLC for certain epipelagic representations of odd ramified unitary groups whose underlying stable functionals admit a very specific form. We will provide a brief comparison of their results with ours in Subsection \ref{Comparison with results in epipelagic-unitary}.

Recently, \cite{BHS2} computed the endoscopic liftings of simple supercuspidal representations of symplectic groups, using a method similar to the one presented in this paper. They confirmed that their lifts match those of Oi. Although one may argue by transitivity, we directly compare the lifting results in \cite{BHS2} with ours in Subsection \ref{subsection Results in BHS2} for the convenience of readers, and verify that the two results coincide.

\subsubsection*{The Galois side}

In the final section, Section \ref{section Langlands parameters}, we shift to the Galois side and explain how endoscopic lifting from classical groups to general linear groups works. By doing so, we can view Langlands parameters of classical groups as self-dual representations of the Weil group $\mathcal W_F$ of $F$ via endoscopic lifting, allowing us to work primarily with self-dual parameters of general linear groups. We then specialize to parameters of (quasi-)epipelagic representations, and compare our endoscopic lifting results with Reeder-Yu's template for epipelagic Langlands parameters in \cite[Sec 7.2 and 7.3]{Reeder-Yu}.

The following two results are consequences of our comparison. Strictly speaking, these results are expected because they have been established in a broader generality, but under various ``tameness" conditions on the residual characteristic $p$, the group $G$, or the parameters. Here, we establish these results without conditions on $p$ other than being odd, while restricting our attention to classical groups and their epipelagic parameters. For this reason, we record them here in itemized form and state them as propositions in Subsections \ref{subsubsection A parity result} and \ref{subsubsection Minimal conductor property}, rather than formulating them as theorems.
\begin{enumerate}[(i)]
\item 
Knowing that an endoscopic-lifted parameter from a classical group $G$ is self-dual with a parity ($+$ or $-$) depending on the type of $G$, we show in Proposition \ref{last parity result} that the parity of a self-dual  (quasi-)epipelagic parameter $\tilde\varphi$, constructed from Reeder-Yu's template, matches the type of $G$ only if it is twisted by a specific character, which we call a rectifying character. The idea of rectifying characters originates in the essentially tame case \cite{BH-ET1,BH-ET2,BH-ET3}, was extended to the effective setting \cite{BH-Eff} for the LLC for general linear groups, and was studied in the context of endoscopy by the author \cite{Tam-ETLLC-GLn,Tam-ETLLC-GLn-inner}. We extend this idea here for classical groups via endoscopic lifting. See also \cite{Kaletha-epipelagic} for an alternative formulation of rectifying characters for general tamely ramified reductive groups.

\item Let $\pi$ be a supercuspidal representation of $G$ with epipelagic parameter $\varphi$. If $\pi$ contains a stable functional $\beta\in \mathsf V_{x}^*$ for a point $x\in \mathcal B(G,F)$, and $\mathsf G_{x}$ is the reductive quotient, then we show in Proposition \ref{adjoint swan is dim of Gx} (see also Propositions \ref{adjoint swan conductor GL-case} and \ref{adjoint swan conductor simple supercuspidal for classical groups}) that the adjoint swan conductor $\mathrm{Adsw}$ of $\varphi$ is given by 
$$\mathrm{Adsw}(\varphi) = \dim \mathsf G_{x}.$$
This identity is consistent with the formal degree conjecture formulated by Hiraga-Ichino-Ikeda \cite{Hiraga-Ichino-Ikeda}, which predicts the formal degree of a supercuspidal representation in terms of invariants attached to its Langlands parameter. In the epipelagic case, one of these invariants, $\mathrm{Adsw}(\varphi)$, is shown to equal $\dim \mathsf G_{x}$ derived from the inducing type of $\pi$. We remark that this conductor result was previously known or could be deduced under certain conditions on $p$ or on the parameters \cite{Reeder-Yu, Oi-Sp-and-SO-even, SchweinFormaldegreeofregularsupercuspidals}.
\end{enumerate}

The novelty of our results stems from the inputs of \cite{BH-Eff} and \cite{BK-epipelagic}, which allows arguments valid for all odd primes $p$. Specifically, using these inputs, we obtain a description of an arbitrary irreducible parameter $\tilde\varphi = \Ind_{\mathcal W_K}^{\mathcal W_F}(\tilde\rho\otimes\tilde \tau)$ in terms of an admissible triple $[K/F,\tilde\rho,\tilde \tau]$, which is an equivalence class of data consisting of a tamely ramified subextension $K/F$, which can be taken as the maximal tamely ramified subextension of $F[\beta]$, and two parts $\tilde\rho$ and $\tilde \tau$ of the parameter, which we colloquially refer to as the wild and the tame parts respectively. We then compare the data $[K/F,\tilde\rho,\tilde \tau]$ with Reeder-Yu's template which, loosely speaking, can likewise be separated into wild and tame parts.

For the wild part, in Proposition \ref{ramification theorem in RY template}, we establish a bijection $\beta\leftrightarrow [\hat\psi_\beta]$ between stable functionals and certain $\mathcal W_K$-conjugacy classes of representations $\hat\psi_\beta$ of the wild inertia group $\mathcal P_F$ of Heisenberg type, slightly generalizing \cite[Lem 7.1]{Reeder-Yu} in the context of general linear groups. Extending $\hat\psi_\beta$ to its normalizer in $\mathcal W_F$ yields the wild part $\tilde\rho$. To avoid the residual characteristic constraint in \cite{Reeder-Yu}, we apply the Ramification Theorem of \cite{BH-Eff} as an intermediary step, which provides a canonical bijection 
$\Theta \leftrightarrow \Psi$ between intertwining classes $\Theta $ of simple characters, also known as endo-classes, and $\mathcal W_F$-orbits $\Psi$ of irreducible representations of $\mathcal P_F$, specializing to $\beta\leftrightarrow [\hat\psi_\beta]$ in the quasi-epipelagic case. The ramification structure of the above representations of Heisenberg type, which is known from \cite{BK-epipelagic}, allows us to  compute the adjoint Swan conductor under only the odd residual characteristic constraint.

For the tame part, we continue along the lines of Reeder-Yu's template to construct two automorphisms $\hat n_s$ and $\hat n_t$ in the normalizer of the image of $\tilde\rho$ in $\hat G$, derived from the generators $s$ of inertia and $t$ of Frobenius, respectively, to obtain the tame part $\tilde\tau$ of $\tilde\varphi$. If $\tilde\varphi$ is self-dual and lifted from an epipelagic parameter of $G$, we recover in Proposition \ref{prop ReederYu parameter as induced representation} the rectifying character $\hat \mu$ of $\mathcal W_K$, specializing to the epipelagic case from the general rectifying character appearing in the Comparison Theorem of \cite[Sec 7.3]{BH-Eff}, which measures the discrepancy between the canonical constructions of supercuspidal representations and their parameters from the  ramification data $\Theta$ and $\Psi$ in the last paragraph. The rectifying character carries a parity itself, and we show that one must twist $\tilde\tau$ by $\hat \mu$ to achieve the correct parity, guaranteeing that the twisted parameter $\Ind_{\mathcal W_K}^{\mathcal W_F}(\tilde\rho\otimes(\tilde \tau\hat \mu))$ is lifted from $G$ (but not $\Ind_{\mathcal W_K}^{\mathcal W_F}(\tilde\rho\otimes\tilde \tau)$ if $\hat \mu$ is non-trivial).

As a final remark, in the tame case, we can place the above discussions within the context of \cite{Kaletha-epipelagic} using admissible embeddings of L-tori into the L-group of $G$, and interpret an epipelagic parameter as an induced representation using endoscopic chi-data. This interpretation was established in the author's previous work \cite{Tam-ETLLC-GLn,Tam-ETLLC-GLn-inner} for general linear groups and their inner forms. For classical groups, we find that the combinatorial arguments in Lemmas \ref{characterization of n_t} and \ref{mu_ns_nt as induced representation} are sufficient for our purposes.

\subsection{Acknowledgements}

This research was supported by the open competition of NWO under Grant No. OCENW.M20.132, and partially by the Radboud Excellence Initiative. We would like to thank Maarten Solleveld for reading several draft versions of this article with great interest. During the preparation of the first draft, the author was supported by the encouragement of Julia Gordon, to whom the author is deeply indebted. This article would not exist without the contributions of these two mathematicians. Thanks are also due to Anne-Marie Aubert, Corinne Blondel, and Guy Henniart for their careful readings of the first draft and insightful comments.

This work was partially presented at the Oberwolfach Workshop 2449: \emph{Representations of p-adic groups} in December 2024, and at the seminar of the Institute of Mathematics, Academia Sinica, Taipei in March 2025. The author would like to thank the respective organizers for their invitations and hospitality. During the trip to Taipei, the author was inspired by discussions of Cheng-Chiang Tsai and Masao Oi to expand and improve the article. The writing of the second draft was supported by Xiamen University Malaysia Research Fund (Grant No: XMUMRF/2024-C14/IMAT/0033).

{\bf Disclaimer.} Portions of the text and grammar were refined with the assistance of AI
tools, but no mathematical ideas were generated by them.

\subsection{Notations and conventions}

The cardinality of a finite set $X$ is denoted by $\#X$. The real and imaginary parts of a complex number $s\in \mathbb C $ are denoted by $\Re(s)$ and $\Im(s)$ respectively.

Given a group $G$, we denote its center by $ZG$. If $G$ is a finite abelian group, the notation $G^\wedge$ stands for the Pontryagin dual of $G$. The action of $G$ on a set $X$ is denoted by $x\mapsto {}^gx$, for $g\in G$ and $x\in X$.

Given a matrix $A$, we denote by ${}^tA$ its transpose. Suppose that $A$ has entries in a field $K$, and $K/K_\bullet$ is a quadratic extension, we denote by $\overline A$ the entrywise $K/K_\bullet$-conjugation of $A$.

We denote a diagonal matrix with entries $A_1,\dots,A_n$ from NW to SE by $\diag(A_1,\dots,A_n)$, and an anti-diagonal matrix with entries $A_1,\dots,A_n$ from NE to SW by $\antidiag(A_1,\dots,A_n)$. Each entry $A_i$ may itself be a matrix.

A representation of a topological, locally profinite group $G$ is assumed to be smooth. We write $(G,\pi)$ if we want to emphasize the underlying group of the representation $\pi$. In many cases, $G$ is the subgroup of rational points of a reductive group over a non-Archimedean local field, and $\pi$ is a smooth representation of $G$, usually irreducible and supercuspidal. To simplify the writing, all supercuspidal representations of a reductive group are presumed to be irreducible unless otherwise specified.

Given a non-Archimedean local field $F$, denote $\mathfrak o_F$ the ring of integers with maximal ideal $\mathfrak p_F$. The residual field $\mathbb F:=\mathfrak o_F/\mathfrak p_F$ has cardinality $q = q_F$, which is a power of a prime number $p$. By fixing a uniformizer $\varpi$ of $F$, the multiplicative subgroup $F^\times$ of $F$ decomposes as $\left<\varpi\right>\times \mathfrak o_F^\times$, and also $ \mathfrak o_F^\times =  {\boldsymbol{\mu}}_F\times U^1_F$, where ${\boldsymbol{\mu}}_F$ is the subgroup of root of unity of order coprime to $p$, and $U^k_F= 1+\mathfrak p^k_F$ for all $k\in \mathbb Z_{\geq 1}$.

If $H$ is a finite cyclic group of even order, denote by $\left(\frac{\cdot }{H}\right):H\rightarrow \{\pm 1\}$ the quadratic character of $H$. Let $\psi:\mathbb F\rightarrow \mathbb C^\times$ be a non-trivial additive character. We denote the normalized quadratic Gauss sum by $\mathfrak{n}_{\psi}  := q^{-1/2}\sum_{x\in \mathbb F^\times }\psi(x)\left(\frac{x}{\mathbb F^\times}\right)$. It is known that $\mathfrak{n}_\psi^2  = \left(\frac{-1}{\mathbb F^\times}\right)$, so that $\mathfrak{n}_\psi$ is a 4th root of unity.

The extended real line \cite[(6.4.1)]{Bruhat-Tits-reductive-group-1} is $\tilde{\mathbb R} = \{r,r_+:r\in \mathbb R\}$, equipped with an order $r>s$ if and only if $r>r_+\geq  s$. We work with filtrations of groups over $F$ compatibly parametrized by $\tilde{\mathbb R}$. To match with the enumeration of Moy-Prasad filtrations, we parametrize any lattice sequence $\Lambda$ using the normalized valuation, i.e., $\varpi\Lambda(r) = \Lambda(r+1) $ for all $r\in \mathbb R$. If $E/F$ is a field extension of ramification degree $m$, we separate the normalized filtration on the ideals generated by $\mathfrak p_E$ from the standard filtration by using $\mathfrak p^r(E) = \mathfrak p_E ^{\lceil rm\rceil} :=(\mathfrak p_E )^{\lceil rm\rceil}$ and $\mathcal U^r(E) = U^{\lceil rm\rceil}_E$.

 \section{Epipelagic Representations}
 \label{section Epipelagic Representations}

 Let $F$ be a non-Archimedean local field with residual characteristic $p$, and $G$ be a connected reductive group over $F$. Let $K$ be the maximal unramified extension of $F$ in its separable closure $F^{\text{sep}}$. Fix a maximal $F$-split torus $S$ of $G$, and take a maximal $K$-split $F$-torus $T$ containing $S$.

The torus $T$ determines an apartment $\mathcal{A} =\mathcal{A}(T,K)$ in the Bruhat-Tits building $\mathcal{B}(G) = \mathcal{B}(G,K)$ of $G$, as well as an affine root system $\Phi_{\mathrm{aff}}$ containing the underlying (relative) root system $\Phi = \Phi(G,T)$. Fix a special point $o$ and an alcove $\mathcal{C}$ in $\mathcal A$ containing $o$. The boundary hyperplanes of $\mathcal C$ determine a set of simple affine roots $\Delta_{\mathrm{aff}}$, consisting of the roots in a simple root system $\Delta\subset \Phi$ together with an affine root $\alpha_0:=1-\alpha_l$, where $\alpha_l$ is the longest root in $\Phi$.

We take a point $x\in \mathcal{B}(G,F) = \mathcal B(G)^{\Gal(K/F)}$ and assume it lies in $\mathcal{A}$ by $G$-translation. For $r\in \tilde{\mathbb R}_{\geq 0}$, denote by $G(K)_{x,r}$ the Moy-Prasad filtration subgroup in $G(K)$, and put $G_{x,r}=G(F)_{x,r}:=G(K)_{x,r}^{\Gal(K/F)}$. The stabilizer group $G(F)_{x}$ of $x$ in $G(F)$ contains the parahoric subgroup $G_{x,0}$ with finite index modulo center. Now take $ r(x)$ to be the minimal positive value in $\{\psi(x):\psi\in \Phi_{\mathrm{aff}}\}$. We call an irreducible representation of $G$ {\bf epipelagic} if it has depth $r(x)$ for some $x\in \mathcal{B}(G,F)$ and contains a non-zero vector fixed by $G_{x,r(x)_+}$.

 \subsection{Stable functionals}
 \label{subsection Stable functionals}

The maximal compact subgroup $T_0$ of $T(K)$ acts on the affine root subgroup $U_{\psi} = U_{\psi}(\mathfrak o_K)$ for each $\psi\in \Phi_{\mathrm{aff}}$, and on the quotient $\mathfrak g_{\dot\psi}:=U_{\psi}/U_{\psi_+}$, where $U_{\psi_+}$ is the next filtration subgroup of $U_{\psi}$ in $U_{\dot\psi}$ and ${\dot\psi}\in \Phi$ is the direction of $\psi$. Denote by $T_{0_+}$ the kernel of the action of $T_0$ on all $\mathfrak g_{\alpha}$ for all $\alpha\in \Delta$, so that $\mathfrak g_{\alpha}$ is the $\alpha$-weight subspace of the Lie algebra $\mathfrak g$ of $G(\overline{\mathbb F})$ under the action of $T_0/T_{0_+}$.

Put
$\Phi_{x,r} = \{ \alpha\in \Phi:
  \alpha =   \dot\psi\text{ for some $\psi \in \Phi_{\mathrm{aff}}$}
    \text{ such that $\psi(x)= r$}\}$ for $r\in \tilde{\mathbb R}$. We have a decomposition of the quotient
    $$
\mathsf{V}_{x,r} := G(K)_{x,r}/G(K)_{x,r_+}\cong \bigoplus_{\alpha\in \Phi_{x,r}
}\mathfrak g_{\alpha}.$$ 
A functional $\beta$ in ${\mathsf{V}}^*_{x,r}$, the linear dual space of ${\mathsf{V}_{x,r}}$, is called {\bf stable} for the action of $\mathsf G_x := G(K)_{x,0}/G(K)_{x,0_+}$ if the $\mathsf G_x $-orbit of $\beta$ in ${\mathsf{V}}^*_{x,r}$ is closed and the stabilizer of $\beta$ in $\mathsf G_x $ is finite-mod-center (i.e., a finite algebraic group after modulo the center of $\mathsf G_x$).

 Fix an additive character $\psi$ of $F$ throughout the paper, which is trivial on $\mathfrak p_F$ but non-trivial on $\mathfrak o_F$, and denote also by $\psi$ the induced character of $\mathbb F = \mathfrak o_F/\mathfrak p_F$. Given $x\in \mathcal B(G,F)$ and $\beta\in {\mathsf{V}}^*_{x,r}(\mathbb F) := ({\mathsf{V}}^*_{x,r})^{\Gal(\bar{\mathbb F}/\mathbb F)}$, we define a character on the compact subgroup $G(F)_{x,r}$ by 
 $$\psi_\beta: G(F)_{x,r} \xrightarrow{} (G_{x,r}/G_{x,r_+} )^{\Gal(\bar{\mathbb F}/\mathbb F)} \cong \mathsf{V}_{x,r}(\mathbb F) \xrightarrow{\beta} \mathbb F\xrightarrow{\psi}\mathbb C^\times.$$ 
Denote by $G(F)_{x,\beta}$ the stabilizer of $\beta$ in $G(F)_x$. We now take $r=r(x)$. If  $\beta$ is moreover stable, we take an irreducible representation $(G(F)_{x,\beta},\lambda)$ containing $( G_{x,r}, \psi_\beta)$, i.e., $\lambda$ being an irreducible constituent in $\Ind_{G(F)_{x,r}}^{G(F)_{x,\beta}}\psi_\beta$. Then $\cInd_{G(F)_{x,\beta}}^{G(F)} \lambda$ is an irreducible supercuspidal representation of $G(F)$, and is moreover epipelagic \cite[Prop 2.4]{Reeder-Yu}.

\begin{rmk} The existence of rational stable functions in ${\mathsf{V}}^*_{x,r}$ was first shown in \cite[Sec 5 and 6]{Reeder-Yu} for large $p$, and was then established in \cite{Fintzen-Romano-stable-vectors} for arbitrary $p$ if one extends  from $F$ to a sufficiently large unramified extension of $F$.
 \qed\end{rmk}

\subsection{Example: Simple supercuspidal representations}
\label{subsection Example: Simple supercuspidal representations}

Simple supercuspidal representations are examples of epipelagic representations. These representations were first constructed in \cite{Gross-Reeder} for simply connected simple $F$-split groups, using affine generic characters. For quasi-split reductive groups, we recall the statements from \cite{Oi-SO-odd}.

With the setup of the previous section, we take $x\in \mathcal C$ to be the barycenter, i.e., the point where the affine roots in $\Delta_{\mathrm{aff}}$ attain a common value, which is $r = 1/h$, where $h$ is the twisted Coxeter number \cite{Reeder-torsion-autom}. Put $\mathcal{I}^+ = G_{x,r}$ and $\mathcal{I}^{++} = G_{x,r_+}$, and denote by $Z$ the center of $G(F)$. We call a character $\chi$ of $\mathcal{I}:= Z\mathcal{I}^+$ {\bf affine generic} if it is trivial on $\mathcal{I}^{++}$ and non-trivial on $U_{\alpha}/U_{\alpha_+}$ for all $\alpha\in \Delta_{\text{aff}}$. In this paper, we also call the functional $\beta$ that gives rise to $\psi_\beta = \chi|_{\mathcal I^+}$ {\bf affine generic} if $\chi$ is.

Denote by $N(\chi)$ the stablizer of $\chi$ in $G(F)$. Given an affine generic character $\chi$ of $Z\mathcal{I}^+$, the compact induction $\cInd_{Z\mathcal{I}^+}^{G(F)}\chi$ admits a decomposition
$$\cInd_{\mathcal{I}}^{G(F)}\chi = \bigoplus_{\chi'} (\dim {\chi'} )\pi_{\chi'},$$
where $\chi'$ ranging over the irreducible constituents of $\cInd_{Z\mathcal{I}^+}^{N(\chi)}\chi$ and $\pi_{\chi'} = \cInd_{N(\chi)}^{G(F)} \chi'$. Each $\pi_{\chi'}$ is an irreducible supercuspidal representation of $G(F)$. \cite[Prop 2.8]{Oi-SO-odd}.

Recall the decomposition $N_G(T)/T_0 = W_{\mathrm{aff}}\rtimes \bar\Omega$, where $\bar \Omega$ is the (finite) group $\Gamma_{\mathcal C}$ defined in \cite[p.189]{Bourbaki-Lie-4-6}, and let $\Omega$ be a set of representatives of $\bar\Omega$ in $N_G(T)$. If $\pi_{\chi'}$ and $\pi_{\xi'}$ are two such representations arising respectively from affine generic characters $\chi$ and $\xi$ of $\mathcal{I}$, then
\begin{equation}
\label{criterion of isomorphic simple supercuspidal}
   \begin{split}
    & \text{$\pi_{\chi'}$ and $\pi_{\xi'}$ are isomorphic if and only if }
     \\
     &\text{there exists $t\in T_0\Omega$ such that ${}^t(\chi,\chi')=  (\xi,\xi')$.}
   \end{split}
\end{equation}
 This gives an upper bound 
$$\mathcal{I}\subseteq N(\chi) \subseteq \mathcal{I}\Omega = N_{G(F)}(\mathcal{I}^+)$$
 for the stabilizer $N(\chi)$. Note that in \cite[Sec 8]{Gross-Reeder}, the group $G$ is assumed to be simply-connected, so that $\bar\Omega$ is trivial.

\section{Covering types}
\label{section Covers}

Let $P= MU$ be a parabolic subgroup of a connected reductive group $\mathcal G$ over $F$, where $M$ is a Levi subgroup in $P$ and $U$ is the unipotent radical of $P$. Let $P^- = M U^-$ be the opposite of $P$.

A compact subgroup $\mathcal J_P$ of $\mathcal G(F)$ is called {\bf decomposed with respect to} $(M,P)$, or just $(M,P)$-decomposed, if 
$\mathcal{J}_P = \mathcal{J}_P^- \mathcal{J}_M \mathcal{J}_P^+$, where 
$$\mathcal{J}_P^- = \mathcal{J}_P\cap U^-, \quad \mathcal{J}_M = \mathcal{J}_P\cap M,\quad \text{and}\quad \mathcal{J}_P^+ = \mathcal{J}_P\cap U^+.$$
Let $\mathcal{J}_P$ be $(M,P)$-decomposed, and $Z_M$ be the center of $ M$. We call an element ${\mathbbm z}\in Z_M(F)$  {\bf strongly positive with respect to }$(P,\mathcal{J}_P)$ if 
$${\mathbbm z} \mathcal{J}_P^+{\mathbbm z}^{-1} \subset \mathcal{J}_P^+, \quad {\mathbbm z}^{-1} \mathcal{J}_P^-{\mathbbm z} \subset \mathcal{J}_P^-,$$
and for any compact subgroups $\mathcal{J}_1,\mathcal{J}_2\subset U^+$ and $\mathcal{J}_3,\mathcal{J}_4\subset U^-$, there exists an integer $N\geq 0$ such that 
$${\mathbbm z}^N \mathcal{J}_1{\mathbbm z}^{-N} \subset \mathcal{J}_2, \quad {\mathbbm z}^{-N} \mathcal{J}_3{\mathbbm z}^N \subset \mathcal{J}_4.$$
An irreducible representation $\lambda_P$ of $\mathcal{J}_P$ is called {\bf decomposed with respect to} $(M,P)$ if $\mathcal{J}_P$ is $(M,P)$-decomposed and both $\mathcal{J}_P^-$ and $\mathcal{J}_P^+$ are contained in $\ker \lambda_P$. We also call $(\mathcal{J}_P,\lambda_P)$ an $(M,P)$-decomposed pair.

\begin{dfn}
\label{definition of cover}
Suppose that $\lambda_M = \lambda_P|_{\mathcal{J}_M}$ is irreducible. We call $(\mathcal{J}_P,\lambda_P)$ a {\bf covering type}, or simply a {\bf cover}, of $(\mathcal{J}_M,\lambda_M)$ in $\mathcal G(F)$ if 
\begin{enumerate}[(a)]
\item $(\mathcal{J}_P,\lambda_P)$ is a decomposed pair,  and 

\label{existence of decomposed pair}

\item  there exist a strongly $(P,\mathcal{J}_P)$-positive element ${\mathbbm z}\in Z_M(F)$ and an invertible element in the Hecke algebra $ \mathcal H(\mathcal G(F),\lambda_P)$ supported on $\mathcal{J}_P{\mathbbm z} \mathcal{J}_P$.\qed
\label{existence of invertible supported on positive} 
\end{enumerate} \end{dfn}

We will use the strongly positive element in \ref{existence of invertible supported on positive} above for computations in Section \ref{section Main calculation}. There is an equivalent criterion in \cite[4.2]{Kim-Yu-tame-types}, which is more convenient for confirming directly that a constructed $(\mathcal{J}_P,\lambda_P)$ is a cover of $(\mathcal{J}_M,\lambda_M)$ and will be recalled in Proposition \ref{results in Kim-Yu-tame-types}. We remark that the criterion is based on the following proposition.

\begin{prop}
\cite{Blondel_Injectivity-of-jacquet} $(\mathcal{J}_P,\lambda_P)$ is a cover of $(\mathcal{J}_M,\lambda_M)$ if and only if criteria \ref{existence of decomposed pair} in Definition \ref{definition of cover} and 
\begin{equation*}
   \begin{split}
     \text{(b')}\quad &\text{for any smooth representation $V$ of $\mathcal G(F)$, the Jacquet map $V \rightarrow V_U$} \\
        &\text{induces an injection on the $(\mathcal{J}_P,\lambda_P)$-isotypic subspace of $V$, }
   \end{split}
\end{equation*}
are satisfied. \hfill$\square$
\end{prop}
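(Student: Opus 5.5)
The plan is to prove the two implications separately, using the Hecke-algebra characterization of covers due to Bushnell--Kutzko together with the Jacquet-module comparison.

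\emph{From cover to (b').} Assume (a) and (b) of Definition \ref{definition of cover}. Let $V$ be smooth and $v\in V^{\lambda_P}$, the $(\mathcal J_P,\lambda_P)$-isotypic part, with image $0$ in $V_U$. The image of $v$ in $V_U$ vanishes exactly when $\int_{\mathcal J_1} u\cdot v\,du=0$ for some compact open $\mathcal J_1\subset U$. By strong positivity we may choose $N$ with ${\mathbbm z}^{-N}\mathcal J_1{\mathbbm z}^{N}\subset \mathcal J_P^+$. Now take the invertible element $\phi$ supported on $\mathcal J_P{\mathbbm z}\mathcal J_P$. Up to a nonzero scalar, $\phi^{N}$ is supported on $\mathcal J_P{\mathbbm z}^N\mathcal J_P$; this uses the Iwahori-type decomposition and the fact that $\mathcal J_P^\pm\subset\ker\lambda_P$, which gives $\mathcal J_P{\mathbbm z}\mathcal J_P{\mathbbm z}\mathcal J_P=\mathcal J_P{\mathbbm z}^2\mathcal J_P$. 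Acting on $V^{\lambda_P}$, the element $\phi^N$ can be rewritten as $e_{\lambda_P}\circ {\mathbbm z}^{N}\circ(\text{averaging over }\mathcal J_P^+)$ up to harmless factors. Conjugating the averaging over $\mathcal J_P^+$ by ${\mathbbm z}^{N}$ produces the averaging over ${\mathbbm z}^N\mathcal J_P^+{\mathbbm z}^{-N}$. I will arrange the indices (replacing ${\mathbbm z}$ by ${\mathbbm z}^{-1}$ where needed) so that this group contains $\mathcal J_1$. It follows that $\phi^N v=0$, and invertibility of $\phi$ gives $v=0$.

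\emph{From (b') to cover.} Here I would follow Blondel's argument. Apply (b') to $V=\cInd_{\mathcal J_P}^{\mathcal G(F)}\lambda_P$, or equivalently to the regular representation. Injectivity of Jacquet restriction on isotypic parts then shows that the natural algebra embedding $t_P:\mathcal H(M,\lambda_M)\to\mathcal H(\mathcal G,\lambda_P)$ behaves well: for strongly positive ${\mathbbm z}$, let $\phi_{\mathbbm z}$ be the element supported on $\mathcal J_P{\mathbbm z}\mathcal J_P$. The isotypic part of $V$ injects into $V_U$, and ${\mathbbm z}$ acts invertibly on $V_U$ because ${\mathbbm z}$ is central in $M$. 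Since $\phi_{\mathbbm z}$ is compatible with this action via the Jacquet map, $\phi_{\mathbbm z}$ acts injectively on every $V^{\lambda_P}$.

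It remains to pass from injective to invertible. I would show that $\phi_{\mathbbm z}$ acting on $\mathcal H(\mathcal G,\lambda_P)$ by right convolution is injective. Surjectivity would then follow from finiteness: $\mathcal H(\mathcal G,\lambda_P)$ is finitely generated over $\mathbb C[\phi_{\mathbbm z}^{\pm}]$-type subalgebras. This last step is the main obstacle, and I would instead invoke Bushnell--Kutzko's criterion \cite{BK-types}: the existence of an invertible element supported on $\mathcal J_P{\mathbbm z}\mathcal J_P$ is equivalent to the injectivity of Jacquet restriction for all $V$. The reason is that the cokernel of $\phi_{\mathbbm z}$ gives a module $V$ whose $\lambda_P$-part is killed in $V_U$; in particular, $V_U$ is the localization at $\phi_{\mathbbm z}$.
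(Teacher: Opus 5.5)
The paper gives no proof of this statement; it quotes Blondel, and the implication from cover to (b') is due to Bushnell--Kutzko.

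\emph{Forward direction.} Your argument is essentially theirs and works once the orientation is fixed. ``Replacing $\mathbbm z$ by $\mathbbm z^{-1}$'' is not legitimate, since nothing provides an invertible element on $\mathcal J_P\mathbbm z^{-1}\mathcal J_P$. Use instead $\mathcal J_P\mathbbm z^N\mathcal J_P=\mathcal J_P^+\mathbbm z^N\mathcal J_P$. On $V^{\lambda_P}$ the operator $\phi^N$ is then a nonzero multiple of averaging over $\mathcal J_P^+$ applied \emph{after} $\pi(\mathbbm z^N)$. This equals $\pi(\mathbbm z^N)$ composed with averaging over $\mathbbm z^{-N}\mathcal J_P^+\mathbbm z^N$. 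That group contains $\mathcal J_1$ as soon as $\mathbbm z^N\mathcal J_1\mathbbm z^{-N}\subset\mathcal J_P^+$, which is what strong positivity gives; your inclusion $\mathbbm z^{-N}\mathcal J_1\mathbbm z^N\subset\mathcal J_P^+$ points the wrong way.

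\emph{Converse: where the gap is.} Write $j\colon V\to V_U$ for the Jacquet map and $\pi_U$ for the $M(F)$-action on $V_U$. Your injectivity step is correct: the same coset decomposition gives $j(\phi_{\mathbbm z}\cdot v)=c\,\pi_U(\mathbbm z)j(v)$ with $c\neq 0$, so (b') makes $\phi_{\mathbbm z}$ injective on every $V^{\lambda_P}$. Nothing in the proposal turns this into invertibility.
Finite generation over $\mathbb C[\phi_{\mathbbm z}^{\pm1}]$ presupposes the inverse you are constructing. In any case an injective endomorphism of a finitely generated module need not be surjective.
The ``Bushnell--Kutzko criterion'' you invoke is exactly the equivalence being proved; they only prove cover $\Rightarrow$ (b').
``$V_U$ is the localization at $\phi_{\mathbbm z}$'' is a consequence of being a cover, not an available input.

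\emph{What is missing.} Apply (b') to a cokernel rather than to $\cInd_{\mathcal J_P}^{\mathcal G(F)}\lambda_P$ itself. Put $V=\cInd_{\mathcal J_P}^{\mathcal G(F)}\lambda_P$ with canonical generator $e\in V^{\lambda_P}$. Let $T\in\End_{\mathcal G(F)}(V)$ correspond to $\phi_{\mathbbm z}$, so $T(e)=\phi_{\mathbbm z}\cdot e$, and let $Q=V/T(V)$.
Then $T_U(j(e))=c\,\pi_U(\mathbbm z)j(e)$. Since $T_U(V_U)$ is $M(F)$-stable and $\pi_U(\mathbbm z)$ is invertible, $j(e)\in T_U(V_U)$.
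By exactness of the Jacquet functor, the image of $e$ in $Q^{\lambda_P}$ maps to $0$ in $Q_U=V_U/T_U(V_U)$. So (b') for $Q$ gives $e\in T(V^{\lambda_P})$, i.e.\ a one-sided inverse of $\phi_{\mathbbm z}$ in $\mathcal H(\mathcal G,\lambda_P)$.

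\emph{Remaining step.} This one-sided inverse already makes every $\phi_{\mathbbm z}^N$ injective on every $V^{\lambda_P}$. By your forward computation, (a) together with (b') is therefore \emph{equivalent} to its existence, so no further use of (b') can produce the other side. You need an extra property of the Hecke algebra, for instance direct finiteness ($xy=1\Rightarrow yx=1$). This follows from the faithful positive trace $f\mapsto\mathrm{tr}\,f(1)$ by a Kaplansky-type argument in the group von Neumann algebra. Neither the cokernel argument nor this last step appears in your proposal.
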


\subsection{Preliminaries on the construction of covering types}
\label{subsection Preliminaries on the construction of covering types}

Consider the following setup: suppose that $\mathcal{J}_P$ is a compact open subgroup of $\mathcal G(F)$ containing a parahoric subgroup $\mathcal{J}_{P,0} $ (whose associating facet is unimportant here and hence ignored) and that the pro-unipotent radical $\mathcal{J}_{P,0_+} $ is a normal subgroup of $\mathcal{J}_P$. Assume that $\mathcal{J}_P, \mathcal{J}_{P,0}$, and $\mathcal{J}_{P,0_+}$ are all decomposed with respect to $(M,P)$, and moreover that $\mathcal{J}_P/\mathcal{J}_{P,0_+} \cong \mathcal{J}_M /\mathcal{J}_{M,0_+}  $, where $\mathcal{J}_M = \mathcal{J}_P\cap M $ and $\mathcal{J}_{M,0_+}  = \mathcal{J}_{P,0_+} \cap M$.

Let $(\mathcal{J}_{P,0_+},\theta_P)$ be a character trivial on both $\mathcal{J}_P^{+ }$ and $\mathcal{J}_P^{- }$. Denote by $(\mathcal{J}_{M,0_+},\theta_M)$ its restriction. Take an irreducible representation $(\mathcal{J}_{M},\lambda_M)$ which is $\theta_M$-isotypic, and an $(M,P)$-decomposed pair $(\mathcal{J}_P,\lambda_P)$ with $\lambda_P|_{\mathcal{J}_M}\cong \lambda_M$. 

\begin{prop}
\label{results in Kim-Yu-tame-types}
Under the above setup, we have the following.
\begin{enumerate}[(i)]

\item \cite[Th 6.3]{Kim-Yu-tame-types} The pair $(\mathcal{J}_{P,0_+}, \theta_P)$ is a cover of $(\mathcal{J}_{M,0_+}, \theta_M)$ in $\mathcal G(F)$. 
\label{extension of simple character}

\item \cite[Cor 6.4]{Kim-Yu-tame-types} If $(\mathcal{J}_P,\lambda_P)$ satisfies criterion \ref{existence of decomposed pair} of Definition \ref{definition of cover} and $\lambda_P|_{\mathcal{J}_{P,+}}$ is $\theta_P$-isotypic, then it is a cover of $(\mathcal{J}_M,\lambda_M)$ in $\mathcal G(F)$. (This is indeed a consequence of \ref{extension of simple character}). 
\label{cover of simple character extends to cover of cuspidal type}
 \hfill$\square$
\end{enumerate}

\end{prop}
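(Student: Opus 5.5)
Part \ref{cover of simple character extends to cover of cuspidal type} follows formally from part \ref{extension of simple character} via the proposition of \cite{Blondel_Injectivity-of-jacquet}. So the plan is to establish \ref{extension of simple character} through the criterion (b') of that proposition, and then transport it to $\lambda_P$.

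For \ref{extension of simple character}, criterion \ref{existence of decomposed pair} of Definition \ref{definition of cover} holds by hypothesis. The group $\mathcal J_{P,0_+}$ is $(M,P)$-decomposed, and $\theta_P$ is trivial on $\mathcal J_P^{\pm}$. Its restriction $\theta_M$ is a character, hence irreducible.

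It remains to produce a strongly $(P,\mathcal J_{P,0_+})$-positive ${\mathbbm z}\in Z_M(F)$ together with an invertible element of $\mathcal H(\mathcal G(F),\theta_P)$ supported on $\mathcal J_{P,0_+}{\mathbbm z}\mathcal J_{P,0_+}$. I would take ${\mathbbm z}$ to be a sufficiently high power of a cocharacter of $Z_M$ that is dominant for $P$. Such an element contracts $\mathcal J_P^+$ and expands $\mathcal J_P^-$, and centralizes $\mathcal J_{M,0_+}$. Consequently ${\mathbbm z}$ intertwines $\theta_P$, and the function $T_{\mathbbm z}$ supported on the double coset is well defined.

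To show $T_{\mathbbm z}$ is invertible, I would compute $T_{\mathbbm z}*T_{{\mathbbm z}^{-1}}$ using the Iwahori decomposition $\mathcal J_{P,0_+}=\mathcal J_P^-\mathcal J_{M,0_+}\mathcal J_P^+$. The target is a nonzero multiple of the identity. The support of the product is contained in $\mathcal J_{P,0_+}{\mathbbm z}\mathcal J_P^-{\mathbbm z}^{-1}\mathcal J_{P,0_+}$, and the problem is to show that only the trivial double coset contributes.

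This is the step I expect to be the main obstacle. It requires showing that no element of ${\mathbbm z}\mathcal J_P^-{\mathbbm z}^{-1}\smallsetminus\mathcal J_P^-$ intertwines $\theta_P$. One route is to argue with the Moy--Prasad filtration: such elements lie deeper in $U^-$ relative to a larger filtration step, and the character $\theta_P$ restricted to the relevant root subgroups is non-degenerate. This is where the structure of $\mathcal J_{P,0_+}$ as a pro-$p$ filtration subgroup is used, following \cite[Th 6.3]{Kim-Yu-tame-types}. Alternatively, one can verify (b') directly: if a $\theta_P$-isotypic vector $v$ maps to $0$ in $V_U$, then $v$ is killed by integration over some large compact subgroup $\mathcal J_1\subset U$. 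Conjugating by ${\mathbbm z}^N$ brings $\mathcal J_1$ inside $\mathcal J_P^+$, on which $v$ is already invariant, and this forces $\pi({\mathbbm z}^N)v=0$, so $v=0$.

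For \ref{cover of simple character extends to cover of cuspidal type}, assume criterion \ref{existence of decomposed pair} holds for $(\mathcal J_P,\lambda_P)$; note that $\lambda_M$ is irreducible by assumption. By the proposition of \cite{Blondel_Injectivity-of-jacquet}, it suffices to verify (b'). Let $V$ be any smooth representation of $\mathcal G(F)$. Since $\lambda_P|_{\mathcal J_{P,0_+}}$ is $\theta_P$-isotypic, the $\lambda_P$-isotypic subspace of $V$ is contained in the $\theta_P$-isotypic subspace of $V$ for $\mathcal J_{P,0_+}$. By \ref{extension of simple character} and the same proposition applied to $(\mathcal J_{P,0_+},\theta_P)$, the Jacquet map $V\to V_U$ is injective on the latter subspace. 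It is therefore injective on the former, so (b') holds and $(\mathcal J_P,\lambda_P)$ is a cover of $(\mathcal J_M,\lambda_M)$.
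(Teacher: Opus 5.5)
Your deduction of (ii) from (i) is correct and is exactly what the paper means by ``a consequence of (i)''. The paper does not prove (i) itself; it imports it from \cite[Th 6.3]{Kim-Yu-tame-types}. Your attempt to prove (i) directly has a genuine gap, and neither of your two routes works.

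\textbf{First route.} It needs that no element of $\mathbbm z\mathcal J_P^-\mathbbm z^{-1}\smallsetminus\mathcal J_P^-$ intertwines $\theta_P$. Nothing in the setup gives this. The only hypothesis on $\theta_P$ is that it is \emph{trivial} on $\mathcal J_P^{\pm}$, so there is no ``non-degeneracy on root subgroups'' to appeal to. Worse, the claim is false exactly where the paper applies the proposition. There $s_y$ intertwines $\lambda_P$, since $T_y\neq 0$, and hence it intertwines $\theta_P=\lambda_P|_{\mathcal J_{P,0_+}}$. Every representative of the set $\mathcal S_y$ in (\ref{formula for b and c for y}) lies in $s_y\mathcal J_P^+s_y^{-1}\subset U^-$ and in $\mathcal J_Ps_y\mathcal J_P$. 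It therefore lies outside $\mathcal J_P^-$ and intertwines $\theta_P$. This set is non-empty because $b_y\neq 0$ (Section \ref{subsection Expanding the intertwining operator as a sum}). After replacing $\mathbbm z$ by a high power, strong positivity gives $s_y\mathcal J_P^+s_y^{-1}\subset\mathbbm z\mathcal J_P^-\mathbbm z^{-1}$. So you cannot show $T_{\mathbbm z}*T_{\mathbbm z^{-1}}$ is scalar this way. In the self-dual situation the paper needs, invertibility comes from a different mechanism: $t_P(Z)=T_y*T_z$, with each $T_w$ satisfying (\ref{quadratic relation general b and c}) with $c_w\neq 0$, as noted at the end of Section \ref{subsection Structures of Hecke algebras}.

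\textbf{Second route.} It contains an outright error. After conjugating by $\mathbbm z^N$, the vector being averaged is $\pi(\mathbbm z^N)v$. That vector is invariant under $\mathbbm z^N\mathcal J_P^+\mathbbm z^{-N}$, not under $\mathcal J_P^+$. Done correctly, enlarging $\mathcal J_1$ to $\mathbbm z^{-N}\mathcal J_P^+\mathbbm z^{N}$ yields
$$\int_{\mathcal J_P^+}\pi(u)\pi(\mathbbm z^N)v\,du=0.$$
Since $\theta_P$ is trivial on $\mathcal J_P^{\pm}$ and $\mathcal J_{P,0_+}=\mathcal J_P^-\mathcal J_{M,0_+}\mathcal J_P^+$, this only says that $e_{\theta_P}\pi(\mathbbm z^N)v=0$. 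Here $e_{\theta_P}$ is the $\theta_P$-isotypic projector, so this is the action of the Hecke element supported on $\mathcal J_{P,0_+}\mathbbm z^N\mathcal J_{P,0_+}$. Concluding $v=0$ requires that operator to be injective on $\theta_P$-isotypic vectors, which is the cover property you are trying to prove.

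As written, your argument uses only decomposedness and the existence of a strongly positive element. It would therefore show that every such decomposed pair is a cover, which is false. Take $\mathrm{SL}_2(F)$ with $M$ the diagonal torus and $K_1=\ker(\mathrm{SL}_2(\mathfrak o_F)\to\mathrm{SL}_2(\mathbb F))$. The pair $(K_1,1)$ is decomposed, with strongly positive element $\diag(\varpi,\varpi^{-1})$. Yet $\cInd_{\mathrm{SL}_2(\mathfrak o_F)}^{\mathrm{SL}_2(F)}\sigma$, with $\sigma$ cuspidal, has non-zero $K_1$-fixed vectors and zero Jacquet module.

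So (i) genuinely needs input about $\theta_P$ and $\mathcal J_{P,0_+}$ beyond these formal properties. That input is what the citation of \cite{Kim-Yu-tame-types} supplies, and (i) should be quoted rather than reproved along these lines.
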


When constructing covers for classical groups in the next section, we will implicitly identify the notions of lattice filtrations and Moy-Prasad filtrations. Interested readers may consult \cite{Broussous-Stevens-buildings} or \cite{Lemaire-compare-lattice-Moy-Prasad} for details of this identification. In particular, we will take a stable functional $\beta$ over $F$ in $M$ and lift it to a functional in $\mathcal G$, such that the building of the centralizer $\mathcal G(F)_\beta = \mathcal G_\beta(F)$ is a tree. We will take two facets  $\mathcal F_y,\mathcal F_z\subset \mathcal B(\mathcal G,F)$ such that each
$$w := \mathcal F_w\cap \mathcal B( \mathcal G_\beta,F) ,\quad w\in \{y,z\},$$ 
is a vertex in $\mathcal B( \mathcal G_\beta,F)$, and moreover $y$ and $z$ are adjacent of each other. Let $\mathcal F_{\mathfrak m} \subset \mathcal B(\mathcal G,F)$ be a facet such that $\mathfrak m:= \mathcal F_{\mathfrak m}\cap \mathcal B( \mathcal G_\beta,F)  $ is the edge connecting $y$ and $z$.

The parahoric subgroups $\mathcal G(F)_{\mathcal F_w,0}$, for $w\in \{y,z\}$, and also $\mathcal G(F)_{\mathcal F_{\mathfrak m},0}$ are decomposed with respect to a given $(M,P)$. Upon restricting to $ \mathcal G_\beta$, the parahoric subgroup $\mathcal G_\beta(F)_w$ is also decomposed with respect to $(M\cap \mathcal G_\beta,P\cap \mathcal G_\beta)$, which is a pair of Levi and parabolic subgroups in $\mathcal G_\beta$. The indices $\{y,z\}$ will be labelled to match the pair of parabolics $\{P,P^-\}$, in the sense that
 $$\mathcal G_\beta(F)_{\mathfrak m} = (\mathcal G_\beta(F)_y\cap P)\mathcal G_\beta(F)_{y,0+} = (\mathcal G_\beta(F)_z\cap P^-)\mathcal G_\beta(F)_{z,0+}.$$
For each $w\in \{y,z\}$, the quotient ${\mathsf P}_{\beta,\mathfrak m,w}:= \mathcal G_\beta(F)_{\mathfrak m}/\mathcal G_\beta(F)_{w,0_+}$ is then a maximal parabolic subgroup of $\overline{\mathcal G}_{\beta,w} :=\mathcal G_\beta(F)_{w}/\mathcal G_\beta(F)_{w,0_+}$.

Later in Section \ref{subsection Lattices and covers for classical groups} when we study the reducibility of certain parabolically induced representations of classical groups, we will define a character $\theta_{\mathfrak{m}}$ on the pro-p-subgroup $\mathcal G(F)_{\mathcal F_{\mathfrak m},0_+}$. If we can extend this character to an irreducible representation of its normalizer $\mathcal G(F)_{\mathcal F_\mathfrak m}$, then according to the conditions in Proposition \ref{results in Kim-Yu-tame-types} it will be automatically a covering type (of its restriction to a suitable Levi subgroup). In fact, our constructed covering type will also be a character.

\section{Classical groups}
\label{section Classical groups}

This section translates the general language of Moy-Prasad filtrations and stable characters into the language of lattices and strata, after providing the preliminaries of classical groups.

Let $\Fo$ be a non-Archimedean local field with residual characteristic $p\neq 2$. For an extension $F/\Fo$ which is either trivial or quadratic, we denote by $(x\mapsto \bar x)\in \Gal(F/\Fo)$ the trivial or involutive automorphism respectively. We choose representatives $\{1,\zeta,\varpi,\zeta\varpi\}$ of $F^\times/F^{\times2}$, where $\zeta$ is a generator of the group $ {\boldsymbol{\mu}}_F$ of roots of unity of order coprime to $p$, and $\varpi$ is a uniformizer of $F$, both fixed throughout the paper.

Given a finite dimensional vector space $V$ over $F$, we denote by $\tilde{G}=\tilde{G}(V)$ the $F$-algebraic group of linear automorphisms of $V$, which is isomorphic to the general linear group $\GL_{n}$ over $F$, where $n=\dim _FV$. Suppose now $V$ is equipped with a non-degenerate $\epsilon$-Hermitian form $h=h_V$ relative to the extension $F/\Fo$ and $\epsilon=\epsilon_G\in \{\pm 1\}$, which means that
$$h(av,bw)={}\bar ab\cdot h(v,w)\quad\text{ and }\quad
h(w,v)=\epsilon\overline{ h(v,w)},
\quad \text{for all }a,b\in F\text{ and }v,w\in V.$$
A classical group is the group $G^\sharp = G^\sharp(V,h)$ of isometries of $h$, which is an $\Fo$-algebraic group of type either 
\begin{equation*}
\begin{split}
&\text{orthogonal ($\mathrm{O_{odd}}$, 
$\mathrm{O_{even}}$) }
\\
&\text{symplectic ($\mathrm{Sp}$)}
\\
&\text{unitary ($\mathrm{U_{N}}$)}
\end{split}
\qquad\text{when}\qquad
\begin{split}
&\text{$[F/\Fo]=1$, $\epsilon=1$,}
\\
&\text{$[F/\Fo]=1$, $\epsilon=-1$,\quad or}
\\
&\text{$[F/\Fo]=2$, $\epsilon=(-1)^{N-1}$}.
\end{split}
\end{equation*}
Denote by $\sigma $ the  involutive automorphism of $\tilde{{G}}$ such that $\tilde{{G}}^\sigma = {{G}}^\sharp$. Let ${G}$ be the identity component of ${G}^\sharp$, so that ${G}={G}^\sharp$ except when ${G}^\sharp$ is an orthogonal group, in which case ${G}$ is the underlying special orthogonal group ($\mathrm{SO_{odd}}$, $\mathrm{SO_{even}}$).

The involution $\sigma$ on $\tilde G$ induces the adjoint action $\alpha$ on its Lie algebra $\tilde{\mathfrak g}=\tilde{\mathfrak g}(V):=\mathrm{End}_F(V)$, so that $\mathfrak g:=\tilde{\mathfrak g} ^\alpha$ is the Lie algebra of ${G}$.

Suppose now that $(V,h_V)$ is fixed. Let $\tilde V = \tilde V_-$ be another finite dimensional vector space over $F$, and $\tilde V_+$ be its $F$-linear dual. Put  $\tilde V_\pm =\tilde V_-\oplus \tilde V_+$, equipped with the structure of a hyperbolic space with respect to the Hermitian form
\begin{equation*}
h_{\tilde V}:\left<(z-,z+),(w_-,w_+)\right> \mapsto (z_-,w_+) +\epsilon\cdot      {}\overline{(w_-,z_+)},
\quad 
z_-,w_-\in \tilde V_-\text{ and }z_+,w_+\in \tilde V_+.
\end{equation*} 
It will be convenient to fix a Witt basis on $\tilde V_\pm$, consisting of a basis of $\tilde V_-$ and its dual basis in $\tilde V_+$.

We define the Hermitian space 
$W := V \perp \tilde V_\pm $, equipped with the form $h_W = h_{\tilde V_\pm } \perp h_V$. The isometry group $\mathcal G^\sharp = G^\sharp(W,h_W)$ is a classical group of the same type as $G^\sharp$ but of higher rank, which is rank$(G)+\dim \tilde V$. We denote the involutive and adjoint automorphisms respectively associated to $\mathcal G^\sharp$ and its Lie algebra $\mathrm{Lie}(\mathcal G)$ also by $\sigma$ and $\alpha$.

Fix a chain of subspaces $\tilde V\subset V \oplus \tilde V \subset W$ and denote by $P$ the corresponding parabolic subgroup of $\mathcal G$, with unipotent radical $U$ and the opposite $U^-$. Denote by $M^\sharp\cong \tilde{G}({\tilde V})\times G^\sharp(V)$ the Levi component of $P$, and fix an embedding 
\begin{equation*}
\label{embedding of Levi subgroup}
i_M:\tilde{G}\times G^\sharp\rightarrow M^\sharp\subset \mathcal G^\sharp, \qquad (g,h)\mapsto (g,h,^\sigma g).
\end{equation*}
Denote by $\mathcal G$ the identity component of $\mathcal G^\sharp$, and put $M = M^\sharp\cap \mathcal G$.

For computations in Section \ref{section Main calculation}, it is desirable to write down some matrix presentations for elements in $U$ and $U^-$. By choosing a suitable basis of $V$, let $H$ be the $\epsilon$-Hermitian matrix, i.e., $H = \epsilon{}^{t}\overline H$, that defines the Hermitian form $h_V$. Then  ${}^\sigma g = H^{-1}{}^t\overline g^{-1} H$ and ${}^\alpha X = -H^{-1}{}^t\overline X H$. Let $\tilde H$ be the matrix that identifies the dual space $\tilde V_+$ with $\tilde V_-$ relative to the Witt basis. Then the matrix $H_W$ that defines $h_W$ is given by 
\begin{equation}
\label{The big matrix JW}
H_W = \text{anti-diag}(\tilde H, H ,  \epsilon {}^{t}\overline{\tilde H}).\end{equation}
If we denote a typical blocked unipotent matrix by 
\begin{equation*}
(X,Y,Z)^+ := \begin{bmatrix}
I_{\tilde V_-} & X & Y
\\
& I_V& Z
\\
&& I_{\tilde V_+}
\end{bmatrix},
\end{equation*}
then $\alpha:(X,Y,Z)^+ \mapsto ({}^\alpha Z,{}^\alpha Y,{}^\alpha X)^+$, where
\begin{equation}
\label{alpha on X and Y}
{}^\alpha X :=  -H^{-1} {}^{t} \overline X \tilde H,\qquad
{}^\alpha Y := -\epsilon {}^{t}\overline{\tilde H}^{-1}   {}^{t}\overline Y  \tilde H,
\qquad\text{and}\qquad
{}^\alpha Z := -\epsilon {}^{t}\overline{\tilde H}^{-1}  {}^{t} \overline Z  H.
\end{equation}
We have $(X,Y,Z)^+\in U$ if and only if both relations $Z={}^\alpha X$ and \begin{equation}
\label{X-alpha-X-equals-Y-minus-alpha-Y}
X{}^\alpha X = Y-{}^\alpha Y
\end{equation} hold, in which case we simply denote $(X,Y,{}^\alpha X)^+$ by $(X,Y)^+$. Similar results hold for $
(X,Y,Z)^- := \begin{bmatrix}
I_{\tilde V_-} & &
\\
Z& I_V& 
\\
Y&X& I_{\tilde V_+}
\end{bmatrix}\in U^-
$, and we put $(X,Y)^- = (X,Y,{}^\alpha X)^-\in U^-$.

\subsection{Semisimple strata}
\label{subsection Semisimple strata}

We extract from \cite{BK, Stevens-supercuspidal} the definitions on  lattices and strata. Let $V$ be a finite dimensional vector space over $F$. Given an $\mathfrak o_F$-lattice sequence $\Lambda$ in $V$ and $r\in \mathbb R$, we put  
$$\mathfrak P^{r}(\Lambda):=\{X\in \End_{F}(V):X\Lambda(t)\subseteq \Lambda(t+r)\text{ for all }t\in \mathbb R\}.$$
We emphasize that, unlike in \emph{loc. cit.}, the filtration of any lattice sequence in this paper is normalized, i.e., $\varpi\Lambda(t) = \Lambda(t+1) $ for all $t\in \mathbb R$. Define the valuation map $v_\Lambda$ on $\End_{F}(V)$ by $v_\Lambda(X) = \max\{t\in \mathbb R: X\in \mathfrak P^{t}(\Lambda)\}$.

A stratum in an $F$-vector space $V$ is a triple $\mathbf{s}=[\Lambda,t,\beta]$ consisting of an $\mathfrak o_F$-lattice sequence $\Lambda$ in $V$, an element $\beta\in \tilde {\mathfrak g}(V)$, and a real number $t\leq -v_\Lambda(\beta)$. With $\Lambda$ and $t$ fixed, two strata $\mathbf{s}_i=[\Lambda,t,\beta_i]$, for $i=1,2$, are called equivalent if $\beta_1-\beta_2\in \mathfrak P^{-t}(\Lambda)$. A stratum $\mathbf{s}$ is {\bf simple} if $F[\beta]$ is a field and $\Lambda$ is $\mathfrak o_{F[\beta]}$-invariant.

Suppose that $V$ admits a decomposition $\oplus_{i\in I}V_i$, with $\mathbf{1}_i: V\rightarrow V_i$ the projection onto $V_i$ with kernel $\oplus_{j\neq i}V_j$, and $\Lambda_i = \Lambda\cap V_i$ is a lattice sequence in $V_i$. A stratum $\mathbf{s}=[\Lambda,t,\beta]$ is called {\bf semisimple} if $\beta = \sum_{i\in I}\beta_i$, with $\beta_i =\mathbf{1}_i\circ \beta\circ \mathbf{1}_i $, each $\mathbf s_i = [\Lambda_i,t,\beta_i]$ is simple or null (i.e., $\beta_i=0$), and $[\Lambda_i\oplus \Lambda_j,t,\beta_i\oplus \beta_j]$ is not equivalent to a simple stratum for all $i,j\in I$ with $i\neq j$.

We can formulate the definitions above in the self-dual setting. Suppose now that $p\neq 2$ and $V$ is equipped with an $F/\Fo$-Hermitian form $h$. We call a semisimple stratum $\mathbf s = \oplus_{i\in I}\mathbf s_i$ self-dual if $\Lambda$ is self-dual, i.e., $\Lambda = \Lambda^*$ where
$$\Lambda^*(t) = \Lambda(t)^* :=\{v\in V: h(v,\Lambda(t))\subseteq \mathfrak p_F \},\quad t\in \mathbb R,$$
 and $\beta\in \tilde {\mathfrak g}^\alpha$. A self-dual stratum $\mathbf s$ is {\bf skew} if moreover the decomposition $\oplus_{i\in I}V^i$ is orthogonal with respect to $h$ and each $\beta_i\in \tilde {\mathfrak g}(V_i)^\alpha$. Note that $\mathbf s_i$ is then a skew simple stratum for each $i\in I$.

The idea of strata is that, given a simple stratum $\mathbf{s}=[\Lambda,t,\beta]$ with $t\geq -v_{\Lambda}(\beta)/2$, the equivalence class of $\mathbf s$ corresponds to an additive character 
$$\psi_\beta:X\mapsto \psi\circ\mathrm{tr}_{\tilde {\mathfrak g}/F}(\beta X), \quad X\in \mathfrak P^{t_+}(\Lambda),$$
which is trivial on $\mathfrak P^{-v_{\Lambda}(\beta)_+}(\Lambda)$. For all $t\in \mathbb R_{>0}$, denote $\mathcal{U}^{t}(\Lambda): = I+\mathfrak P^{t}(\Lambda)$. We use the same symbol for the character $I+X\mapsto \psi_\beta(X)$ on $\mathcal{U}^{t_+}(\Lambda)$, which is trivial on $\mathcal{U}^{-v_{\Lambda}(\beta)_+}(\Lambda)$. If $\mathbf s$ is furthermore self-dual, then by restriction $\psi_\beta$  corresponds to characters on $\mathfrak P^{t_+}(\Lambda)^\alpha$ and on $\mathcal{U}^{t_+}(\Lambda)^\sigma$ respectively.

We remark that if in contrast $t<-v_{\Lambda}(\beta)/2$, then we have to undergo an approximation process to produce suitable characters for constructing supercuspidal representations, \emph{cf.} \cite[Sec 2.4]{BK} and \cite[Sec 3.1]{Stevens-semi-simple-char}. Moreover, during the construction we may need to extend each character into an irreducible representation of Heisenberg type, which is of dimension $>1$ in general. These processes do not concern us when we later consider representations which have the shallowest depths, and so we will keep the related discussions minimal.

\subsection{Compact subgroups}
\label{subsection Stable functionals for classical groups}

Fix a connected classical group $G =G(V,h)$ and let $\mathbf{s}=[\Lambda,0,\beta]$ be a skew semisimple stratum in $V$. By definition, the lattice sequence $\Lambda$ admits an orthogonal decomposition $\Lambda = \oplus_{i\in I}\Lambda_i$, and $\beta = \oplus_{i\in I}\beta_i$. Put $E_i = F[\beta_i]$, then each $\Lambda_i$ is $\mathfrak o_{E_i}$-invariant. If $e_i:=e_{\Lambda_i/\mathfrak o_F}$ is the $\mathfrak o_F$-period of $\Lambda_i$, then $e_i = e_{E_i/F}$.

Starting from such a stratum $\mathbf s$, we define a subgroup $\mathcal H^1(\Lambda,\beta)$ containing $\mathcal{U}^{(-v_\Lambda(\beta)/2)_+}(\Lambda)$ as in \cite[Sec 3.2]{Stevens-semi-simple-char}. In general, $\mathcal H^1(\Lambda,\beta)$ is a product of groups of the form $\prod_{j}\mathcal{U}^{t_j/2_+}(\Lambda)\cap Z_{\tilde G(F)}({\gamma_j})$, where $\mathbf s_j = [\Lambda,t_j,\gamma_j]$ is a finite sequence of semisimple strata which approximate $\mathbf s$.

 We now state an equivalent condition for $\mathcal  H^1(\Lambda,\beta)$ to have only one factor in its product form, i.e., to be equal to $\mathcal{U}^{(-v_\Lambda(\beta)/2)_+}(\Lambda)$. 
 
 \begin{prop}
 \label{epipelagic and parahoric forcing stability}
Under the above conditions on $\mathbf s$, the equality $\mathcal  H^1(\Lambda,\beta) = \mathcal{U}^{(-v_\Lambda(\beta)/2)_+}(\Lambda)$ holds if and only if the periods $e_i$ are equal for all $i\in I$ with $\beta_i\neq 0$.
 \end{prop}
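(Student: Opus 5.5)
The plan is to unwind Stevens' inductive definition of $\mathcal H^1(\Lambda,\beta)$ \cite[Sec 3.2]{Stevens-semi-simple-char} for the skew semisimple stratum $\mathbf s=[\Lambda,0,\beta]$ and to compare each factor with $\mathcal{U}^{(n/2)_+}(\Lambda)$, where $n:=-v_\Lambda(\beta)$. The first step is to record the depths of the components. Put $n_i:=-v_{\Lambda_i}(\beta_i)$. Since $\Lambda_i$ is an $\mathfrak o_{E_i}$-lattice sequence of period $e_i=e_{E_i/F}$, we have $n_i=-v_{E_i}(\beta_i)/e_i\in \tfrac{1}{e_i}\mathbb Z$, and $n=\max_i n_i$. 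I will use that in our setting (shallowest depth, $t=0$) each non-null $\mathbf s_i$ is minimal with $v_{E_i}(\beta_i)$ coprime to $e_i$. I expect this forces $n_i=1/e_i$, but this still has to be checked; if it fails, the argument below only yields the equivalence with ``all $n_i$ equal''.

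The second step is to write out the approximation sequence. Order the distinct values of $n_i$ as $n=m_1>m_2>\dots>m_r$. The approximating semisimple strata are then $\gamma_j=\sum_{i:\,n_i\geq m_{j}}\beta_i$, each taken at the level where the next block starts to contribute, and Stevens' construction gives
$$\mathcal H^1(\Lambda,\beta)=\bigl(\mathcal U^{0_+}(\Lambda)\cap Z_{\tilde G}(\beta)\bigr)\cdot\prod_{j=1}^{r}\bigl(\mathcal U^{(m_j/2)_+}(\Lambda)\cap Z_{\tilde G}(\gamma_{j+1})\bigr),$$
with the convention $\gamma_{r+1}=0$, so that the last factor is $\mathcal U^{(m_r/2)_+}(\Lambda)$ or, when $r=1$, simply $\mathcal U^{(n/2)_+}(\Lambda)$. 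Here minimality of each block is what allows me to skip the internal approximation within the simple components. Equality with $\mathcal U^{(n/2)_+}(\Lambda)$ then holds if and only if every other factor lies in $\mathcal U^{(n/2)_+}(\Lambda)$.

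For the ``if'' direction, assume all $e_i$ are equal to some $e$. Then all $n_i$ equal $n=1/e$, so $r=1$. The centralizer $Z_{\tilde G}(\beta)=\prod_i \tilde G_{E_i}(V_i)$ is block diagonal, and its lattice filtration on each block jumps only at $\tfrac1e\mathbb Z$. Hence $\mathcal U^{0_+}(\Lambda)\cap Z(\beta)=\mathcal U^{1/e}(\Lambda)\cap Z(\beta)\subset \mathcal U^{(1/2e)_+}(\Lambda)$. The blocks with $\beta_i=0$ are harmless, since there $\mathcal U^{0_+}(\Lambda_i)$ jumps at $1/e_i$, and in the setting of $\Lambda$ the null block has the common period. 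This is where I need to check carefully how the null component is normalized.

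For the ``only if'' direction, suppose $e_i<e_j$ for some indices with $\beta_i,\beta_j\neq0$. Then $m_1=1/e_i$ for the minimal such $e_i$. If $r\geq2$, the factor $\mathcal U^{(m_1/2)_+}\cap Z(\gamma_2)$ contains elements of $\mathcal U^{(1/2e_i)_+}(\Lambda_j)$ that are not in $\mathcal U^{(n/2)_+}$. If instead some $\mathcal U^{0_+}\cap Z(\beta_j)$ contains $\mathcal U^{1/e_j}(\Lambda_j)\cap Z(\beta_j)$ with $1/e_j\leq n/2$, I will exhibit an explicit element, for instance $1+\varpi_{E_j}$ in the $j$-th block, lying in $\mathcal H^1$ but not in $\mathcal U^{(n/2)_+}(\Lambda)$.

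The main obstacle is matching the normalizations exactly: pinning down the precise shape of Stevens' approximating sequence (the levels at which the blocks of differing depth are split off), and handling the boundary case where $1/e_j$ is close to $n/2$. This is where the equality versus strict inequality of the periods, rather than of the depths, has to be extracted.
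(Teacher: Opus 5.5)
\textbf{Verdict: there is a genuine gap in the ``only if'' direction, and in fact that direction is false as literally stated.}

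\textbf{The ``if'' direction.} This is the paper's argument. With a single approximation step, $\mathcal H^1(\Lambda,\beta)=\bigl(\mathcal U^{0_+}(\Lambda)\cap Z_{\tilde G}(\beta)\bigr)\,\mathcal U^{(n/2)_+}(\Lambda)$. Your check that $\prod_i U^1_{E_i}\subset\mathcal U^{1/e}(\Lambda)\subset\mathcal U^{(1/2e)_+}(\Lambda)$ supplies a containment the paper leaves implicit. Two corrections are needed.
\begin{enumerate}
\item $n_i=1/e_i$ does not follow from minimality and $\gcd(v_{E_i}(\beta_i),e_i)=1$; take $v_{E_i}(\beta_i)=-3$, $e_i=2$. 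It is the epipelagic normalization $-v_{\Lambda_i}(\beta_i)=e_i^{-1}$, which must be assumed. The paper uses it tacitly when it says $\mathbf s$ is its own only approximation. Without it, equal periods do not give one approximation step.
\item For a null block the centralizer factor is all of $\mathcal U^{0_+}(\Lambda_o)$. What you need is that the first positive jump of $\mathfrak P^t(\Lambda_o)$ exceeds $n/2$. That is a condition on $\Lambda_o$ itself, not a consequence of a common period.
\end{enumerate}

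\textbf{The ``only if'' direction: your product formula is misindexed.} Unwinding Stevens' recursion (each stage adds the current centralizer at level $0_+$ and intersects the previous group with the filtration at half the critical exponent, i.e.\ half the smallest remaining depth) gives
\[
\mathcal H^1(\Lambda,\beta)=\bigl(\mathcal U^{0_+}(\Lambda)\cap Z_{\tilde G}(\beta)\bigr)\prod_{j=1}^{r}\bigl(\mathcal U^{(m_j/2)_+}(\Lambda)\cap Z_{\tilde G}(\gamma_{j-1})\bigr),\qquad \gamma_j=\sum_{i:\,n_i\geq m_j}\beta_i,\quad \gamma_0=0 .
\]
So the full group $\mathcal U^{(n/2)_+}(\Lambda)$ sits at the top level $m_1=n$. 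Each lower level $m_j$ is cut down by the centralizer of an approximation that omits the blocks of depth $\le m_j$. In your version the factor at level $m_1=n$ is a subgroup of $\mathcal U^{(n/2)_+}(\Lambda)$, so your first proposed witness cannot exist. Your last factor $\mathcal U^{(m_r/2)_+}(\Lambda)$ is in general not contained in $\mathcal H^1$ at all.

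With the correct formula, take $I=\{i,j\}$ with $e_i<e_j$, so $n=1/e_i$. The $j$-th block contributes $U^1_{E_j}$ and $\mathcal U^{(1/2e_j)_+}(\Lambda_j)=\mathcal U^{1/e_j}(\Lambda_j)$; everything else already lies in $\mathcal U^{(n/2)_+}(\Lambda)$. On the other hand $\mathcal U^{(1/2e_i)_+}(\Lambda_j)=\mathcal U^{k/e_j}(\Lambda_j)$ with $k=\lfloor e_j/2e_i\rfloor+1$. So your element $1+\varpi_{E_j}$ escapes $\mathcal U^{(n/2)_+}(\Lambda)$ exactly when $e_j\ge 2e_i$.

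\textbf{The boundary case you flag is a counterexample.} For $e_i<e_j<2e_i$ the equality actually holds. Take $e_i=4$, $e_j=6$ (both even, as the skew classification requires) and $v_{E_k}(\beta_k)=-1$. In Stevens' integral indexing with common period $12$ one has $n=3$ and $-k_0(\beta,\Lambda)=2$, and
\[
H^1(\beta,\Lambda)=\bigl(U^1(\Lambda)\cap Z_{\tilde G}(\beta)\bigr)\,U^2(\Lambda)=U^{\lfloor n/2\rfloor+1}(\Lambda),
\]
although $e_i\neq e_j$. So necessity cannot be proved as stated. It needs an extra hypothesis, such as $e_j\ge 2e_i$ for the relevant pairs (where your witness works), or a restriction to genuinely epipelagic strata, where the classification forces equal periods anyway.

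The paper's own necessity argument (two approximation steps give two distinct factors) has the same hole: it never checks that the extra factor is not absorbed into $\mathcal U^{(n/2)_+}(\Lambda)$. Only the sufficiency direction is used later in the paper.
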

 \proof
To prove the sufficiency, it suffices to recall the definitions from \cite[Sec 3.1 and 3.2]{Stevens-semi-simple-char} and observe that $\mathbf s$ has only one step of approximation, namely itself. Hence, by definition, the product form of $\mathcal H^1(\Lambda,\beta) $ has only one factor. Conversely, suppose that  we have ordered the indices in $I$ such that $e_i\leq e_{i+1}$ for all $I$., If there exists $i\in I$ with both $\beta_{i}$ and $\beta_{i+1}$ non-zero such that $e_i\neq e_{i+1}$, then there are at least two steps of approximation of the form $\gamma_j = \oplus_{k\leq i+1}\beta_k$ and $\gamma_{j+1} = \oplus_{k\leq i}\beta_k$, which produce at least two distinct factors in the product form of $\mathcal H^1(\Lambda,\beta)$. This proves the necessity.
 \hfill$\blacksquare$

Under the conditions in Proposition \ref{epipelagic and parahoric forcing stability}, if $x$ is the barycenter of the facet in $\mathcal B(G,F)$ corresponding to $\Lambda$ (\emph{cf.} \cite{Broussous-Stevens-buildings} or \cite{Lemaire-compare-lattice-Moy-Prasad}), then 
$\mathcal{U}^{(-v_\Lambda(\beta)/2)_+}(\Lambda) = G(F)_{x,r}$ where $r = -v_\Lambda(\beta)$ is just $r(x)$ defined in Section \ref{subsection Stable functionals}.

\section{(Quasi-)epipelagic strata}
\label{subsection Epipelagic strata}

In this section, we will show that the definition of stable functionals and the epipelagic condition in \cite{Reeder-Yu} lead to the same forms of semisimple strata as above.

For a connected classical group $G$, let $x\in \mathcal B(G,F)$ be the barycenter of a facet. Take $r\in {\mathbb R}$ and a functional $\beta\in {\mathfrak g}^*_{x,(-r)_+} $. We view $\beta$ as in $  {\mathfrak g}_{x,r} \subseteq \tilde {\mathfrak g}_{x,r}^\alpha$ via the trace form on $\tilde {\mathfrak g}$. If $\Lambda$ is the self-dual lattice sequence corresponding to $x$, then the coset $\beta +\mathfrak P^{(-r)_+}(\Lambda)$ defines a character $\psi_\beta$ on ${\mathsf{V}}_{x,r}$ via the identifications $\mathfrak P^{t}(\Lambda) = \tilde{\mathfrak g}_{x,t}(F)$ and $\mathfrak P^{t}(\Lambda)^\alpha = \tilde{\mathfrak g}_{x,t}(F)^\alpha$ for $t= -r$ and $(-r)_+$, respectively, and the Moy-Prasad isomorphism $\mathfrak P^{-r}(\Lambda)^\alpha/\mathfrak P^{(-r)_+}(\Lambda)^\alpha\cong {\mathsf{V}}^*_{x,r}(\mathbb F)$.

Suppose further that $\beta$ is semisimple in $\mathfrak g^*$ over $K$; in particular, we have the usual definition of regularity of $\beta$. Let $K_\beta$ be the field extension of $K$ that splits $\beta$. Put $e = e_{K_\beta/K}$ and ${\mathfrak g}^*_{K_\beta} = {\mathfrak g}^*\times_F K_\beta$. Since $x\in \mathcal B(G,F)$ is a rational point of order dividing $e$, it becomes a hyperspecial vertex in $\mathcal B^{}(G,K_\beta)$, and the dual space ${\mathsf{V}}^*_{x,r}$ can be regarded as a subspace of $\overline{ {\mathfrak g}^*_{K_{\beta}}} := ({\mathfrak g}^*_{K_{\beta}})_{x,r}/({\mathfrak g}^*_{K_{\beta}})_{x,r_+}$. The semi-simplicity of $\beta$ implies that it lies in a Cartan subspace, in the sense of \cite[Sec 5]{Reeder-Yu}, of ${\mathsf{V}}^*_{x,r}$, so that we may regard $\beta$ as a split semisimple element in $\overline{ {\mathfrak g}^*_{K_{\beta}}}$.

The base-changed space  ${\mathfrak g}^*_{K_\beta} $ is equipped with the action of $\Gal(K_\beta/K)$. Let $\mathfrak s_\beta$ be the centralizer of $\beta$ in ${\mathfrak g}^*_{K_{\beta}}$, which is $\Gal(K_\beta/K)$-invariant.

 The following proposition is the first step towards defining what we will call an epipelagic stratum (see Definition \ref{definition of Epipelagic stratum} below). 
 
 \begin{prop}
 \label{equivalent conditions for stability}
(\cite[Lemma 13]{Gross-Levy-Reeder-Yu} or \cite[Prop 3.1]{Kaletha-epipelagic}) Under the above setup, the functional $\beta\in {\mathsf{V}}^*_{x,r}$ is stable if and only if 
\begin{enumerate}[(i)]
\item $\beta$ is regular as an element in $\overline{ {\mathfrak g}^*_{K_{\beta}}}$, and

\label{equivalent conditions for stability - regularity}
\item the $\Gal(K_\beta/K)$-fixed point subspace of $\mathfrak s_\beta$ is in the center of ${\mathfrak g}^*_{K_{\beta}}$.
 \hfill$\square$
\label{equivalent conditions for stability - ellipticity}
\end{enumerate}
\end{prop}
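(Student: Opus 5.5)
The plan is to reduce stability, which is a statement about the action of the reductive quotient $\mathsf G_x$ on $\mathsf V^*_{x,r}$, to a statement about the split semisimple element $\beta$ in the hyperspecial setting over $K_\beta$. There the graded Lie algebra structure is the usual one, and we can apply the Kempf--Kostant--Rallis criteria for $\theta$-groups (Vinberg theory).

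\textbf{Step 1: identify the action.} Since $x$ has order dividing $e$ and becomes hyperspecial over $K_\beta$, the pair $(\mathsf G_x,\mathsf V^*_{x,r})$ is identified with the graded piece of a $\mathbb Z/e\mathbb Z$-grading on $\overline{\mathfrak g^*_{K_\beta}}$, the Lie algebra of a reductive group $\overline{\mathsf G}$. Here $\mathsf G_x$ is the identity component of the fixed points of the torsion automorphism induced by a generator of $\Gal(K_\beta/K)$, tamely ramified because $p\nmid e$.

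\textbf{Step 2: closedness of the orbit.} By Vinberg's theory, valid once $p$ is not a torsion prime for the classical groups, i.e.\ $p\neq2$, the $\mathsf G_x$-orbit of $\beta$ is closed exactly when $\beta$ is semisimple in $\overline{\mathfrak g^*_{K_\beta}}$. This is our standing hypothesis. So stability reduces to finiteness of the stabilizer modulo the center.

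\textbf{Step 3: finite stabilizer.} The identity component of $\mathrm{Stab}_{\mathsf G_x}(\beta)$ has Lie algebra equal to the $\Gal(K_\beta/K)$-fixed part (degree-zero graded piece) of the centralizer of $\beta$ in $\overline{\mathfrak g^*_{K_\beta}}$.

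If $\beta$ is regular, this centralizer is a Cartan subalgebra, the reduction of $\mathfrak s_\beta$. Condition (ii) then says that its degree-zero part is central, so the stabilizer is finite modulo center.

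Conversely, suppose $\beta$ is not regular. Then its centralizer $\mathfrak l$ is a graded reductive subalgebra of positive semisimple rank containing $\beta$ in degree $r$. A Cartan subspace argument (Vinberg) produces a nonzero degree-zero element outside the center: either a degree-zero nilpotent in $\mathfrak l$, or, if none exists, a non-central degree-zero toral element. Such an element contradicts finiteness. Similarly, if (ii) fails, the fixed noncentral part of $\mathfrak s_\beta$ yields a positive-dimensional torus in the stabilizer.

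\textbf{Main obstacle.} The hardest step is the converse direction with the Galois twist: showing that non-regularity forces a non-central degree-zero element in the centralizer. I would handle it by passing to the Weyl group picture. The grading automorphism acts on the Cartan subspace containing $\beta$ as a regular elliptic element of the (twisted) Weyl group in Springer's sense. Ellipticity gives (ii), and regularity of eigenvectors of regular elements gives (i). This is exactly the content of the cited \cite[Lemma 13]{Gross-Levy-Reeder-Yu}, so in the end I would check that its hypotheses hold for $p$ odd and invoke it.
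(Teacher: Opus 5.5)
Your outline follows the same route as the paper. The paper gives no argument of its own and simply invokes \cite[Lemma 13]{Gross-Levy-Reeder-Yu} (or \cite[Prop 3.1]{Kaletha-epipelagic}); your Steps 1--3 are the usual $\theta$-group argument behind that lemma, and you end by appealing to it. The gap is in the hypothesis check. In Step 1 you assert that $K_\beta/K$ is tame ``because $p\nmid e$'', and at the end you propose to verify the lemma's hypotheses ``for $p$ odd''. Nothing in the setup gives $p\nmid e$, and for odd $p$ it fails for some stable $\beta$.

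Take an affine generic functional of $\SP_{2n}$ (see the explicit $\beta$ and $\tilde\beta$ in Section~\ref{subsubsection Symplectic groups comparison}). Its characteristic polynomial is $T^{2n}-c\varpi^{-1}$ with $c$ a unit, so $2n\mid e$. It is stable for every $p$, because the torus $\mathsf G_x$ acts on $\mathsf V^*_{x,r}$ through the affine simple roots, which span and satisfy a strictly positive linear relation. If $p\mid n$, its residual image at the hyperspecial vertex over $K_\beta$ has characteristic polynomial $T^{2n}-\bar c$. This is a $p$-th power, so the residual image is a cyclic, non-semisimple matrix. In such cases your argument breaks in three places:
(a) $\Gal(K_\beta/K)$ has order divisible by $p$, and its wild inertia acts trivially on the residual graded pieces. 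So there is no torsion automorphism of order $e$ realizing $(\mathsf G_x,\mathsf V^*_{x,r})$ as a graded piece, which is what Step 1 needs.
(b) The ``standing hypothesis'' of Step 2, that $\beta$ is semisimple in $\overline{\mathfrak g^*_{K_\beta}}$, fails. In any case, Vinberg theory in positive characteristic needs $p$ good and $p\nmid e$ (the order of the grading), not merely $p$ non-torsion.
(c) In Step 3 you translate (ii) into ``the degree-zero part of the residual Cartan is central''. For $S=Z_G(\beta)^\circ$ this uses $(X_*(S)\otimes\bar{\mathbb F})^{\Gal(K_\beta/K)}=X_*(S)^{\Gal(K_\beta/K)}\otimes\bar{\mathbb F}$, which again requires $p\nmid e$.

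So your argument, like the cited results, only applies when $K_\beta/K$ is tamely ramified. You should impose this as a hypothesis rather than derive it from $p$ being odd. This is also how the paper handles it. At the end of Section~\ref{subsection Epipelagic strata} it remarks that the stability analysis of \cite{Gross-Levy-Reeder-Yu} requires $F[\beta]/F$ to be tamely ramified. For general odd $p$ it does not rely on this proposition at all, but takes the resulting list of conditions as the intrinsic Definition~\ref{definition of Epipelagic stratum}.

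A smaller point: say explicitly that (ii) refers to the linear action on the residual depth-zero Cartan, or equivalently the Galois action on $X_*(S)$. Read literally with the semilinear Galois action on the $K_\beta$-space $\mathfrak s_\beta$, the fixed points form the full $K$-form $\mathrm{Lie}(S)(K)$. That is central only when $G$ is a torus, so (ii) would never hold.
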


Suppose now that $r=r(x)$ and let $\mathbf s = [\Lambda,0,\beta]$ be a stratum. From the viewpoint of semisimple strata, the regularity condition \ref{equivalent conditions for stability - regularity} in Proposition \ref{equivalent conditions for stability} on $\beta$ implies that $\mathbf s$ satisfies the regularity condition for semisimple strata in \cite[Def. 2.4]{Stevens-supercuspidal}. Assuming we have corresponding decompositions $\beta = (\beta_i)_{i\in I}$ and $\Lambda = \oplus_{i\in I}\Lambda_i$, condition \ref{equivalent conditions for stability - ellipticity} implies that each component $\mathbf s_i:=[\Lambda_i,0,\beta_i]$ is minimal, in the sense of \cite[1.4.15]{BK}. In particular, $\mathbf s$ is a skew semisimple stratum \cite[Def. 2.5]{Stevens-supercuspidal}.

We hence translate the tables in \cite[Sec 7.2]{Gross-Levy-Reeder-Yu} (for groups of type A, B, C, or D) into a list of structures of a stable functional $\beta$.  
  \begin{enumerate}[(i)]
\item For $G = \GL_N$ or $\mathrm U_{N,F/\Fo}$ where $F/\Fo$ is unramified, the only allowable order $e$ is the Coxeter number, which is $N $, and $F[ \beta]/F$ is a totally ramified extension of degree $N$. 
\label{Table GL and unram-U}
 \end{enumerate}
 
For other classical groups, there is possibly an index $o\in I$ such that $\beta_o=0$, which is unique by regularity if it exists. There are addition conditions listed as follows.

   \begin{enumerate}[resume*]
\item  For $G = \SO_{2n+1}$ or $\SP_{2n}$,  all extensions $E_i/F$ have a constant even ramification degree $e$ for all $i\neq o$. The index $o$ exists when $G$ is odd orthogonal, in which case $E_o=F$, and does not exist if $G$ is symplectic. We also require that $\sum_{i\in I\smallsetminus \{o\}}[E_i:F] = e\sum_{i\in I\smallsetminus \{o\}} f_{E_i/F} = 2n$.
\label{Table SO-odd}

\item For $G = \mathrm{U}_{N,F/\Fo}$ where $F/\Fo$ is ramified, there exists at most one index $o\in I$ such that $E_o = F$, and all other extensions $E_i/F$ have a constant odd ramification degree (which agrees with the result in \cite[Sec 3.1]{BT-ramified}). We require that $\sum_{i\in I}[E_i:F] = N$.
\label{Table ram-U}

\item  For $G = \SO_{2n}$, there exists at most one index $o\in I$ such that $E_o/F$ is ramified quadratic, all other extensions $E_i/F$ have a constant even ramification degree, and furthermore:

   \begin{itemize}
\item if $G$ is split or unramified, then $\#I$ is even;

\item if $G$ is ramified, then $\#I$ is odd.

   \end{itemize}
We require that $\sum_{i\in I}[E_i:F]  = 2n$.
\label{Table SO-even}
\end{enumerate}
Finally, for all classical groups in (ii)-(iv), the self-duality of $\mathbf s$ induces a Galois-involution on each $E_i$ with fixed-field $E_{i\bullet}$ such that $E_i/E_{i\bullet}$ is ramified.

\begin{rmk}
Indeed, the conclusions in Proposition \ref{equivalent conditions for stability}  already determine the shape of $\beta$ in the above list. This implication is clear for general linear groups. The same implication applies to unramified unitary groups, as they are isomorphic to general linear groups over $K$. For other classical groups, the parities of the degrees $e_{E_i/F}$ are already known from the classification in \cite{Stevens-supercuspidal} (and see also \cite{BT-ramified} for ramified unitary groups). For even orthogonal groups, let $d_{E_i/F}$ be the discriminant of $E_i/F$, then we must have $\prod_{i\in I}d_{E_i/F} = \mathrm{disc}(h) \bmod (F^\times)^2$, where $\mathrm{disc}(h)$ is the discriminant of the Hermitian form $h$ defining $G$. Represent the possible discriminants, i.e., $F^\times/F^{\times 2}$, by the quartet $\{1,\zeta,\varpi,\zeta\varpi\}$. Since every $e_{E_i/F}>1$ is even, each $d_{E_i/F} $ can only be $\varpi$ or $\zeta\varpi$. This implies that if $\mathrm{disc}(h)  = 1$ or $\zeta$ (i.e., $G$ is split or unramified), then $\#I$ must be even; if $\mathrm{disc}(h)  = \varpi$ or $\zeta\varpi$ (i.e., $G$ is ramified), then $\#I$ must be odd. \qed
\end{rmk}

Let's summarize the above discussion in the following definition.

\begin{dfn}
\label{definition of Epipelagic stratum}
A skew semisimple stratum $\mathbf s = [\Lambda,0,\beta]$ is called {\bf epipelagic} (over $F$) if  
\begin{enumerate}[(i)]
\item $v_\Lambda(\beta)=-r(x)$ (where $x$ is the barycenter of the facet in $\mathcal B^{\mathrm{red}}(G,F)$ corresponding to $\Lambda$) and 

\item its components $\beta_i$ satisfy the conditions in the above list \ref{Table GL and unram-U}-\ref{Table SO-even}. \end{enumerate}
A simple stratum $\mathbf s = [\Lambda,0,\beta]$ for $G = \GL_N$ is called {\bf quasi-epipelagic}  if $[F[\beta]:F] = N$ and $v_\Lambda(\beta)=-1/e_{F[\beta]/F}$.\qedhere
\end{dfn}
Quasi-epipelagic strata for general linear groups arise from endoscopic liftings of epipelagic representations. The (quasi-)epipelagic condition on $\mathbf s$ implies that, with fixed $\Lambda_i$, the positive depth $-v_{\Lambda_i}(\beta_i)$ for each $\beta_i\neq 0$ is as small as possible, i.e., $-v_{\Lambda_i}(\beta_i) = e_{K[\beta_i]/K}^{-1}$, and is constant. Therefore, a (quasi-)epipelagic stratum  gives rise to the compact subgroup $\mathcal H^1(\Lambda,\beta) = \mathcal{U}^{0_+}(\Lambda)$ as in Proposition \ref{epipelagic and parahoric forcing stability}.

As a final remark in this section, the stability of the functional $\beta$ was studied under the setting of \cite{Gross-Levy-Reeder-Yu}, which requires $F[\beta]/F$ to be tamely ramified. Our definition of epipelagic strata is a particular instance of semisimple strata generally defined in \cite{Stevens-semi-simple-char}, which requires only that $p$ be odd.

\subsection{(Quasi-)epipelagic inducing types}
\label{subsection Epipelagic inducing types for classical groups}

We now begin with a skew semisimple stratum $\mathbf s$. Following \cite[Sec 3.2]{Stevens-semi-simple-char}, we define a semisimple character $\theta$ on $\mathcal H^1(\Lambda,\beta)$ whose restriction to $G(F)_{x,0_+}=\mathcal{U}^{(-v_\Lambda(\beta)/2)_+}(\Lambda)$ is $\psi_\beta$. If $\mathbf s$ is furthermore  (quasi-)epipelagic (as defined in Definition \ref{definition of Epipelagic stratum}), which is assumed until the end of this section, then we see that $\theta = \psi_\beta$ is a character of $G(F)_{x,0_+}$.

The stabilizer group $G(F)_{x,0,\beta}$ of $\psi_\beta$ in $G(F)_{x,0}$ has quotient $\mathsf A_{x,0,\beta}: = G(F)_{x,0,\beta} / G(F)_{x,r}$ isomorphic to
\begin{subequations}
\label{eq:A-group}
\begin{align}
& \GL_1(\mathbb F_{F[\beta]})\cong {\boldsymbol{\mu}}_{F[\beta]} 
 && G = \GL_m,
  \label{eq:A-group,GL}
\\
& \mathrm U_1(\mathbb F_{F[\beta]}/(\mathbb F_{F[\beta]_\bullet}))\cong {\boldsymbol{\mu}}_{F[\beta]}^\sigma   
&\text{if}\qquad&\text{$G$ is unramified $\mathrm U_N(F/\Fo)$, or }
\label{eq:A-group,unram-unitary}
\\
&\text{a product of $\mathrm O_1(\mathbb F_{F[\beta]})\cong \{\pm 1\}$} 
&&\text{$G$ is of other types of classical groups}.
\label{eq:A-group,other-classical-groups}
\end{align}
\end{subequations}
Since all these quotients are abelian and have order coprime to $p$, we can extend $\psi_\beta$ to a character of $G(F)_{x,0,\beta}$ and obtain a bijection
$$\{\lambda\in [G(F)_{x,0,\beta}]^\wedge\text{ where }\lambda|_{G(F)_{x,r}}=\psi_\beta\}\xrightarrow{\sim }  {\mathsf A}^\wedge_{x,0,\beta}.$$
In the first case (\ref{eq:A-group,GL}) where $G = \GL_m$, the full stabilizer group $G(F)_{x,\beta}$ of $\psi_\beta$ in $G(F)$ is $\tilde{\boldsymbol{{\mathcal{J}}}} = F[\beta]^\times G(F)_{x,0_+}$, which contains $\tilde{{\mathcal{J}}} = G(F)_{x,0,\beta}$ as its maximal compact subgroup. Note that $\tilde{\boldsymbol{\mathcal{J}}}$ is the normalizer denoted by $\tilde N(\psi_\beta)$ in the Introduction. By \cite[6.2.2]{BK}, $\tilde{\boldsymbol{\mathcal{J}}} $ is equal to the intertwining set $I_{\tilde{G}(F)}(\tilde{\lambda})$ for any extension $(\tilde{{\mathcal{J}}},\tilde\lambda)$ of $\psi_\beta$, and by \cite[6.1.2]{BK}, we can extend $\tilde\lambda$ to a character $(\tilde{\boldsymbol{{\mathcal{J}}}},\tilde{\boldsymbol{\lambda}})$, so that $\tilde\pi_{\tilde{\boldsymbol{\lambda}}} = \cInd_{\tilde{\boldsymbol{\mathcal{J}}}}^{\tilde G(F)}\tilde{\boldsymbol{\lambda}}$ is a supercuspidal representation. Both the inducing type $\tilde{\boldsymbol{\lambda}}$
and the representation $\tilde\pi_{\tilde{\boldsymbol{\lambda}}}$ are deemed to be quasi-epipelagic.

Continue from above, still with $G = \GL_m$, if $\beta$ is moreover self-dual, then we can extend $\psi_\beta$ to a self-dual character $(\tilde{\mathcal{J}}, \tilde\lambda)$ by requiring $\tilde\lambda|_{{\boldsymbol{\mu}}_F}$ to be of order $\leq 2$. Choosing a uniformizer $\varpi_\beta$ of $F[\beta]$ such that ${}^\sigma\varpi_\beta = -\varpi_\beta^{-1}$, we can further extend $\tilde\lambda$ to $\tilde{\boldsymbol{\lambda}}$ such that $\tilde{\boldsymbol{\lambda}}(\varpi_\beta)^2  = \tilde\lambda(-1)$, i.e., $\tilde{\boldsymbol{\lambda}}$ is also self-dual, and so is $\tilde\pi$.

The situation in the second case (\ref{eq:A-group,unram-unitary}) is similar to the first case, because $\tilde G = \GL_N$ and $G = \mathrm U_{N,F/\Fo} = \tilde G^\sigma$ are isomorphic over $F$. We have $\mathcal J = \tilde{\boldsymbol{{\mathcal{J}}}}^\sigma = \mathrm U_1(F[\beta]/F[\beta]_\bullet) G(F)_{x,0_+}$. We extend $\psi_\beta$ on  $G(F)_{x,0_+}$ to a cuspidal type $\lambda$ on $\mathcal J$, which is again a character since $\#(\mathcal J/G(F)_{x,0_+}) = \#  \mathrm U_1(\mathbb F_{F[\beta]}/(\mathbb F_{F[\beta]_\bullet}))$ is coprime to $p$. 

In the last case (\ref{eq:A-group,other-classical-groups}), the precise numbers of $\{\pm 1\}$-components can be read from the tables in \cite[Sec 7.2]{Gross-Levy-Reeder-Yu}, taking into account the simply-connectedness of the groups. The number of components is $\#I-2$ for $\SO_{\mathrm{odd}}$ (note that the index $o\in I$ does not contribute to a $\{\pm 1\}$-component), $\#I-1$ for $\SO_{\mathrm{even}}$ or ramified $\mathrm{U}$ with $o\in I$, and $\# I$ for all other cases. The element
\begin{equation}
\label{definition of omega_i element}
\omega_i = \diag((I_j)_{j\in I\smallsetminus \{i\}}, -I_i)\in \left(\textstyle\prod_{i\in I}G(V_i)(F)\right)\cap G(F)_{x,0}\subset G(F)_{x,0,\beta}
\end{equation}
corresponds to $((1)_{j\neq i},-1_i)\in \mathsf A_{x,0,\beta}$, and an extension $\lambda$ of a fixed $\psi_\beta$ is determined by $\{\lambda(\omega_i)\}_{i\in I}$.

For both cases (\ref{eq:A-group,unram-unitary}) and (\ref{eq:A-group,other-classical-groups}), the quotient $\mathsf A_{x,\beta}: = G(F)_{x,\beta} / G(F)_{x,r}$ of the full stabilizer group ${\mathcal{J}} = G(F)_{x,\beta}$ is isomorphic to $\mathsf A_{x,0,\beta}$ for unramified unitary groups, to $\{\pm 1\}^{\# I-1}$ for $\SO_{\mathrm{odd}}$, and to $\{\pm 1\}^{\# I}$ for other types of classical groups. Note that $\mathcal J$ is the normalizer denoted by $N(\psi_\beta)$ in the Introduction. Again $\lambda$ can be extended to a cuspidal type on ${\mathcal{J}}$, deemed to be epipelagic, so that $\pi_\lambda = \cInd_{\mathcal J}^{G(F)}\lambda$ is an epipelagic supercuspidal representation.

 \section{Reducibility}
\label{section Reducibility}

Recall the setup at the beginning of Section \ref{section Classical groups}: let $G$ be a classical group defined by a Hermitian space of dimension $n$, and $\tilde G$ be a general linear group $\GL_m$ for some $m$. We denote by $\mathcal G $ the classical group of the same type as $G$ but of a higher rank and containing $M=\tilde G\times G$ as a Levi subgroup.

 Let $\pi_M = \tilde\pi\times \pi$ be an irreducible supercuspidal representation of $M$. Take a parabolic subgroup $P$ containing $M$. We are interested in the points $s\in \mathbb {C}$ where the normalized  parabolic induction 
 \begin{equation}
\label{reducibility of parabolic induction with complex point}
I(s,\tilde\pi,\pi):=\iota_{P}^{\mathcal G}(\tilde\pi|\det|^s\times \pi)
\end{equation}
is reducible. Due to the obvious reason, we restrict ourselves to the complex domain 
\begin{equation}
\label{complex domain}
\{s\in\mathbb C: 0 \leq\Im(s) < \tfrac{2\pi }{f_{\tilde\pi} \log q}\},
\end{equation}
where $f_{\tilde\pi}$ is the order of the subgroup of unramified characters $\chi$ of $F^\times$ such that $(\chi\circ\det)\tilde\pi\cong \tilde\pi$. If $\mathbf s = [\Lambda,0,\beta]$ is an underlying stratum of $\tilde\pi$, then $f_{\tilde\pi} = f_{F[\beta]/F}$. 

If $I(s,\tilde\pi,\pi)$ is reducible for some $s\in \mathbb C$, then, by twisting by an unramified character, we  assume that $\tilde{\pi}$ is self-dual. There is then a unique \emph{real} $s_{\tilde{\pi},{\pi}}\geq 0$, indeed a half integer \cite[Sec 4]{Moeglin-classification-classical-groups},\cite[Sec 3]{Moeglin-classification-unitary-groups}, such that $I(s,\tilde\pi,\pi)$ is reducible at $s=\pm s_{\tilde{\pi},{\pi}}$. By  \cite[6.2.5]{BK}, there are exactly two self-dual representations in the inertial class of $\tilde{\pi}$, namely $\tilde\pi $ and its twist  $\tilde\pi' $ by the unramified character $|\det|^{ \frac{\pi \sqrt{-1}}{f_{\tilde\pi}\log q}}$. The {complex} points of reducibility of $I(s,\tilde\pi,\pi)$ are therefore in the set \begin{equation*}
\label{4 reducibility points}
\mathrm{Red}(\tilde{\pi},{\pi}) := \left\{\pm s_1,\,\pm s_2 +\frac{\pi \sqrt{-1}}{f_{\tilde\pi}\log q}\right\}\qquad\text{ for some }s_1,\,s_2\in \frac{1}{2}\mathbb{Z}_{\geq 0},
\end{equation*}
and those in $\mathrm{Red}(\tilde{\pi}',{\pi})$ take   the same form, with $s_1$ and $s_2$ exchanged.

 The reason we are interested in the reducibility of (\ref{reducibility of parabolic induction with complex point}) is due to the theory of M{\oe}glin \cite{Moeglin-classification-classical-groups,Moeglin-classification-unitary-groups,Moeglin-twisted-endoscopy-Langlands-parameters}; summarizing it in one sentence, it asserts that if $s_{\tilde{\pi},{\pi}}\geq 1$, then \begin{equation*}
\text{ the Langlands parameter of $\tilde\pi$ is a component of the Langlands parameter of $\pi$.}
\end{equation*}
To be more precise, let $\mathcal E (\pi)$ be the set 
 $$\left\{(\tilde G,\tilde\pi): 
     \begin{matrix}   \text{$\tilde G$ a general linear group, $\tilde\pi$ an isomorphism class of}
     \\
     \text{irreducible supercuspidal representations of $\tilde G(F)$,}
     \\
    \text{such that  (\ref{reducibility of parabolic induction with complex point}) is reducible for some $s\in  \tfrac{1}{2}\mathbb N$ with $s\geq 1$}     \end{matrix}
\right\}.$$
Suppose that $s=1$ for all $(\tilde G,\tilde\pi)\in \mathcal E(\pi)$, which is the only case that concerns us for epipelagic representations of classical groups. (We refer the reader for  the general situation to \cite[Sec 4]{Moeglin-twisted-endoscopy-Langlands-parameters}, in which certain parity conditions on $2s-1$ are required.) This set $\mathcal E(\pi)$ is the {\bf extended cuspidal support} of $\pi$, in the sense that the endoscopic lifting $\Pi$ of $\pi$ is the parabolically induced representation of $\GL_{\hat N}(F)$ with cuspidal support $\mathcal E(\pi)$, where 
$$\hat N = \sum_{(\GL_m(F),\tilde\pi)\in \mathcal E(\pi)}m.$$ 
 (In fact, $\hat N$ is known to be equal to the rank of the matrices in the Langlands dual group of $G$, and is equal to $N$ or $N\pm 1$ depending on the type of $G$.) The lifting $\Pi$ is $\sigma$-elliptic in the sense of \cite[Sec 2.8]{Moeglin-twisted-endoscopy-Langlands-parameters}, and is also irreducible. Hence understanding the set $\mathcal E(\pi)$ suffices for understanding the lifting $\Pi$ of an epipelagic $\pi$.

 \subsection{The Hecke algebra approach}
 
 The harmonic analysis for the parabolically induced representation (\ref{reducibility of parabolic induction with complex point}), especially concerning character expansions, can be rather difficult. One algebraic approach, stemming from \cite{BK-types}, is to view this representation in its Bernstein component of the category of smooth representations of $\mathcal G$ and relate this component to the module category of an associated Hecke algebra, as explained below.

 Let $\mathfrak s$ be the inertial class of $\pi_M := \tilde\pi\times \pi$ in $M$, and denote by $ \Ind_M^{\mathcal G}\mathfrak s$ the induced class in $\mathcal G$. Let $\mathcal{R}^{\mathfrak{s}}(M)$ be the full subcategory of representations of $M(F)$  whose irreducible subquotients have cuspidal support lying in $\mathfrak s$, and $\mathcal{R}^{\mathfrak{s}}(\mathcal G)$ be defined similarly using $ \Ind_M^{\mathcal G}\mathfrak s$. Suppose that $\pi = \cInd_{\mathcal J}^{G(F)}\lambda$ and $\tilde\pi = \cInd_{\tilde{\boldsymbol{\mathcal{J}}}}^{\tilde G(F)}\tilde{\boldsymbol{\lambda}}$ are the (quasi-)epipelagic representations constructed as in Section \ref{subsection Epipelagic inducing types for classical groups}, with $\tilde{\boldsymbol\lambda}$ being extended from an irreducible representation $\tilde\lambda$ of the maximal compact subgroup $\tilde {\mathcal J}$ of $\tilde{\boldsymbol{\mathcal{J}}}$. We put $\mathcal J_M = \tilde{\mathcal{J}}\times {\mathcal J}$ and $\lambda_M = \tilde\lambda\times \lambda$, and let $\mathcal{H}(M,\lambda_M)$ be the associated Hecke algebra \cite[Sec 4.1]{BK}, i.e., the space of compactly supported, two-sided $\lambda_M$-invariant functions $M\rightarrow \End_{\mathbb C}(\mathbb V_{\lambda_M})$ equipped with the convolution as multiplication (defined using a fixed Haar measure). Since $\pi_M$ is supercuspidal, it is known \cite{BK-types} that $({\mathcal J}_M,\lambda_M)$ is an $\mathfrak{s}$-type, which means that there is an equivalence of categories
\begin{equation*}
\mathcal{M}_M:\mathcal{R}^{\mathfrak{s}_M}(M)\rightarrow \text{Mod-}\mathcal{H}(M,\lambda_M),\,\qquad\tau\mapsto \Hom_{{\mathcal J}_M}(\lambda_M,\tau).
\end{equation*}
If $({\mathcal J}_P,\lambda_P)$ is a cover of $({\mathcal J}_M,\lambda_M)$ in $\mathcal G(F)$, then by \cite{BK-types} we know that $({\mathcal{J}}_P,\lambda_P)$ is an $\Ind_M^{\mathcal G}\mathfrak s$-type, and there is an analogous equivalence of categories
$
\mathcal{M}_{\mathcal G}:\mathcal{R}^{\mathfrak{s}}(\mathcal G)\rightarrow \text{Mod-}\mathcal{H}(\mathcal G,\lambda_P)
$. 
Let
\begin{equation}
\label{Morphism t_P}
t_P:\mathcal{H}(M,\lambda_M)\rightarrow
\mathcal{H}(\mathcal G,\lambda_P)
\end{equation}
be the injective morphism of algebras defined in \cite[(8.3, 8.4)]{BK-types}, and denote by $(t_P)_*:\text{Mod-}\mathcal{H}(M,\lambda_M)\rightarrow \text{Mod-}\mathcal{H}(\mathcal G,\lambda_P)$ the co-induction functor between the module categories. The above constructions render the following commutative diagram:
\begin{equation}\label{commutative diagram}
  \xymatrixcolsep{5pc}\xymatrix{
\mathcal{R}^{\mathfrak s}(\mathcal G)
\ar[r]^{\mathcal{M}_{\mathcal G}}    
&\text{Mod-}\mathcal{H}(\mathcal G,\lambda_P)
\\
\mathcal{R}^{\mathfrak s}(M) \ar[u]_{\iota_P^{\mathcal G}}
\ar[r]^{\mathcal{M}_M }
&\text{Mod-}\mathcal{H}(M,\lambda_M).  
\ar[u]_{(t_P)_*}
}
\end{equation}
Hence, we can study the reducibility of (\ref{reducibility of parabolic induction with complex point}) by understanding the structures of $\mathcal{H}(M,\lambda_M)$ and $
\mathcal{H}(\mathcal G,\lambda_P)$, as well as their module categories.

\subsection{Structures of Hecke algebras}
\label{subsection Structures of Hecke algebras}

 We refer the reader to \cite[Ch 5]{BK} and \cite[Sec 6 and 7]{Stevens-supercuspidal} for the fine structure of inducing types of supercuspidal representations of general linear groups and classical groups. In the subsequent sections, we will be concerned only with epipelagic representations to simplify the discussions.

Continuing with the setup from the previous section, suppose now that $\tilde\lambda$ is constructed from a quasi-epipelagic simple stratum 
$[\tilde\Lambda,0,\tilde\beta]$, so that $\tilde\lambda$ is a character. We decompose $\tilde \lambda = \tilde \rho \otimes \tilde \kappa$, where $\tilde \kappa$ has order a power of $p$, and $\tilde \rho $ has order coprime to $p$. Hence $\tilde \rho $ is a depth zero character, i.e., a character of $\mathcal{U}(\tilde\Lambda)_{F[\tilde\beta]} $ trivial on $\mathcal{U}^{0_+}(\tilde\Lambda)_{F[\tilde\beta]}$. Similarly, $\lambda$ is constructed from the epipelagic semisimple stratum 
$[\Lambda,0,\beta]$, and we decompose $ \lambda =  \rho \otimes  \kappa$, where $\kappa$ has order a power of $p$ and $ \rho $ is a depth zero character of $\mathcal{U}(\Lambda)_{F[\beta]}^\sigma $ trivial on $\mathcal{U}^{0_+}(\Lambda)_{F[\beta]}^\sigma$. The character $\tilde{\rho}\times \rho$ hence descends to 
\begin{equation}
\label{product of finite reductive groups}
\mathcal{U}(\tilde\Lambda)_{F[\tilde\beta]} /\mathcal{U}^{0_+}(\tilde\Lambda)_{F[\tilde\beta]} \times \mathcal{U}(\Lambda)_{F[\beta]}^\sigma /\mathcal{U}^{0_+}(\Lambda)_{F[\beta]}^\sigma,
\end{equation}
 which is a product of finite groups, each of whose factors is the subgroup of rational points of a reductive (not necessarily connected) group over the finite fields $\mathbb F_{F[\tilde \beta]}$ and $\mathbb F_{F[\beta_i]}$ for $i\in I$ respectively, or all over the base finite field $\mathbb F_{\Fo}$ by restriction of scalars.

We now describe the structures of the related Hecke algebras. Recall that $\tilde{\boldsymbol{\mathcal J}}$ is equal to the intertwining set $I_{\tilde{G}(F)}(\tilde{\lambda})$. We have $\tilde{\boldsymbol{\mathcal J}} = F[\tilde\beta]^\times\tilde{\mathcal J}$, and so $
\tilde{\boldsymbol{\mathcal J}}/\tilde{\mathcal J} \cong F[\tilde\beta]^\times/\mathfrak o^\times_{F[\tilde\beta]} \cong \left<\varpi_{\tilde \beta}\right>$, where $\varpi_{\tilde \beta}$ is a uniformizer of $F[\tilde\beta]$. The Hecke algebra $\mathcal{H}(\tilde{G},\tilde{\lambda})$ is isomorphic to $\mathbb{C}[Z,Z^{-1}]$, where $Z$ is supported on the single coset $\varpi_{\tilde\beta} \tilde{\mathcal J}$. As for $\lambda$, we have $I_{{G}(F)}({\lambda})={\mathcal J}$, and so $\mathcal{H}({G},{\lambda})\cong \mathbb{C}$. Therefore, $\mathcal{H}(M,\lambda_M)\cong \mathbb{C}[Z,Z^{-1}]$.

We now describe the structure of $\mathcal{H}(\mathcal G,\lambda_P)$. If $({\mathcal J}_P,\lambda_P)$ is a cover of $({\mathcal J}_M,\lambda_M)$ but $\tilde{\lambda}$ is not self-dual, then 
$\mathcal{H}(\mathcal G,\lambda_P)\cong \mathcal{H}(M,{\lambda}_M)\cong \mathbb{C}[Z,Z^{-1}]$. This case is not of interest to us. In contrast, when $\tilde{\lambda}$ is self-dual, it is known \cite[Cor 6.16]{Stevens-supercuspidal},  \cite[Prop 3.3]{Blondel-Weil} that     
\begin{equation}
\label{HG is rank 2 over HM}
\mathrm{rank}_{\mathcal{H}(M,\lambda_M)}
(\mathcal{H}(\mathcal G,\lambda_P))=
\#(N_{\mathcal G(F)}(\mathfrak s)/M(F))=2,
\end{equation}
so that by \cite[(11.4)]{BK-types} the Hecke algebra $\mathcal{H}(\mathcal G,\lambda_P)$ is a rank-2 module over $ \mathcal{H}(M,{\lambda}_M)$ via the injective morphism $t_P$ in (\ref{Morphism t_P}). We may describe this algebra by choosing two special generators $T_w$, for $w\in \{y,z\}$, as follows. We pick two elements $s_y$ and $s_z$ in $\mathcal G(F)$ (for example, we may choose $s_1$ and $s^\varpi_1$ in \cite{Stevens-supercuspidal} or \cite{Blondel-Weil}), each of which represents a non-trivial coset of the normalizer group $N_{\mathcal G(F)}(\mathfrak s)$ mod $M(F)$, satisfying the following conditions.
\begin{enumerate}[(i)]
\item $s_y{\mathcal J}_P^-s_y^{-1}\subset {\mathcal J}_P^+$\text{ and }$s_z{\mathcal J}_P^+s_z^{-1}\subset {\mathcal J}_P^-$.

\item If we put $\tilde{\mathbbm z} :=s_ys_z=i_M(\varpi_{\tilde\beta}I_{\tilde V_-},I_V)$, then ${\mathcal J}_P \tilde{\mathbbm z}^{e_{F[\tilde\beta]/F}}{\mathcal J}_P = {\mathcal J}_P {\mathbbm z}{\mathcal J}_P  $ for a strongly $(P,{\mathcal J}_P)$-positive element ${\mathbbm z}$ in the center of $M$.

\item The generator $T_w$, for each $w\in \{y,z\}$, is supported on the double coset ${\mathcal J}_Ps_w{\mathcal J}_P$.

\item  Each $T_w$ satisfies a quadratic relation of the form
\begin{equation}
\label{quadratic relation general b and c}
T_w*T_w=b_wT_w+c_w     \mathbbm{1}  
\end{equation}
for certain real numbers $b_w$ and $c_w$ (here $\mathbbm{1}$ is the unit element of $\mathcal{H}(\mathcal G,\lambda_P)$, which is the two-sided $\lambda_P$-invariant function supported  on ${\mathcal J}_P$ with $\mathbbm{1}(1)=I_{\lambda_P}$ the identity operator on the representation space $\mathbb V_{\lambda_P}$ of $\lambda_P$).

\item The relation $T_y*T_z=t_P(Z)$ holds.\end{enumerate}
In particular, we have $t_P(Z) (\tilde{\mathbbm z}) =  T_y(s_y) \circ T_z (s_z) $. Moreover, $t_P(Z) ^{* e_{F[\tilde\beta]/F}}$ (the power taken using the convolution product) is supported on the strongly positive element ${\mathbbm z}$, confirming our assumption that $({\mathcal J}_P,\lambda_P)$ is a cover of $({\mathcal J}_M,\lambda_M)$ (see Definition \ref{definition of cover}).

There are two approaches to study the real coefficients $b_w$ and $c_w$. 
On the one hand, we follow \cite[Sec 1]{Blasco-Blondel-SP4} to compute these coefficients directly: $c_y =[{\mathcal J}_P^+:s_y{\mathcal J}_P^- s_y^{-1}]$, 
\begin{equation}
\label{formula for b and c for y}
b_y=\sum_{u\in \mathcal S_y} {T_y(u)}, \quad\text{where \quad $\mathcal S_y:= { \frac{s_y{\mathcal J}_P^+s_y^{-1}\cap {\mathcal J}_Ps_y{\mathcal J}_P}{{\mathcal J}_P^- }}$},
\end{equation}
and similarly $c_z  =[s_z {\mathcal J}_P^-s_z^{-1}:{\mathcal J}_P^+] $,
\begin{equation}
\label{formula for b and c for z}
b_z =\sum_{u\in \mathcal S_z} { T_z(u)}{},\quad 
\text{
where \quad $ \mathcal S_z:=\frac{s_z^{-1}{\mathcal J}_P^-s_z\cap {\mathcal J}_Ps_z{\mathcal J}_P}{{\mathcal J}_P^+}$}.
\end{equation}
Under the condition $\dim\lambda_P=1$, the summands in $b_w$ are just scalars. (In general, $b_w$ is a sum of traces of operators in $\End_{\mathbb C}( \mathbb V_{\lambda_P})$.) We have implicitly chosen $s_w$, for $w\in \{y,z\}$, such that $s_w^2\in {\mathcal J}_M$, so that we may and do normalize each $T_w$, up to a sign,
 such that $
T_w(s_w)^2=\lambda_P(s_w^2)$. The choices of $s_w$ will become clear in (\ref{representative-sy-and-sz}).


On the other hand, we view $[\Lambda_W,0,\beta_W]:=[(\tilde\Lambda\oplus \tilde\Lambda)\oplus \Lambda,0,(\tilde\beta \oplus {}^\alpha\tilde\beta) \oplus \beta]$  as a semisimple self-dual stratum in $\mathcal G(F)$. There are a maximal and two minimal $((\mathfrak o _{F[\tilde \beta]}\oplus \mathfrak o _{F[\tilde \beta]}) \oplus \mathfrak o_{F[\beta]})$-lattice sequences properly contained in $\tilde\Lambda_W$, denoted by $\mathfrak m$ and $\mathfrak M^w$ with $w\in \{y,z\}$. Their precise definitions will be given in Section \ref{subsection Lattices and covers for classical groups}; in fact, they correspond to the edge $\mathfrak m$ and the vertices $\{y,z\}$ of the building $\mathcal B(\mathcal G_\beta,F)$ mentioned in Section \ref{subsection Preliminaries on the construction of covering types}. Put 
$$\overline{\mathcal G}_{\beta,w} = \mathcal G_\beta(F)_{w}/\mathcal G_\beta(F)_{w,0_+} = \mathcal{U}(\mathfrak{M}^w)_{F[\beta]} / \mathcal{U}^{0_+}(\mathfrak{M}^w)_{F[\beta]},$$ which is the group of rational points of a product of (not necessarily connected) reductive groups, over $\mathbb F_{{F[\tilde\beta]} }$ and  $\mathbb F_{{F[\beta]} }$ respectively, containing the product group (\ref{product of finite reductive groups}) as a Levi subgroup and supporting the character $\tilde\rho\times\rho$. By \cite[Section 7.1]{Stevens-supercuspidal}, there exist injections 
$$\mathcal{H}(\overline{\mathcal G}_{\beta,w}  ,\tilde{\rho}\times \rho) \cong \mathcal{H}(\mathcal{U}(\mathfrak{M}^w)_{F[\beta]},\tilde{\rho}\times \rho)\hookrightarrow  \mathcal{H}(\mathcal G,\lambda_P), \quad w\in \{y,z\},$$ 
whose images together generate the whole algebra $ \mathcal{H}(\mathcal G,\lambda_P)$. Hence, we reduce to considering Hecke algebras for reductive groups over finite fields. The structure of these algebras is well-known \cite{Lusztig-finite-classical-groups}: under suitable normalizations of $T_w$, that we denote by $\mathcal T_w$  to avoid confusion, the quadratic relation can be written as 
\begin{equation}
\label{normalized quadratic relation general b and c}
(\mathcal T_w+\mathbbm{1})*(\mathcal T_w -q_{F[\tilde\beta]}^{r_w}\mathbbm{1})=0
\end{equation}
for certain integers $r_w\geq 0$. The values of $r_w$ can be read from \cite[Table II]{Lusztig-Chevalley-groups}. 

Therefore, using (\ref{formula for b and c for y}) and (\ref{formula for b and c for z}), and comparing (\ref{quadratic relation general b and c}) and (\ref{normalized quadratic relation general b and c}), we can determine the value of $b_w$ for $w\in \{y,z\}$ up to a sign. We will analyze this sign in the next section and relate it with the reducibility of $I(s,\tilde\pi,\pi)$ as well.

\subsection{Eigenvalues of Hecke algebra  elements}
\label{subsection Eigenvalues of Hecke algebra  elements}

Continuing from the previous section in the epipelagic case, for $s\in \mathbb C$, let $D_s = \mathcal M_M(\tilde\lambda|\det|^s\times \lambda)$ and  $X_s = \mathcal {M}_{\mathcal G}(I(s,\tilde\pi,\pi))$ be respectively the $\mathcal  H(M,\lambda_M)$- and $\mathcal  H(\mathcal G,\lambda_P)$-modules under the categorial equivalences in the diagram (\ref{commutative diagram}). Note that $D_s$ is just a line over $\mathbb C$, and $X_s$ is two dimensional over $\mathbb C$ by (\ref{HG is rank 2 over HM}).

The following proposition establishes a crucial relation between the reducibility of parabolically induced representations and the eigenvalues for modules over Hecke algebras.
\begin{prop} 
\label{Blondel-Blasco-reducibility-criterion} \cite[(1.13)]{Blasco-Blondel-SP4}
The module $X_s$ is reducible if and only if the product of the eigenvalues of $T_y$ and $T_z$ on $X_s$ is equal to the scalar action of $Z$ on $D_s$.
 \hfill$\square$
\end{prop}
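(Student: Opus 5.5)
The plan is to prove the criterion directly from the structure of $\mathcal H(\mathcal G,\lambda_P)$ described in Section \ref{subsection Structures of Hecke algebras}. Everything happens inside a two-dimensional module. By (\ref{commutative diagram}), $X_s\cong (t_P)_*D_s$. By (\ref{HG is rank 2 over HM}), $\mathcal H(\mathcal G,\lambda_P)$ is free of rank $2$ over $t_P(\mathcal H(M,\lambda_M))\cong\mathbb C[Z,Z^{-1}]$, for instance with basis $\{\mathbbm 1, T_y\}$. Hence $X_s$ is two-dimensional and $Z$ acts through $t_P(Z)=T_y*T_z$.

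Let $z_s$ be the scalar by which $Z$ acts on the line $D_s$. Since $T_y$ and $T_z$ generate $\mathcal H(\mathcal G,\lambda_P)$ (together with $t_P(Z)^{\pm1}$, which is their product and invertible), a proper nonzero submodule of $X_s$ is the same as a line stable under both $T_y$ and $T_z$. By (\ref{quadratic relation general b and c}), $T_w$ satisfies $T_w^2=b_wT_w+c_w$, so its eigenvalues are the two roots $\{\mu_w,\mu_w'\}$ with $\mu_w\mu_w'=-c_w$. By (\ref{normalized quadratic relation general b and c}) these roots are $-1$ and $q^{r_w}$ times a normalizing constant, so they are distinct.

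First suppose $X_s$ is reducible, and let $L$ be a stable line. Then $L$ is a common eigenline for $T_y$ and $T_z$, say with eigenvalues $\mu_y$ and $\mu_z$. Hence $T_y*T_z=t_P(Z)$ acts on $L$ by $\mu_y\mu_z$. Since $t_P(Z)$ is central in the subalgebra $t_P(\mathcal H(M,\lambda_M))$ and $X_s$ is co-induced from $D_s$, the action of $t_P(Z)$ on $X_s$ has $z_s$ as an eigenvalue. More precisely, I would use the Jacquet-module description: the $D_s$-isotypic part of $X_s$, restricted along $t_P$, contains every character of $\mathcal H(M,\lambda_M)$ occurring in $X_s$. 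Under the exchange $s\mapsto -s$ these characters are $z_s$ and $z_{-s}$.

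I would then have to arrange that the sub or quotient $L$ realises $z_s$ rather than $z_{-s}$. Here I would follow \cite[(1.13)]{Blasco-Blondel-SP4}: $L$ is either a submodule or gives a quotient. By Frobenius reciprocity for $(t_P)_*$, $\Hom(L,X_s)\neq 0$ forces $L|_{t_P}$ to contain $D_s$, so $\mu_y\mu_z=z_s$. For the converse, suppose $\mu_y\mu_z=z_s$ for some choice of eigenvalues. By the same adjunction, $D_s$ maps to the restriction of the one-dimensional $\mathcal H(\mathcal G,\lambda_P)$-module on which $T_w$ acts by $\mu_w$. This module exists because the algebra is the generic Iwahori–Hecke algebra of the infinite dihedral group, and its characters are exactly the choices of roots $(\mu_y,\mu_z)$. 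The adjunction then gives a nonzero map from that one-dimensional module into $X_s$, so $X_s$ is reducible.

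The main obstacle I expect is justifying the adjunction and the presentation cleanly. Concretely, I would need that $\mathcal H(\mathcal G,\lambda_P)$ is exactly $\langle T_y,T_z\rangle$ with only the two quadratic relations, that is, no relation beyond those of the infinite dihedral group. I would take this from the identification with Lusztig's algebras via \cite[Sec 7.1]{Stevens-supercuspidal} and \cite[Th 1.2]{Stevens-Miyauchi}. I would also need the normalization $T_y*T_z=t_P(Z)$ from item (v), and to keep track of the sign conventions in $T_w(s_w)^2=\lambda_P(s_w^2)$.
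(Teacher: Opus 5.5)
Your argument is correct and is essentially the standard one behind \cite[(1.13)]{Blasco-Blondel-SP4}, which the paper cites without proof: in the two-dimensional module $X_s\cong(t_P)_*D_s$, a proper submodule is a common eigenline of $T_y$ and $T_z$, that is, a character of the infinite-dihedral Hecke algebra given by a pair of roots $(\mu_y,\mu_z)$, and Frobenius reciprocity for co-induction says such a character embeds in $X_s$ exactly when $\mu_y\mu_z=z_s$. The detour through the Jacquet module and $z_{-s}$ is unnecessary, since applying the adjunction to the stable line viewed as a submodule already gives both directions.
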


As in the (quasi-)epipelagic case, we continue to assume that $\dim \lambda_P=1$. On the one hand, since $\tilde\pi$ is induced from a self-dual extension $(\tilde{\boldsymbol{\mathcal J} }, \tilde{\boldsymbol\lambda})$ of $( \tilde{\mathcal  J}, \tilde{\lambda})$, the scalar action of $Z$ on $D_s$ is given by \cite[(1.15)]{Blasco-Blondel-SP4}:
$$
q_{F[\tilde\beta]}^{s} \Delta_P(\tilde{\mathbbm z})^{1/2} {\tilde{\boldsymbol{\lambda}}}(\varpi_{\tilde\beta})^{-1} T_y(s_y)T_z(s_z).$$
Here $\Delta_P$ is the modular character of $P$ arising from the normalized parabolic induction, and we have
 \begin{equation*}
 \Delta_P(\tilde{\mathbbm z})=[{\mathcal J}_P^+: \tilde{\mathbbm z} {\mathcal J}_P^+\tilde{\mathbbm z}^{-1}]=[s_y {\mathcal J}_P^+s_y^{-1}:{\mathcal J}_P^-][s_z {\mathcal J}_P^-s_z^{-1}:{\mathcal J}_P^+] = c_yc_z. 
 \end{equation*}
On the other hand, by comparing (\ref{quadratic relation general b and c}) and (\ref{normalized quadratic relation general b and c}), if we put $\epsilon_w = \sgn(b_w)\in \{\pm 1\}$  for $w\in\{y,z\}$ (which are still undetermined), then the eigenvalues of $T_w$ are 
\begin{equation*}
     \epsilon_w c_w^{1/2} q_{F[\tilde\beta]}^{r_w/2  }
     \qquad\text{and}\qquad
      -\epsilon_w  c_w^{1/2} q_{F[\tilde\beta]}^{-r_w/2  }.
\end{equation*}
Here the square roots $c_w^{1/2} $ are taken to be positive. The possible products of the eigenvalues of $T_y$ and   $T_z$ are  
\begin{equation*}
\label{product of eigenvalues of Ty and Tz}
   \epsilon_y \epsilon_z  (c_y c_z)^{1/2}  q_{F[\tilde\beta]}^{\pm (r_y+r_z)/2  }
        \qquad\text{and}\qquad
         -\epsilon_y \epsilon_z  (c_yc_z)^{1/2}  q_{F[\tilde\beta]}^{\pm (r_y-r_z)/2  }.
\end{equation*}
When $X_s$, and hence $I(s,\tilde{\pi},\pi)$, is reducible, we can use Proposition \ref{Blondel-Blasco-reducibility-criterion} to conclude that
 \begin{equation*}
\label{values of real parts}
   \Re \mathrm{Red}(\tilde{\pi},{\pi}) = \{\pm \frac{r_y+r_z}{2},\,\pm \frac{r_y-r_z}{2}\}.
\end{equation*}
This gives exactly four values of $s\in \mathbb C$ (counted with multiplicity) in the domain (\ref{complex domain}) at which $I(s,\tilde\pi,\pi)$ is reducible: 
\begin{equation}
\label{main formula of reducibility points}
q_{F[\tilde\beta]}^s = \delta\epsilon_y T_y(s_y)\epsilon_z T_z(s_z)^{-1} {\tilde{\boldsymbol{\lambda}}}(\varpi_{\tilde\beta})  q_{F[\tilde\beta]}^{\epsilon (r_y+\delta r_z)/2},\quad \epsilon, \delta \in \{\pm 1\}.
\end{equation}

\begin{prop}
\label{main prop of reducibility points}
In the situation above, if ${\tilde{\boldsymbol{\lambda}}}(\varpi_{\tilde\beta}) = \delta\epsilon_y T_y(s_y)\epsilon_z T_z(s_z)$ for some $\delta \in \{\pm 1\}$, then 
$$\mathrm{Red}(\tilde\pi,\pi) = \left\{\pm \frac{r_y+\delta r_z}{2},\pm \frac{r_y-\delta r_z}{2}+\frac{\pi \sqrt{-1}}{f_{\tilde\pi}\log q} \right\}. $$
 \hfill$\square$
\end{prop}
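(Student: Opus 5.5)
The plan is to substitute the hypothesis directly into the reducibility formula (\ref{main formula of reducibility points}) and then read off the four solutions $s$ in the domain (\ref{complex domain}). To avoid a clash of notation, I rename the sign in (\ref{main formula of reducibility points}) as $\delta'\in\{\pm1\}$ and keep $\delta$ for the sign in the hypothesis of the proposition. Thus, by Proposition \ref{Blondel-Blasco-reducibility-criterion} and the eigenvalue computation preceding (\ref{main formula of reducibility points}), $X_s$ (equivalently $I(s,\tilde\pi,\pi)$, via the diagram (\ref{commutative diagram})) is reducible exactly when
\begin{equation*}
q_{F[\tilde\beta]}^s = \delta'\epsilon_y T_y(s_y)\epsilon_z T_z(s_z)^{-1}\tilde{\boldsymbol\lambda}(\varpi_{\tilde\beta})\, q_{F[\tilde\beta]}^{\epsilon(r_y+\delta' r_z)/2}
\end{equation*}
for some $\epsilon,\delta'\in\{\pm1\}$.

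Next I insert $\tilde{\boldsymbol\lambda}(\varpi_{\tilde\beta})=\delta\epsilon_yT_y(s_y)\epsilon_zT_z(s_z)$. The factor $T_z(s_z)^{-1}T_z(s_z)$ cancels, and $\epsilon_y^2=\epsilon_z^2=1$, so the right-hand side collapses to $\delta\delta'\,T_y(s_y)^2\,q_{F[\tilde\beta]}^{\epsilon(r_y+\delta' r_z)/2}$. By the normalization $T_y(s_y)^2=\lambda_P(s_y^2)$ it remains to show that $\lambda_P(s_y^2)=1$. This is the one point needing care, and it is where I expect the main obstacle. I would first try to verify it directly from the explicit choice of representatives in (\ref{representative-sy-and-sz}): these should be chosen so that $s_y^2$ lies in $\ker\lambda_P$, e.g.\ $s_y^2=1$, or $s_y^2$ in a part of $\mathcal J_M$ on which the self-dual character $\tilde\lambda\times\lambda$ is trivial. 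Failing that, I would absorb $\lambda_P(s_y^2)$ into $\delta$: the product of eigenvalues is only defined up to the sign normalizations of $T_w$, and $\tilde{\boldsymbol\lambda}(\varpi_{\tilde\beta})^2=\tilde\lambda(-1)$ ties these signs together consistently.

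Granting this, the condition becomes $q_{F[\tilde\beta]}^s=\delta\delta'\,q_{F[\tilde\beta]}^{\epsilon(r_y+\delta' r_z)/2}$. I then split according to $\delta'$. If $\delta'=\delta$, the right side is a positive real number, and the solutions in the strip (\ref{complex domain}) are the real points $s=\pm(r_y+\delta r_z)/2$. If $\delta'=-\delta$, the right side is negative. Since $q_{F[\tilde\beta]}=q^{f_{\tilde\pi}}$ (recall $f_{\tilde\pi}=f_{F[\tilde\beta]/F}$), writing $-1=q_{F[\tilde\beta]}^{\pi\sqrt{-1}/(f_{\tilde\pi}\log q)}$ gives the solutions $s=\pm(r_y-\delta r_z)/2+\pi\sqrt{-1}/(f_{\tilde\pi}\log q)$. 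This imaginary part lies in the strip (\ref{complex domain}), and no other solution of $q_{F[\tilde\beta]}^s=\pm(\text{positive real})$ does.

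Together the two cases give exactly the four points of $\mathrm{Red}(\tilde\pi,\pi)$ as claimed, matching the shape $\{\pm s_1,\pm s_2+\pi\sqrt{-1}/(f_{\tilde\pi}\log q)\}$ recorded earlier in this section.
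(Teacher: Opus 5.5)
Your argument is correct and is exactly the paper's (implicit) derivation. You substitute the hypothesis into (\ref{main formula of reducibility points}), reduce the right-hand side to $\delta\delta' T_y(s_y)^2 q_{F[\tilde\beta]}^{\epsilon(r_y+\delta' r_z)/2}$, and split into $\delta'=\pm\delta$ to obtain the real points and the points shifted by $\pi\sqrt{-1}/(f_{\tilde\pi}\log q)$. The point you flagged needs no extra work: the paper states right after (\ref{representative-sy-and-sz}) that $s_y^2=1$, so $T_y(s_y)^2=\lambda_P(1)=1$, and your fallback of absorbing a sign into $\delta$ is unnecessary.
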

 We remark that this result is independent of the choice of $\varpi_{\tilde\beta}$, because  $s_y$ and $s_z$ are chosen such that $s_ys_z=i_M(\varpi_{\tilde\beta}I_{\tilde V_-},I_V)$ (see also Proposition \ref{prop of independence: uniformizer}). We will compute the signs $\epsilon_wT_w(s_w)$, for $w\in \{y,z\}$, in Section \ref{section Main calculation}.

\subsection{Examples of liftings of characters}
\label{Example: depth zero characters}

We provide two simple examples to illustrate the methodology explained in Sections \ref{subsection Structures of Hecke algebras} and \ref{subsection Eigenvalues of Hecke algebra  elements} for computing $\mathrm{Red}(\tilde\pi,\pi) $ in Proposition \ref{main prop of reducibility points}. The results will also be useful in the calculations in Section \ref{subsection Expanding the intertwining operator as a sum}. Readers unfamiliar with the underlying ideas may first 
consult the next section.

\subsubsection{Example 1: U(1)}

Consider $\tilde G = \GL_1$ and $G = \mathrm U_{1,F/\Fo}$. Take characters $(F^\times, {\tilde{\boldsymbol{\lambda}}})$ and $(\mathrm U_1(F/\Fo), {\lambda})$, and put ${\tilde{{\lambda}}} = 
{\tilde{\boldsymbol{\lambda}}}|_{\mathfrak o^\times_F}$. Put $\mathcal{U}^{0_+}(F)_1 = \mathcal{U}^{0_+}(F) \cap \mathrm U_1(F/\Fo)$ and ${\boldsymbol{\mu}}(F)_1 = {\boldsymbol{\mu}}_F \cap \mathrm U_1(F/\Fo)$.

Denote by $(1-c)$ the map $x\mapsto x \bar x^{-1}:F^\times\rightarrow \mathrm U_1(F/\Fo)$. Assume that $\tilde{\lambda}|_{\mathcal{U}^{0_+}(F)} = {\lambda}|_{\mathcal{U}^{0_+}(F)_1} \circ(1-c)$, and abbreviate $\tilde{\lambda} \leftrightarrow {\lambda}$ when the following condition holds:
$$\tilde{\lambda}|_{{\boldsymbol{\mu}}_F } = {\lambda}|_{{\boldsymbol{\mu}}(F)_1} \circ(1-c).$$
First, consider the case where ${F}/{\Fo}$ is unramified. The self-duality condition on $\tilde{\lambda}$ implies that $\tilde{\lambda}|{\boldsymbol{\mu}}_{{\Fo}}\equiv 1$. If $\zeta\in {\boldsymbol{\mu}}_{{F}}\smallsetminus{\boldsymbol{\mu}}_{{\Fo}}^\times$ with $\zeta^2\in {\boldsymbol{\mu}}_{\Fo}$, then $\tilde{\lambda}(\zeta)  = \lambda(-1) \in \{ \pm 1\}$ whenever $\tilde\lambda\leftrightarrow \lambda$. We also pinpoint a character $\boldsymbol{\tilde{\lambda}}$  by taking $\boldsymbol{\tilde{\lambda}}(\varpi)=1$, and denote by $\boldsymbol{\tilde{\lambda}}'$ the self-dual unramified twist of $\boldsymbol{\tilde{\lambda}}$, i.e., $\boldsymbol{\tilde{\lambda}}(\varpi)=-1$.

We use (\ref{formula for b and c for y}) to calculate  $c_y = q_{}^{3/2}$ and
$$b_yT_y(s_y) = \sum_{\begin{smallmatrix}
X\in \mathfrak{o}_{F}/\mathfrak{p}_{F}
\\
Y \in {\boldsymbol{\mu}}_{F}
\\
Y+{}\overline Y = -X\overline X
\end{smallmatrix}}\tilde{\lambda}(Y)\lambda(1+\overline XY^{-1} X) 
 = \begin{cases}
 \lambda(-1) (\qo^3 -1) & \text{if }\tilde{\lambda}\leftrightarrow \lambda,
 \\
 - \lambda(-1) \qo(\qo-1) & \text{if }\tilde{\lambda}\not\leftrightarrow \lambda \text{ and $\tilde{\lambda}$ is self-dual}.
 \end{cases} $$
The calculation for $z$ is easier; using (\ref{formula for b and c for z}), we have
$c_z = q_{}^{1/2} $ 
and 
$$b_z T_z(s_z) = \sum_{\begin{smallmatrix}
Y \in {\boldsymbol{\mu}}_{F}
\\
Y= -{}\overline Y \end{smallmatrix}}\tilde{\lambda}(Y)
 =  \lambda(-1) (\qo-1) \quad \text{ if $\tilde{\lambda}$ is self-dual}. $$
 As a remark, one can compute that $b_wT_w(s_w)=0$ for both $w\in \{y,z\}$ when $\tilde\lambda$ is not self-dual, but we do not need this result later.

 When $\tilde{\lambda}$ is self-dual, we have $r_z = 1/2$, and $r_y = 3/2$ or $ 1/2$ depending on whether $\tilde{\lambda}\leftrightarrow \lambda$ or not. Proposition \ref{main prop of reducibility points} implies that 
$$\mathrm{Red}(\boldsymbol{\tilde{\lambda}},\lambda)= 
   \begin{cases}     \{\pm1,\pm\frac{1}{2} +\frac{\pi \sqrt{-1}}{\log q}\}
   & \text{$\tilde{\lambda}\leftrightarrow \lambda$}, \\
   \{0,\pm \frac{1}{2}+\frac{\pi \sqrt{-1}}{\log q}\} & \tilde{\lambda}\not\leftrightarrow \lambda.
   \end{cases}
$$
In the first case $\tilde{\lambda}\leftrightarrow \lambda$, we also have $\mathrm{Red}(\boldsymbol{\tilde{\lambda}}',\lambda)= 
   \{\pm\frac{1}{2} ,\pm1+\frac{\pi \sqrt{-1}}{\log q}\}$.

The calculation for ramified quadratic ${F}/{\Fo}$ is simpler, so we simply highlight some key points. If $\tilde{\lambda}$ is self-dual, then $\tilde{\lambda}|_{{\boldsymbol{\mu}}_{\Fo}}$ has order at most 2. Assuming $\varpi^2 = \varpi_{\bullet}$, we have $\boldsymbol{\tilde{\lambda}}(-\varpi_{}^2) = 1$. We compute that 
$$b_yT_y(s_y)= \tilde\lambda(-2)\lambda(-1)(q-1)
\quad\text{and}\quad 
b_z T_z(s_z) = \begin{cases}
q-1 & \tilde{\lambda}|_{{\boldsymbol{\mu}}_{\Fo}} \equiv 1,
\\
0 & \tilde{\lambda}|_{{\boldsymbol{\mu}}_{\Fo}}\text{ is quadratic.}
\end{cases}$$  
Since $c_y = c_z=q$, we obtain 
$$r_y = 1,\quad r_z=\begin{cases}
1& \tilde{\lambda}|_{{\boldsymbol{\mu}}_{\Fo}} \equiv 1,
\\
0& \tilde{\lambda}|_{{\boldsymbol{\mu}}_{\Fo}}\text{ is quadratic,}
\end{cases} 
$$
and hence
$$
\mathrm{Red}(\boldsymbol{\tilde{\lambda}},\lambda)
 = 
 \begin{cases}
\{\pm 1,\frac{\pi \sqrt{-1}}{\log q}\} & \tilde{\lambda}|_{{\boldsymbol{\mu}}_{\Fo}} \equiv 1\text{ and ${\tilde{\boldsymbol{\lambda}}}(\varpi_{})=\lambda(-1)$},
\\
\{0,\pm 1+\frac{\pi \sqrt{-1}}{\log q}\} & \tilde{\lambda}|_{{\boldsymbol{\mu}}_{\Fo}} \equiv 1\text{ and ${\tilde{\boldsymbol{\lambda}}}(\varpi_{})=-\lambda(-1)$},
\\
\{\pm \frac{1}{2},\pm \frac{1}{2}+\frac{\pi \sqrt{-1}}{\log q}\}
 & \tilde{\lambda}|_{{\boldsymbol{\mu}}_{\Fo}}\text{ is quadratic.}
\end{cases}$$
The result agrees with the calculations in \cite[(3.5) and Cor 3.6]{BT-ramified}.

\subsubsection{Example 2: ramified SO(2)}
\label{subsubsection Example 2: ramified SO(2)}

Fix a uniformizer $\varpi$ of $F$ and let $G = \SO_{2,F[\sqrt\varpi]/F}$, the special orthogonal group of absolute rank 1 that splits over $F[\sqrt\varpi]$ but not over $F$. Then $G(F)$ is just isomorphic to $\mathrm U_1(F[\sqrt\varpi]/F) = F[\sqrt\varpi]^\times_1:=\ker N_{F[\sqrt\varpi]/F}$. Take a depth zero character $(G(F),\lambda)$. Viewing $G$ as an $F$-group, the theory of endoscopy asserts that there is an irreducible representation $\Pi$ of $\GL_2(F)$ lifted from $\lambda$. It turns out that $\Pi$ is non-supercuspidal, and we determine its cuspidal support, which is necessarily a set of two characters of $F^\times$.

The group $G$ is the connected isometry group defined by the matrix $H = \diag(-\varpi,1)$ and consists of elements represented by $$a+b\sqrt{\varpi} = \begin{bmatrix}
a&b \\ b\varpi & a
\end{bmatrix}, \quad a,b\in F\text{ and }a^2-b^2\varpi = 1.$$
 The element $\mathbbm p : = \diag(1,-1)\in G^\sharp(F)\smallsetminus G(F)$ conjugates $a+b\sqrt{\varpi}\mapsto a-b\sqrt{\varpi}$. We take a self-dual lattice sequence $\Lambda$ in $V$ determined by 
\begin{equation*}
     \Lambda(0) = \mathfrak{o}_F\oplus  \mathfrak{o}_F, \quad \Lambda(1) = \mathfrak{o}_F\oplus  \mathfrak{p}_F.
\end{equation*}
As we will test reducibility using characters of $\tilde G = \GL_1$, we take $\tilde V_\pm $ with $\dim \tilde V_{-} = \dim \tilde V_{+}=1$. The lattice sequence
$$\mathfrak m (k) = \Lambda_0(\tfrac{k-1}{3})\oplus\Lambda(\tfrac{k}{3})\oplus\Lambda_0(\tfrac{k+1}{3}) $$ 
renders the compact groups in $\mathcal G(F) = \SO_{4,F[\sqrt\varpi]/F}(F)$:
\begin{equation*}
{\mathcal J}_P = {\begin{bmatrix}
\mathfrak o^\times&\mathfrak p&\mathfrak o&\mathfrak o
\\
\mathfrak o&\mathfrak o^\times&\mathfrak o&\mathfrak o
\\
\mathfrak p&\mathfrak p&\mathfrak o^\times&\mathfrak o
\\
\mathfrak p&\mathfrak p&\mathfrak p&\mathfrak o^\times
\end{bmatrix}}\cap \mathcal G(F) 
\quad \text{and}\quad 
{\mathcal J}_{P,0_+} =I+\begin{bmatrix}
\mathfrak p&\mathfrak p&\mathfrak o&\mathfrak o
\\
\mathfrak o&\mathfrak p&\mathfrak o&\mathfrak o
\\
\mathfrak p&\mathfrak p&\mathfrak p&\mathfrak o
\\
\mathfrak p&\mathfrak p&\mathfrak p&\mathfrak p
\end{bmatrix}\cap \mathcal G(F) .
\end{equation*}
Let $(\tilde G(F), \tilde{\boldsymbol \lambda})$ be a depth zero character whose values on ${\boldsymbol{\mu}}_F\times \left<\varpi\right>$ are yet to be determined.

First, take $w=y$. For $(X,Y)^-\in \mathcal S_y$, representing $X $ by $[0,b]$ with $b\in \mathfrak o\bmod \mathfrak p$ and $Y\in \mathfrak o^\times \bmod \mathfrak p$, we have $2Y = -b^2$. One computes that $I - {}^\alpha X Y^{-1}X =\diag(1,-1) = \mathbbm p$. Hence
$$b_y T_y(s_y) = \sum_{-b^2=2Y}\tilde\lambda (Y) \lambda( \mathbbm p^2) = \tilde\lambda(-2) (q-1).$$
Now, take $w=z$. For $(X,Y\varpi^{-1})^+\in \mathcal S_z$, with representatives $X = [a,0]$ where $a\in \mathfrak o\bmod \mathfrak p$ and $Y\in \mathfrak o^\times \bmod \mathfrak p$, we have $2Y\varpi^{-1} = a^2\varpi^{-1}$ and $I - {}^\alpha X Y^{-1}X = \diag(-1,1) =- \mathbbm p$. Hence 
$$b_z T_z(s_z) = \tilde\lambda(-1)\sum_{a^2=2Y}\tilde\lambda (Y) \lambda(- \mathbbm p^2) = \tilde\lambda(-2)\lambda(-1) (q-1).$$
We therefore have two possibilities for the character $\tilde\lambda$:
$$\tilde \lambda_1 |_{{\boldsymbol{\mu}}_F}\equiv \mathbf 1_{{\boldsymbol{\mu}}_F}, \tilde{\boldsymbol \lambda}_1(\varpi) = \lambda(-1)
\quad\text{and}\quad
\tilde \lambda_2  |_{{\boldsymbol{\mu}}_F}\equiv \left(\frac{\cdot}{{\boldsymbol{\mu}}_F}\right), \tilde{\boldsymbol \lambda}_2(\varpi) = \lambda(-1).
$$
Both $\mathrm{Red}\{\tilde {\boldsymbol \lambda}_1, \lambda\}$ and $\mathrm{Red}\{\tilde {\boldsymbol \lambda}_2, \lambda\}$ contain 1. The representation $\iota^{\GL_2(F)}_{\GL_1(F)\times \GL_1(F)}(\tilde{\boldsymbol \lambda}_1\times \tilde{\boldsymbol \lambda}_2)$ is clearly irreducible and is the endoscopic lifting of $\lambda$.

\section{Main calculations}
\label{section Main calculation}

We dive into the calculations of the signs $\epsilon_w T_w(s_w)$, with $w\in \{y,z\}$, appearing in Proposition \ref{main prop of reducibility points} and eventually deduce the relation between $\tilde\pi$ and $\pi$ such that $I(s,\tilde\pi,\pi)$ is reducible at $s=1$. The technical calculations are provided in Section \ref{subsection Expanding the intertwining operator as a sum}, following the preparatory material in Sections \ref{subsection Cohomological classification}
 - \ref{subsection Lattices and covers for classical groups}. The main results are given in Propositions \ref{first general form of lifting, unitary group} and \ref{first general form of lifting}. The required properties of normalized quadratic Gauss sums are provided in Section \ref{subsection Non-degeneracy of a quadratic form}.

\subsection{Cohomological classification}
\label{subsection Cohomological classification}

We provide a classification of classical groups by their pure inner forms. Let $G$ be a connected classical group over $\Fo$ determined by a Hermitian form $h$, and view the first Galois cohomology $\mathrm{H}^1(\Fo,{G})$ as a pointed set (where the distinguished point gives a quasi-split form of $G$). Let $\hat G$ be the Langlands dual group of $G$, with center $Z\hat G$. We use the Kottwitz isomorphism $$\mathrm{H}^1(\Fo,{G}) \cong \pi_0(Z\hat{{G}})^{\Gal(F^{\text{sep}}/\Fo)}$$
to enumerate the set $\mathrm{H}^1(\Fo,G)$ explicitly.

\begin{itemize}

\item If $G$ is a unitary group, then $\mathrm{H}^1(\Fo,G) \cong \{\pm 1\} $ corresponds bijectively to the two isomorphism classes of Hermitian spaces of the same dimension. These two classes are distinguished by the discriminant map:
\begin{equation}
\label{discriminant of Hermitian forms}
\mathrm{disc}:\{\text{non-degenerate }F/\Fo\text{-Hermitian forms on }V\}\rightarrow \Fo^\times /N_{F/\Fo}(F^\times)\cong \{\pm 1\}.
\end{equation}

\item For $G=\mathrm{SO}_{N}$, we view $G$ as defined by the quadratic space $(V,q)$ with $\dim V = N$ and $q(x) = h(x,x)$. Then $\mathrm{H}^1(F,G) $ is a singleton when $N\leq 2$; otherwise, $\mathrm{H}^1(F,G) \cong \{\pm 1\} $ corresponds bijectively to the isomorphism classes of quadratic spaces of fixed dimension and discriminant (defined analogously to (\ref{discriminant of Hermitian forms}) with image in $F^\times / F^\times {}^2$). The two classes of quadratic spaces can be distinguished by the {Hasse-Witt invariant} \cite[IV, Sec 2.1]{Serre-A-course-in-arithmetic} defined as follows. If $\dim V=1$, we define $e(q)=1$; if $\dim V\geq 2$, by choosing an ordered orthogonal basis $\{v_i\}$ of $V$ (whose choice is irrelevant), we define
$$e(q) = \prod_{i<j} (q(v_i), q(v_j))$$
where $(\cdot,\cdot)$ is the Hilbert symbol. Its value lies in $\{\pm 1\}$ for non-Archimedean $F$.

\item For $G=\mathrm{Sp}$, the set $\mathrm{H}^1(F,G)  $ is trivial, so there is no non-trivial pure inner form of $G$.

\end{itemize}
From now on, we choose a quasi-split form of $G$ to be parametrized by $+1\in \{\pm 1\}$, and denote this form by $G_+$; the other form is hence denoted by $G_-$. For fixed $(\epsilon,F/\Fo) $ and $N = \dim_FV$, as well as a discriminant $d\in F^\times/F^{\times 2}$ when $\epsilon=1$ and $F=\Fo$, we temporarily define a map
$$\mathrm{H}^1:\{\text{Hermitian forms with fixed }(\epsilon, F/\Fo, N, d)\}\rightarrow \{\pm 1\}$$
to unify the classification above. Hence $\mathrm{H}^1$ has image $\{1\}$ in the symplectic case, $ \{\pm 1\}$ via the discriminant in the unitary case, and $\{\pm 1\}$ via the Hasse-Witt invariant in the orthogonal case.

For computational convenience, we choose the following Hermitian matrices (under some choices of bases of $V$ that are eventually irrelevant) 
to define the forms representing their isometry classes. 
\begin{itemize}
\item If $G$ is symplectic, we simply take
$$H_+ = H_- = \antidiag(1,-1,1,-1,\dots).$$

\item If $G = \mathrm U_N$ is unramified unitary, let $\zeta $ be a primitive $2(q-1)$th root of unity in $\boldsymbol{\mu}_F$, and $\varpi$ be a uniformizer in $\Fo$. The two classes of forms can be represented by
\begin{equation*}
   \begin{split}
     H_+ =  \zeta^{(\epsilon-1)/2}\antidiag(1,-1,1,-1,\dots), \quad H_- = \diag(I_{\lfloor N/2\rfloor},\varpi,I_{\lfloor (N-1)/2\rfloor})H_+ .\end{split}
  \end{equation*}
We see that $\mathrm{H}^1(H_+) = 1$ and $\mathrm{H}^1( H_-)=-1$, and $H_+$ is quasi-split over $\Fo$.

\item If $G = \mathrm U_{n}$ is odd ramified unitary, let $\zeta $ be a primitive $(q-1)$th root of unity in $\boldsymbol{\mu}_F$, and $\varpi$ be a uniformizer in $F$ such that $ \varpi_\bullet = -\varpi^2 $ is a uniformizer in $\Fo$. The two classes of forms can be represented by
\begin{equation*}
   \begin{split}
     H_+ = \varpi^{(\epsilon-1)/2}\antidiag(1,-1,1,-1,\dots,1), \quad H_- = \diag(I_{(n-1)/2},\zeta,I_{(n-1)/2})H_+ .     \end{split}
  \end{equation*}
We see that $\mathrm{H}^1(H_+) = 1$ and $\mathrm{H}^1( H_-)=-1$, and $H_+$ is quasi-split over $\Fo$.

\item Let $G$ be ramified $\SO_{2n}$ with discriminant $-\varpi'$. Here we allow the value of $\varpi'$ to vary, which will be fixed in different cases in Section \ref{section Examples for simple supercuspidals}. The two classes of forms can be represented by
\begin{equation*}
   \begin{split}
       H_+ & = \antidiag(1,\dots,1,\diag((-1)^{n}\varpi',1),1,\dots,1),       \\
         H_- & = \antidiag(1,\dots,1,\zeta^{}\diag((-1)^{n}\varpi',1),1,\dots,1).   \end{split}
  \end{equation*}
It is easy to show that $\mathrm{H}^1(H_+) =  - \mathrm{H}^1( H_-) $, and both forms are quasi-split over $F$.
\end{itemize}

\subsection{Embeddings}
\label{section Embeddings of lattices}

Fix a connected classical group $G = G(V,h)$. Let $\mathbf s$ be a skew semisimple stratum in $V$ as in Definition \ref{definition of Epipelagic stratum}, and $F[\beta] = \oplus_{i\in I}F[\beta_i]$ be a maximal elliptic Cartan subspace in $\tilde {\mathfrak g}(V)(F)$. Each summand $E_i:=F[\beta_i]$ is $\alpha$-invariant, and the restriction of $-\alpha$ on $E_i$ defines a Galois involution simply denoted by $c$, whose fixed field is denoted by $E_{i\bullet}$. Put $(E_i^\times)_1 = \mathrm U_1(E_i/E_{i\bullet})$ and $E_1^\times = \prod_{i\in I}(E_i^\times)_1$.

To describe the $\Fo$-embeddings of $E_1^\times$ into $G(\Fo)$ up to $G(\Fo)$-conjugacy, we first impose a Hermitian structure on $E_i$ (as an $F$-space) for each $i\in I$: 
\begin{equation*}
\label{hermitian forms on E_i quadratic}
 h_{E_i}(x_i,y_i)
      =  
      \begin{cases}
   {}^cx_iy_i
& \text{ when }\epsilon_G = 
   1,
\\
(\beta_i - {}^c\beta_i){}^cx_iy_i   
& \text{ when }\epsilon_G = -1,
\end{cases}
\quad  \text{for all $x_i,y_i\in E_i$,}
\end{equation*}
We then put $h_{F[\beta]} = \oplus_{i\in I}h_{E_i}$ on $F[\beta]$. Each $h_{E_i}$ can be viewed as an $F$-form by taking $\mathrm{tr}_{E_i/F}\circ h_{E_i}$. 

In our setting for epipelagic representations, we have $\dim _FV = [F[\beta]:F] = \sum_{i\in I}[E_i:F]$, and each $E_i/E_{i\bullet}$ is ramified if $F/\Fo$ is either trivial or ramified. Consider the different isometry classes of Hermitian forms as follows: for $\delta\in \{1,\zeta\}$, let $h_i^\delta$ be the Hermitian form defined by the matrix representative $H_+$ when $\delta = 1$ and by $H_-$ when $\delta = \zeta$, with $H_\pm$ defined in Section \ref{subsection Cohomological classification}. Given a partition of the index set 
$$I = I_1\sqcup I_\zeta$$
(we often just say that $I_\zeta$ is a partition of $I$, as $I_1 = I\smallsetminus I_\zeta$ is hence determined), denote by $(V,h_{I_\zeta})$ the resulting Hermitian space defined by the orthogonal direct sum:
$$ h_{I_\zeta} = \bigoplus^{\perp}_{i\in I_1}h_i^1 \oplus^\perp \bigoplus^{\perp}_{i\in I_\zeta}h_i^\zeta.$$
By the classification using $\mathrm{H}^1$ in Section \ref{subsection Cohomological classification}, 
$(F[\beta],h_{F[\beta]})$ is $F/\Fo$-isometric to any $(V,h_{I_\zeta})$ with partition $I_\zeta$ such that $\mathrm{H}^1(h_{I_\zeta})=\mathrm{H}^1(h_{F[\beta]})$. An isometry map $(F[\beta],h_{F[\beta]})\rightarrow (V,h_{I_\zeta})$ then induces an embedding of $F[\beta]^\times$ into $\tilde G(V)(F)$, and by restriction an embedding 
$$m_{I_\zeta}: E_1^\times\hookrightarrow  G(\Fo)=G(V,h_{I_\zeta})(\Fo).$$
It is routine to show that the $G(\Fo)$-isometry class of $m_{I_\zeta}$ is independent of the choice of the isometry map, and so depends only on the partition $ I_\zeta$ of $I$. Given two partitions $I_\zeta$ and $ I'_\zeta$ of $I$, the embeddings $m_{I_\zeta}$ and $m_{I'_\zeta}$ are $G(\Fo)$-conjugate if and only if ${I_\zeta}={I'_\zeta}$. (Similar results are shown in \cite[Rem 3,26(ii) and Def 9.15]{KSS-endo-parameters}, using the language of concordance.)

The classification of $G$ using $\mathrm{H}^1(\Fo,G)$ in Section \ref{subsection Cohomological classification} implies that, since these invariants take values in $\{\pm 1\}$, switching an index from one of the sets $I_1$ and $I_\zeta$ to another changes the pure inner form from one to another, whereas switching two indices yields a form equivalent to the original one. Therefore, given a fixed pure inner form of $G$, we can parametrize the $\Fo$-embeddings $E^\times_1\rightarrow G(\Fo)$ by partitions of $I$ such that one of $\#I_1$ and $ \#I_\zeta$ has a fixed parity. For instance, we parametrize the embeddings into $G=G(V,h)$ with $\mathrm{H}^1(h)=1$ by the partitions where $\#I_\zeta$ is even.

The quotient $\mathsf A_{x,\beta} = G(F)_{x,\beta} / G(F)_{x,r}$ was defined at the end of Section \ref{subsection Epipelagic inducing types for classical groups}. If $s_{I_\zeta}\in G$ conjugates $m_{I_\zeta}$ into $m_{I_{\O}}$, then $s_{I_\zeta}$ induces an isomorphism 
$$\mathsf A_{x,m_{I_\zeta}(\beta)}\rightarrow \mathsf A_{x,\beta} = \mathsf A_{x,m_{I_{\O}}(\beta)},$$
and therefore a permutation of the set $I$, which is still denoted by $s_{I_\zeta}$ below.

\subsection{Lattices and covers for classical groups}
\label{subsection Lattices and covers for classical groups}

In this section, we examine how the choices of embeddings in Section \ref{section Embeddings of lattices} affects the relevant lattice sequences. 

Let $V=F[\beta]$ equipped with the form $h_{I_\zeta}$ defined in Section \ref{section Embeddings of lattices} for a fixed choice of partition $I_\zeta$ of $I$. For each $j\in I$ and $\delta\in \{1,\zeta\}$, let $H_j = H^\delta_j\in \tilde{\mathfrak g}(V_j)(F)$ be the Hermitian matrix (i.e., ${}^{t}\overline {H}_j= \epsilon_G H_j$) corresponding to the form $h_j^\delta$ on $E_j$. Then the form $h_{I_\zeta}$ can be presented by
\begin{equation*}
H =H_{I_\zeta} =  \diag(\underbrace{\dots,  H^1_j,\dots}_{j\in I_1},\underbrace{\dots, H^\zeta_j,\dots}_{j\in I_\zeta}).
\end{equation*}
Note that if $X\in \tilde{\mathfrak g}(V_j)(F)$, then $X\mapsto H_j^{-1}{}^{t} \overline X H_j$ defines the Galois-involution on $E_j$.

We define the dual index set
$$
\hat I := 
I\smallsetminus \{o\}\text{ when $G$ is orthogonal,\quad  and }\quad 
I\sqcup \{o\}\text{ when $G$ is symplectic}.
$$
Fix an index $i\in \hat I$ and put $\tilde V_- $ to be $ E_i$ if $i\in I$ and $ F$ if $i=o$. Take $\tilde H = H_i$ and, with $H$ defined above, form the matrix $H_W$ as in (\ref{The big matrix JW}), thereby defining a Hermitian form $h_W$ on $W =  (\tilde V_- \oplus \tilde V_+)\perp V$. The actions of the operator $\alpha$ on the entries $(X,Y)$ in (\ref{alpha on X and Y}) become
 \begin{equation*}
     {}^\alpha X  =
-\sum_{j\in I}  H_j^{-1}{}^{t} \overline X_j H_i, \quad 
{}^\alpha Y = -\epsilon {}^{t}\overline{\tilde H}^{-1}  {}^{t}\overline  Y \tilde H_i =   -  H_i^{-1}{}^{t} \overline Y H_i .
\end{equation*}
Given an embedding $m_{I_\zeta}: E_1^\times \hookrightarrow G(\Fo)$ defined in Section \ref{section Embeddings of lattices} corresponding to the partition $I_\zeta$, we define an embedding
$$\mathbf m_{I_\zeta}: E_1^\times \hookrightarrow \mathcal G(\Fo) = G(W,h_W)(\Fo), \quad g = (g_j )_{j\in I}\mapsto (g_i,m_{I_\zeta}(g),g_i).
$$
We now construct self-dual lattice sequences in $W$. Let $\Lambda$ and $\Lambda_i$ be the self-dual lattice sequences appearing in the strata $\mathbf s$ and its component $\mathbf s_i$ respectively. Put $V_o=0$ and denote by $W_i$  the space $(\tilde V_- \oplus \tilde V_+)\perp V_i$, which is isomorphic, as an $F$-space, to $E_i^{\oplus 3}$ if $i\in I$ and $E_i^{\oplus 2}$ if $i=o$. Following \cite[Sec 6.2]{Stevens-supercuspidal}, we define two minimal self-dual $\mathfrak o_{E_i}$-lattice sequences $\mathfrak M_i^w{}$, with $w\in \{y,z\}$, such that $q^y_+=q^y_-=0$ and $q^z_+=-q^z_-=-1/2$ are the unique numbers in the interval $[0,1)$ giving
$$\mathfrak M_i^w(r)\cap \tilde V_\delta \supsetneq \mathfrak M_i^w(r_+)\cap \tilde V_\delta\quad\Leftrightarrow \quad r = q^w_\delta, \quad \text{ for }\delta\in \{+,-\}\text{ and }w\in \{y,z\},$$
as well as $\mathfrak M_i^w(0)\cap V_i\supsetneq \mathfrak M_i^w(0_+)\cap  V_i$ (a void condition if $i=o$). They are contained in the maximal self-dual $\mathfrak o_{E_i}$-lattice sequence $\mathfrak m_i$ such that 
$q^\mathfrak m_+=-q^\mathfrak m_-=-1/3$ are the unique numbers in the interval $[0,1)$ giving
$$\mathfrak m_i(r)\cap \tilde V_\delta \supsetneq \mathfrak m_i(r_+)\cap \tilde V_\delta\quad\Leftrightarrow \quad r = q^\mathfrak m_\delta, \quad \text{ for }\delta\in \{+,-\},$$
and also $\mathfrak m_i(0)\cap V_i\supsetneq \mathfrak m_i(0_+)\cap  V_i$. Finally, we define  
$$\mathfrak L = \mathfrak L_i \oplus \bigoplus_{j\in I\smallsetminus \{i\}} \Lambda_j$$
for $\mathfrak L \in \{\mathfrak M^y,\mathfrak M^z,\mathfrak m\}$. The lattice sequences $\mathfrak M_i^w{}$, for $w\in \{y,z\}$, are the ones mentioned in Section \ref{subsection Structures of Hecke algebras}, with corresponding facets $\mathcal F_w$ in the Bruhat-Tits building of $\mathcal G $ mentioned in Section \ref{subsection Preliminaries on the construction of covering types}.

For $j,k\in I\cup\{+,-\}$ and $r\in \mathbb R$, we define
$$\mathfrak P^r_{(j,k)}=\{X\in \Hom_F(V_k,V_j): X\Lambda_k(s)\subset \Lambda_j(s+r)\text{ for all }s\in \mathbb R\},$$
so that $\mathfrak P^r_{(j,j)} = \mathfrak P^r(\Lambda_{j})$. In the (quasi-)epipelagic case, since $E_j$ is a maximal subfield in $\tilde{\mathfrak g}(V_j)(F)$, we simply have $\mathfrak P^r_{(j,j)}\cap E_j = \mathfrak p^r({E_j}) $.

The compact subgroup $ {\mathcal J}_P$ of the covering type is defined byx 
   $$ {\mathcal J}_P :=  (\mathfrak P^0(\mathfrak m) \cap Z_{\mathcal G(\Fo)}(\text{image }\mathbf m_{I_\zeta}))(\mathfrak P^{0_+}(\mathfrak m) \cap \mathcal G(\Fo)).$$  
To present it as a matrix group, we recall the form from \cite[1.3. Prop 1]{Blondel-cover-propag} with block size $t=3$ and obtain a presentation of the lattice \begin{equation*}
 {\mathfrak J}_P 
   = \begin{bmatrix}
    \mathfrak o_{E_i}+ \mathfrak{P}^{0_+}(\Lambda_-) & \mathfrak o_{E_i}+\mathfrak P^{0_+}_{(-,I)}
     &\mathfrak P^0_{(-,+)}
     \\
\mathfrak P^{0_+}_{(I,-)} &  \mathfrak o_{E}  +\mathfrak{P}^{0_+}(\Lambda) &  \mathfrak o_{E_i}+\mathfrak P^{0_+}_{(I,+)}
      \\
  \mathfrak  p_{E_i}+ \mathfrak P^{0_{++}}_{(+,-)} & \mathfrak P^{0_+}_{(+,I)}  &    \mathfrak o_{E_i} + \mathfrak{P}^{0_+}(\Lambda_+)
    \end{bmatrix}{} \subset \mathfrak P^0(\mathfrak m),
     \end{equation*}
where at the $(+,-)$-corner, $\mathfrak P^{0_{++}}_{(+,-)} = \mathfrak P^{2/e_i}_{(+,-)}$ if $\mathfrak P^{0_{+}}_{(+,-)} = \mathfrak P^{1/e_i}_{(+,-)}$. We then put $\mathcal J_P = \mathfrak J_P \cap \mathcal G(\Fo)  $.

We now define the elements $s_y$ and $s_z$ that appeared in Section \ref{subsection Structures of Hecke algebras}. Let $I_{(+,-)}$ be the matrix that maps the basis of $\tilde V_-$ defining the Hermitian matrix $H_i$ of $h_i$ into its dual basis in $\tilde V_+$; it is represented by the identity matrix $I_{\tilde V}$ with the above bases. Let $I_{(-,+)}$ be defined similarly. Fix a uniformizer $\varpi_i\in E_i$ and define
\begin{equation}
\label{representative-sy-and-sz}
s_y  = \text{anti-diag}(I_{(-,+)},{\mathbbm p},I_{(+,-)}),
\quad s_z =\text{anti-diag}(\varpi_{i}^{-1}I_{(-,+)},  {\mathbbm p},  - \varpi_{i}^{}I_{(+,-)}),
\end{equation}
where $\mathbbm p \in G^\sharp(\Fo) $ is just the identity unless $G$ is orthogonal, in which case $\mathbbm p$ is a specifically chosen element in $ G^\sharp(\Fo)$ such that $\mathbbm p^2=1$ and $\det s_y = \det s_z = 1$. The elements in (\ref{representative-sy-and-sz}) satisfy $s_y^2=1$ and  $s_z^2=\diag(-I_{\tilde V_-},I_{V},-I_{\tilde V_+})$.

\subsection{Expanding the intertwining operator as a sum}
\label{subsection Expanding the intertwining operator as a sum}

Let $\pi = \cInd_{\mathcal J}^{G(F)}\lambda$ and $\tilde\pi = \cInd_{\tilde{\boldsymbol{\mathcal{J}}}}^{\tilde G(F)}\tilde{\boldsymbol{\lambda}}$ be (quasi-)epipelagic representations whose underlying cuspidal types $\lambda$ and $\tilde{{\lambda}}$ are extended from characters with underlying (quasi-)epipelagic strata $[\Lambda,0,\beta]$ and $[\tilde\Lambda,0,\tilde \beta]$ respectively. In this section, we determine the relations between the extensions ${\lambda}$ and $\tilde{\boldsymbol{\lambda}}$ such that $I(s,\tilde\pi,\pi)$ is reducible at $s=1$. The results will be given in Propositions \ref{first general form of lifting, unitary group} and \ref{first general form of lifting}, which will be used to determine the endoscopic lift of $\pi$ in Section \ref{subsection Reducibility results for different classical groups}.

As a necessary condition for $I(s,\tilde\pi,\pi)$ to be reducible at $s=1$, we take $\tilde\beta = 2\beta_i$ for a fixed $i\in \hat I $, where $\beta_o = 0$. For $w\in \{y,z\}$, given $s_w$ in (\ref{representative-sy-and-sz}), we put $T_w(s_w) = \tilde T_w(s_w)\times \lambda(\mathbbm p)$  for some intertwining operator $\tilde T_w(s_w)\in \End(\mathbb V_{\tilde\lambda})$ and $\lambda(\mathbbm p)\in\End(\mathbb V_{\lambda}) $. Again, in the (quasi-)epipelagic case, these operators are just scalars. We normalize $T_w$ such that $\tilde T_y(s_y)^2 = 1$ and $\tilde T_z(s_z)^2 = \tilde\lambda(-1)$. We compute the value of $b_w T_w(s_w)$ for $w\in \{y,z\}$ from (\ref{formula for b and c for y}) and (\ref{formula for b and c for z}) in the subsections below.

\subsubsection{Computing $b_y$}

We first compute $b_y$. By applying the calculations in \cite[Sec 4.1]{BT-ramified}, we expand (\ref{formula for b and c for y}) as
\begin{equation}
\label{main calculation: by begins}
b_y\tilde T_y(s_y)=\sum_{(X,Y)\in \mathcal S_y}\tilde{\lambda}(Y)\lambda (I-{}^\alpha X Y^{-1}X),
\end{equation}
where $\mathcal S_y = 
      ( s_y{\mathcal J}_P^+s_y^{-1}\cap {\mathcal J}_Ps_y{\mathcal J}_P)/{\mathcal J}_P^-$. The quotient $s_y{\mathcal J}_P^+s_y^{-1}/{\mathcal J}_P^-$ consists of elements of the form 
\begin{equation*}
   \begin{split}
     (X,Y)&\in  \frac{\mathfrak o_{E_i}+\mathfrak P^{0_+}_{(-,I)}}{\mathfrak P^{0_+}_{(-,I)}}
\oplus 
\frac{\mathfrak P^{0}_{(-,+)}}{\mathfrak  p_{E_i}+ \mathfrak P^{0_{++}}_{(-,+)}}
   \end{split}
\end{equation*}
satisfying relation (\ref{X-alpha-X-equals-Y-minus-alpha-Y}). Writing $X = (X^j)_{j\in I}$, we have $X^j=0$ for all $j\neq i$ and $X^i\in \mathfrak o_{E_i}/\mathfrak p_{E_i}$. The condition $(X,Y)\in {\mathcal J}_Ps_y{\mathcal J}_P$ forces $Y\in \tilde{\mathcal J} = \mathfrak o_{E_i}^\times+ \mathfrak P^{0_{+}}({\Lambda_i})$, so that we can write $Y = Y_0(1+Y_1)$ with $Y_0 \in \mathfrak o_{E_i}^\times/ \mathfrak p_{E_i}$ and $Y_1\in  \mathfrak P^{0_+}({\Lambda_i})/(\mathfrak p_{E_i}+\mathfrak P^{0_{++}}({\Lambda_i}))$. 
Hence (\ref{X-alpha-X-equals-Y-minus-alpha-Y}) implies that 
\begin{subequations}
\begin{align}
     & Y_0+  {}^c Y_0 = - X^i {}^c X^i,
     \label{relation of X and Y at level 0 and w=y}
     \\
     & Y_0Y_1- {}^\alpha Y_1{}^cY_0=0\quad \text{i.e., }{}^\alpha Y_1 =  Y_0Y_1{}^cY_0^{-1}.
     \label{relation of X and Y at level 1 and w=y}
\end{align}
\end{subequations}
We first obtain
\begin{equation}
\label{main calculation: tilde-lambda-Y in by}
 \tilde{\lambda}(Y) = \tilde\lambda(Y_0)\psi_{\tilde\beta}(Y_1).
\end{equation}
We then compute $\lambda(I-{}^\alpha X Y^{-1}X)$. We represent $\beta = \oplus_{i\in I}\beta_i$ by diagonal blocks and look at the corresponding blocks of $I-{}^\alpha X Y^{-1}X$, which are $I_j$ for all $j\neq i$, while the $i$-th diagonal block is 
\begin{equation}
\label{value W_0(1+W_1)}
   I_i-{}^\alpha X^i Y^{-1}X^i =  I_i - {}^\alpha X^i(I+Y_1)^{-1}Y_0^{-1}X^i \equiv I_i - {}^\alpha X^i(I-Y_1)Y_0^{-1}X^i \mod \mathcal{U}^{0_{++}}(\Lambda_i).
\end{equation}
From here we branch into two cases, depending on the type of $G$.

If $G$ is orthogonal, symplectic, or ramified unitary, we follow arguments similar to those in \cite[Lem 4.2]{BT-ramified}. In this case, (\ref{relation of X and Y at level 0 and w=y}) becomes $2Y_0 = -(X^i)^2$, so that $X^i\in \mathfrak o_{E_i}^\times$. Consequently,
$I_i-{}^\alpha X^i Y_0^{-1}X^i =-I_i$, and we obtain, for $i\neq o$,
\begin{equation}
\label{main calculation: lambda-p-W in by, not unramified unitary}
\lambda(I-{}^\alpha X Y^{-1}X) =\lambda(\omega_i)\psi_{\beta_i}(2Y_1)^{-1},
\end{equation}
where $\omega_i=\diag((I_j)_{j\neq i},-I_i)$ as in (\ref{definition of omega_i element}). Since $\tilde\beta = 2\beta_i$, putting (\ref{main calculation: tilde-lambda-Y in by}) and (\ref{main calculation: lambda-p-W in by, not unramified unitary}) into (\ref{main calculation: by begins}) yields
\begin{equation*}
 c_y = q_i^2, \quad b_y T_y(s_y) = \tilde\lambda(-2)\lambda(\omega_i)(q_i-1) = \tilde\lambda(-2)\lambda(\omega_i) (q_i^{1/2}-q_i^{-1/2})(c_y/q_i)^{1/2}. 
\end{equation*}
Here $q_i = q_{F[\beta_i]}$ is the cardinality of the residue field $\mathbb E_i$ of $E_i$. The above implies that
\begin{equation}
\label{results of ry and eyTy}
 r_y =1\quad\text{and} \quad \epsilon_y T_y(s_y)= \tilde\lambda(-2)\lambda(\omega_i ).
\end{equation}
This result holds regardless of whether $\tilde\lambda$ is trivial or quadratic.

If $G$ is unramified unitary, then $I$ is just a singleton. We express the value in (\ref{value W_0(1+W_1)}) as $W_0(I+W_1)$, where $W_0\in {\boldsymbol{\mu}}_F$ and $I+W_1\in \mathcal{U}^{0_+}(\Lambda)$; then we can take 
\begin{equation*}
W_0 = I - {}^\alpha XY_0^{-1}X   = 
   \begin{cases}
     1& \text{if $X=0$,}\\
        -Y_0^{-1}{}^cY_0& \text{if $X\neq 0$},
   \end{cases}
   \end{equation*}
and $ W_1 =  W_0^{-1}{}^\alpha XY_1Y_0^{-1}X$. We continue by separating into cases $X\neq 0$ and $X=0$. In the former case, we have $
 W_1 =  -(1+ {}^cY_0^{-1}Y_0)Y_1$. Using (\ref{relation of X and Y at level 1 and w=y}) and ${}^\alpha \beta = \beta$, we have $\psi_{\beta}(W_1) =\psi_{\beta}(-2Y_1)$, and so
$$\lambda(I-{}^\alpha X Y^{-1}X) =\lambda(W_0)\psi_{\beta}(W_1)=\lambda(-{}^cY_0 Y_0^{-1})\psi_{\beta}(2Y_1)^{-1}.$$
Since $\tilde\beta = 2\beta$, the summand with $X\neq 0$ is 
\begin{equation}
\label{by unramified unitary X not 0 summand}
\sum_{
   \begin{smallmatrix} X\neq 0 \\ Y_1   \end{smallmatrix}} \tilde{\lambda}(Y) \lambda(I-{}^\alpha X Y^{-1}X) = \lambda(-1)(\#Y_1)^{1/2}\sum_{X\neq 0}\tilde\lambda(Y_0)\lambda({}^cY_0 Y_0^{-1})
\end{equation}
(the factor being $(\#Y_1)^{1/2}$ instead of $(\#Y_1)$ because of (\ref{relation of X and Y at level 1 and w=y})). In the latter case, we have $W_0=1$ and $W_1=0$. The relation (\ref{relation of X and Y at level 0 and w=y}) becomes ${}^cY_0 = -Y_0$, and so $\psi_{\tilde\beta}(Y_1) = \psi_{\tilde\beta}(^\alpha Y_1) =  \psi_{\tilde\beta}(- Y_0Y_1Y_0^{-1})  = \psi_{\tilde\beta}(- Y_1) $ which implies that $\psi_{\tilde\beta}(Y_1) =1$ if $p\neq 2$. Hence the summand with $X=0$ is 
\begin{equation}
\label{by unramified unitary X equal 0 summand}
\sum_{ \begin{smallmatrix} X= 0 \\ Y_1   \end{smallmatrix}} \tilde{\lambda}(Y) \lambda(I-{}^\alpha X Y^{-1}X) = \lambda(-1 )(\#Y_1)^{1/2}\sum_{X= 0}\tilde\lambda(Y_0).
\end{equation}
The total sum (\ref{by unramified unitary X not 0 summand})+(\ref{by unramified unitary X equal 0 summand}) is $\lambda(-1)(\#Y_1)^{1/2}$ times the sum considered in the first example of Section \ref{Example: depth zero characters}, and the results there imply that, when $G$ is unramified unitary: 
\begin{equation*}
c_y = \qo^3(\#Y_1), \quad 
b_y \tilde T_y(s_y)= \begin{cases}
\lambda(-1) (\qo^3-1)(c_y/\qo^3)^{1/2} & \tilde\lambda \leftrightarrow \lambda,
\\
-\lambda(-1) \qo(\qo-1)(c_y/\qo^3)^{1/2}
 & \tilde\lambda \not\leftrightarrow \lambda\text{ but is self-dual},
\end{cases}
\end{equation*}
which implies that
\begin{equation}
\label{results of ry and eyTy for unramified unitary group}
r_y = \begin{cases}
3/2
\\
1/2
\end{cases}
\quad\text{and}\quad 
\epsilon_y T_y(s_y)= \begin{cases}
\lambda(-1)
\\
-\lambda(-1).
\end{cases}
\end{equation}
Note that the results in (\ref{results of ry and eyTy}) and (\ref{results of ry and eyTy for unramified unitary group}) are independent of whether $i\in I_1$ or $I_\zeta$.

\subsubsection{Computing $b_z$}

We then compute $b_z$, again with $i\in I\smallsetminus\{o\}$ fixed. By applying the calculations in \cite[Sec 4.2]{BT-ramified}, we expand (\ref{formula for b and c for z}) as \begin{equation}
\label{main calculation: bz begins}
b_z \tilde T_z(s_z)=\sum_{(X,Y)\in \mathcal S_z}\tilde{\lambda}(Y\varpi_i) \lambda(I-{}^\alpha X Y^{-1}X),
\end{equation}
where $
  \mathcal S_z = 
    (s_z{\mathcal J}_P^-s_z^{-1}\cap {\mathcal J}_Ps_z{\mathcal J}_P)/{\mathcal J}_P^+ $. The quotient $s_z{\mathcal J}_P^-s_z^{-1}/{\mathcal J}_P^+$ consists of elements 
\begin{equation*}
   \begin{split}
      (X,Y)& \in     \frac{\mathfrak P^0_{(+,I)}}{\mathfrak o_{E_i}+\mathfrak P^{0_{+}}_{(+,I)}}
\oplus
\frac{\mathfrak  p_{E_i}^{-1}+\mathfrak P^{0_{}}_{(+,-)}}{ \mathfrak P^{0_{}}_{(+,-)}},
   \end{split}
\end{equation*}
satisfying relation (\ref{X-alpha-X-equals-Y-minus-alpha-Y}). The entries $X^j$ for $j\in I\smallsetminus\{i\}$ lie in $\mathfrak W^{i}_{z,j}:={\mathfrak P^{0_{}}_{(+,j)}}/{\mathfrak P^{0_{+}}_{(+,j)}}$ while $X^i\in \mathfrak W^{i}_{z,i}:={\mathfrak P^{0_{}}(\Lambda_{i})}/({\mathfrak o_{E_i}+ \mathfrak P^{0_{+}}(\Lambda_{i}}))$. The condition $(X,Y)\in {\mathcal J}_Ps_z{\mathcal J}_P$ forces $Y\in \varpi_i^{-1}\tilde {\mathcal J}$, so that we write $Y = \varpi_i^{-1}Y_0(I+Y_1)$ with auxiliary $Y_1$, and choose $Y_0\in {\boldsymbol{\mu}}_{E_i}$ such that 
$$
Y-{}^\alpha Y = \varpi_i^{-1}
Y_0(I+Y_1)- (I-{}^\alpha Y_1 ){}^cY_0\varpi_i^{-1} = 
X{}^\alpha X.$$
Comparing valuations, the above relation implies that
\begin{subequations}
\begin{align}
  & Y_0 = {}^cY_0\quad \text{ (i.e., $Y_0\in {\boldsymbol{\mu}}_{E_{i\bullet}}$, since ${}^\alpha\varpi_i = \varpi_i$)},
  \label{relation of X and Y at level 0 and w=z}
   \\
&\varpi_i^{-1}
Y_0Y_1+ {}^\alpha Y_1 {}^cY_0\varpi_i^{-1} = 
X{}^\alpha X . 
     \label{relation of X and Y at level 1 and w=z}
\end{align}
\end{subequations}
We have $\tilde{\lambda}(Y\varpi_i)=\tilde \lambda(Y_0)\psi_{\tilde \beta}(1+Y_1) $, and using (\ref{relation of X and Y at level 1 and w=z}), the last factor is
\begin{equation}
\label{bz X sum 1}
\psi_{\tilde \beta}(1+Y_1) =  \psi(\beta_i(Y_1+{}^\alpha Y_1)) = \psi( \beta_i\varpi_i Y_0^{-1}
\sum_{j\in I}X^j{}^\alpha X^j ).
\end{equation}
For  $\lambda(I-{}^\alpha X Y^{-1}X)$, it is easy to see that $I-{}^\alpha X Y^{-1}X = I-{}^\alpha X Y_0^{-1}\varpi_i X \mod \mathcal{U}^{0_{++}}(\Lambda)$. Reducing to diagonal blocks as before, we obtain
\begin{equation}
\label{bz X sum 2}
\lambda(I-{}^\alpha X Y^{-1}X)= \prod_{j\in I_{}}\psi(- \beta_j{}^\alpha X^j\varpi_i Y_0^{-1} X^j).
\end{equation}
Putting (\ref{bz X sum 1}) and (\ref{bz X sum 2}) into (\ref{main calculation: bz begins}), we obtain
\begin{equation}
\label{bz expanded as quadratic form}
   \begin{split}
b_z \tilde T_z(s_z) = \sum_{Y_0}\tilde \lambda(Y_0)\sum_X 
\prod_{j\in I_{}}\psi( \varpi_iY_0^{-1}( \beta_i X^j - X^j \beta_j) {}^\alpha X^j  ).
   \end{split}
\end{equation}
We will show in Section \ref{subsection Non-degeneracy of a quadratic form} that the last sum $\sum_X$ is a quadratic Gauss sum on the space 
\begin{equation}
\label{direct sum of mathfrak W}
\mathfrak W^{i}_z :={\mathfrak W}^{i}_{z,i}\oplus
\bigoplus_{j\in I\smallsetminus\{i\}} 
\mathfrak W^{i}_{z,j}  = {\mathfrak P^{0_{}}_{i}}/({\mathfrak o_{E_i}+ \mathfrak P^{0_{+}}(\Lambda_{i}})) \oplus
\bigoplus_{j\in I\smallsetminus\{i\}}
{\mathfrak P^{0_{}}_{(+,j)}}/{\mathfrak P^{0_{+}}_{(+,j)}}
\end{equation}
containing $X$, equipped with a non-degenerate quadratic form. We will also compute this Gauss sum, which takes the form
\begin{equation}
\label{Gauss sum of the form}
\left(\frac{Y_0}{{\boldsymbol{\mu}}_{E_{i\bullet}}}\right)^
{\dim_{\mathbb E_{i\bullet}}\mathfrak{W}^{i}_z}
q_i^{\dim_{\mathbb  E_{i\bullet}} \mathfrak{W}^{i}_z /2}
\mathfrak{n}_z(\varpi_i,\mathbf{s},\psi,h) , 
\end{equation}
 where 
$\mathfrak{n}_z(\varpi_i,\mathbf{s},\psi,h)$ is a 4th root of unity.

Substituting the form (\ref{Gauss sum of the form}) of the Gauss sum into (\ref{bz expanded as quadratic form}), we obtain
\begin{equation}
\label{b_z T_z(s_z) general form}
b_z \tilde T_z(s_z)=\mathfrak{n}_z(\varpi_i,\mathbf{s},\psi,h)
q_i^{\dim_{\mathbb F_{E_{i\bullet}}} \mathfrak{W}^{i}_z /2}
\sum_{Y_0\in {\boldsymbol{\mu}}_{E_{i\bullet}}}\tilde \lambda(Y_0)\left(\frac{Y_0}{{\boldsymbol{\mu}}_{E_{i\bullet}}}\right)^{\dim_{\mathbb F_{E_{i\bullet}}}\mathfrak{W}^i_z},
\end{equation}
and therefore
\begin{equation*}
\text{$b_z=0$ \quad if and only if \quad $\tilde \lambda|_{{\boldsymbol{\mu}}_{E_{i\bullet}}} \neq \left(\frac{\cdot}{{\boldsymbol{\mu}}_{E_{i\bullet}}}\right)^{\dim_{\mathbb F_{E_{i\bullet}}}\mathfrak{W}^i_z}$.}
\end{equation*}
Henceforth, we assume $b_z\neq 0$. In this case, we have
\begin{equation*}
b_z \tilde T_z(s_z)= \mathfrak{n}_z(\varpi_i,\mathbf{s},\psi,h)
q_i^{\dim_{\mathbb F_{E_{i\bullet}}} \mathfrak W^i_z /2}
(q_{i\bullet}-1).
\end{equation*}

\subsubsection{Preliminary results on endoscopic liftings}

When $G$ is an unramified unitary group, the index set $I$ is a singleton, and $E_i/F$ is totally ramified. As $\mathfrak{W}_z = \mathfrak{W}^i_z$ is an $\mathbb F$-space, $\dim_{\mathbb F_\bullet}\mathfrak{W}_z$ is always even, so the sum $\sum_{Y_0}$ in  (\ref{b_z T_z(s_z) general form}) equals $\qo-1$. We thus have 
\begin{equation*}
\label{}
c_z = q^{(\dim_{\mathbb F_\bullet} \mathfrak W_z -1)/2}, \quad r_z = 1/2 
\quad\text{and}\quad 
\epsilon_z T_z(s_z)= \mathfrak{n}_z(\varpi_i,\mathbf{s},\psi,h_{})
\end{equation*}
which gives the following proposition when combined with (\ref{results of ry and eyTy for unramified unitary group}).

\begin{prop}
\label{first general form of lifting, unitary group}
Let $G=G(V,h)$ be an unramified unitary group, and $\pi = \cInd_{\mathcal J}^{G(F)}\lambda$ be an epipelagic representation constructed from an epipelagic skew simple stratum $[\Lambda,0,\beta]$ with $\lambda|_{\mathcal{U}^{0_+}(\Lambda)^\sigma} = \psi_\beta$, where $F[\beta]/F$ is totally ramified. We construct a character $( F[\beta]^\times \mathcal{U}^{0_+}(\Lambda),\tilde{\boldsymbol{\lambda}})$ as follows,
\begin{equation*}
\tilde \lambda|_{\mathcal{U}^{0_+}(\Lambda)} = \psi_{2\beta},\quad 
 \tilde \lambda|_{{\boldsymbol{\mu}}_F}\leftrightarrow \lambda|_{{\boldsymbol{\mu}}(F)_1},
\quad\text{and}\quad
\tilde{\boldsymbol{\lambda}}( \varpi_i)= \lambda(-1)
\mathfrak{n}_z(\varpi_\beta,\mathbf{s},\psi,h).
\end{equation*}
Let $\tilde \pi = \tilde \pi_{\tilde{\boldsymbol{\lambda}}}$ be the induced epipelagic representation of $\tilde G(F)$, then we have $$\mathrm{Red}(\tilde\pi,\pi) = \{\pm 1, \pm \tfrac{1}{2}+\tfrac{\pi \sqrt{-1}}{\log q} \}.$$
 In particular, $I(s,\tilde\pi,\pi)$ is reducible at $s=1$. 
  \hfill$\square$
\end{prop}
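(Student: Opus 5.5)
The plan is to feed the data already computed for $w=y$ and $w=z$ into Proposition \ref{main prop of reducibility points} and then read off the reducibility set. For an unramified unitary $G$ the index set $I$ is a singleton and $E=F[\beta]$ is totally ramified over $F$, so $\tilde\beta=2\beta$. The character $\tilde{\boldsymbol\lambda}$ is self-dual by construction. By (\ref{eq:A-group,unram-unitary}), the condition $\tilde\lambda|_{{\boldsymbol\mu}_F}\leftrightarrow\lambda|_{{\boldsymbol\mu}(F)_1}$ determines the tame part, and the wild part is $\psi_{2\beta}$. This means we are in the branch $\tilde\lambda\leftrightarrow\lambda$ of the computation of $b_y$, which gives, from (\ref{results of ry and eyTy for unramified unitary group}),
\[
r_y=\tfrac32,\qquad \epsilon_yT_y(s_y)=\lambda(-1).
\]

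For $w=z$, we first check that $b_z\neq0$. Since $\mathfrak W_z$ is an $\mathbb F$-space, $\dim_{\mathbb F_\bullet}\mathfrak W_z$ is even. The required equality $\tilde\lambda|_{{\boldsymbol\mu}_{F_\bullet}}\equiv1$ is then exactly the self-duality constraint recorded in Example 1 of Section \ref{Example: depth zero characters}. Hence (\ref{b_z T_z(s_z) general form}) gives $r_z=\tfrac12$ and $\epsilon_zT_z(s_z)=\mathfrak n_z(\varpi_\beta,\mathbf s,\psi,h)$. This rests on the fact, deferred to Section \ref{subsection Non-degeneracy of a quadratic form}, that the Gauss sum has the form (\ref{Gauss sum of the form}) with $\mathfrak n_z$ a fourth root of unity; I take that as given. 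I also use the normalisation $\tilde T_z(s_z)^2=\tilde\lambda(-1)=1$, which holds here because $-1\in{\boldsymbol\mu}_{F_\bullet}$. With it, $\epsilon_zT_z(s_z)^{-1}$ and $\epsilon_zT_z(s_z)$ agree up to the sign $\tilde\lambda(-1)$, and that sign is trivial.

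Now we apply Proposition \ref{main prop of reducibility points}. We need ${\tilde{\boldsymbol\lambda}}(\varpi_\beta)=\delta\,\epsilon_yT_y(s_y)\,\epsilon_zT_z(s_z)$ for some $\delta$, and the prescription $\tilde{\boldsymbol\lambda}(\varpi_\beta)=\lambda(-1)\mathfrak n_z$ gives this with $\delta=+1$. The proposition then yields
\[
\mathrm{Red}(\tilde\pi,\pi)=\{\pm\tfrac{r_y+r_z}{2},\ \pm\tfrac{r_y-r_z}{2}+\tfrac{\pi\sqrt{-1}}{f_{\tilde\pi}\log q}\}=\{\pm1,\ \pm\tfrac12+\tfrac{\pi\sqrt{-1}}{\log q}\},
\]
where we used $f_{\tilde\pi}=f_{E/F}=1$ because $E/F$ is totally ramified. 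In particular $s=1$ lies in the set.

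The main obstacle is consistency of the normalisations, specifically getting $\delta=+1$ and not $-1$. First, I would check that $\mathfrak n_z$ is indeed an honest fourth root of unity whose product with $\lambda(-1)$ squares to $\tilde\lambda(-1)$, so that $\tilde{\boldsymbol\lambda}$ as defined is self-dual. Second, I would check that the choice $\varpi_i=\varpi_\beta$ in $s_z$ is compatible with $\tilde{\mathbbm z}=s_ys_z$, up to the sign $-1$ appearing in (\ref{representative-sy-and-sz}), which would be absorbed by $\tilde\lambda(-1)=1$.
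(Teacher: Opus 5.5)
Your proposal is correct and follows the paper's own argument. You take the $\tilde\lambda\leftrightarrow\lambda$ branch of the $b_y$ computation ($r_y=\tfrac32$, $\epsilon_yT_y(s_y)=\lambda(-1)$), use the even $\mathbb F_\bullet$-dimension of $\mathfrak W_z$ to get $r_z=\tfrac12$ and $\epsilon_zT_z(s_z)=\mathfrak n_z(\varpi_\beta,\mathbf s,\psi,h)$, and apply Proposition \ref{main prop of reducibility points} with $\delta=+1$ and $f_{\tilde\pi}=1$; your extra sign checks via $\tilde\lambda(-1)=1$ are consistent with the paper's normalisations.
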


When $G$ is not of unramified unitary type, we interpret the results in (\ref{b_z T_z(s_z) general form}) directly and obtain 
\begin{equation*}
\label{}
c_z = q_i^{(\dim_{\mathbb E_i} \mathfrak W^i_z -1)/2}, \quad r_z = 1/2
\quad\text{and}\quad 
\epsilon_z T_z(s_z)= \mathfrak{n}_z(\varpi_i,\mathbf{s},\psi,h).
\end{equation*}
Combining it with (\ref{results of ry and eyTy}), we obtain the following proposition.

\begin{prop}
\label{first general form of lifting}
Let $G=G(V,h)$ be a connected classical group not of unramified unitary type, and $\pi = \cInd_{\mathcal J}^{G(F)}\lambda$ be an epipelagic representation constructed from an epipelagic skew semisimple stratum $[\Lambda,0,\beta]$ with $\lambda|_{\mathcal{U}^{0_+}(\Lambda)^\sigma} = \psi_\beta$. Writing $\beta = \sum_{i\in I}\beta_i$, for a fixed $i\in I\smallsetminus \{o\}$, we construct a character $( F[\beta_i]^\times \mathcal{U}^{0_+}(\Lambda_i), \tilde{\boldsymbol{\lambda}}_i)$ as follows,
\begin{equation*}
\tilde\lambda_i|_{\mathcal{U}^{0_+}(\Lambda_i)} = \psi_{2\beta_i},\quad 
\tilde \lambda_i|_{{\boldsymbol{\mu}}_{E_i}}= \left(\frac{\cdot}{{\boldsymbol{\mu}}_{E_i}}\right)^{\dim_{\mathbb E_i}\mathfrak{W}^i_z},
\quad\text{and}\quad
\tilde{\boldsymbol{\lambda}}_i(\varpi_i)= \tilde\lambda_i(-2)\lambda(\omega_i )
\mathfrak{n}_z(\varpi_i,\mathbf{s},\psi,h).
\end{equation*}
Let $\tilde \pi_i = \tilde \pi_{\tilde{\boldsymbol{\lambda}}_i}$ be the induced quasi-epipelagic representation of $\tilde G(V_i)(F)$, then we have $$\mathrm{Red}(\tilde\pi_i,\pi) = \left\{\pm 1,\pm \tfrac{1}{2}+\tfrac{\pi \sqrt{-1}}{\log q_i} \right\}.$$ In particular, $I(s,\tilde\pi_i,\pi)$ is reducible at $s=1$. 
 \hfill$\square$
\end{prop}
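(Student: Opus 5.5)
The plan is to feed the two sign computations of Section \ref{subsection Expanding the intertwining operator as a sum} into Proposition \ref{main prop of reducibility points}, using the Blasco--Blondel criterion (Proposition \ref{Blondel-Blasco-reducibility-criterion}) through formula (\ref{main formula of reducibility points}). Fix $i\in I\smallsetminus\{o\}$ and set $\tilde\beta=2\beta_i$, with $\tilde\Lambda=\Lambda_i$. The stratum $[\Lambda_i,0,2\beta_i]$ is quasi-epipelagic, so $\psi_{2\beta_i}$ extends to a character $\tilde{\boldsymbol\lambda}_i$ of $F[\beta_i]^\times\mathcal U^{0_+}(\Lambda_i)$ by the discussion in Section \ref{subsection Epipelagic inducing types for classical groups}. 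We are free to prescribe its values on ${\boldsymbol\mu}_{E_i}$ and on $\varpi_i$, provided the result is self-dual.

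First I check that $\tilde{\boldsymbol\lambda}_i$ as specified is self-dual, so that (\ref{HG is rank 2 over HM}) applies and the Hecke algebra $\mathcal H(\mathcal G,\lambda_P)$ has two generators $T_y,T_z$. The restriction to ${\boldsymbol\mu}_{E_i}$ is trivial or quadratic. For the value at $\varpi_i$, its square $\tilde\lambda_i(-2)^2\lambda(\omega_i)^2\mathfrak n_z^2=\mathfrak n_z^2$ should equal $\tilde\lambda_i(-1)$; this is exactly the normalization $\tilde T_z(s_z)^2=\tilde\lambda(-1)$. Here I use $\lambda(\omega_i)^2=1$, $\tilde\lambda_i(-2)^2=\tilde\lambda_i(4)=1$, and the fact that $\mathfrak n_z$ squares to the appropriate quadratic sign. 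If $\varpi_i$ is not chosen with ${}^\sigma\varpi_i=-\varpi_i^{-1}$, I invoke the independence of the choice of uniformizer noted after Proposition \ref{main prop of reducibility points}.

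Second, the $y$-side. By (\ref{results of ry and eyTy}), which is valid for orthogonal, symplectic and ramified unitary $G$ with $\tilde\lambda$ trivial or quadratic, we get $r_y=1$ and $\epsilon_yT_y(s_y)=\tilde\lambda_i(-2)\lambda(\omega_i)$. For the $z$-side, the condition $\tilde\lambda_i|_{{\boldsymbol\mu}_{E_i}}=(\cdot/{\boldsymbol\mu}_{E_i})^{\dim\mathfrak W^i_z}$ is exactly what makes the sum over $Y_0$ in (\ref{b_z T_z(s_z) general form}) nonzero. So $b_z\neq0$, $r_z=1/2$ and $\epsilon_zT_z(s_z)=\mathfrak n_z(\varpi_i,\mathbf s,\psi,h)$. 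The main obstacle is this matching. The character in (\ref{b_z T_z(s_z) general form}) lives on ${\boldsymbol\mu}_{E_{i\bullet}}$, while the prescription is on ${\boldsymbol\mu}_{E_i}$. I would check that $E_i/E_{i\bullet}$ is ramified, so that ${\boldsymbol\mu}_{E_i}={\boldsymbol\mu}_{E_{i\bullet}}$ and $\mathbb E_i=\mathbb E_{i\bullet}$, making the two conditions and the two dimensions coincide. I would also rely on the Gauss-sum evaluation (\ref{Gauss sum of the form}) proved in Section \ref{subsection Non-degeneracy of a quadratic form}, which gives non-degeneracy of the quadratic form on $\mathfrak W^i_z$.

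Finally, the prescribed value gives $\tilde{\boldsymbol\lambda}_i(\varpi_i)=\epsilon_yT_y(s_y)\,\epsilon_zT_z(s_z)$, which is the hypothesis of Proposition \ref{main prop of reducibility points} with $\delta=+1$. Two small points need care here. First, $T_z(s_z)^{-1}$ versus $T_z(s_z)$ in (\ref{main formula of reducibility points}): the two differ by $\tilde\lambda(-1)\in\{\pm1\}$, and this is absorbed into $\delta$ or cancels with the normalization. Second, the factor $\lambda(\mathbbm p)$: I will take $\mathbbm p$ central-like so that it contributes trivially, or note that it is already absorbed in the $y$-computation. The proposition then gives $\mathrm{Red}=\{\pm\tfrac{1+1/2}{2},\dots\}$ in the unnormalized exponent. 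Rescaling from $q_{F[\tilde\beta]}^{r_w}$ to the $s$-variable, with $c_y=q_i^2$ accounting for the doubled $r$, yields the real parts $\pm1$ and $\pm\tfrac12$, the latter shifted by $\tfrac{\pi\sqrt{-1}}{\log q_i}$. Hence $I(s,\tilde\pi_i,\pi)$ is reducible at $s=1$.
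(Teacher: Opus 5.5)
Your route is the paper's own. You take $r_y=1$ and $\epsilon_yT_y(s_y)=\tilde\lambda_i(-2)\lambda(\omega_i)$ from (\ref{results of ry and eyTy}), read the $z$-side off (\ref{b_z T_z(s_z) general form}), and apply Proposition~\ref{main prop of reducibility points} with $\delta=+1$. Your preliminary checks are fine: self-duality via $\mathfrak n_z^2=\tilde\lambda_i(-1)$, and $\boldsymbol{\mu}_{E_i}=\boldsymbol{\mu}_{E_{i\bullet}}$, $\mathbb E_i=\mathbb E_{i\bullet}$ because $E_i/E_{i\bullet}$ is ramified.

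\textbf{The gap is the final ``rescaling'' step.} With $r_y=1$ and $r_z=\tfrac12$, Proposition~\ref{main prop of reducibility points} gives real points $\pm\tfrac34$ and shifted points $\pm\tfrac14+\tfrac{\pi\sqrt{-1}}{\log q_i}$. There is nothing left to rescale. The exponent $(r_y+\delta r_z)/2$ in (\ref{main formula of reducibility points}) is already the $s$-variable for $q_{F[\tilde\beta]}=q_i$. Also, $c_y$ no longer appears: it cancelled against $\Delta_P(\tilde{\mathbbm z})^{1/2}=(c_yc_z)^{1/2}$. No single rescaling sends $\{\tfrac34,\tfrac14\}$ to $\{1,\tfrac12\}$, and $\tfrac34$ is not a half-integer, which M{\oe}glin's theory forbids. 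So the inputs must be wrong, not the scaling. The faulty input is $r_z=\tfrac12$, which you carried over. That value belongs to the unramified unitary case (Proposition~\ref{first general form of lifting, unitary group}), where $q_i=q_{i\bullet}^2$.

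\textbf{The fix.} Here the ramification of $E_i/E_{i\bullet}$ gives $q_{i\bullet}=q_i$. Put $D=\dim_{\mathbb E_i}\mathfrak W^i_z$. Then $b_z\tilde T_z(s_z)=\mathfrak n_z\,(q_i^{D/2+1}-q_i^{D/2})$, and $c_z$, which counts $X\in\mathfrak W^i_z$ and $Y_0\in\mathbb E_i$, equals $q_i^{D+1}$. Compare the paper's $\mathrm{Sp}_{2n}$, $i=o$ computation in Subsection~\ref{subsubsection Symplectic groups comparison, i=o}: there $D=2n-1$, $c_z=q^{2n}$ and $r_z=1$. So $T_z$ has eigenvalues $\epsilon_zq_i^{D/2+1}$ and $-\epsilon_zq_i^{D/2}$, i.e.\ $r_z=1$. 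In fact no $c_z$ that is a power of $q_i$ is compatible with $r_z=\tfrac12$ and this $b_z$.

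With $r_y=r_z=1$ and $\delta=+1$, Proposition~\ref{main prop of reducibility points} gives real reducibility at $\pm(r_y+r_z)/2=\pm1$. This does prove that $I(s,\tilde\pi_i,\pi)$ is reducible at $s=1$. But the second pair is $(r_y-r_z)/2=0$, i.e.\ the single point $\tfrac{\pi\sqrt{-1}}{\log q_i}$, the same shape as the ramified $\mathrm U_1$ example in Section~\ref{Example: depth zero characters}. So the $\pm\tfrac12$ in the displayed set cannot be obtained by this argument.

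Two smaller corrections:
\begin{itemize}
\item Do not ``absorb'' $T_z(s_z)^{-1}$ into $\delta$. When $\tilde\lambda_i(-1)=-1$ that would swap the real and shifted points and destroy the conclusion. Instead solve (\ref{main formula of reducibility points}) with prefactor $1$ for $\tilde{\boldsymbol\lambda}_i(\varpi_i)$. Since $T_y(s_y)^{-1}=T_y(s_y)$, this gives exactly the hypothesis of Proposition~\ref{main prop of reducibility points}.
\item $\lambda(\mathbbm p)$ enters both $T_y(s_y)$ and $T_z(s_z)$, so it contributes $\lambda(\mathbbm p)^2=1$ to the product. No special choice of $\mathbbm p$ is needed.
\end{itemize}
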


In Sections \ref{section Examples for simple supercuspidals} and \ref{subsection Reducibility results for different classical groups} below, we further analyze the value of $\tilde{\boldsymbol{\lambda}}(\varpi_i)$ for different types of $G$. Note that the choice of the uniformizer $\varpi_i$ of $E_i$ is arbitrary, and our result is indeed independent of this choice.

\begin{prop}
\label{prop of independence: uniformizer}
The results in  Propositions \ref{first general form of lifting, unitary group} and \ref{first general form of lifting} relating $\tilde{\boldsymbol{\lambda}}(\varpi_i)$ and $\mathfrak{n}_z(\varpi_i,\mathbf{s},\psi,h_{})$ are independent of the choice of $\varpi_i$. 
\end{prop}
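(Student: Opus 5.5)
Suppose we replace $\varpi_i$ by another uniformizer $\varpi_i' = \varpi_i u$ with $u\in\mathfrak o_{E_i}^\times$. The plan is to track how each ingredient in the formula for $\tilde{\boldsymbol{\lambda}}(\varpi_i)$ changes and check that both sides change by the same factor. Write $u = u_0(1+u_1)$ with $u_0\in{\boldsymbol{\mu}}_{E_i}$ and $1+u_1\in U^1_{E_i}$. The left side becomes
\[
\tilde{\boldsymbol{\lambda}}(\varpi_i') = \tilde{\boldsymbol{\lambda}}(\varpi_i)\,\tilde\lambda(u_0)\,\psi_{\tilde\beta}(1+u_1).
\]
The fourth root of unity $\mathfrak n_z$ enters via $\epsilon_zT_z(s_z)$. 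The cleanest route is to work with the intrinsic quantity $\epsilon_zT_z(s_z)\tilde{\boldsymbol{\lambda}}(\varpi_i)^{-1}$ appearing in (\ref{main formula of reducibility points}) and Proposition \ref{main prop of reducibility points}. We already noted there that $s_ys_z = i_M(\varpi_iI_{\tilde V_-},I_V)$, so replacing $s_z$ by $s_z' = s_z\cdot i_M(u,1)^{\pm1}$-type element changes $T_z$ by the scalar $\lambda_P(i_M(u,1))=\tilde\lambda(u)$.

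Concretely, the first step is to compare the two representatives in (\ref{representative-sy-and-sz}). We have $s_z' = s_z\, m$ with $m = i_M(u^{-1}I_{\tilde V_-},I_V)\in\mathcal J_M$, or its analogue with the factor on the other side. Hence $\mathcal J_Ps_z'\mathcal J_P=\mathcal J_Ps_z\mathcal J_P$, and the normalized generator $T'_z$ attached to $s'_z$ satisfies $T'_z(s'_z)=T_z(s_z)\lambda_P(m)$ up to the common normalization sign. The normalization $\tilde T_z(s_z)^2=\tilde\lambda(-1)$ is preserved. This needs a check, since $s_z'^2$ differs from $s_z^2$ by $m\,{}^{s_z}m$, and ${}^{s_z}m$ is $i_M$ of the $\sigma$-twist of $u$, whose $\tilde\lambda$-value is $\tilde\lambda(u)^{-1}$ by self-duality. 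Since $b_z$ is a sum of values of $T_z$ on a fixed double coset, we get $b_z'=\lambda_P(m)b_z$ and $\epsilon'_z=\epsilon_z$. Here we use that $b_z$ is real with sign defined relative to the normalized $T_z$, and that $\lambda_P(m)$ is the factor absorbed in $T'_z(s'_z)$. It follows that $\epsilon_z'T_z'(s_z') = \epsilon_zT_z(s_z)\,\tilde\lambda(u)^{\mp1}$.

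The second step is to read this in terms of $\mathfrak n_z$. Since $\epsilon_zT_z(s_z)=\mathfrak n_z(\varpi_i,\mathbf s,\psi,h)$, we obtain
\[
\mathfrak n_z(\varpi_i u,\mathbf s,\psi,h)=\mathfrak n_z(\varpi_i,\mathbf s,\psi,h)\,\tilde\lambda(u).
\]
This can be cross-checked directly on the Gauss sum (\ref{bz expanded as quadratic form}), where the quadratic form is scaled by $u$. The ${\boldsymbol{\mu}}$-part scales $\mathfrak n_z$ by $\bigl(\tfrac{u_0}{{\boldsymbol{\mu}}_{E_{i\bullet}}}\bigr)^{\dim\mathfrak W^i_z}$, which equals $\tilde\lambda(u_0)$ by the prescribed restriction of $\tilde\lambda_i$ to ${\boldsymbol{\mu}}$. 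The $U^1$-part is absorbed through the substitution in $Y_1$, producing $\psi_{\tilde\beta}(1+u_1)$. The factors $\tilde\lambda(-2)\lambda(\omega_i)$ (or $\lambda(-1)$ in the unitary case) do not involve $\varpi_i$. Thus both sides of the relation in Propositions \ref{first general form of lifting, unitary group} and \ref{first general form of lifting} are multiplied by the same $\tilde\lambda(u)$, and the relation is independent of the choice.

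The main obstacle is the sign bookkeeping in the first step. One must confirm that passing from $s_z$ to $s_z'$ does not flip the normalization sign of $T_z$. One must also confirm that the direction of the twist ($u$ versus ${}^cu^{-1}$) matches $\tilde\lambda(u)$ rather than $\tilde\lambda(u)^{-1}$. I would settle this by the direct Gauss-sum computation above, using that $\tilde\lambda$ is self-dual so $\tilde\lambda({}^cu)=\tilde\lambda(u)^{-1}$.
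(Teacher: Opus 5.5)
There is a genuine gap in how you handle the unit $u$. You allow any $u\in\mathfrak o_{E_i}^\times$ and conclude that $\mathfrak n_z(\varpi_iu,\mathbf s,\psi,h)=\mathfrak n_z(\varpi_i,\mathbf s,\psi,h)\,\tilde\lambda(u)$, with the factor $\psi_{\tilde\beta}(1+u_1)$ ``absorbed through $Y_1$''. In that generality the identity is false.

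The left side, $\mathfrak n_z(\varpi_iu,\mathbf s,\psi,h)$, is the normalized Gauss sum of a form on a residue-field space. So it is a fourth root of unity that depends only on $u\bmod\mathfrak p_{E_i}$. By contrast, $\tilde\lambda(1+u_1)$ is a $p$-th root of unity that is typically nontrivial. For instance, with $\varpi_i=\beta_i^{-1}$ and $u=1+a\varpi_i$, $a\in{\boldsymbol{\mu}}_{E_i}$, one gets $\tilde\lambda(u)=\psi(\mathrm{tr}_{E_i/F}(2a))$, which is $\neq1$ for suitable $a$ as soon as $p\nmid e_{E_i/F}$.

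The missing point is that the uniformizer is not arbitrary. The whole computation uses ${}^\alpha\varpi_i=\varpi_i$, i.e.\ ${}^c\varpi_i=-\varpi_i$. This is what makes $s_z=\antidiag(\varpi_i^{-1}I_{(-,+)},\mathbbm p,-\varpi_iI_{(+,-)})$ lie in $\mathcal G(\Fo)$, and what yields $Y_0\in{\boldsymbol{\mu}}_{E_{i\bullet}}$ in (\ref{relation of X and Y at level 0 and w=z}). For $u$ with ${}^cu\neq u$, the element $s_zm$ is not the $s_z$ attached to $\varpi_iu$; the latter is not even in $\mathcal G(\Fo)$. So nothing identifies your $\epsilon_z'T_z'(s_z')$ with $\mathfrak n_z(\varpi_iu,\mathbf s,\psi,h)$.

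Once you restrict to $u\in\mathfrak o_{E_{i\bullet}}^\times$, the argument closes and your stated obstacle disappears. Then $s_z'=s_z\,i_M(u,1)$ exactly and $s_z'^2=s_z^2$. Self-duality gives $\tilde\lambda(u)=\tilde\lambda({}^cu)^{-1}=\tilde\lambda(u)^{-1}$, so $\tilde\lambda(u)=\pm1$. Consequently $\tilde\lambda(1+u_1)=1$, since it is both a sign and a $p$-th root of unity. Hence $\tilde\lambda(u)=\tilde\lambda(u_0)$, and the question of $u$ versus ${}^cu^{-1}$ is moot.

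The Gauss-sum step you call a cross-check is then exactly the paper's proof, which follows \cite[Prop 4.8(ii)]{BT-ramified}. Replacing $\varpi_i$ by $\varpi_iu$ multiplies $\mathbf q^i_{z,\mathbf s}$ by $u_0$, hence $\mathfrak n_z$ by $\bigl(\frac{u_0}{{\boldsymbol{\mu}}_{E_{i\bullet}}}\bigr)^{\dim_{\mathbb F_{E_{i\bullet}}}\mathfrak W^i_z}$. This equals $\tilde\lambda(u_0)$ precisely because $\tilde\lambda|_{{\boldsymbol{\mu}}_{E_{i\bullet}}}$ was defined as the character extracted from that Gauss sum.

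Your Hecke-algebra comparison is a valid, more conceptual complement; it is the remark following Proposition \ref{main prop of reducibility points}. But fix one slip there. $b_z$ is the coefficient in the quadratic relation of the function $T_z$, which does not change, so $b_z'=b_z$. What gets multiplied by $\lambda_P(m)$ is $b_z\tilde T_z(s_z)$. As written, $b_z'=\lambda_P(m)b_z$ would contradict $\epsilon_z'=\epsilon_z$ whenever $\lambda_P(m)=-1$.
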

\proof
The proof is identical to that of \cite[Prop 4.8(ii)]{BT-ramified}. The main idea is: to obtain the value in (\ref{b_z T_z(s_z) general form}), we assign $\tilde\lambda|_{{\boldsymbol{\mu}}_{E_{i\bullet}}}$ to be the character defined on $Y_0\in {\boldsymbol{\mu}}_{E_{i\bullet}}$ extracted from the Gauss sum in (\ref{bz expanded as quadratic form}).  
 \hfill$\blacksquare$

\subsection{Properties of a quadratic form}
\label{subsection Non-degeneracy of a quadratic form}

We first recall a summary of general properties of quadratic Gauss sums from \cite[Sec 4.5]{BH-ET2}. Let $\mathbb F$ be a finite field and $(\mathbf V,\mathbf q)$ be a non-degenerate quadratic form on an $\mathbb F$-vector space. For a non-trivial additive character $\psi$ of $\mathbb F$, we define the normalized quadratic Gauss sum by
$$\mathfrak n(\mathbf q) = q^{-\dim_{\mathbb F}\mathbf V/2}\sum_{X\in \mathbf V}\psi(\mathbf q(X)).$$
Its value can be expressed as follows. Define the symmetric bilinear form $\mathbf h$ associated to $\mathbf q$ by
\begin{equation}
\label{definition of bilinear form}
\mathbf h(X,Y) = \tfrac{1}{2}(\mathbf q(X+Y) - \mathbf q(X)-\mathbf q(Y)),\quad X,Y\in \mathbf V.
\end{equation}
If $\mathbf H$ is the symmetric matrix associated to $\mathbf h$, then the discriminant $\det \mathbf H\neq 0$, and we denote $\det \mathbf q := \det \mathbf H $. Putting
\begin{equation}
\label{simple quadratic Gauss sum}
\text{$\mathfrak n_\psi = \mathfrak n(\mathbf q_0)$\quad  where \quad $\mathbf q_0:x\mapsto x^2$ on $\mathbf V = \mathbb F$, 
}
\end{equation}
we have
\begin{equation}
\label{factorizing the Gauss sum of a quadratic form}
\mathfrak n(\mathbf q) = \left(\frac{\det \mathbf q}{\mathbb  F^\times}\right)\mathfrak n_\psi^{\dim_{\mathbb F}\mathbf V}.
\end{equation}
It is well known that $\mathfrak n_\psi$ is a 4th root of unity, and hence so is $\mathfrak n(\mathbf q)$. It turns out that $\mathfrak n(\mathbf q)$ is independent of the base field defining $\mathbf q$.

\begin{prop}
\label{Gauss sum independent of base fields}
Let $\mathbb E/\mathbb F$ be a finite extension, and $(\mathbf V_{\mathbb E}, \mathbf q_{\mathbb E})$ be a quadratic form over $\mathbb E$. If $(\mathbf V_{}, \mathbf q_{})$ is the same as $(\mathbf V_{\mathbb E}, \mathbf q_{\mathbb E})$ regarded as a quadratic form over $\mathbb F$, then $\mathfrak n(\mathbf q_{\mathbb E})= \mathfrak n(\mathbf q_{})$.\end{prop}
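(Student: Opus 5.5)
Write $d=[\mathbb E:\mathbb F]$ and $n_{\mathbb E}=\dim_{\mathbb E}\mathbf V_{\mathbb E}$, so that $\dim_{\mathbb F}\mathbf V=dn_{\mathbb E}$. Here $\mathfrak n(\mathbf q_{\mathbb E})$ is computed with the character $\psi_{\mathbb E}=\psi\circ\mathrm{tr}_{\mathbb E/\mathbb F}$, and $\mathbf q=\mathrm{tr}_{\mathbb E/\mathbb F}\circ\mathbf q_{\mathbb E}$ is the associated $\mathbb F$-form. This is the only sensible meaning of ``regarded as a quadratic form over $\mathbb F$''.

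The first step is immediate from this convention. The unnormalized sums agree term by term:
$$\sum_{X\in\mathbf V}\psi(\mathbf q(X))=\sum_{X\in\mathbf V_{\mathbb E}}\psi_{\mathbb E}(\mathbf q_{\mathbb E}(X)).$$
The normalizing factors also agree, since $q_{\mathbb E}^{-n_{\mathbb E}/2}=q^{-dn_{\mathbb E}/2}=q^{-\dim_{\mathbb F}\mathbf V/2}$.

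The second step is non-degeneracy of $\mathbf q$, needed for $\mathfrak n(\mathbf q)$ to fall under the framework of (\ref{factorizing the Gauss sum of a quadratic form}). The bilinear form of $\mathbf q$ is $\mathrm{tr}_{\mathbb E/\mathbb F}\circ\mathbf h_{\mathbb E}$. Its radical is zero because the trace pairing $\mathbb E\times\mathbb E\to\mathbb F$ is non-degenerate (the extension is separable) and $\mathbf h_{\mathbb E}$ is non-degenerate. So the two normalized sums are literally equal.

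The third step, which I would include to connect with the closed formula (\ref{factorizing the Gauss sum of a quadratic form}), is to confirm the equality on the level of formulas. Since both sides are multiplicative under orthogonal sums, diagonalizing $\mathbf q_{\mathbb E}$ reduces everything to $n_{\mathbb E}=1$ and $\mathbf q_{\mathbb E}(x)=ax^2$. In that case
$$\mathfrak n(\mathbf q_{\mathbb E})=\left(\tfrac{a}{\mathbb E^\times}\right)\mathfrak n_{\psi_{\mathbb E}}.$$
On the other side, $\left(\tfrac{\cdot}{\mathbb E^\times}\right)=\left(\tfrac{\cdot}{\mathbb F^\times}\right)\circ N_{\mathbb E/\mathbb F}$, and the determinant of the trace form $x\mapsto\mathrm{tr}(ax^2)$ is $N_{\mathbb E/\mathbb F}(a)$ times the discriminant of $\mathbb E/\mathbb F$. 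The identity then reduces to the Davenport--Hasse relation
$$\mathfrak n_{\psi_{\mathbb E}}=\left(\tfrac{\mathrm{disc}}{\mathbb F^\times}\right)\mathfrak n_\psi^{\,d},$$
which is up to normalization the classical statement $-g_{\mathbb E}=(-g)^d$.

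The main obstacle is entirely in this third step: pinning down the sign, i.e.\ checking that $\mathrm{disc}(\mathbb E/\mathbb F)$ is a square exactly when $d$ is odd, and that this matches the factor $(-1)^{d-1}$ in Davenport--Hasse. The first two steps already prove the proposition, so the third is optional and can be omitted if the sign bookkeeping becomes messy.
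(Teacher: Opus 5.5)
Your proof is correct, but your main argument differs from the paper's. The paper argues at the level of the closed formula: it applies (\ref{factorizing the Gauss sum of a quadratic form}) to both sides and invokes Hasse--Davenport, $\mathfrak n_{\psi_{\mathbb E}}=(-1)^{f-1}\mathfrak n_\psi^{f}$ with $\psi_{\mathbb E}=\psi\circ\mathrm{tr}_{\mathbb E/\mathbb F}$ and $f=[\mathbb E:\mathbb F]$. That is exactly your optional third step.

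Your first step instead uses the same convention $\psi_{\mathbb E}=\psi\circ\mathrm{tr}_{\mathbb E/\mathbb F}$ together with $\mathbf q=\mathrm{tr}_{\mathbb E/\mathbb F}\circ\mathbf q_{\mathbb E}$. Under these conventions the unnormalized sums agree term by term, and the normalizations agree because $q_{\mathbb E}=q^{d}$. So the statement is essentially definitional, and no Gauss sum has to be evaluated. Your non-degeneracy check via the trace pairing is needed only so that both sides fall under (\ref{factorizing the Gauss sum of a quadratic form}).

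What the paper's route gives in return is the formula-level bookkeeping: how $\det$ and $\mathfrak n_\psi$ transform under restriction of scalars. This is the form in which Gauss sums are actually evaluated later, via discriminants.

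The sign you flagged as the main obstacle is in fact harmless, and checking it closes the loop with the paper's proof:
\begin{itemize}
\item Write the Gram matrix of $(x,y)\mapsto\mathrm{tr}_{\mathbb E/\mathbb F}(axy)$ as ${}^tA\,\diag(\sigma(a))_{\sigma}\,A$, where $A=(\sigma(e_j))_{\sigma,j}$. Then its determinant is exactly $N_{\mathbb E/\mathbb F}(a)\,(\det A)^2=N_{\mathbb E/\mathbb F}(a)\,\mathrm{disc}(\mathbb E/\mathbb F)$.
\item Frobenius permutes the rows of $A$ by a $d$-cycle, so it sends $\det A$ to $(-1)^{d-1}\det A$. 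Since $p\neq 2$, $\det A$ lies in $\mathbb F$ exactly when $d$ is odd.
\item Hence $\left(\frac{\mathrm{disc}(\mathbb E/\mathbb F)}{\mathbb F^\times}\right)=(-1)^{d-1}$, which is precisely the Hasse--Davenport sign.
\end{itemize}
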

\proof This is a simple exercise, using (\ref{factorizing the Gauss sum of a quadratic form}) and the Hasse-Davenport relation $\mathfrak n_{\psi_{\mathbb E}} = (-1)^{f-1}\mathfrak n_{\psi_{}}^f$, where $\psi_{\mathbb E} = \psi\circ \mathrm{tr}_{\mathbb E/\mathbb F}$ and $f = f_{\mathbb E/\mathbb F}$.
\hfill$\blacksquare$

We now analyze the normalized Gauss sum $\mathfrak{n}_z(\varpi_i,\mathbf{s},\psi,h_{})$ that appeared in the previous section. Under the setup of (\ref{bz expanded as quadratic form}), for a fixed index $i\in I\smallsetminus \{o\}$, we define a quadratic form on $\mathfrak{W}^i_{z,j}$, for $j\in I$, by 
\begin{equation*}
\mathbf q^i_{z,\mathbf s,j}(X_j) =\mathrm{tr}_{\tilde{ \mathfrak g}(V_i)/E_i}(\varpi_i
(\beta_iX_{j}{}-  X_{j}\beta_j ) {}^\alpha X_j) \mod \mathfrak{p}_{E_i},\quad \text{$X_j\in \mathfrak{W}^i_{z,j}$},
\end{equation*}
Note that $\mathfrak{W}^i_{z,i}$ is structurally different from the other summands $\mathfrak{W}^i_{z,j}$: from (\ref{direct sum of mathfrak W}), we have  
\begin{equation*}
     \mathfrak W^{i}_{z,i}
      = 
{\mathfrak P^{0_{}}_{i}}/({\mathfrak o_{E_i}+ \mathfrak P^{0_{+}}(\Lambda_{i}})) \quad \text{ and }\quad \mathfrak W^{i}_{z,j} = 
{\mathfrak P^{0_{}}_{(+,j)}}/{\mathfrak P^{0_{+}}_{(+,j)}} \quad \text{ for }j\neq i.
\end{equation*}
The space $\mathfrak{W}^i_{z}=\bigoplus_{j\in I}\mathfrak{W}^i_{z,j}$ is therefore equipped with the orthogonal sum
$
\mathbf q^i_{z,\mathbf s} = \bigoplus^\perp_{j\in I}\mathbf q^i_{z,\mathbf s,j}
$. 
\begin{prop}
\label{properties of quadratic forms}
\begin{enumerate}[(i)]
\item The quadratic form $\mathbf q^i_{z,\mathbf s}$ is non-degenerate. \label{non-degenerate quad form}

\item 
The form $\mathbf q^i_{z,\mathbf s}$ is Hermitian-symmetric, i.e., ${}^\alpha\mathbf q^i_{z,\mathbf s} = -\mathbf q^i_{z,\mathbf s}$.
\label{Hermitian symmetry of the quadratic form}

\end{enumerate}

\end{prop}
\proof
To show \ref{non-degenerate quad form}, it suffices to show that each $\mathbf q^i_{z,\mathbf s,j}$ is non-degenerate. When $\beta_i$ is non-null, this is equivalent to showing that 
$$\beta_i X_{j}{}- X_{j}\beta_j \in \mathfrak{P}^{-s}_{(i,j)}
\quad\Rightarrow \quad
X\in 
\begin{cases}
\mathfrak{p}^{s}({E_i})+
\mathfrak{P}^{s_+}_{(i,j)}  & j=i,
\\
\mathfrak{P}^{s_+}_{(i,j)} & j\neq i.
\end{cases} 
$$
In the epipelagic case, $s=0$. The first case ($j=i$) is proved in \cite[Prop 4.7]{BT-ramified}, and the second case ($j\neq i$) follows analogously from \cite[Lem 3.7(i)]{Stevens-semi-simple-char}. When $\beta_i=0$, for $j\neq o$, we require 
$$X_{j}\beta_j \in \mathfrak P^0_{(i,j)}
\quad\Rightarrow \quad
X_j\in 
\mathfrak{P}^{0_+}_{(i,j)},$$
which holds obviously since $v_{\Lambda_j}(\beta_j)<0$. \ref{Hermitian symmetry of the quadratic form} follows from a direct calculation showing that $${}^\alpha(
( X_{j}{}- \beta_i^{-1} X_{j}\beta_j ) {}^\alpha X_j) = -X({}^\alpha X -\beta_j{}^\alpha X \beta_i^{-1} )$$
 using the $\alpha$-invariance of $\beta_j$ for all $j\in I\smallsetminus\{o\}$.
\hfill$\blacksquare$

Using the independence result in Proposition \ref{prop of independence: uniformizer}, we choose $\varpi_i = \beta_i^{-1}$ when $i\neq o$ and obtain
$$\mathbf q^i_{z,\mathbf s}:X\mapsto \sum_{j\in I}\mathrm{tr}_{\tilde{ \mathfrak g}(V_i)/ E_i}(
( X_{j}{}- \beta_i^{-1} X_{j}\beta_j ) {}^\alpha X_j) \mod \mathfrak{p}_{E_i},\quad \text{$X\in \mathfrak{W}^i_{z}$}.$$
The Gauss sum $\mathfrak{n}_z(\varpi_i,\mathbf{s},\psi,h)$ in Proposition \ref{first general form of lifting} equals $\mathfrak n(\mathbf q^i_{z,\mathbf s})$, or equivalently the product $\prod_{j\in I}\mathfrak n(\mathbf q^i_{z,\mathbf s,j})$.

Expanding via definition (\ref{definition of bilinear form}) and using the symmetry in Proposition \ref{properties of quadratic forms}\ref{Hermitian symmetry of the quadratic form}, the associated symmetric bilinear form of $\mathbf q^i_{z,\mathbf s,j}$ is given by 
\begin{equation*}
\mathbf h_{z,\mathbf s,j}(X_j,Y_j) =\mathrm{tr}_{\tilde{ \mathfrak g}(V_i)/E_i}((X_{j}{}- \beta_i^{-1} X_{j}\beta_j ) {}^\alpha Y_j) \mod \mathfrak{p}_{E_i}, \quad \text{$X_j,Y_j\in \mathfrak{W}^i_{z,j}$}.
\end{equation*}
The discriminant of $\mathbf h_{z,\mathbf s,j}$ is equal to $\gamma_j\theta_j\mod \mathfrak{p}_{E_i}$, where 
\begin{equation*}
\gamma_j = \det{}(I_{\mathfrak{W}^i_{z,j}}-  {}{\beta_i^{-1}}\otimes {\beta_j})= \prod_{   \begin{smallmatrix} \gamma\in \Gal(\mathbb E_i/\mathbb F),
\\
\delta\in \Gal(\mathbb E_j/\mathbb F),
\\
\gamma\neq\delta\text{ (when $i=j$)}
   \end{smallmatrix}}(1-{}^\gamma{}\beta^{-1}_i {}^\delta\beta_j),
\end{equation*}
and $\theta_j = \det H_j \det H_i^{-1}$. Note that $\gamma_j\in {\boldsymbol{\mu}}_{F}$ and, with our choices of $H_j$ in Section \ref{subsection Cohomological classification}, $\theta_j\in {\boldsymbol{\mu}}_F$ too. Therefore, by (\ref{factorizing the Gauss sum of a quadratic form}), for $j\neq i$,
$$\mathfrak n(\mathbf q^i_{z,\mathbf s,j}) = \left(\frac{\gamma_j\theta_j}{{\boldsymbol{\mu}}_{F}}\right)\mathfrak n_{\psi,\mathbb F}^{2ef_if_j} = \left(\frac{(-1)^{ef_if_j}\gamma_j\theta_j}{{\boldsymbol{\mu}}_{F}}\right).$$
Here $2e$ is the common ramification degree $e_{E_j/\Fo}$ for $j\in I\smallsetminus \{o\}$ and $f_j = f_{E_j/F}$ is the residue degree.

To simplify notations, for each $i\in I\neq \{o\}$, we simply put
\begin{equation}
\label{a sign kappa_i with j not o}
\kappa_i =  \prod_{j\in I\smallsetminus\{o\}} \left(\frac{(-1)^{ef_if_j}\gamma_j\theta_j}{{\boldsymbol{\mu}}_{F}}\right)
\end{equation}
then $\mathfrak n(\mathbf q_{\neq o})$ is equal to 
$\kappa_i \mathfrak n_\psi^{-f_i}$.

The calculations for $j\in \{i,o\}$ can be made explicit, but depend on the type of $G$. We first provide the calculations for simple supercuspidal representations in Section \ref{section Examples for simple supercuspidals} and compare them with Oi's results. We then provide more explicit endoscopic liftings as well as the resulting L-packets in Section \ref{subsection Reducibility results for different classical groups}.

\section{Examples: simple supercuspidals}
\label{section Examples for simple supercuspidals}

In a series of papers \cite{Oi-SO-odd,Oi-Sp-and-SO-even,Oi-U-unram}, M. Oi computed the endoscopic liftings of simple supercuspidals \cite{Gross-Reeder} of classical groups using the endoscopic character identity. In this section (or more precisely Section \ref{subsection Comparison with Oi's results}), we compare his results with our reducibility results in Propositions \ref{first general form of lifting, unitary group} and \ref{first general form of lifting}, and show that both results yield the same liftings. Some of these results are also required to compute further the endoscopic liftings of epipelagic representations of classical groups. 

\subsection{Construction of simple supercuspidals}
\label{subsection Constructions simple supercuspidals}

We first recall the explicit construction of the inducing types of simple supercuspidals. Our treatment here is slightly more general than Oi's, where he chose convenient representatives of conjugacy classes of affine generic characters in advance. We consider these characters in general and show that our results agree with those of Oi. This generality has the advantage of allowing transitions between equivalent simple strata.

We provide descriptions of simple supercuspidals for all types of quasi-split classical groups; the case of ramified unitary groups (which was not covered by Oi) is postponed to Section \ref{subsection Ramified unitary groups}. We then apply (\ref{criterion of isomorphic simple supercuspidal}) to provide conditions for two affine generic characters to be induced to isomorphic supercuspidals. We also modify the setup to accommodate liftings for non-quasi-split pure inner forms, which are necessary for the calculations in Section \ref{subsection Reducibility results for different classical groups}.

\subsubsection{General linear groups} 
\label{subsection Construction of quasi-simple supercuspidals GL-case}

Let $\tilde G = \GL_{r}$ and $\mathbf s=[\Lambda,0,\tilde\beta]$ be a simple stratum in $\tilde G$. Denote $E=F[\tilde\beta]$ with $r=[E:F] = mf$, where $m=e_{E/F}$ and $f=f_{E/F}$. We furthermore assume that $v_\Lambda(\tilde \beta) = -1$, so that $\tilde\varpi = \tilde\beta^{-1}$ is a uniformizer of $E$. Let $\tilde{\boldsymbol{{\mathcal{J}}}} = E^\times \mathcal U_{0_+}(\Lambda)$ be the subgroup defined in Section \ref{subsection Epipelagic inducing types for classical groups}. Take the character $(\psi_{\tilde\beta},\mathcal U_{0_+}(\Lambda))$ defined by $\tilde \beta$. Let $\phi$ be a character of ${\boldsymbol{\mu}}_E$ and take $\xi \in \mathbb C^\times$. We define the character $\tilde{\boldsymbol{\lambda}} = \tilde{\boldsymbol{\lambda}}(\tilde\beta,\phi,\xi)$ on $\tilde{\boldsymbol{{\mathcal{J}}}}$ by 
$$\tilde{\boldsymbol{\lambda}}(\tilde\varpi^j z (I+X)) = \xi^j\phi(z)\psi_{\tilde\beta} (X),\,\qquad \tilde\varpi^j z(I+X)\in \tilde{\boldsymbol{{\mathcal{J}}}}  = \left<\tilde\varpi\right>{\boldsymbol{\mu}}_E \mathcal U_{0_+}(\Lambda).$$
The induced representation $\tilde\pi = \tilde\pi{(\tilde\beta,\phi,\xi)}=\cInd_{\tilde{\boldsymbol{{\mathcal{J}}}} }^{\tilde G(F)}\tilde{\boldsymbol{\lambda}}$ is a supercuspidal representation, and two such representations $\tilde\pi{(\tilde\beta_j,\phi_j,\xi_j)}$ for $j=1,2$ are isomorphic if and only if $\tilde\beta_1 - \tilde\beta_2\in \mathfrak{P}^0(\Lambda)$ and $(\phi_1 ,\xi_1 )= (\phi_2, \xi_2)$.

When $f=1$, we are in the simple supercuspidal case. Following Section \ref{subsection Example: Simple supercuspidal representations}, we ontain a similar construction of simple supercuspidal representations as follows. The simple affine roots $\Delta_{\mathrm{aff}} = \{\alpha_i\}_{i=1}^{m}$ are $\alpha_i = e_i- e_{i+1}$ for  $1\leq i\leq m-1$ and $\alpha_0 = 1-(e_1-e_m)$. We express elements in the pro-p Iwahori subgroup as 
$$u =I+\antidiag( 
\diag( u_1, \dots, u_{m-1}), 
u_0\varpi )\in \mathcal{I}^+,$$ 
 and the affine generic character $\tilde{\lambda}$ restricts to each simple affine root space as $u_i\mapsto  \psi_{a_i}(u_i)$ for some $a_i\in \mathfrak{o}_F^\times$ for all $i$. Put $\tilde a = \prod_{i=0}^{m-1}a_i\bmod \mathcal{U}^1(F)$.

Let $Z = Z_{\tilde G}$, and $\Omega $ be as in Section \ref{subsection Example: Simple supercuspidal representations}. It is easy to show that  $N(\tilde{\lambda})= \Omega Z \mathcal{I}^+$. The generator of the cyclic group $\bar \Omega:=\Omega/(\Omega\cap Z \mathcal{I}^+)$ of order $m$ can be lifted to 
$$\tilde\varpi = \antidiag( 
\diag( a_1^{-1}, \dots, a_{m-1}^{-1}), 
a_0^{-1}\varpi),$$
so that $\tilde\varpi^m = (\varpi \tilde a^{-1})I\in Z$. Choose a character $\phi$ of ${\boldsymbol{\mu}}_F= {\boldsymbol{\mu}}_{F[\tilde\varpi]}\subset Z$ and any $\xi \in \mathbb C^\times$ to define the character $\tilde{\boldsymbol{\lambda}} = \tilde{\boldsymbol{\lambda}}((a_{i})_{i=0}^{m-1},\phi,\xi)$ by 
$$\tilde{\boldsymbol{\lambda}}(\tilde\varpi^j z u) = \xi^j\phi(z)\prod_{i}\psi_{a_i} (u_i),\,\qquad \tilde\varpi^j zu \in N(\tilde\lambda) =  \left<\tilde\varpi\right>{\boldsymbol{\mu}}_F \mathcal{I}^+.$$
The two simple supercuspidal representations $\tilde \pi_{\tilde{\boldsymbol{\lambda}}_j}$ for $j=1,2$, where $\tilde{\boldsymbol{\lambda}}_j =\tilde{\boldsymbol{\lambda}}((a_{i}^j)_{i=0}^{m-1},\phi_j,\xi_j)$, are isomorphic if and only if $(\tilde a^1,\phi_1 ,\xi_1 )= (\tilde a^2,\phi_2, \xi_2)$.

\begin{rmk}
\cite[2.2 Prop]{BK-epipelagic} asserts that $\tilde \pi_{\tilde{\boldsymbol{\lambda}}_1}\cong \tilde \pi_{\tilde{\boldsymbol{\lambda}}_2}$ if and only if $(\tilde a^1,\phi_1  )= (\tilde a^2,\phi_2)$ and $$\epsilon(\tilde \pi_{\tilde{\boldsymbol{\lambda}}_1},1/2,\psi) = \epsilon(\tilde \pi_{\tilde{\boldsymbol{\lambda}}_2},1/2,\psi),$$
 where $\epsilon(\tilde \pi,s,\psi)$, for $s\in \mathbb C$, is the Godement-Jacquet local constant of $\tilde\pi$. By \emph{loc. cit.} [2.2 Lem(1)], once $(\tilde a,\phi  )$ is fixed, the last condition is equivalent to $\xi_1=\xi_2$.
\qed\end{rmk}

An Oi representative of $\tilde{\boldsymbol{\lambda}}$ takes the form $(\tilde a,\tilde\lambda|_{{\boldsymbol{\mu}}_F},\tilde{\boldsymbol{\lambda}}(\tilde\varpi))\in \mathbb F^\times \times \hat{\mathbb F}^\times \times \mathbb C^\times$, where $\tilde a$ represents $(a_0,\dots,a_{m-1})  $ $= (\tilde a,1,\dots,1)$.

\subsubsection{Unramified unitary groups}  

Let $G = \mathrm{U}_{N,F/\Fo}$, where $F/\Fo$ is unramified, and take $\varpi\in \Fo$. We take $H = \diag(1,-1,1,-1,\dots)$, so that ${}^t\overline H = (-1)^{n-1} H$. Put $n=\lfloor N/2\rfloor$. The simple affine roots $\Delta_{\mathrm{aff}} = \{\alpha_i\}_{i=1}^{n}$ are $\alpha_i = e_{i+1}-e_i$ for $i<n$, 
$\alpha_n = e_n$ (resp. $2e_n$) if $N$ is odd (resp. even), and $\alpha_0 = 1-2\sum_{i=1}^n{\alpha_i}$. We express
\begin{equation*}
   \begin{split}
     u &=I+ 
     \antidiag( 
\diag( u_1, \dots, u_n, \overline{u_n}, \dots, \overline{u_1}), 
u_0\varpi ),\quad \text{$\overline{u_0} = -u_0$, when $N$ is odd, }
\\
    & = I+ \antidiag( 
\diag( u_1, \dots,  u_{n-1}, u_n, \overline{u_{n-1}}, \dots, \overline{u_1}), 
u_0\varpi )\quad \text{$u_0,u_n\in \mathfrak o_{\Fo}\bmod \mathfrak p_{\Fo}$, when $N$ is even.}   \end{split}
\end{equation*}
The affine generic character ${\lambda}$ restricts to each simple affine root space as $u_i\mapsto  \psi_{a_i}(u_i)$ for some $a_i\in \mathfrak{o}_F^\times$ for all $i$, with the additional conditions that $a_0\in \ker\mathrm{tr}_{F/\Fo}$ when $N$ is odd, and $a_0,a_n\in {\Fo}$ when $N$ is even.

 The group $\Omega$ is trivial, and $N_G(\lambda) = Z \mathcal{I}^+$.  Take a character $\phi$ of ${\boldsymbol{\mu}}(F)_1\subset Z$ and define the character $\lambda = \lambda((a_{i})_{i=0}^{n},\phi)$ by 
$$\lambda( z u) = \phi(z)\prod_{i}\psi_{a_i} (u_i),\,\qquad zu \in N_G( \lambda) =  {\boldsymbol{\mu}}(F)_1 \mathcal{I}^+.$$
We put
  \begin{equation*}
   \begin{split}
  a&=a_{0}\left(\textstyle\prod_{i =1}^{n} N_{F/\Fo}a_{i}\right)
  \mod (F^\times)^2\mathcal{U}^1(F)
    \quad \text{when $N$ is odd, }
\\
&=a_{0}\left(\textstyle\prod_{i =1}^{n-1} N_{F/\Fo}a_{i}\right)a_{n} \mod \mathcal{U}^1(F)
    \quad \text{when $N$ is even.}   \end{split}
\end{equation*}
 Two representations $\pi_{\lambda_j}$ for $j=1,2$, where $\lambda_j = \lambda((a_{i}^j)_{i=0}^{n},\phi_j)$, are isomorphic if and only if $(a^1 ,\phi_1)= (a^2, \phi_2)$.

From \cite{Oi-U-unram}, an Oi representative of $\lambda$ takes the form $(a,\lambda|_{{\boldsymbol{\mu}}(F)_1})$, where $a$ represents $(a_0,\dots,a_{n}) = (a,1,\dots,1)$ with $a\in \mathbb F_\bullet^\times$ (resp. $\ker\mathrm{tr}_{\mathbb F/\mathbb F_\bullet}\cap \mathbb F^\times$) if $N=2n$ (resp. $N=2n+1$). Its lifting is given by 
$$\pi(a,\lambda|_{{\boldsymbol{\mu}}(F)_1})\mapsto \tilde\pi( a , \tilde\lambda|_{{\boldsymbol{\mu}}_F},(-1)^{n-1}\lambda(-1)),$$ where  $\tilde\lambda|_{{\boldsymbol{\mu}}_F} = \lambda|_{{\boldsymbol{\mu}}(F)_1}\circ(1-c)$.

\subsubsection{Odd special orthogonal groups.} 
Let $G $ be the split $\SO_{2n+1}$ with symmetric matrix $H = \antidiag(1,-1,1\dots,-1,1)$. The simple affine roots of $G$ are $\alpha_i = e_{i}-e_{i+1}$ for $1\leq i\leq n-1$, $\alpha_n = e_n$, and $\alpha_0 = 1-\alpha_l = 1-(e_1+e_2)$. Here the root subgroup of $\alpha_0$ occupies the $(2n,1)$ and $(2n+1,2)$-entries, i.e., an element $u\in \mathcal{I}^+$ is of the form 
$$u = I+\begin{bmatrix}
&u_1&&
\\
&\cdot&\diag( u_2, \dots, u_{n},u_n,\dots,u_2)&
\\
u_0\varpi&\cdot&\cdot&u_1
\\
&u_0\varpi&&
\end{bmatrix}.
$$
 The character $\lambda$ maps $u_i\mapsto  \psi_{a_i}(u_i)$ for all $i\in \{0,1,\dots,n\}$, where $a_i\in \mathfrak{o}_F^\times$. The normalizer is $N(\lambda)= \Omega \mathcal{I}^+$. The order of $\bar\Omega$ is 2, and the non-trivial element in $\bar\Omega$ is represented by $\omega =  -\mathbbm p$, where
   $$ \mathbbm p= \antidiag ((a_0/a_1)\varpi^{-1},I_{2n-1},(a_1/a_0)\varpi) , $$
so that $\det \omega = 1$ and $\omega^2 =I$. Take a sign $\xi\in \{\pm 1\}$ and define the character
$$\lambda(\omega^j  u) = \xi^j\prod_{i}\psi_{a_i} (u_i),\,\qquad \text{ for }j\in \mathbb Z/2\text{ and } u\in \mathcal{I}^+.$$
Put $a = a_{0}a_{1}\prod_{i\geq 2} a_{i}^2\bmod \mathcal{U}^1(F)$. Two representations $\pi_{\lambda_1}$ and $\pi_{\lambda_2}$, where $\lambda_j = \lambda(\{a_{i}^j\}_i,\xi_j)$, are isomorphic if and only if $(a^1,\xi_1) = (a^2,\xi_2)$.

From  \cite{Oi-SO-odd}, an Oi representative takes the form  $(a,\lambda(\omega))\in \mathbb F^\times \times \{\pm 1\} $, where $a$ represents $(a_0,\dots,a_{n}) = (a,1,\dots,1)$, and its lifting is given by
\begin{equation}
\label{Oi's lifting odd orthogonal}
\pi(a,\lambda(\omega)) \mapsto \tilde \pi(2a,\mathbf 1_{{\boldsymbol{\mu}}_F}, \lambda(\omega)).
\end{equation}

\subsubsection{\bf Symplectic groups} 
Let $G = \SP_{2n}$ be defined by the involution $g\mapsto H{}^tg^{-1}H^{-1} $, where $H = \antidiag(1,-1,1,-1,\dots)$. The simple affine roots are $\alpha_i = e_i - e_{i+1}$ for  $1\leq i\leq n-1$, $\alpha_n = 2e_n$, and $\alpha_0 = 1-\alpha_l = 1-2e_1$. Here the root subgroup of $\alpha_0$ occupies the $(2n,1)$-entry, and so 
$$u  =I+\antidiag( 
\diag( u_1, \dots,  u_{n-1}, u_n, u_{n-1}, \dots, {u_1}), 
u_0\varpi )\in \mathcal I^+.$$
 Since $G$ is simply connected, the group $\Omega$ is trivial, and so the normalizer $N(\lambda) $ is $Z \mathcal{I}^+ = \{\pm 1\} \mathcal{I}^+$. We take a sign $\xi\in \{\pm 1\}$
and define the character 
$$\lambda((-1)^j  u) = \xi^j\prod_{i}\psi_{a_i} (u_i),\,\qquad \text{ for }j\in \mathbb Z/2\text{ and } u\in \mathcal{I}^+.$$
Put $a = a_{0}\left(\prod_{i=1}^{n-1} a_{i}^2\right)a_{n} \mod \mathcal{U}^1(F)$. Two representations $\pi_{\lambda_1}$ and $\pi_{\lambda_2}$, where $\lambda_j = \lambda(\{a_{i}^j\}_i,\xi_j)$ are isomorphic if and only if $(a^1,\left(\frac{a_{n}^1}{{\boldsymbol{\mu}}_F}\right),\xi_1) = (a^2,\left(\frac{a_{n}^2}{{\boldsymbol{\mu}}_F}\right),\xi_2)$.

By \cite[Th 7.17]{Oi-Sp-and-SO-even}, an Oi representative takes the form $(a,\kappa,\lambda(-1))\in \mathbb F^\times \times (\mathbb F^\times / \mathbb F^{\times2}) \times \{\pm 1\} $, where $(a,\kappa)$ represents $(a_0,\dots,a_{n}) = (a\kappa^{-1},1,\dots,1,\kappa)$. Note that we can take $\kappa\in \{1,\zeta\}$, and both representations $\pi(a\kappa^{-1},\kappa,\lambda(-1))$ lift to 
\begin{equation}
\label{Oi's result for symplectic groups}
\tilde\pi (4a,\left(\tfrac{\cdot}{{\boldsymbol{\mu}}_F}\right),\lambda(-1)\left(\tfrac{-1}{{\boldsymbol{\mu}}_F}\right)\mathfrak n_\psi)\oplus \chi_{F[\sqrt{(-1)^{n-1}a\varpi}]/F}
\end{equation}
where, if $E/F$ is a quadratic extension, $\chi_{E/F}$ is the character of $F^\times$ whose kernel is the image of $N_{E/F}$.

\subsubsection{Ramified even orthogonal groups}

Consider the ramified $G = \SO^{}_{2n,F[\sqrt\varpi]/F}$ defined by the Hermitian matrix
\begin{equation}
\label{H matrix for ramified even orthogonal group G-plus}
H = \antidiag(1,\dots,1,\diag(-\varpi, 1),1,\dots,1).
\end{equation}
The simple affine roots
are
$\alpha_i = e_{i}-e_{i+1}$, for $1< i\leq n-2$, $\alpha_{n-1}=e_{n-1}$, and $\alpha_0 = \tfrac{1}{2}-e_1$ for $n>2$. If $n=2$, then $\Delta^{\text{aff}} = \{e_1, 1-e_1\}$. We express elements in $\mathcal I^+$ as $u = u(u_0,\dots,u_{n-1})$, where
$$u  =I+
\begin{bmatrix}
&\diag(u_1,\dots,u_{n-2})&&&&
\\
&&\cdot&u_{n-1}&\cdot&
\\
u_0&\cdot&\cdot&\cdot&\cdot&
\\
&&\cdot&\cdot&-u_{n-1}&
\\
&&\cdot&&&-\diag(u_{n-2},\dots,u_1)
\\
&&u_0\varpi&&&
\end{bmatrix}.
$$
The normalizer is $N(\lambda)  = Z \mathcal{I}^+ = \{\pm 1\} \mathcal{I}^+$. Take $\xi\in \{\pm 1\}$ and define
$$\lambda((-1)^k u) = \xi^k\prod_{i=0}^{n-1}\psi_{a_i} (u_i),\,\qquad \text{ for }k=1,2\text{ and } u\in \mathcal{I}^+.$$
Put $a=\prod_{i=0}^{n-1} a_{i} \bmod \mathcal{U}^1(F)$. Two representations $\pi_{\lambda_1}$ and $\pi_{\lambda_2}$, where $\lambda_j = \lambda(\{a_{i}^j\}_i,	\xi_j)$, are isomorphic if and only if $(a^1,\xi_1 )= (a^2,\xi_2)$.

An outer automorphism of $G$ is represented by an element in $G^\sharp = \mathrm O_{2n}$ not in $G$. For example, we may take \begin{equation}
\label{p element in ramified even orthogonal groups}
     \mathbbm p  = \diag(I_{n-1},\diag(1,-1),I_{n-1}),
     \end{equation}
which conjugates $u(u_0,\dots,u_{n-2},u_{n-1})$ to $u(u_0,\dots,u_{n-2},-u_{n-1})$.

From \cite[Th 7.16]{Oi-Sp-and-SO-even}, an Oi representative takes the form $(a, \lambda(-1))$, where  $a\in \mathbb F^\times$ 
represents $(a_0,\dots,a_{n}) = (a,1,\dots,1)$. Both $\pi(a,\lambda(-1))$ and $\pi(-a,\lambda(-1))$, which are conjugate with each other by an outer automorphism of $G$, lift to \begin{equation}
\label{Oi's lifting ramified even orthogonal}
\tilde\pi((-1)^{n-1}a^2, \left(\tfrac{\cdot}{{\boldsymbol{\mu}}_F}\right), \mathfrak n_\psi \lambda(-1)).
\end{equation}

\subsubsection{Split or unramified even orthogonal groups}

 Let $G=\SO_{2n}$ or $G=\SO^{\mathrm{ur}}_{2n}$ (note that Oi considered $\SO_{2n+2}$ and $\SO^{\mathrm{ur}}_{2n+2}$ instead). The calculations for these two types of groups are similar (since they become isomorphic over the unramified quadratic extension of $F$), so we treat them together in a single subsection.

Henceforth, $G$ is an even orthogonal group defined by the Hermitian matrix
$$H = \antidiag(1,\dots,1,H',1,\dots,1),$$
where 
$$H'=\antidiag(1,1) \quad \text{if $G$ is split, }\quad \diag(-\zeta,1)\quad\text{if $G$ is unramified}.$$

Assuming $n\geq 3 $, the simple affine roots are given by
\begin{equation*}
   \begin{split}
    &\text{$\alpha_i = e_i - e_{i+1}$ for $1\leq i\leq n-1$, $\alpha_n = e_{n-1} +e_n  $, and $\alpha_0 = 1-\alpha_l = 1-(e_1+e_2)$}, 
    \text{ or }\\
       &\text{$\alpha_i = e_{i}-e_{i+1}$, for $1< i\leq n-2$, $\alpha_{n-1}=e_{n-1}$, and $\alpha_0 = 1-(e_1+e_2)$,}\end{split}
\end{equation*}
according to whether $G$ is split or unramified.

We express an element in $\mathcal I^+$ as
$$u  =I+
\begin{bmatrix}
&u_1&&
\\
&&\diag(u_2,\dots,u_{n-2}, u', -u_{n-2},\dots,-u_2)&
\\
u_0\varpi&\cdot&&-u_1
\\
&-u_0\varpi&&
\end{bmatrix}. 
$$
where 
$$u'=\begin{bmatrix}
u_{n-1}&u_{n}&
\\
&&-u_{n}
\\
&&-u_{n-1}\end{bmatrix} \quad \text{if $G$ is split, }\quad \begin{bmatrix}
u_{n}\zeta&u_{n-1}&
\\
&&u_{n}
\\
&&-u_{n-1}
\end{bmatrix}\quad\text{if $G$ is unramified}. $$
The order of $\Omega$ is 2, whose non-trivial element can be represented by  
\begin{equation*}
   \omega = \antidiag(-\tfrac{a_0}{a_1}\varpi^{-1}, \diag(I_{n-2},\omega',I_{n-2}),-\tfrac{a_1}{a_0}\varpi)
  ,\end{equation*}
where 
\begin{equation*}
   \begin{split}
     \omega'&=\antidiag\left(\frac{a_{n-1}}{a_n},\frac{a_n}{a_{n-1}}\right) \quad \text{if $G$ is split, }
     \\
     \quad& \frac{1}{a_n^2-a_{n-1}^2\zeta}\begin{bmatrix}
a_n^2+a_{n-1}^2\zeta& -2a_na_{n-1}
\\
2a_na_{n-1}\zeta & -(a_n^2+a_{n-1}^2\zeta)
\end{bmatrix}\quad\text{if $G$ is unramified},   \end{split}
\end{equation*}
 so that $\omega^2 =I$. Take two signs $\eta, \xi\in \{\pm 1\}$
and define
$$\lambda(\omega^j (-1)^k u) = \eta^j \xi^k\prod_{i}\psi_{a_i} (u_i),\,\qquad \text{ for }j,k\in \{1,2\}\text{ and } u\in \mathcal{I}^+,$$
i.e., the affine generic character $\lambda$ can be extended to the full normalizer $N(\chi) = \Omega \{\pm 1\} \mathcal{I}^+$. We put 
\begin{equation}
\label{a element in split or unramified orthogonal groups}
 a=a_{0}a_{1}\left(\textstyle\prod_{i=2}^{n-2} a_{i}^2\right)N_{a_{n-1},a_n},
\end{equation}
where 
 $$N_{a_{n-1},a_n}=a_{n-1}a_n\quad \text{if $G$ is split, }\quad a_{n-1}^2-a_{n}^2\zeta^{-1}\quad\text{if $G$ is unramified}.$$
Two representations $\pi_{\lambda_1}$ and $\pi_{\lambda_2}$, where $\lambda_j = \lambda(\{a_{i}^j\}_i,\xi_j,\eta_j)$, are isomorphic if and only if $(a^1,\left(\tfrac{N_{a^1_{n-1},a^1_n}}{{\boldsymbol{\mu}}_F}\right)$, $\xi_1,\eta_1)=(a^2,\left(\tfrac{N_{a^2_{n-1},a^2_n}}{{\boldsymbol{\mu}}_F}\right),\xi_2,\eta_2)$.

A non-trivial element in $G^\sharp = \mathrm O_{2n}$ not in $G$ can be represented by  
\begin{equation}
\label{p element in split or unramified orthogonal groups}
     \mathbbm p  = \diag(I_{n-1},\omega',I_{n-1}).
     \end{equation}
Whether $G$ is split or unramified,  $\mathbbm p$ stabilizes the character $\lambda$.

To express Oi's lifting result, we first put
\begin{equation}
\label{the unramification index of G}
u_G=0 \quad \text{if $G$ is split, }\quad 1\quad\text{if $G$ is unramified}.
\end{equation}
Depending on whether $G$ is split or unramified, an Oi representative takes the form 
\begin{equation*}
(a,\kappa,\lambda(-1),\lambda(\omega))\in 
   \begin{cases}
     \mathbb {\boldsymbol{\mu}}_F \times {\boldsymbol{\mu}}_F / {\boldsymbol{\mu}}_F^{2} \times \{\pm 1\} \times \{\pm 1\} , \\
        \mathbb {\boldsymbol{\mu}}_F \times {\boldsymbol{\mu}}_{F[\sqrt\zeta]} / ({\boldsymbol{\mu}}_F \ker N_{F[\sqrt\zeta]/F} )\times \{\pm 1\} \times \{\pm 1\} , \\
   \end{cases}
\end{equation*}
where $(a,\kappa)$ represents $(a_0,\dots,a_{n}) = (a\kappa^{-1},1,\dots,1,\kappa)$ or $(a(N_{F[\sqrt\zeta]/F}\kappa)^{-1},1,\dots,1,\kappa)$, and we can take $\kappa \in \{1,\zeta\}$ or $\in \{1,\zeta'\}$ where $N_{F[\sqrt\zeta]/F}\zeta' = \zeta$.

From  \cite[Th 8.7]{Oi-Sp-and-SO-even}, both representations $\pi( a,\kappa,\lambda(-1),\lambda(\omega))$, where $\kappa \in \{1,\zeta\}$ or $\in \{1,\zeta'\}$ as above, lift to 
\begin{equation}
\label{Oi's lifting split or unramified even orthogonal}
\tilde\pi((-1)^{n}2^{2-u_G}a, \left(\tfrac{\cdot}{{\boldsymbol{\mu}}_F}\right), 
(-1)^{u_G}\lambda(\omega)\left(\tfrac{-1}{{\boldsymbol{\mu}}_F}\right)\mathfrak n_\psi )
\times 
\tilde\chi_1\times 
\tilde\chi_{2},
\end{equation}
where both $\tilde\chi_1$ and  $\tilde\chi_2$ are tamely ramified characters of $F^\times$ satisfying
\begin{equation}
\label{Oi's result for split and unramified even orthogonal groups}
   \begin{split}
  \tilde \chi_1 \equiv \mathbf 1_{{\boldsymbol{\mu}}_F}, \quad \tilde\chi_1(\varpi) = \lambda(\omega);\quad \tilde \chi_2 \equiv \left(\frac{\cdot}{{\boldsymbol{\mu}}_F}\right), \quad 
     \tilde\chi_2(\varpi) = (-1)^{u_G}\left(\frac{-2^{u_G}a}{{\boldsymbol{\mu}}_F}\right)\lambda(\omega).
         \end{split}
\end{equation}

\subsection{Comparison with Oi's results}
\label{subsection Comparison with Oi's results}

We use our reducibility results in Propositions \ref{first general form of lifting, unitary group} and \ref{first general form of lifting} to determine the endoscopic liftings of simple supercuspidals, and show that our results agree with those of Oi. The groups considered by Oi are all quasi-split, so that $I_\zeta = \O$ in the simple supercuspidal case. When $G$ is a symplectic group, we also compare our results directly with those of \cite{BHS2} (see Subsection \ref{subsection Results in BHS2}).

\subsubsection{Unramified unitary groups}
\label{subsubsection Unramified unitary groups}

Our result in Proposition \ref{first general form of lifting, unitary group} shows that $\tilde{\boldsymbol{\lambda}}(\varpi_i) = \lambda(-1) \mathfrak n(\mathbf q) $ where $\mathbf q = \mathbf q_{z,\mathbf s}$ in Section \ref{subsection Non-degeneracy of a quadratic form}, while Oi's result gives $\tilde{\boldsymbol{\lambda}}(\varpi_i) = (-1)^{n-1}\lambda(-1)$ where $n=\dim_FV$. Hence our aim is to show that 
$$\mathfrak n(\mathbf q) = (-1)^{n-1}.$$ 
 Let $\zeta\in {\boldsymbol{\mu}}_{\Fo}$ be a generator, so that $F = \Fo[\sqrt\zeta]$ with $\overline{\sqrt\zeta} = -\sqrt\zeta$. As a quadratic form over $\mathbb F_\bullet$, the discriminant of $\mathbf q$ is equal to $(-\zeta)^{n-1}N_{\mathbb{F/F}_\bullet}(\det{}_{\mathbb F} \mathbf q)$, where $\det{}_{\mathbb F} $ is the discriminant map over $\mathbb F$. Proposition \ref{properties of quadratic forms}\ref{Hermitian symmetry of the quadratic form} implies that 
 $\det{}_{\mathbb F} \mathbf q\in \mathbb F_\bullet$. Hence 
$$\mathfrak n(\mathbf q) =\left(\frac{(-\zeta)^{n-1}(\det{}_{\mathbb F} \mathbf q)^2}{{\boldsymbol{\mu}}_\Fo}\right)\mathfrak n_\psi^{2n-2} = \left(\frac{\zeta}{{\boldsymbol{\mu}}_\Fo}\right)^{n-1} = (-1)^{n-1},$$
which is the desired result. We remark that this result applies to both forms $G_+$ and $G_-$.

\subsubsection{Ramified even orthogonal groups}
\label{subsubsection Ramified even orthogonal groups}

The ramified even orthogonal case is much simpler than the split and unramified cases, so we begin with it.

Let $G = G(V,h)$ be a ramified even orthogonal group, where the Hermitian form $h$ is defined by the matrix $H = H_+$ defined in (\ref{H matrix for ramified even orthogonal group G-plus}). The lattice sequence determined by 
\begin{equation*}
   \begin{split}
     &\Lambda(\tfrac{1}{2n}-\tfrac{1}{2}) = [\mathfrak{o}_F^{\oplus 2n}],\quad \dots,
     \quad 
     \Lambda(0) = [\mathfrak{o}_F^{\oplus n+1},\mathfrak{p}_F^{\oplus n-1}],
     \quad
     \Lambda(\tfrac{1}{2n}) = [\mathfrak{o}_F^{\oplus n},\mathfrak{p}_F^{\oplus n}]
     \\
& \Lambda(\tfrac{1}{n}) = [\mathfrak{o}_F^{\oplus n-2 },\mathfrak{p}_F,\mathfrak{o}_F,\mathfrak{p}_F^{\oplus n }],\quad \dots,
\quad \Lambda(\tfrac{1}{2}) = [\mathfrak{p}_F^{\oplus n-1},\mathfrak{o}_F,\mathfrak{p}_F^{\oplus n}],
    \end{split}
\end{equation*}
is self-dual. The affine generic functional $\beta = \beta(a_0,\dots,a_{n-2}, a_{n-1}) $ in this case is represented by 
$$ \frac{1}{2}\begin{bmatrix}
&&a_0&&&
\\
\diag(a_1,\dots,a_{n-2})&&\cdot&&&
\\
&\cdot&\cdot&\cdot&\cdot&a_0\varpi^{-1}
\\
&a_{n-1}&\cdot&\cdot&&
\\
&\cdot&\cdot&-a_{n-1}&&
\\
&&&&\diag(-a_{n-2},\dots,-a_{1})&
\end{bmatrix}.$$ 
Then $[\Lambda_{},0,2\beta]$ is equivalent to $[ \Lambda_{},0,\tilde \beta]$, where 
\begin{equation}
\label{beta in ramified even orthogonal group}
\tilde\beta  = \tilde \varpi^{-1} = \antidiag((-1)^{n-1}a^2\varpi^{-1},I_{2n-1})
\end{equation}
and $a=\prod_{i=0}^{n-1} a_{i} \bmod \mathcal{U}^1(F)$. If we take $\mathbbm p = \diag(I_{n-1},1,-1,I_{n-1})$ as in (\ref{p element in ramified even orthogonal groups}), then 
\begin{equation}
\label{action of p on beta, ramified even orthogonal}
\Ad(\mathbbm p)\beta(a_0,\dots,a_{n-2}, a_{n-1}) = \beta(a_0,\dots,a_{n-2}, -a_{n-1}).
\end{equation}
By comparing Oi's result (\ref{Oi's lifting ramified even orthogonal}) with ours in Proposition \ref{first general form of lifting}, we have to show that 
$$\mathfrak n(\mathbf q) = \left(\frac{-2}{{\boldsymbol{\mu}}_F}\right)\mathfrak n_\psi.$$
The bilinear form associated with $\mathbf q$, 
$$\mathbf h:(X,X')\mapsto  \tfrac{1}{2}\mathrm{tr}_{\tilde{\mathfrak g}/F}(( X - \tilde \beta^{-1} X\tilde \beta){}^\alpha X'),$$ 
with $X=\diag(x_1,\dots,x_{2n})\in \tilde{\mathfrak g} $ mod center,
expands as 
\begin{equation}
\label{quadratic form in the even ramified orthogonal case}
   \begin{split}
    \mathbf h:(X,X')\mapsto &\tfrac{1}{2}(
(x_n-x_{2n})x_1'  +(x_{1}-x_{n})x'_{n}
+(x_{n+2}-x_{n+1})x'_{n+1}+
(x_{n+1}-x_{n-1})x'_{n+2}
\\
&+
\left(\sum_{i=2}^{n-1} +\sum_{i=n+3}^{2n}\right) (x_{2n+2-i}-x_{2n+1-i})x_i' ).
\end{split}
\end{equation}
Modulo the 1-dimensional radical, i.e., the center of $\tilde{\mathfrak g}$, it has discriminant $2^{2n-1}(-1)^{n}$. Hence 
$$\mathfrak n(\mathbf q) =  \left(\frac{2^{2n-1}(-1)^{n}}{{\boldsymbol{\mu}}_F}\right)\mathfrak n_\psi^{2n-1}  = \left(\frac{-2}{{\boldsymbol{\mu}}_F}\right)\mathfrak n_\psi$$
which is the desired result.

\begin{rmk}
The above result applies to $G_+$ defined by the Hermitian form in (\ref{H matrix for ramified even orthogonal group G-plus}). The result for $G_-$ is entirely analogous, in which case we take $H = \antidiag(1,\dots,1,\zeta\diag(-\varpi, 1),1,\dots,1)$, so that an affine unipotent element is of the form
$$u  =I+
\begin{bmatrix}
&\diag(u_1,\dots,u_{n-2})&&&&
\\
&&\cdot&u_{n-1}&\cdot&
\\
u_0&\cdot&\cdot&\cdot&\cdot&
\\
&&\cdot&\cdot&-u_{n-1}\zeta^{-1}&
\\
&&\cdot&&&-\diag(u_{n-2},\dots,u_1)
\\
&&u_0\zeta\varpi&&&
\end{bmatrix},
$$
and the affine generic functional $\beta $ is represented by 
$$ \frac{1}{2}\begin{bmatrix}
&&a_0&&&
\\
\diag(a_1,\dots,a_{n-2})&&\cdot&&&
\\
&\cdot&\cdot&\cdot&\cdot&a_0\zeta^{-1}\varpi^{-1}
\\
&a_{n-1}&\cdot&\cdot&&
\\
&\cdot&\cdot&-a_{n-1}&&
\\
&&&&\diag(-a_{n-2},\dots,-a_{1})&
\end{bmatrix}.$$ 
The remainder of the calculation is completely analogous to the case when $G=G_+$.
\qed\end{rmk}

\subsubsection{Odd orthogonal groups}
\label{subsubsection Odd orthogonal groups}

The affine generic functional $\beta$ defining a character of $\mathcal I^+$ is represented by $$\tfrac{1}{2}\begin{bmatrix}
&&a_0\varpi^{-1}&
\\
a_1&&\cdot &a_0\varpi^{-1}
\\
&\diag(a_2,\dots,a_{n},a_n,\dots,a_2)&&
\\
&&a_1&
\end{bmatrix}.$$
Put $a = a_0a_1\prod_{i\geq 2}a_i^2$ and define, with $I=\{i,o\}$, $$\beta_i = \varpi_i^{-1}  = \antidiag(a(2^{2n-3}\varpi)^{-1},I_{2n-1})\in \tilde {\mathfrak g}(V_{i}),$$ 
and $\beta_o=0\in F$. The stratum $[\Lambda_{}, 0,\beta]$ is equivalent to $[\Lambda_i\oplus \Lambda_o,0,\beta_i\oplus\beta_o]$ (note that  $\beta_i\oplus\beta_o$ has the same characteristic polynomial as $\beta$), and we take $\tilde \beta =2\beta_i$ to define $\tilde{\boldsymbol{\lambda}}_i|_{\mathcal U^{0_+}(\Lambda_i)}$.

To match the calculation (\ref{beta in ramified even orthogonal group}) for the even orthogonal group case, we modify our uniformizer by defining $\varpi'$ such that 
$$a(2^{2n-3}\varpi)^{-1}=(-1)^{n-1}b^2\varpi'^{-1} \mod \mathcal U^1(F) $$
 for some $b\in {\boldsymbol{\mu}}_F$, and take \begin{equation*}
\label{odd orthogonal group, H1 and Ho}
H_i = \antidiag(1,\dots,1,\diag(-\varpi', 1),1,\dots,1),\quad H_o=[\varpi'].
\end{equation*}
Comparing (\ref{Oi's lifting odd orthogonal}) with our result in Proposition  \ref{first general form of lifting}, our aim is to show that $\tilde{\boldsymbol{\lambda}}(\varpi_i) = \lambda (\omega)$, where $\omega\in G(V,h)$ corresponds to $\omega_i = \diag(-I_{2n},1)\in G(V,h_{H_i\oplus H_o})$. It suffices to prove that 
$$\mathfrak{n}(\mathbf q)=1,$$
where, with $H_i\oplus H_o$ defined as above, we have $\mathbf q = \mathbf q_{i}\oplus \mathbf q_{o}$, where $\mathbf q_{i}$ is defined as in (\ref{quadratic form in the even ramified orthogonal case}), and $\mathbf q_{o}(X^0)=x_0^2$. 
Here $X^i = \diag(x_1,\dots,x_{2n})\in\tilde {\mathfrak g}(V_{i})$ and $X^0\in \End(V_o,V_{i})$ is a rank-1 column vector depending on a single parameter $x_0\in \mathbb F$. The discriminant of $\mathbf q_{i}$ was computed in the previous Subsection \ref{subsubsection Ramified even orthogonal groups} for even ramified orthogonal groups, which is $2^{2n-1}(-1)^{n}$ (with the modified uniformizer $\varpi'$), while that of $\mathbf q_{o}$ is $2\bmod \mathbb F^{\times2}$. Hence, the discriminant of $\mathbf q$ is $2^{2n-1}(-1)^{n}\cdot2 \equiv (-1)^{n}\bmod \mathbb F^{\times2}$, and therefore
$$ \mathfrak{n}(\mathbf q) = \left(\frac{(-1)^{n}}{{\boldsymbol{\mu}}_F}\right)\mathfrak n_\psi^{2n}=1$$
which is the desired result.

\subsubsection{Symplectic groups with $\dim \tilde V > 1$}
\label{subsubsection Symplectic groups comparison}

We first consider the case $\dim \tilde V = \dim V$. The affine generic functional $\beta $ is represented by 
\begin{equation}
\label{beta element for symplectic groups}
\tfrac{1}{2}\antidiag(2a_0\varpi^{-1}, \diag(a_1,\dots,a_{n-1},2a_n, a_{n-1},\dots,a_1)).
\end{equation}
Putting $a = a_0(\prod_{i=1}^{n-1}a_i^2)a_n$, the stratum $[\Lambda_{},0,2\beta]$ is equivalent to $[\Lambda,0,\tilde\beta]$, where 
$$\tilde\beta = \tilde\varpi ^{-1} = \antidiag(a(2^{2n-2}\varpi)^{-1},I_{2n-1}).$$  
Our result in Proposition \ref{first general form of lifting} implies that the lifting is $\tilde\pi (4a,\left(\tfrac{\cdot}{{\boldsymbol{\mu}}_F}\right), \left(\tfrac{-2}{{\boldsymbol{\mu}}_F}\right)
\lambda (-1)\mathfrak{n}(\mathbf q))$. Compare this with (\ref{Oi's result for symplectic groups}), it suffices to show that 
$$\mathfrak{n}(\mathbf q) = \left(\frac{2}{{\boldsymbol{\mu}}_F}\right)\mathfrak n_\psi.$$
The bilinear form associated to $\mathbf q$,
$$(X,X')\mapsto  \tfrac{1}{2}\mathrm{tr}_{\tilde{\mathfrak g}/F}(( X - \tilde \beta^{-1} X\tilde \beta){}^\alpha X'),$$ 
for $X=\diag(x_1,\dots,x_{2n})\in \mathbb F^{\oplus 2n}\bmod \Delta\mathbb F$, expands as 
$$ \frac{1}{2}\left( (x_{1}-x_{2n})x_1' 
+\sum_{i=2}^{2n}(x_{2n+2-i}-x_{2n+1-i})x_{i}'\right).$$
Modulo the 1-dimensional radical, its discriminant is $2^{2n-1}(-1)^{n-1} = 2(-1)^{n-1}\bmod(\mathbb F^\times)^2$. Its normalized Gauss sum is 
\begin{equation*}
\label{Gauss sum appearing in tripling method, SP-case}
\mathfrak{n}(\mathbf q)=\left(\frac{2(-1)^{n-1}}{{\boldsymbol{\mu}}_F}\right)
\mathfrak{n}_\psi^{2n-1}  = \left(\frac{2}{{\boldsymbol{\mu}}_F}\right)\mathfrak{n}_\psi
\end{equation*}
which is the desired result.

\subsubsection{Symplectic groups with $\dim \tilde V = 1$}
\label{subsubsection Symplectic groups comparison, i=o}

We next consider the case $\dim \tilde V = 1$. The character of $\tilde G = \GL_1(F)$ suggested by Oi is the central character $\tilde\chi$ of the above lifting, which is trivial on ${\mathcal U^1(F)}$ and satisfies
\begin{equation}
\label{lifting character for symplectic groups}\tilde\chi|_{{\boldsymbol{\mu}}_F} = \left(\frac{\cdot}{{\boldsymbol{\mu}}_F}\right)
\quad \text{and} \quad
\tilde\chi(\varpi) = \tilde{\boldsymbol{\lambda}}((4a)^{-1}\varpi_i^{2n}) =  \left(\frac{a}{{\boldsymbol{\mu}}_F}\right)\mathfrak n_\psi^{2n} = \left(\frac{(-1)^na}{{\boldsymbol{\mu}}_F}\right).
\end{equation}
This is the character associated with the ramified quadratic extension $F[\sqrt{(-1)^{n-1}a\varpi}]/F$ appearing in (\ref{Oi's result for symplectic groups}).

Recall that our Hermitian space is defined by the matrix $H = \antidiag(1,-1,1,-1,\dots)$, and in this case $\dim \tilde V  =1$. The operator $\alpha$ on $X$ is given by 
$$[x_1,\dots,x_{2n}] \mapsto {}^t[x_{2n},-x_{2n-1},\dots,x_{2},-x_1],$$
and on $Y$ by identity, so that the relation $X{}^\alpha X = Y- {}^\alpha Y$ reduces to $0=0$, i.e., there is no relation between $X$ and $Y$.

The maximal lattice sequence $\Lambda$ in $V$ determined by 
$$\Lambda(0) = [\mathfrak{o}_F^{\oplus n},  \mathfrak{p}_F^{\oplus n} ],\quad  \Lambda(\tfrac{1}{2n})= [\mathfrak{o}_F^{\oplus n-1},  \mathfrak{p}_F^{\oplus n+1} ],\cdots$$
is self-dual, and we take 
$$\Lambda_- (0) = \mathfrak{o}_F
\quad\text{and}\quad
 \Lambda_- (0_+) = \Lambda_- (1) = \mathfrak{p}_F.$$
We take the null stratum $[\{\mathfrak{p}^k\}_k,0,0]$ in $F$. Then the group ${\mathcal J}_P$ can be expressed as
\begin{equation*}
\label{JP-group, with dim V=1, symplectic case}
\begin{bmatrix}
{\boldsymbol{\mu}}_F+ \mathfrak{p}_F&[\mathfrak{p}_F^{\oplus n},  \mathfrak{o}_F^{\oplus n} ]&\mathfrak{o}_F
\\
{}^t [\mathfrak{p}_F^{\oplus (n+1)},  \mathfrak{o}_F^{\oplus (n-1)} ]&{\boldsymbol{\mu}}_F + \mathcal{I}^+&{}^t[\mathfrak{o}_F^{\oplus n},  \mathfrak{p}_F^{\oplus n} ]
\\
\mathfrak{p}_F&[\mathfrak{p}_F^{\oplus (n+1)},  \mathfrak{o}_F^{\oplus (n-1)} ]&{\boldsymbol{\mu}}_F + \mathfrak{p}_F
\end{bmatrix}.
\end{equation*}
We now compute the coefficients $b_w$. For $w=y$, we have
$$X\in[\mathfrak{p}_F^{\oplus n},  \mathfrak{o}_F^{\oplus n} ]/[\mathfrak{p}_F^{\oplus (n+1)},  \mathfrak{o}_F^{\oplus (n-1)} ]\cong \mathfrak{o}_F/\mathfrak{p}_F
\qquad\text{and}\qquad
Y\in \mathfrak{o}_F^\times/\mathfrak{p}_F.
$$
For $X$ represented by $[0^n,x_{n+1},0^{n-1}]$, we have $-\mathrm{tr}(\beta{}^\alpha X Y^{-1} X) = (-1)^na_nx_{n+1}^2Y^{-1}$, so that
$$b_y \tilde T_y(s_y)= \sum_{Y}\tilde{\lambda}(Y) \sum_{x_{n+1}}\psi((-1)^na_nx_{n+1}^2Y^{-1})=\left(\frac{(-1)^n a_n}{{\boldsymbol{\mu}}_F}\right)q^{1/2}\mathfrak n_\psi(q-1)$$
when $\tilde\lambda |_{{\boldsymbol{\mu}}_F} $ is quadratic, in which case \begin{equation*}
c_y = q^2, \quad r_y = 1,
\quad\text{and}\quad 
\epsilon_y T_y(s_y)= \left(\frac{(-1)^n a_n}{{\boldsymbol{\mu}}_F}\right)\mathfrak n_\psi.
\end{equation*}
The calculation for $w=z$ is similar. We have
\begin{equation*}
\label{definition of P_(F,Lambda)}
X\in \varpi^{-1}[\mathfrak{p}_F^{\oplus (n+1)},  \mathfrak{o}_F^{\oplus (n-1)} ]/[\mathfrak{p}_F^{\oplus n},  \mathfrak{o}_F^{\oplus n} ] \quad\text{and}\qquad
Y = \varpi^{-1}Y_0\in \mathfrak{p}_F^{-1}/\mathfrak{o}_F.
\end{equation*}
For $X$ represented by $[x_1,\dots,x_{n},0,x_{n+2}\varpi^{-1},\dots,x_{2n}\varpi^{-1}]$, we obtain
$$\mathrm{tr}(-\beta{}^\alpha X Y^{-1} X )= (a_0x_1^2+2(-a_1x_2x_{2n}+\cdots+(-1)^{n-1}a_{n-1}x_nx_{n+2}) )Y_0^{-1}.$$ This form has discriminant $a_0 = aa_n$ mod square, and so
$$b_z \tilde T_z(s_z)= \sum_{Y_0}\tilde{\lambda}(Y_0) \sum_{X}\psi(\mathrm{tr}(-\beta{}^\alpha X Y_0^{-1} X ))=q^{(2n-1)/2}(q-1)\left(\frac{aa_n}{{\boldsymbol{\mu}}_F}\right)\mathfrak n_\psi $$
again when $\tilde\lambda |_{{\boldsymbol{\mu}}_F} $ is quadratic, in which case 
\begin{equation*}
c_z = q^{2n}, \quad r_z = 1,
\quad\text{and}\quad 
\epsilon_z T_z(s_z)= \left(\frac{aa_n^{}}{{\boldsymbol{\mu}}_F}\right)\mathfrak n_\psi.
\end{equation*}
Therefore, 
\begin{equation*}
\tilde\lambda|_{{\boldsymbol{\mu}}_F}  = \left(\frac{\cdot}{{\boldsymbol{\mu}}_F}\right)
\quad\text{and}\quad
\tilde{\boldsymbol{\lambda}}(\varpi)= \left(\frac{(-1)^{n}a_n(aa_n^{})}{{\boldsymbol{\mu}}_F}\right)
\mathfrak{n}_\psi^{2} =\left(\frac{(-1)^{n-1}a}{{\boldsymbol{\mu}}_F}\right) = \left(\frac{\varpi\det \beta}{{\boldsymbol{\mu}}_F}\right)
\end{equation*}
which is precisely $\chi_{F[\sqrt{(-1)^{n-1}a\varpi}]/F}$.

\subsubsection{Comparison with results in \cite{BHS2}}
\label{subsection Results in BHS2}

Let $\tilde\chi$ be our character in (\ref{lifting character for symplectic groups}). The character $\epsilon_1$ computed in \cite[Sec 3.4]{BHS2} is tamely ramified, quadratic on ${\boldsymbol{\mu}}_F$, and maps $\varpi$ to $1$. We easily see that $\tilde\chi|_{\mathfrak o_F^\times} = \epsilon_1|_{\mathfrak o_F^\times}$.

We put
 $$\beta^{\mathrm{BHS}} = \antidiag(\varpi^{-1},\diag(-1,1,\dots,(-1)^n,,\dots,1,-1)).$$
Note that this differs slightly from the $\beta$-element in \cite[Sec 2.2]{BHS2}, since their underlying symplectic form is defined by $\antidiag(J,-J)$, where $J = \antidiag(1,\dots,1)$, which is not the same as ours. Using our expression in (\ref{beta element for symplectic groups}), we have $a_0=1$ and $a_n=(-1)^n$ for $\beta^{\mathrm{BHS}} $.

Our character takes the value $\tilde\chi((-1)^na_0a_n\varpi)=1$, and so  
$$\tilde\chi(\varpi) =\left(\frac{(-1)^na_0a_n}{{\boldsymbol{\mu}}_F}\right) =\left(\frac{(-1)^n\cdot(-1)^n\cdot 1}{{\boldsymbol{\mu}}_F}\right)=1,$$ which is equal to $\epsilon_1(\varpi)$.

We then examine the inducing types. Our type  $\tilde{\boldsymbol{\lambda}}$ in Proposition \ref{first general form of lifting} is quadratic on ${\mathfrak o_{F[\beta]}^\times}$, which coincides with $\tau|_{{\mathfrak o_{F[\beta]}^\times}}$ in \cite[Sec 4.6]{BHS2}.

Our type takes the value $\tilde{\boldsymbol{\lambda}}(\varpi_i) = \lambda(-1)\left(\frac{-1}{{\boldsymbol{\mu}}_F}\right)\mathfrak{n}_\psi$. The uniformizer $\varpi^{\mathrm{BHS}}$ is $1/\beta^{\mathrm{BHS}}$, which equals $2\varpi_i$ in our notation. We have
$$\tilde{\boldsymbol{\lambda}}(\varpi^{\mathrm{BHS}}) = \lambda(-1)\left(\frac{-2}{{\boldsymbol{\mu}}_F}\right)\mathfrak{n}_\psi,$$
which is equal to $\tau(\beta^{-1})$ in \cite[Prop 4.14]{BHS2}.

We hence conclude that our liftings are the same as those obtained in \cite{BHS2}, by a direct comparison.

\subsubsection{Split or unramified even orthogonal groups with $i\neq o$}

The affine generic functional $\beta$ is represented by $$  \frac{1}{2}\begin{bmatrix}
&&a_0\varpi^{-1}&
\\
a_1&&\cdot&-a_0\varpi^{-1}
\\
&\diag(a_2,\dots,a_{n-2}, \beta', -a_{n-2},\dots,-a_{2})&&
\\
&&-a_1&
\end{bmatrix},$$
where 
$$\beta'=\begin{bmatrix}
a_{n-1}&&\\
a_{n}&&
\\
&-a_{n}&-a_{n-1}
\end{bmatrix} \quad \text{if $G$ is split, }\quad \begin{bmatrix}
a_{n}&&
\\
a_{n-1}&&
\\
&a_{n}\zeta^{}&-a_{n-1}
\end{bmatrix}\quad\text{if $G$ is unramified}. $$
The outer automorphism  $\mathbbm p $ given in (\ref{p element in split or unramified orthogonal groups}) acts on $\beta$ by switching the entries $a_{n-1}$ and $a_n$ if $G$ is split 
and by conjugation $(a_{n-1},a_n) \mapsto (a_{n-1},-a_n) $ if $G$ is unramified. The characteristic polynomial of $\beta$ is 
$$T^{2n}-(-1)^{n}a(2^{2-u_G} \varpi)^{-1}T^2,$$
where $a$ is given in (\ref{a element in split or unramified orthogonal groups}) and $u_G$ in (\ref{the unramification index of G}). Hence, $[\Lambda_{},0,2\beta]$ is equivalent to $[\Lambda_{i}\oplus \Lambda_o,0,\tilde\beta\oplus (0,  0)]$, where $\Lambda_{i}$ and $\Lambda_{o}$ are self-dual lattice sequences in $V_{i}$ and $V_{o}$, respectively, with $\dim V_{i}=2n-2$ and $\dim V_o = 2$, and 
$$\tilde\beta = \varpi_i ^{-1} = \antidiag((-1)^{n}a(2^{2-u_G}\varpi)^{-1},I_{2n-3}).$$  
As in the odd orthogonal case in Subsection \ref{subsubsection Odd orthogonal groups}, to match the calculation (\ref{beta in ramified even orthogonal group}) for the even ramified orthogonal case in Subsection \ref{subsubsection Ramified even orthogonal groups}, we modify our uniformizer by defining $\varpi'$ such that
\begin{equation}
\label{Modified uniformizer}
(-1)^{n}a(2^{2-u_G} \varpi)^{-1}=(-1)^{n-1}b^2\varpi'^{-1}\mod \mathcal U^1(F) 
\end{equation}
for some $b\in {\boldsymbol{\mu}}_F$, and take 
$$H_1 = \antidiag(1,\dots,1,\diag(-\varpi', 1),1,\dots,1),\quad H_o=\diag(-\zeta^{u_G}\varpi',1).$$
Here $H_o$ is placed at the center of $H_1$, i.e., $H = \antidiag(1,\dots,1,\diag(-\varpi', -\zeta^{u_G}\varpi', 1,1),1,\dots,1)$.

For $i\neq o$, comparing (\ref{Oi's lifting split or unramified even orthogonal}) with our result in Proposition  \ref{first general form of lifting}, it suffices to show that
$$
\mathfrak{n}(\mathbf q) = (-1)^{u_G}\left(\frac{2}{{\boldsymbol{\mu}}_F}\right) \mathfrak n_\psi
.$$ 
We express $\mathbf q$ as the sum of two quadratic forms: $\mathbf q_{i}$ defined as in the even ramified orthogonal case in (\ref{quadratic form in the even ramified orthogonal case}) but with $\dim_{\mathbb F} = 2n-3$, and $\mathbf q_{o}$ with $\dim_{\mathbb F} = 2$. The associated bilinear form is 
$(X,X')\mapsto \mathbf h_{i}(X,X')+ \mathbf h_{o}(X,X')$, where $\mathbf h_{i}$ is as in (\ref{quadratic form in the even ramified orthogonal case}) and 
$\mathbf h_{o} = -\tfrac{1}{2}(\zeta^{u_G}x_0x_0'+y_0y_0')$, and we have
 $$\mathrm{disc}(\mathbf q_{o}) = \zeta^{u_G}
\quad\Rightarrow\quad \mathfrak n(\mathbf q_{o}) = (-1)^{u_G}\left(\frac{-1}{{\boldsymbol{\mu}}_F}\right). $$
Recall that $ \mathfrak n(\mathbf q_{i})= \left(\tfrac{-2}{{\boldsymbol{\mu}}_F}\right) \mathfrak n_\psi$. Therefore, 
$$\mathfrak{n}(\mathbf q) =   \mathfrak n(\mathbf q_{i}) \mathfrak n(\mathbf q_{o})= (-1)^{u_G}\left(\frac{2}{{\boldsymbol{\mu}}_F}\right) \mathfrak n_\psi$$
which is the desired result.

\subsubsection{Split or unramified even orthogonal groups with $i=o$}

 We continue from the previous subsection, but now with $i=o$. The calculation is analogous to Section \ref{subsubsection Example 2: ramified SO(2)}; although $\dim V_o =2$, we must take a self-dual tamely ramified character $\tilde{\boldsymbol{\lambda}}$ of $\tilde G(F) = F^\times$, which contains the null stratum $[\{\mathfrak{p}^k\}_k,0,0]$ in $F$ as its maximal simple stratum (rather than an epipelagic stratum of dimension 2).

The operator $\alpha$ on $X$ is given by
\begin{equation*}
   \begin{split}
&[x_1,\dots,x_{n-2},
(x_{n-1}\varpi,x_{n}\varpi,x_{n+1},x_{n+2}),
x_{n+3},\dots,x_{2n}]
\\
&\mapsto {}^t[-x_{2n},,\dots,-x_{n+3},(\varpi'^{-1} x_{n-1},\zeta^{-u_G}\varpi'^{-1}x_{n},-x_{n+1},-x_{n+2}),-x_{n-2},\dots,-x_1],
   \end{split}
\end{equation*}
and on $Y$ by minus-identity. Here, $\varpi'$ is the modified uniformizer defined in (\ref{Modified uniformizer}). We put
$$\mathbbm p_o = \diag(I_{n-1},\diag(1,-1),I_{n-1})
\quad \text{and}\quad
\omega_o = \diag(I_{n-1},-I_2,I_{n-1}).$$
 When $w=y$, representing $X = [0_{n-1},(0,x_{n+1}),0_{n-1}]$, we have $2Y = -x_{n+1}^2$. 
One can show that $I - {}^\alpha X Y^{-1}X $ is $G(F)$-conjugate to $\mathbbm p_o$, and 
$$b_y\tilde T_y(s_y) = \sum_{-2Y = x_{n+1}^2}\tilde\lambda (Y) \lambda( \mathbbm p_o^2) = \tilde\lambda(-2) (q-1),$$
which implies that
\begin{equation*}
c_y = q , \quad 
r_y = 1,
\quad\text{and}\quad 
\epsilon_y T_y(s_y)= \tilde\lambda(-2).
\end{equation*}
When $w=z$, with representatives $X = [x_1,\dots,x_{n-1},
(x_{n},0),
x_{n+2}\varpi^{-1},\dots,x_{2n}\varpi^{-1}]$, we have 
\begin{equation*}
\label{relation in split or unramified even orthogonal groups, w=z, i=o}
2Y\varpi^{-1} = X{}^\alpha X =-\varpi^{-1}\zeta^{-u_G}x_n^2.
\end{equation*}
Regardless of the Hermitian form, one can show that indeed the trace of $\beta_j{}^\alpha X^jX^j$ is 0, which means that $\mathbf q_{j}$ is trivial. Now $I - {}^\alpha X Y^{-1}X $ is $G(F)$-conjugate to $\omega_o \mathbbm p_o$. Hence, 
$$b_z\tilde T_z(s_z) = \sum_{2Y =-\zeta^{-u_G}x_n^2.}\tilde\lambda (Y) \lambda(\omega_o \mathbbm p_o^2) = \tilde\lambda(\zeta)^{u_G}\tilde\lambda(- 2)\lambda(\omega_o) (q-1),$$
which implies that
\begin{equation*}
c_z = q, \quad 
r_z = 1,
\quad\text{and}\quad 
\epsilon_z T_z(s_z)= \tilde\lambda(\zeta)^{u_G}\tilde\lambda(- 2)\lambda(\omega_o).
\end{equation*}
We see that the deduction is independent of $\tilde\lambda|_{{\boldsymbol{\mu}}_F}$, which gives two tamely ramified characters ${\tilde{\boldsymbol{\lambda}}}_1$ and ${\tilde{\boldsymbol{\lambda}}}_2$ extending $\tilde\lambda$: 
 \begin{equation}
 \label{split or unramified orthogonal groups, the two charaacters}
   \begin{split}
      \tilde \lambda_1 \equiv \mathbf 1_{{\boldsymbol{\mu}}_F}& \quad\Rightarrow \quad {\tilde{\boldsymbol{\lambda}}}_1(\varpi') = \lambda(\omega_o); \\
       \tilde \lambda_2 \equiv \left(\frac{\cdot}{{\boldsymbol{\mu}}_F}\right)
       & \quad\Rightarrow \quad {\tilde{\boldsymbol{\lambda}}}_2(\varpi') = (-1)^{u_G}\lambda(\omega_o),
   \end{split}
\end{equation}
giving the required results in (\ref{Oi's result for split and unramified even orthogonal groups}).

\subsection{Ramified unitary groups}
\label{subsection Ramified unitary groups}

The calculation applies equally well to ramified unitary groups, the type of groups not covered in Oi's series. Let $G=\mathrm U_{N}(F/\Fo)$ where $F = \Fo[\varpi]$ with $\varpi^2=-\varpi_\bullet$. 

\subsubsection{Odd case}

If $N=2n+1$, we take $H=\antidiag(1,-1,1,\dots,1)$. An 
element $u\in \mathcal{I}^+$ takes the form 
$$u = I+\antidiag( \diag(u_1,\dots,u_{n},u_{n},\dots,u_1), u_0\varpi), \quad u_0,u_1,\dots,u_{n}\in {\boldsymbol{\mu}}_F,$$
and the affine generic functional is represented by
$$\beta = \tfrac{1}{2}\antidiag(2a_0\varpi^{-1},\diag(a_1,\dots,a_{n},{a_{n}},\dots,{a_1})),\quad a_0,a_1,\dots,a_{n}\in {\boldsymbol{\mu}}_F.$$
The normalizer is $N(\chi) =  \{\pm 1\} \mathcal{I}^+$. Take a sign $\xi\in \{\pm 1\}$
and define
$$\lambda( (-1)^k u) =  \xi^k\prod_{i}\psi_{a_i} (u_i),\,\qquad \text{ for }k=1,2\text{ and } u\in \mathcal{I}^+.$$
Putting
$ a=a_{0}(\textstyle\prod_{i=1}^{n} a_{i}^2)
$, two representations $\pi_{\lambda_1}$ and $\pi_{\lambda_2}$, where $\lambda_j = \lambda(\{a_{i}^j\}_i,\xi_j)$ for $i\in\{1,2\}$, are isomorphic if and only if $(a^1,\xi_1)=(a^2,\xi_2)$.

The quadratic form $\mathbf q = \mathbf q_{z,\mathbf s}$ is on a space of dimension $2n$ and has discriminant $(-1)^n$. The normalized Gauss sum is $\mathfrak n(\mathbf q)=\left(\tfrac{(-1)^n}{{\boldsymbol{\mu}}_F}\right)\mathfrak n_\psi^{2n}=1$. If we take $\varpi_i = \antidiag(I_{2n},2a\varpi^{-1})$, 
\begin{equation*}
\tilde \lambda|_{\tilde {\mathcal I}^+} =\psi_{2\beta} , \quad \tilde \lambda|_{{\boldsymbol{\mu}}_{F}}= \mathbf 1_{{\boldsymbol{\mu}}_F},
\quad\text{and}\quad
\tilde{\boldsymbol{\lambda}}(\varpi_i)= \tilde\lambda(-2)\lambda(-1),
\end{equation*}
then the lifting is given by 
$$\pi(a,\lambda(-1)) \mapsto \tilde \pi(2a,\mathbf 1_{{\boldsymbol{\mu}}_F}, \lambda(-1)).$$ 

\subsubsection{Even case}

If $N=2n$, we take $H=\antidiag(1,-1,1,\dots,-1)$. An 
element $u\in \mathcal{I}^+$ takes the form 
$$u = I+\antidiag(\diag(u_1,\dots,u_{n},\dots,u_1),u_0\varpi_\bullet),
$$
and the affine generic functional is represented by
$$\beta = 
\tfrac{1}{2}\antidiag(2a\varpi_\bullet^{-1}, \diag(a_1,\dots,a_{n-1},2a_n,a_{n-1},\dots,a_1)).$$
The normalizer is $N(\chi) \cong \{\pm 1\}^2  \mathcal{I}^+$. Take a pair of signs $\xi, \eta\in \{\pm 1\}$
and define
$$\lambda( ((-1)^{k_1},(-1)^{k_2}) u) =  \xi^{k_1} \eta^{k_2}\prod_{i}\psi_{a_i} (u_i),\,\qquad \text{ for }k_1,k_2\in \{0,1\}\text{ and } u\in \mathcal{I}^+.$$
Putting 
$ a=a_{0}(\textstyle\prod_{i=1}^{n-1} a_{i}^2)a_n
$, two representations $\pi_{\lambda_1}$ and $\pi_{\lambda_2}$, where $\lambda_j = \lambda(\{a_{i}^j\}_i,\xi_j,\eta_j)$ for $i\in\{1,2\}$, are isomorphic if and only if $(a^1,\xi_1,\eta_1)=(a^2,\xi_2,\eta_2)$.

The characteristic polynomial of $\beta$ is $T^{2n}-a(2^{2n-2}\varpi)^{-1}$. We hence take $I=\{j,o\}$ and the Hermitian forms $H_j=\varpi\antidiag(1,-1,1,-1\dots,1)$ and $H_o=[\varpi]$. Put $\beta_j = \tfrac{1}{2}\antidiag(a\varpi^{-1},I_{2n-2})$ and $\beta_o=0$, and define $\tilde\beta = 2[\beta_j,\beta_o]$. %

When $i=j$, $\mathbf q_{j}$ is the form $\mathbf q$ in the odd case (with $n$ replaced by $n-1$), and $\mathbf q_{o}= -x_o^2$. We hence have $\mathfrak n(\mathbf q)=\left(\tfrac{-1}{{\boldsymbol{\mu}}_F}\right)\mathfrak n_\psi^{}$, and define \begin{equation*}
\tilde \lambda_j|_{\tilde {\mathcal I}^+} =\psi_{2\beta_j} , \quad \tilde \lambda_j|_{{\boldsymbol{\mu}}_{F}}= \left(\frac{\cdot}{{\boldsymbol{\mu}}_F}\right)
,\quad\text{and}\quad
\tilde{\boldsymbol{\lambda}}_j(\varpi_j)= \lambda(\omega_j)\left(\frac{2}{{\boldsymbol{\mu}}_F}\right)\mathfrak n_\psi. 
\end{equation*}
If $i=o$, then $\mathbf q_{o}=1$ since $\mathfrak W_{z,o}^o$ is trivial, while  
$$\mathbf q_{j}=-\varpi X^j \beta_j {}^\alpha X^j = (\det \beta_j) x_1^2, \quad 
X^j\in \mathfrak W^o_{z,j}.$$
Therefore, we define $\tilde{\boldsymbol{\lambda}}_o$ to be the tamely ramified character of $F^\times$ satisfying
\begin{equation*}
\tilde \lambda_o|_{{\boldsymbol{\mu}}_{F}}= \left(\frac{\cdot}{{\boldsymbol{\mu}}_F}\right)
\quad\text{and}\quad
\tilde{\boldsymbol{\lambda}}_o(\varpi)= \lambda(-1)\left(\frac{-2\det \beta_j}{{\boldsymbol{\mu}}_F}\right)\mathfrak n_\psi.
\end{equation*}
The endoscopic lift of $\pi_\lambda$ is the parabolically induced representation 
$$\tilde\pi(a,\left(\tfrac{\cdot}{{\boldsymbol{\mu}}_F}\right), \lambda(\omega_j)\left(\tfrac{2}{{\boldsymbol{\mu}}_F}\right)\mathfrak n_\psi)\times \tilde{\boldsymbol{\lambda}}_o$$ 
of $\GL_{2n}(F)$.

\section{L-packets of epipelagic representations}
\label{subsection Reducibility results for different classical groups}

We present the L-packets of epipelagic representations of classical groups. Roughly speaking, the calculation is an inverse process of the endoscopic lifting: we begin with a representation $\tilde \pi$ of a general linear group which is an endoscopic lift of an epipelagic representation of a classical group, described via the inducing types $\{\tilde{\boldsymbol{\lambda}}_i\}_i$ of the cuspidal support of $\tilde \pi$. We then describe the inducing types of all possible representations lifting to $\tilde\pi$ using information of $\{\tilde{\boldsymbol{\lambda}}_i\}_i$. Again, we omit the discussion for unramified unitary groups, since their L-packets of epipelagic supercuspidals are singletons.

More precisely, we begin with a skew semisimple epipelagic stratum $\mathbf{s} = [\Lambda,0,\beta]$, with orthogonal decomposition  $[\Lambda_i,0,\beta_i]$ for $i\in I$. Let $\hat I$ be the dual index set 
of $I$, defined in each subsection below depending on the type of $G$. Take a tuple of signs $\delta = (\delta_i)_{i}\in \{\pm 1\}^{\#\hat I}$, where $\delta_o$ (and $\delta_{o'}$ if $o'\in \hat I$) may or may not depend on other $\delta_i$, again  depending on the type of $G$. If $G = G(V,h)$ and $N=\dim_FV$, we put 
$$\hat N = N-1 \text{ if $G$ is odd orthogonal}, \quad N+1 \text{ if $G$ is symplectic, and }\quad 
N\text{ otherwise.}$$
From the results of Proposition \ref{first general form of lifting} and Sections \ref{subsection Comparison with Oi's results} and \ref{subsection Ramified unitary groups} for various types of groups, we know that any endoscopic lift is an irreducible representation of $\GL_{\hat N}(F)$, depending only on $\mathbf{s}$ and $\delta$, and denoted simply by $\tilde \pi_{(\mathbf{s},\delta)}$, whose cuspidal support consists of quasi-epipelagic representations of the form $\tilde\pi(\beta_i, \phi_i,\xi_i)$  (with notation as in Subsection \ref{subsection Construction of quasi-simple supercuspidals GL-case}) for $i\in \hat I$, each of which is either a self-dual character of $F^\times$ or a self-dual quasi-epipelagic representation such that, for all $i\in \hat I \smallsetminus \{o,o'\}$, the characters $\phi_i = \tilde{{\lambda}}_i|_{\boldsymbol{\mu}_{E_i}}$ are
\begin{equation*}
   \begin{split}
   &\text{all trivial when $G$ is odd orthogonal or odd ramified unitary; } 
   \\
  & \text{all quadratic otherwise.}
   \end{split}
\end{equation*}
Consequently, for each $i\in \hat I \smallsetminus \{o,o'\}$, we assign $\xi_i = \tilde{\boldsymbol{\lambda}}_i(\varpi_i) $  to be  
\begin{equation*}
   \begin{split}
   &\text{$\delta_i$ when $G$ is odd orthogonal or odd ramified unitary}; 
   \\
  & \text{$\delta_i\cdot (\text{a power of the normalized quadratic Gauss sum }\mathfrak n_\psi)$ otherwise.}
   \end{split}
\end{equation*}
We then describe the corresponding L-packet $\tilde\Pi_{(\mathbf{s},\delta)}$, as a set of supercuspidal representations of $G(F)$ parametrized by partitions $I_\zeta \subseteq I$, together with an extra sign when $G$ is even orthogonal. Each representation in $\tilde\Pi_{(\mathbf{s},\delta)}$ is then determined by $(I_\zeta , \mathbf{s},\delta)$ (and a sign in the even orthogonal case).

For each $i\in I$, the common ramification degree of $F[\beta_i]/F$ is $2e$, and $\kappa_i$ is the sign appearing in (\ref{a sign kappa_i with j not o}). For each partition $I_\zeta$ of $I$, the embedding $m_{I_\zeta}$ is defined in Section \ref{section Embeddings of lattices}.

\subsubsection{Odd orthogonal groups}

For $G=\SO_{2n+1}$, given a skew semisimple epipelagic stratum $\mathbf s=[\Lambda,0,\beta]$, denote $\hat I = I\smallsetminus \{o\}$ and take $\delta \in \{\pm 1\}^{\#\hat I}$. Define an irreducible parabolically induced representation of $\GL_{2n+1}(F)$ by
$$\tilde\pi_{(\mathbf s,\delta)} = \prod_{i\in \hat I}\tilde\pi(2\beta_i,\mathbf 1_{{\boldsymbol{\mu}}_{E_i}}, \delta_i), $$
then $\tilde\pi_{(\mathbf s,\delta)}$ is the endoscopic lift of the representations in the L-packet
$$\tilde\Pi_{(\mathbf s,\delta)} := \{\pi_{I_\zeta}\}_{I_\zeta},$$
where each $\pi_{I_\zeta}$ is induced from a character $\lambda_{(I_\zeta,\mathbf s,\delta)}$ defined by 
\begin{equation*}
 \lambda_{(I_\zeta,\mathbf s,\delta)}|_{G(F)_{x,0_+}} = \psi_{m_{I_\zeta}(\beta)}\quad \text{and}\quad
\lambda_{(I_\zeta,\mathbf s,\delta)}(\omega_i) = \delta_i\kappa_i\left(\tfrac{2}{{\boldsymbol{\mu}}_{F}}\right),\quad\text{ for all }i\in I\smallsetminus\{o\}. 
\end{equation*}
Then $\lambda_{(I_\zeta,\mathbf s,\delta)}$ is an epipelagic type, and $\pi_{I_\zeta}:=\pi_{(I_\zeta,\mathbf s,\delta)}$ is an epipelagic supercuspidal representation of $G(F)$. The packet $\tilde\Pi_{(\mathbf s,\delta)} $ contains $2^{\#I-1}$ representations: half of them belong to $G_+$ and the other half to $G_-$. Indeed, if $\pi_{{\O}}$ belongs to $G_+$, then 
$$\text{$\pi_{I_\zeta}$ belongs to $G_\epsilon$ \quad$\Leftrightarrow$\quad $\#I_\zeta \equiv \tfrac{1}{2}(\epsilon-1)\bmod 2$,\quad for $\epsilon\in \{+,-\}$.}
$$

\subsubsection{Symplectic groups}

For $G=\SP_{2n}$, given $\mathbf s=[\Lambda,0,\beta]$ as before, denote $\hat I = I\sqcup \{o\}$ and take $\delta = \{\pm 1\}^{\#I}$. Define a tamely ramified character $\tilde{\boldsymbol{\lambda}}_{o}$ of $F^\times$ by 
$$ \tilde\lambda_{o}|_{{\boldsymbol{\mu}}_F} = \left(\frac{\cdot}{{\boldsymbol{\mu}}_F}\right)^{n/e} 
\quad\text{and}\quad
\tilde{\boldsymbol{\lambda}}_{o}(\varpi_{}) = \prod_{i\in I}\delta_i\left(\frac{\varpi^{f_i} N_{E_i/F}\beta_i}{{\boldsymbol{\mu}}_F}\right),$$
and  an irreducible parabolically induced representation of $\GL_{2n+1}(F)$ by
$$\tilde\pi_{(\mathbf{s},\delta)} = \left( \prod_{i\in I}\tilde\pi(2\beta_i,\left(\tfrac{\cdot}{{\boldsymbol{\mu}}_{E_i}}\right)^{}, \delta_i\mathfrak n_\psi^{f_i}) \right)\times \tilde{\boldsymbol{\lambda}}_{o}.$$ 
Then $\tilde\pi_{(\mathbf{s},\delta)}$ is the endoscopic lift of the representations in the L-packet of $2^{\#I}$ representations,
$$\tilde\Pi_{(\mathbf{s},\delta)} := \{\pi_{I_\zeta}\}_{I_\zeta},$$
where each $\pi_{I_\zeta}$ is induced from the epipelagic type $\lambda_{(I_\zeta,\mathbf s,\delta)}$ defined by 
\begin{equation*}
\lambda_{(I_\zeta,\mathbf s,\delta)}|_{G(F)_{x,0_+}} = \psi_{m_{I_\zeta}(\beta)},
\quad\text{and}\quad 
\lambda_{(I_\zeta,\mathbf s,\delta)}(\omega_i) = \delta_i\left(\tfrac{-1}{{\boldsymbol{\mu}}_{F}}\right)^{f_i}\kappa_i,\quad\text{ for all }i\in I, 
\end{equation*}
and so $\pi_{I_\zeta}:=\pi_{(I_\zeta,\mathbf s,\delta)}$ is an epipelagic supercuspidal representation of $G(F)$.

\subsubsection{Even orthogonal groups}

For $G=\SO^{}_{2n, F[\sqrt{d}]/F}$ where $d\in F^\times/(F^\times)^2$, given $\mathbf s=[\Lambda,0,\beta]$ as before and such that $\prod_{i\in I\smallsetminus\{o\}}{d_i} = d \bmod(F^\times)^2$, where $d_i$ is the discriminant of $F[\beta_i]/F$, denote $\hat I = I$ when $o\notin I$ and 
$\hat I = I\sqcup \{o'\}$ otherwise.

Take a tuple $\delta \in \{\pm 1\}^{\#I}$. We separate into cases depending on whether $o\in I$. When $o\notin I$, define the following irreducible parabolically induced representation of $\GL_{2n}(F)$ by 
$$\tilde\pi_{(\mathbf s,\delta)} =  \prod_{i\in I}\tilde\pi(\beta_i,\left(\tfrac{\cdot}{{\boldsymbol{\mu}}_{E_i}}\right), \delta_i\mathfrak n_\psi^{f_i}).
$$
When $o\in I$, we define two additional tamely ramified characters ${\tilde{\boldsymbol{\lambda}}}_{o}$ and ${\tilde{\boldsymbol{\lambda}}}_{o'}$ of $F^\times$ satisfying
 \begin{equation}
 \label{the two extra characters in the general epipelagic even orthogonal case}
   \begin{split}
      \tilde\lambda_{o} |_{{\boldsymbol{\mu}}_F}\equiv 
      \left(\frac{\cdot}{{\boldsymbol{\mu}}_F}\right), \quad {\tilde{\boldsymbol{\lambda}}}_{o}(\varpi) = {\delta_o}
      ;\quad \quad 
     \tilde\lambda_{o'} |_{{\boldsymbol{\mu}}_F}\equiv \mathbf 1_{{\boldsymbol{\mu}}_F},\quad 
  {\tilde{\boldsymbol{\lambda}}}_{o'}(\varpi) = \prod_{i\in I}\delta_i,
   \end{split}
\end{equation}
which are slight modifications of ${\tilde{\boldsymbol{\lambda}}}_{1}$ and ${\tilde{\boldsymbol{\lambda}}}_{2}$ in (\ref{split or unramified orthogonal groups, the two charaacters}). We then define the irreducible parabolically induced representation of $\GL_{2n}(F)$ by 
$$\tilde\pi_{(\mathbf s,\delta)} =  \left(\prod_{i\in I\smallsetminus \{o\}}\tilde\pi(\beta_i,\left(\tfrac{\cdot}{{\boldsymbol{\mu}}_{E_i}}\right), \delta_i\mathfrak n_\psi^{f_i})\right)
\times   \tilde{\boldsymbol{\lambda}}_{o}
\times   \tilde{\boldsymbol{\lambda}}_{o'}.$$
We introduce an additional sign $\xi\in \{+,-\}$. If $\beta_i = \beta_i(a_1,\dots,a_{k})$ for some $a_1,\dots,a_{k}\in \mathfrak o_{E_i} \bmod \mathfrak p_{E_i} $ (see (\ref{action of p on beta, ramified even orthogonal})), put
$$\beta_i(+) = \beta_i
\quad\text{and}\quad\beta_i(-) =\beta_i(a_1,\dots,a_{k-1},-a_{k}).$$ 
Choosing any tuple $(\xi_i)_i\in \{\pm 1\}^{\#I}$ such that $\prod_{i}\xi_i = \xi$, we define
$$\beta_{\xi} = \bigoplus_{i\in I\smallsetminus\{o\}}\beta_i(\xi_i ), \quad \text{for }\xi\in \{+,-\}.$$
In either case whether $o\in I$ or not, $\tilde\pi_{(\mathbf s,\delta)}$ is the endoscopic lift of the packet
$$\tilde\Pi_{(\mathbf s,\delta)} = \{\pi_{(I_\zeta,\xi)}
\}_{I_\zeta,\xi}$$
containing $2^{\#I+1}$ representations: half of them belong to $G_+$ and the other half to $G_-$. Indeed, for a fixed $\xi$, if $\pi_{(\O,\xi)}$ belongs to $G_+$, then 
$$\text{$\pi_{(I_\zeta,\xi)}$ belongs to $G_\epsilon$ \quad$\Leftrightarrow$\quad $\#I_\zeta \equiv \tfrac{1}{2}(\epsilon-1)\bmod 2$,\quad for $\epsilon\in \{+,-\}$.}
$$
Hence, each group has a packet of cardinality $2^{\#I}$.

To construct $\pi_{(I_\zeta,\xi)}$ for a partition $I_\zeta\subseteq I$ and a sign $\xi$, define a character $\lambda_{(I_\zeta,\mathbf s,\delta,\xi)}$ by 
\begin{equation*}
 \lambda_{(I_\zeta,\mathbf s,\delta,\xi)}|_{G(F)_{x,0_+}} = \psi_{m_{I_\zeta}(\beta_\xi)}\quad\text{and}\quad 
\lambda_{(I_\zeta,\mathbf s,\delta,\xi)}(\omega_i) = \delta_i\left(\tfrac{-1}{\mu_{E_i}}\right)\kappa_i,\quad\text{ for all }i\in I.  
\end{equation*}
Then $ \lambda_{(I_\zeta,\mathbf s,\delta,\xi)}$ is an epipelagic type, and the induced representation
$$\pi_{(I_\zeta,\xi)}:=\pi_{(I_\zeta,\mathbf s,\delta,\xi)}$$ is an epipelagic supercuspidal representation of $G(F)$.

When $o\notin I$, the packet $\tilde\Pi_{(\mathbf s,\delta)}$ is a disjoint union 
$$\tilde\Pi_{(\mathbf s,\delta)} = \tilde\Pi_{(\mathbf s,\delta,+)} \sqcup \tilde\Pi_{(\mathbf s,\delta,-)}$$ 
where $\tilde\Pi_{(\mathbf s,\delta,\xi)} = \{\pi_{(I_\zeta,\xi)}\}_{I_\zeta}$ for a fixed $\xi$, and both $\pi_{(I_\zeta,+)}$ and $\pi_{(I_\zeta,-)}$ induce isomorphic representations of $G^\sharp(F)$. In contrast, when  $o\in I$, all representations $\pi_{(I_\zeta,\xi)}$ are invariant under $G^\sharp(F)$. %

\subsubsection{Ramified unitary groups}
\label{subsubsection Ramified unitary groups L-packets}

Let $G=\mathrm U_{N}(F/\Fo)$ where $F = \Fo[\varpi]$ with $\varpi^2=-\varpi_\bullet$. Given $\mathbf s=[\Lambda,0,\beta]$ as before, denote $\hat I = I$ and take $\delta = \{\pm 1\}^{\#\hat I}$.  Note that $E_o = F$ and $\beta_o=0$ when $o\in I$.

The representation of $\GL_{N}(F)$ defined by 
$$\tilde\pi_{(\mathbf s,\delta)} = \prod_{i\in \hat I}\tilde\pi(\beta_i,\left(\tfrac{\cdot}{{\boldsymbol{\mu}}_{E_i}}\right)^{N-1}, \delta_i\mathfrak n_\psi^{f_i(N-1)}) $$ 
is the endoscopic lift of the L-packet
$$\tilde\Pi_{(\mathbf s,\delta)} := \{\pi_{I_\zeta}\}_{I_\zeta},$$
constructed as follows. We define a character $\lambda_{(I_\zeta,\mathbf s,\delta)}$ by 
\begin{equation*}
\lambda_{(I_\zeta,\mathbf s,\delta)}|_{G(F)_{x,0_+}} = \psi_{m_{I_\zeta}(\beta)},
\quad\text{and}\quad 
\lambda_{(I_\zeta,\mathbf s,\delta)}(\omega_i) = \delta_i\kappa_i,\quad\text{ for all }i\in I, 
\end{equation*}
where $\kappa_i$ is defined as before if $i\neq o$ and  
$$\kappa_o = \left(\frac{(-2)^{N-1}\prod_{i\in I\smallsetminus\{o\}}\varpi^{f_i} N_{E_i/F}\beta_i}{{\boldsymbol{\mu}}_{F}}\right).$$
Then $\lambda_{(I_\zeta,\mathbf s,\delta)}$ is an epipelagic type, and  $\pi_{I_\zeta}:=\pi_{(I_\zeta,\mathbf s,\delta)}$ is the induced epipelagic supercuspidal representation of $G(F)$. The packet $\tilde\Pi_{(\mathbf s,\delta)}$ contains $2^{\#I}$ representations: half of them belong to $G_+$ and the other half to $G_-$. Indeed if $\pi_{{\O}}$ belongs to $G_+$, then 
$$\text{$\pi_{I_\zeta}$ belongs to $G_\epsilon$ \quad$\Leftrightarrow$\quad $\#I_\zeta \equiv  \epsilon \bmod 2$,\quad for $\epsilon\in \{+,-\}$.}
$$

\subsubsection{Comparison with results in \cite{epipelagic-unitary}}
\label{Comparison with results in epipelagic-unitary}

Suppose that $(V,H)$ is a Hermitian space of odd dimension $N$ such that $G = G(V,H)$ is a ramified unitary group. We assume that $o\notin I$ and $\beta_i\in F$ for all $i\in I$. In this case, \cite{epipelagic-unitary} determined the criteria for two epipelagic representations to belong to the same L-packet.

Note that $(\lambda,\psi)$ in \emph{loc. cit.}, which we denote denoted by $(\lambda^{\mathrm{FRT}},\psi^{\mathrm{FRT}})$ now, corresponds to our $(\psi_\beta, \lambda)$. The Kostant section in \cite[(5.7.1)]{epipelagic-unitary} is evidently the same as ours: 
$$\beta = \diag(\beta_i)_{i\in I}.$$
Putting $1_I = (1)_{i\in I}\in \{\pm 1\}^{\#I}$, their inducing type $\rho(\lambda^{\mathrm{FRT}},\psi^{\mathrm{FRT}})$ corresponds to our $ \lambda_{(I_{\O}, \beta,1_I)}$, both takes values
$$\omega_i\mapsto \kappa_i := \prod_{j\in I\smallsetminus\{i\}}\left(\frac{ H_j H_i (1-\beta_j /\beta_i)}{\boldsymbol{\mu}_F}\right)= \prod_{j\in I\smallsetminus\{i\}}\left(\frac{ H_j(\beta_i -\beta_j )}{\boldsymbol{\mu}_F}\right),$$
where the last equality holds because $N$ is odd; the last value is given by \cite[Prop 5.15]{epipelagic-unitary}.

Suppose that $I_\zeta$ is a subset of $I$, and denote $\lambda_{I_\zeta}  = \lambda_{(I_\zeta, \mathbf s,\delta)}$ for fixed $(\mathbf s,\delta)$. Let $s_{I_\zeta}$ be the corresponding permutation on $\#I$, then $\lambda_{I_\zeta} (\omega_i)  = \delta_i \kappa_{s_{I_\zeta}(i)}$. Therefore, 
\begin{equation}
\label{FRT same L-packet}
\text{the quotient $\lambda_{I_\zeta} \lambda_{I_{\O}}^{-1}$ maps $\omega_i$  to $\kappa_{s_{I_\zeta}(i)}\kappa_{i}^{-1}$}.
\end{equation}
Since $\pi_{I_\zeta}$ and $\pi_{I_{\O}}$ belong to the same L-packet (Subsection \ref{subsubsection Ramified unitary groups L-packets}), we see that (\ref{FRT same L-packet}) precisely rephrases
\cite[Th 5.16]{epipelagic-unitary}.

\section{Epipelagic Langlands parameters}
\label{section Langlands parameters}

The aim of this section is to study the Langlands parameters of epipelagic representations for a connected classical group $G$. To this end, we assume that the characteristic of $F$ to be 0. We have deduced from the results of Section \ref{subsection Reducibility results for different classical groups} that an epipelagic parameter, after being endoscopic-lifted to a general linear group, is a direct sum of irreducible parameters corresponding to quasi-epipelagic representations. We look at these irreducible parameters in detail in this section. We emphasize that, while the construction of epipelagic parameters in \cite{Reeder-Yu} requires certain conditions on the residual characteristic $p$, we extend their results to all odd $p$ with the aid of \cite{BK-epipelagic} and \cite{BH-Eff}.

We focus mainly on the case where $F/\Fo$ is either trivial or ramified. The case where $F/\Fo$ is unramified is similar but less interesting, as seen in earlier sections (such as Subsection \ref{subsubsection Unramified unitary groups}).

As basic setup, let $\mathcal W_F$ be the Weil group of $F$. The isomorphism ${\mathcal W}_F^{ab} \cong F^\times $ from Artin reciprocity induces a canonical bijection
$$\text{\{characters of $F^\times$\}}\rightarrow \{\text{characters of ${\mathcal W}_F$}\},
\quad
\xi\mapsto \hat \xi.$$
We denote by $\mathcal P_F$ the wild inertia subgroup of $\mathcal W_F$. If $E/F$ is a field extension, the induction functor $\Ind_{\mathcal W_E}^{\mathcal W_F}$ is  denoted simply by $\Ind_{E/F}$  as in \cite{BH-Eff}, and similarly for the restriction functor $\Res_{E/F}$.

\subsection{L-groups of classical groups}
\label{subsection L-groups of classical groups}

In this section, we review basic setups on the dual side, including L-groups, their embeddings, lifting parameters, and self-duality. Towards the end, we view Langlands parameters as representations of $\mathcal W_F$ and describe them using admissible triples.

\subsubsection{Pinned automorphisms}
\label{subsubsection Pinned automorphisms}

We recall briefly the basics of L-groups from \cite[Sec 1 and 2]{Borel-L-functions}. By definition, groups that are inner forms of each other share the same L-group, and so we let $G$ be a quasi-split connected reductive group over $F$. 

We associate to $G$ a based root datum $\Psi = (X,\hat X,\Delta, \hat \Delta)$, which is invariant by the action of $\mathcal W_F$ (via the canonical morphism $\mathcal W_F\rightarrow \Gal(F^{\mathrm{sep}}/F)$). The dual group $\hat G$ of $G$ is the complex connected reductive group with based root datum is the dual $\hat \Psi = (\hat X,X,  \hat \Delta, \Delta)$ of $\Psi$. To define the L-action, we first fix a Borel-pair, i.e., a Borel subgroup of $\hat G$ and an underlying maximal torus, together with a set of simple root vectors. These fixed data $\hat {\mathcal E}$, called a pinning ({\'e}pinglage), determine an isomorphism $\Aut(\hat \Psi)\rightarrow \Aut(\hat G, \hat {\mathcal E})$, which induces the composition
\begin{equation}
\label{outer action}
\mathcal W_F\rightarrow \Aut(\Psi) \cong \Aut(\hat \Psi)\cong  \Aut(\hat G, \hat {\mathcal E}) \subset \Aut(\hat G),
\end{equation}
defining the L-action $\mathcal W_F$ on $\hat G$ as well as the L-group ${}^LG:=\hat G \rtimes {\mathcal W}_{F}$ of $G$.

Consider the groups studied in the previous sections. Viewing $ \tilde G = \GL_{m/F}$ as a split algebraic group over $F$, its dual group is $ \hat{\tilde  G} = \GL_{m}(\mathbb C)$, and the L-group is the direct product ${}^L\tilde G = {}^L\tilde G_F=\hat {\tilde G} \times {\mathcal W}_{F}$. We sometimes view $ \hat{\tilde  G}$ as the linear automorphism group of an $m$-dimensional complex vector space.

For a connected classical group $G$ with $F=\Fo$, we sometimes view the dual group $\hat G$ as the linear automorphism group of a complex vector space $\hat V$ equipped with a non-degenerate bilinear form $\hat h$. We may choose a suitable matrix $\hat H$ to define $\hat h$ as follows.
\begin{enumerate}[(i)]

\item If $G$ is $\SO_{2n+1}$, $\SP_{2n}$, or split $\SO_{2n}$, then ${}^LG := \hat G \times {\mathcal W}_F$, where $ \hat G = \SP_{2n}(\mathbb C)$, $\SO_{2n+1}(\mathbb C)$, or $\SO_{2n}(\mathbb C)$ respectively, defined by 
\begin{equation*}
\hat H = \begin{cases}\antidiag(1,\dots,1) &\text{ if $\hat G$ is orthogonal}, 
\\
 \antidiag(1,\dots,1,-1,\dots,-1) &\text{ if $\hat G$ is symplectic}.
\end{cases}
\end{equation*}

\item If $G$ is the quasi-split $\SO_{2n}$ that splits over a quadratic extension $F^\sharp/F$, then
$\hat G = \SO_{2n}(\mathbb C) $ is defined by $\hat H$ as above, and ${}^LG = \hat G \rtimes {\mathcal W}_F$, where the ${\mathcal W}_F$-action on $\hat G$ factors through ${\mathcal W}_F/{\mathcal W}_{F^\sharp} \cong \Gal(F^\sharp/F)$, and the non-trivial element of $\Gal(F^\sharp/F)$ acts by conjugation by the permutation matrix $\hat n_{\mathrm O}$ in $\mathrm{O}_{2n}(\mathbb C)$ that switches only the middle two coordinates.

\end{enumerate}
In both cases, we embed each of the $L$-groups into a general linear group to express it as a complex matrix group, i.e., we naturally embed both $  \SP_{2n}(\mathbb C)$ and $\SO_{2n}(\mathbb C)$ into $\GL_{2n}(\mathbb C)$, and  $\SO_{2n+1}(\mathbb C)$ into $\GL_{2n+1}(\mathbb C)$ in case (i), and $\mathrm{O}_{2n}(\mathbb C)$ into $\GL_{2n}(\mathbb C)$ in case (ii).

If $F/\Fo$ is quadratic and $G = \mathrm U_{m,F/\Fo}$, then $\hat G =  \GL_m(\mathbb C)$ and ${}^LG = \GL_m(\mathbb C)\rtimes {\mathcal W}_{\Fo}$, where the ${\mathcal W}_{\Fo}$-action on $\hat G$ is trivial on ${\mathcal W}_F$, and the non-trivial element $c\in {\mathcal W}_{\Fo}\smallsetminus{\mathcal W}_F $ acts by the automorphism $\theta$ defined by
\begin{equation*}
\theta:g\mapsto \hat H^{-1} {}^tg^{-1}\hat H, \quad \hat H = \begin{cases}\antidiag(1,\dots,1) &\text{ if $m$ is odd}, 
\\
 \antidiag(1,\dots,1,-1,\dots,-1) &\text{ if $m$ is even}.
\end{cases}
\end{equation*}
In all cases, the choice of $\hat H$ need not take the same form as $H$ on the p-adic side; this choice is made only for computational convenience. For the same reason, we usually pin the maximal torus in the pinning $\hat {\mathcal E}$ to be the diagonal subgroup.

\begin{rmk}
\label{Remark on Galois form}
It is sometimes convenient to replace the Weil group component in an L-group by a finite Galois group. We therefore introduce the following ad-hoc notations,
\begin{equation*}
   \begin{split}
     & \mathrm{O}_{2n,F^\sharp/F}(\mathbb C):= \mathrm{SO}_{2n} (\mathbb C)\rtimes \Gal(F^\sharp/F)\quad \text{when $G$ is even orthogonal and splits over $F^\sharp/F$;}
\\
&\mathrm{GL}_{m,F/F_\bullet}(\mathbb C):=\mathrm{GL}_{m}(\mathbb C)\rtimes \Gal(F/F_\bullet)\quad \text{when $G$ is $F/F_\bullet$-unitary.}
   \end{split}
\end{equation*}
Together with  $ \hat G = \SP_{2n}(\mathbb C)$, $\SO_{2n+1}(\mathbb C)$, or $\SO_{2n}(\mathbb C)$ when $G$ is split, we call these mentioned groups the \emph {Galois-form} of their corresponding L-groups. This form will be used whenever we see fit, particularly in (\ref{types of classical groups, dual side}) and Subsections \ref{subsubsection Unramified induction} and \ref{subsubsection Epipelagic parameters} below.
\qed\end{rmk}

In what follows, a Langlands parameter is a continuous morphism $\varphi: \mathcal W_F\rightarrow {}^LG$ such that the composition $\mathcal W_F\xrightarrow{\varphi} {}^LG \xrightarrow{}\mathcal W_F$  with the natural projection ${}^LG \rightarrow\mathcal W_F$ is the identity.

\subsubsection{Endoscopic embeddings}
\label{subsubsection Endoscopic embeddings}

Let $K/F$ be a field extension. Given two quasi-split connected reductive groups $G$ over $F$ and $H$ over $K$, a morphism between their L-groups $\iota:{}^LH\rightarrow {}^LG$ is called an L-morphism if $\iota|_{\hat H}:{\hat H}\to {\hat G}$ is a morphism of algebraic groups and the diagram 
\begin{equation*}
  \xymatrixcolsep{5pc} \xymatrixrowsep{2pc}\xymatrix{
{}^LH
\ar[r]^{\iota}  
\ar[d]_{
\begin{smallmatrix}
\text{natural}
\\
\text{projection}
\end{smallmatrix}
}  
&{}^LG
\ar[d]^{
\begin{smallmatrix}
\text{natural}
\\
\text{projection}
\end{smallmatrix}
}
\\
{\mathcal W}_K
 \ar@{^{(}->}[r]
 &{\mathcal W}_F 
}
\end{equation*}
commutes, {\it cf.} \cite[Sec 15]{Borel-L-functions}, and is called an L-embedding if it is furthermore injective. From now on, all morphisms (in particular all embeddings) between two L-groups are assumed to be L-morphisms.

Due to technical reasons concerning the matching of conjugacy classes on the p-adic side (which does not concern us in the present work), we furthermore require certain L-embeddings defined below to be {\it endoscopic}. The complete definition in \cite{Kottwitz-Shelstad} is highly involved, but for our purposes we only require the following:
\begin{enumerate}[(i)]
\item $\iota$ maps $\hat H$ isomorphically onto $(\hat G ^{\mathrm{Ad}(s)\circ\theta})^\circ $ for a prescribed $F$-automorphism $\theta$ of $\hat G$ and for some {$\theta$-quasi-semisimple} $s\in \hat G$, i.e., ${\mathrm{Ad}(s)\circ\theta}$ fixes a Borel-pair.

\item $\iota({}^LH)$ embeds into $({}^LG|_{\mathcal W_K})^{\mathrm{Ad}(s)\circ{}^L\theta_a}$ for a prescribed 1-cocycle $a:\mathcal W_F\to Z\hat G$, where ${}^LG|_{\mathcal W_K}:=\hat G\rtimes \mathcal W_K\subseteq {}^LG$ and ${}^L\theta_a$ is the automorphism $g\rtimes w \mapsto {}^\theta g a(w)\rtimes w$ on ${}^LG$. 
\end{enumerate}
An L-embedding $\iota:{}^LH\to {}^LG $ is called $(\theta,a,s)$-{\bf endoscopic} if the above conditions are satisfied. Up to $\theta$-conjugacy:  $(s,\iota)\mapsto (g s{}^\theta g^{-1},\mathrm{Ad}(g)\circ \iota)$ for some $g\in \hat G$, we assume that $s\in \hat {\mathcal T}$ and $\iota$ maps the pair $(\hat {\mathcal B}_H, \hat {\mathcal T}_H)$ in a fixed pinning of $\hat H$ to the pair $(\hat{\mathcal B}, \hat{\mathcal T})$ of $\hat G$.

 Here are two simple examples that we will use later.
 \begin{enumerate}[(i)]

\item (Levi-type) When both $(\theta,\omega) = (1,1)$, the group $\hat H = Z_{\hat G}(s)$ is a Levi subgroup of $\hat G$.

\item (Galois-type) Let $E/F$ be a cyclic extension of degree $d$, and $\omega = \omega_{E/F}$ be a character of $F^\times$ associated to $E/F$ by local class field theory. Choose $\gamma\in \mathcal W_F$ which projects to a generator of $\Gal(E/F)$. Put $G = \GL_{md,F}$ and take 
$$s = \diag(I_m, e^{2\pi \sqrt{-1}/d}I_m, \dots,  e^{2\pi \sqrt{-1}(d-1)/d}I_m)\in \hat G.$$ Put also $H = \Res_{E/F}\GL_{m,E}$, so that $\hat H = \Ind_{E/F}\GL_m(\mathbb C)$. The embedding $ {}^LH \to {}^LG$ determined by 
$$h\rtimes \gamma \mapsto h n_\gamma \times \gamma,\quad \text{ for all }h\in \hat H,$$
where $n_\gamma = \antidiag(I_m, \diag(I_m,\dots,I_m))$, is $(1,\hat \omega,s)$-endoscopic \cite[Sec 6.3]{Tam-ETLLC-GLn}.

\end{enumerate}

\subsubsection{Lifting parameters by embeddings}
\label{subsubsection Lifting parameters}

We recall from \cite[Sec 1]{Arthur-book} and \cite[Sec 2]{Mok-unitary} an embedding $\iota_G$ from the L-group ${}^LG = \hat G \rtimes {\mathcal W}_{\Fo}$ of a connected classical group $G$ into the L-group of a general linear group $\tilde G$. The motivation of defining $\iota_G$ is evident: by regarding $G$ as a twisted endoscopic group of $\tilde G$ via $\iota_G$ (with suitable endoscopic data \cite[Sec 2.1]{Kottwitz-Shelstad}), lifting parameters from $G$ to $\tilde G$ via $\iota_G$ corresponds to the endoscopic lifting on the representation side. A lifted parameter is viewed as a representation of ${\mathcal W}_F$.

When $F=\Fo$, the embedding $\iota_G: {}^LG\rightarrow  \GL_{m}(\mathbb C)$ is the natural inclusion discussed in Section \ref{subsubsection Pinned automorphisms}, which is $(\text{transpose-inverse},1,\hat H)$-endoscopic. The lifting of a parameter $\varphi$ is just  $\tilde\varphi:=\iota_G\circ \varphi$. By \cite[Th 8.1(ii)]{GGP}, the isomorphism class of $\tilde\varphi$ determines the $\hat G$-conjugacy class of $\varphi$, except in the even orthogonal case when every irreducible component of $\tilde\varphi$ is even-dimensional, in which case both the $\hat G$-conjugacy classes of $\varphi$ and of $\Ad(\hat n_O)\varphi$ give rise to $\tilde\varphi$.

When $F/\Fo$ is quadratic, we fix $c\in \mathcal W_{\Fo}\smallsetminus \mathcal W_{F}$ and view $\Ind_{F/\Fo}\hat{\tilde G} = \hat{\tilde G} \times \hat{\tilde G}$, where $\hat{\tilde G}= \GL_m(\mathbb C)$ on which $\mathcal W_F$ acts trivially, and $c$ exchanges the two $\hat{\tilde G} $-factors of $\Ind_{F/\Fo}\hat{\tilde G}$. Form the L-group $ {}^LG_{\Fo}:=( \Ind_{F/\Fo}\hat{\tilde G} )\rtimes {\mathcal W}_{\Fo}$, and define two embeddings $\iota^{\epsilon}_G: {}^LG \rightarrow {}^LG_{\Fo}:=( \Ind_{F/\Fo}\hat{\tilde G} )\rtimes {\mathcal W}_{\Fo}$, where $\epsilon\in \{\pm \}$, by 
\begin{equation*}
   \begin{split}
      &\iota^{\epsilon}_G:g\times w\mapsto (\hat \chi_\epsilon(w)g,\hat \chi_\epsilon(w)^{-1} {}^tg^{-1})\times w
\quad\text{for $w\in \mathcal W_F$, and}\quad  
\\
&1\rtimes c\mapsto ( \hat H , \epsilon\hat H^{-1})\rtimes c,
   \end{split}
\end{equation*}
where $\chi_{\epsilon}$ is a character of $F^\times$, with $\chi_{+}$ chosen to be trivial and $\chi_{-}$ chosen to be tamely ramified and satisfying
$$\chi_{-}|_{{\boldsymbol{\mu}}_F} =\left(\frac{\cdot}{{\boldsymbol{\mu}}_F}\right)^{f_{F/\Fo}}, \quad \chi_{-}(\varpi) =(-1)^{e_{F/\Fo}}\mathfrak n_\psi^{2-f_{F/\Fo}},$$
where $\mathfrak n_\psi$ is the quadratic Gauss sum (\ref{simple quadratic Gauss sum}) associated to $F$. Each embedding $\iota_G^\epsilon$ is then which is $(\text{transpose-inverse},\hat \chi_\epsilon,\hat H)$-endoscopic. For later reference, we call this embedding unitary-type endoscopic.

We put $\iota_G = \iota_G^{(-1)^{m-1}}$ for $\hat G = \GL_m(\mathbb C)$. Given a parameter $\varphi$ of $G$, its lifting can be described as follows. Denote by $\pi_1:( \Ind_{F/\Fo}\hat{\tilde G} )\rtimes {\mathcal W}_{\Fo}\rightarrow \hat{\tilde G}$ the projection onto the first factor $\hat{\tilde G} $. Then, given a parameter $\varphi: {\mathcal W}_{\Fo}\rightarrow {}^LG $, we call 
 $\tilde\varphi:= \pi_1\circ \iota_G\circ \varphi|_{ {\mathcal W}_{F}}: {\mathcal W}_{F}\rightarrow \hat{\tilde G}$ the lifting of $\varphi$, i.e., lifting $\varphi$ is in fact restricting $\varphi$ to ${\mathcal W}_{F}$. In other words, 
 $\varphi = j_G\circ \tilde\varphi$, where $j_G:{}^L\tilde G \hookrightarrow {}^LG$ is the natural inclusion. The lifted parameter $\tilde\varphi$ satisfies
 $$\tilde\varphi(cwc^{-1}) = {}^{}{}^\theta\tilde\varphi( w) \quad  \text{for all $w\in \mathcal W_F$} \quad\text{and}\quad  \tilde\varphi(c^2) = (-1)^{m-1}.$$
Again, the isomorphism class of $\tilde\varphi$ determines the $\hat G$-conjugacy class of $\varphi$.

For later reference, whether $F=\Fo$ is trivial or quadratic, we refer to the embedding $\iota_G$ as {\bf standard} for the classical groups $G$ discussed above.

\subsubsection{Self-duality}
\label{first subsubsection Self-duality}

To unify notation, we still denote by $c\in \Gal(F/\Fo)$ the trivial element when $F=\Fo$. Given a parameter $\tilde \varphi:{\mathcal W}_F\rightarrow \hat{\tilde{G}}:=\hat{\tilde{G}}(\hat V)$, we put ${}^c\tilde\varphi(w)= \tilde\varphi(c^{-1} wc)$. We call $\tilde\varphi$ self-dual if ${}^c\tilde\varphi$ is isomorphic to the contragredient $\tilde\varphi^\vee$ of $\tilde\varphi$ as a $\mathcal W_F$-representation, i.e., up to $ \hat{\tilde{G}}$-conjugacy, $\tilde\varphi$ satisfies
$$\hat h({}^c\tilde\varphi(w) u ,\tilde\varphi(w) v) = \hat h(u,v),\quad \text{for all }u,v\in\hat V\text{ and }w\in {\mathcal W}_F.$$
Let $\epsilon\in \{\pm 1\}$. If $\tilde\varphi$ is self-dual, we say that $\tilde\varphi$ is $(\epsilon,F/\Fo)$-self-dual, or just $\epsilon$-self-dual when $F=\Fo$, if 
$$\hat h(v,u) = \epsilon\hat h(\tilde\varphi(c^2)u,v),\quad \text{for all }u,v\in\hat V.$$
We make two simple observations.
\begin{enumerate}[(i)]
\item When $\dim \hat V=1$, if $F=\Fo$, then a self-dual character $\mu$ of $F^\times$ is quadratic and $+$-self-dual; whereas if $[F:\Fo]=2$, then a self-dual character $\mu$ of $F^\times$ is $(+,F/\Fo)$-self-dual (resp. $(-,F/\Fo)$-self-dual) if and only if $\mu |_{\Fo^\times}$ is trivial (resp. equal to $\omega_{F/\Fo}$).

\item Let $E/\Eo$ be an extension over $F/\Fo$, i.e., $E/\Eo$ is trivial or quadratic if $F=\Fo$, and is quadratic if $F/\Fo$ is. If $\xi$ is an $(\epsilon,E/\Eo)$- self-dual representation of $\mathcal W_E$, then $\Ind_{E/F} {\xi}$ is an $(\epsilon,F/\Fo)$-self-dual representation of $\mathcal W_F$.

\end{enumerate}

\begin{prop}
A parameter $\tilde \varphi:{\mathcal W}_F\rightarrow \hat{\tilde{G}}:=\hat{\tilde{G}}(\hat V)$ is
$$\text{$+$-self-dual \quad (resp. }\text{$-$-self-dual, \quad resp. }\text{$(+,F/\Fo)$-self-dual, \quad resp. }\text{$(-,F/\Fo)$-self-dual)}$$
if and only if it is a lifting from ${}^LG$, where $\hat G = \hat G(\hat V,\hat h)$ is
$$\text{orthogonal \quad (resp. sympelctic, \quad resp. odd unitary, \quad resp. even unitary}).$$
\end{prop}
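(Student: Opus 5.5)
The plan is to prove both directions by unwinding the standard embeddings $\iota_G$ of Subsection \ref{subsubsection Lifting parameters}, treating the cases $F=\Fo$ and $[F:\Fo]=2$ separately.

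\emph{Case $F=\Fo$: lifting implies self-duality.} If $\tilde\varphi=\iota_G\circ\varphi$ with $\hat G=\hat G(\hat V,\hat h)$ orthogonal or symplectic, then $\tilde\varphi(w)$ preserves $\hat h$ for every $w\in\mathcal W_F$. This holds also for quasi-split non-split $\SO_{2n}$, since the Weil group acts through $\hat n_{\mathrm O}\in\mathrm O_{2n}(\mathbb C)$, which still preserves $\hat h$. Hence $\tilde\varphi^\vee\cong\tilde\varphi$. Since $c$ is trivial, $\tilde\varphi(c^2)=1$ and the sign condition reduces to $\hat h(v,u)=\epsilon\hat h(u,v)$. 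This holds with $\epsilon=+$ for orthogonal $\hat h$ and $\epsilon=-$ for symplectic $\hat h$.

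\emph{Case $F=\Fo$: self-duality implies lifting.} Suppose $\tilde\varphi$ is $\epsilon$-self-dual. After $\GL(\hat V)$-conjugation it preserves a non-degenerate $\epsilon$-symmetric form on $\hat V$. All such forms of a given dimension are isometric over $\mathbb C$, so we may take the form to be $\hat h$ with matrix $\hat H$. Then $\tilde\varphi$ lands in $\mathrm O(\hat V,\hat h)$ or $\SP(\hat V,\hat h)$.
\begin{itemize}
\item In the symplectic case this is $\hat G$ for $G=\SO_{2n+1}$, and we are done.
\item In the orthogonal case, $\det\tilde\varphi$ is a quadratic character $\chi$ of $\mathcal W_F$.
\item If $\dim\hat V$ is odd, replace $\tilde\varphi$ by $\chi\otimes\tilde\varphi$. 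This lands in $\SO_{2n+1}(\mathbb C)$, the dual of $\SP_{2n}$. Here I must check that the twist is harmless: it is not, so the cleaner route is to note that $-1\in\mathrm O_{2n+1}$ is central with $\mathrm O_{2n+1}=\SO_{2n+1}\times\{\pm1\}$. I therefore expect the intended statement to allow the composite $\mathcal W_F\to\mathrm O_{2n+1}\to\SO_{2n+1}$ up to this central sign. I would resolve this convention by invoking \cite[Th 8.1]{GGP} as the paper does.
\item If $\dim\hat V$ is even, $\chi$ cuts out an extension $F^\sharp/F$ of degree at most $2$. The map $w\mapsto\tilde\varphi(w)$ then factors through the Galois form $\mathrm O_{2n,F^\sharp/F}(\mathbb C)$ of Remark \ref{Remark on Galois form}. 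After conjugating so that elements of determinant $-1$ act as $\hat n_{\mathrm O}$ times an element of $\SO_{2n}$, this is a parameter of the quasi-split $\SO_{2n}$ split by $F^\sharp$.
\end{itemize}

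\emph{Case $[F:\Fo]=2$: lifting implies self-duality.} Use the relations recorded after the definition of $\iota_G$:
\[
\tilde\varphi(cwc^{-1})={}^\theta\tilde\varphi(w),\qquad \tilde\varphi(c^2)=(-1)^{m-1},
\]
where $\theta(g)=\hat H^{-1}\,{}^tg^{-1}\hat H$. The first relation says exactly that ${}^c\tilde\varphi\cong\tilde\varphi^\vee$, the isomorphism being given by the form $\hat H$. The sign $\epsilon$ is then read off from $\hat H$ being $(-1)^{m-1}$-symmetric together with $\tilde\varphi(c^2)=(-1)^{m-1}$. Carrying out this bookkeeping and the choice of $\chi_\pm$ should give $+$ for $m$ odd and $-$ for $m$ even.

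\emph{Case $[F:\Fo]=2$: self-duality implies lifting.} Suppose $\tilde\varphi$ is $(\epsilon,F/\Fo)$-self-dual, and choose $A$ realizing ${}^c\tilde\varphi\cong\tilde\varphi^\vee$. Extend $\tilde\varphi$ to $\mathcal W_{\Fo}$ by
\[
\varphi(c)=A\hat H^{-1}\rtimes c .
\]
The self-duality sign is exactly the condition needed for $\varphi(c)^2=\varphi(c^2)$, that is, for $\varphi$ to be a homomorphism into ${}^LG$. This is the standard Gan--Gross--Prasad conjugate-self-dual argument, and I would cite \cite[Th 8.1]{GGP} rather than redo it.

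\emph{Main obstacle.} I expect the hardest part to be the sign bookkeeping in the unitary case, in particular reconciling the characters $\chi_\pm$ and $\epsilon=(-1)^{m-1}$ with the labels ``odd/even unitary''.
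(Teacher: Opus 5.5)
Your direct verification for $\hat G=\SP_{2n}(\mathbb C)$ and for even-dimensional orthogonal $\hat G$ is correct. It is also more explicit than the paper, whose proof is only a citation of \cite[Th 8.1(i)]{GGP}. However, two of your steps do not work as described.

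\emph{Odd-dimensional orthogonal case.} You rightly notice that twisting by $\chi=\det\tilde\varphi$ is not harmless. Your proposed reading via the composite $\mathcal W_F\to\mathrm O_{2n+1}(\mathbb C)\to\SO_{2n+1}(\mathbb C)$ does not fix this, because that composite is the representation $\chi\otimes\tilde\varphi$, not $\tilde\varphi$. For $G=\SP_{2n}$ we have $\hat G=\SO_{2n+1}(\mathbb C)$, so every lift has trivial determinant. Hence the ``if'' direction is false without that hypothesis: for example $\chi\oplus\mathbf 1\oplus\mathbf 1$, with $\chi$ a nontrivial quadratic character, is $+$-self-dual but is not a lift. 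The statement has to be read with $\det\tilde\varphi=\mathbf 1$ in odd dimension, which is the form in which \cite{GGP} states it. Citing GGP supplies this condition; it does not make it unnecessary.

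\emph{Unitary case.} If you carry out the bookkeeping as you describe, you get the wrong sign.
\begin{itemize}
\item The first relation makes $\hat H$ the invariant form, so $\hat h(v,u)=(-1)^{m-1}\hat h(u,v)$.
\item Combined with $\tilde\varphi(c^2)=(-1)^{m-1}$, this gives $\hat h(v,u)=\hat h(\tilde\varphi(c^2)u,v)$, i.e.\ sign $+$ for every $m$.
\item The cause is that these two relations describe the lift through the twisted embedding $\iota_G^{(-1)^{m-1}}$ with $\varphi(c)=1\rtimes c$. There $\tilde\varphi(c^2)=\chi_\epsilon(c^2)=\epsilon$, and twisting by the conjugate-symplectic $\chi_-$ flips the sign.
\end{itemize}
The labels in the statement hold for the untwisted base change $\iota_G^{+}$ (trivial $\chi$), for every $m$.

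The correct computation is as follows. Write $\varphi(c)=A\rtimes c$ and $\varphi(w)=\tilde\varphi(w)\rtimes w$ for $w\in\mathcal W_F$.
\begin{itemize}
\item The relation $\varphi(c)\varphi(w)\varphi(c)^{-1}=\varphi(cwc^{-1})$ shows that $B=\hat HA^{-1}$ satisfies ${}^t\tilde\varphi(w)\,B\,\tilde\varphi(cwc^{-1})=B$.
\item The relation $\varphi(c)^2=\varphi(c^2)$ gives $\tilde\varphi(c^2)=A\theta(A)=(-1)^{m-1}B^{-1}\,{}^tB$.
\end{itemize}
So $\tilde\varphi$ is conjugate-dual of sign $(-1)^{m-1}$ in the sense of \cite{GGP}. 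Reversing this computation is exactly your converse argument, so with the untwisted embedding that direction goes through.
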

\proof
This is \cite[Th 8.1(i)]{GGP}, which asserts that a parameter of $G$ determines uniquely the self-duality structure of its lifting as a $\mathcal W_F$-representation.  \hfill$\blacksquare$

When $F=\Fo$ (resp. $F/\Fo$ is quadratic), two representations are deemed of the same parity if they are both $+$-self-dual or both  $-$-self-dual (resp. both $(+,F/\Fo)$-self-dual or both  $(-,F/\Fo)$-self-dual), and of different parities otherwise.

\subsubsection{Admissible triples}
\label{subsubsection Admissible triples (BH descriptions of Langlands parameters)}

We recall from \cite{BH-Eff} the description of irreducible representations of $\mathcal{W}_F$ via admissible triples. An admissible triple $(K/F,\rho,\tau)$  consists of 
\begin{enumerate}[(i)]
\item 

a finite tamely ramified extension $K/F$, 
\item an irreducible representation $\rho$ of $\mathcal{W}_K$ such that the restriction
$\rho_{\mathcal P} := \rho|_{\mathcal P_F}$ remains irreducible and $Z_{\mathcal{W}_F} (\rho_{\mathcal P}) := \{g\in \mathcal{W}_F:{}^g \rho_{\mathcal P}\cong \rho_{\mathcal P}\}= \mathcal{W}_K$, and 

\item an irreducible representation $\tau$ of $\mathcal{W}_K$ such that $\tau|_{\mathcal P_F}$ is trivial.
\end{enumerate}

 Two admissible triples $(K_i/F,\rho_i,\tau_i)$, for $i \in \{ 1,2\}$, are called equivalent if there exist $g\in \mathcal{W}_F$ and a tamely ramified character $\chi$ of $\mathcal{W}_{K_2}$ such that $K_2 ={}^gK_1$, $\rho_2\cong {}^g\rho_1 \otimes \chi$, and $\tau_2\cong {}^g\tau_1\otimes \chi^{-1}$. Denote by $[K/F,\rho,\tau]$ the equivalence class of $(K/F,\rho,\tau)$.

Given an equivalence class $\Xi = [K/F,\rho,\tau]$ of admissible triples, we define $\tilde\varphi = \tilde \varphi_\Xi:=\Ind_{K/F}(\rho\otimes\tau)$, viewed as a Langlands parameter of $\GL_m$, where $m=\dim \tilde \varphi$. Then \cite[1.4]{BH-Eff} implies that 
\begin{enumerate}[(i)]
\item  $\tilde\varphi$ depends only on the equivalence class $\Xi$, and is irreducible;

\item every irreducible representation of ${\mathcal W}_F$ arises as $\tilde\varphi_\Xi$ for a unique class $\Xi = [K/F,\rho,\tau]$.

\end{enumerate}

\begin{rmk}
If a supercuspidal representation whose parameter $\tilde\varphi$ has an underlying simple stratum with field datum $\beta$, then $K$ is $F$-isomorphic to the maximal tamely ramified subextension of $F[\beta]/F$ \cite[Sec 6.3]{BH-Eff}. The extension $F[\beta]/K$ is then {\bf totally wild}, i.e., it is totally ramified and of degree a power of $p$.
\qed\end{rmk}

In another direction, let $t = t(\tilde\varphi)$ be the number of unramified characters $\chi$ of $\mathcal W_F$ such that $\chi\tilde\varphi\cong \tilde\varphi$, which is equal to $f_{K/F}\dim \tau $. If $U/F$ is the unramified extension of $F$ of degree $t/\dim \tau$, then $\tilde\varphi = \Ind_{U/F}\eta$ for some representation $\eta$ of $\mathcal W_{U}$. If we write $\tau = \Ind_{K_d/K}\xi$ for some depth zero character $\xi$ of $\mathcal W_{K_d}$, where $d=\dim\tau$ and $K_d$ is the unramified extension of $K$ of degree $d$, then 
$\eta = \Ind_{K_d/U}(\xi\otimes \Res_{K_d/K}\rho)$.

We now describe the structure of $\tilde \varphi = \tilde\varphi_\Xi$ when it is self-dual. Eventually we will often consider `irreducible components' of epipelagic representations of classical groups, and they are of depth zero only when they are characters, so we assume that $\rho_{\mathcal P}$ is non-trivial henceforth. Since $p$ is odd, $\rho_{\mathcal P}$ cannot be self-dual if it is non-trivial. There exists $g\in \mathcal W_F \smallsetminus \mathcal P_F$ such that ${}^g\rho \cong \rho^\vee$, and we assume that $g^2 \in \mathcal W_K$. Denote by $K^\flat$ the fixed field of $g$, so that $K/K^\flat$ is quadratic. We have 
$\tilde\varphi = \Ind_{K^\flat/F}(\rho\otimes \tau \oplus \rho^\vee\otimes \tau^\vee )$.

\begin{rmk}
\label{ramified quadratic extension over F/Fo}
In the case $[F:\Fo]=2$, we have ${}^\theta\rho \cong \rho^\vee$ by the uniqueness of the $\mathcal P_F$-invariant orthogonal (i.e., self-dual) structure \cite[7.5 Prop]{BHS}. Indeed, since $F/\Fo$ is tamely ramified, we can assume that $c^2 \in Z_{\mathcal{W}_\Fo}(\rho|_{\mathcal P_F}) = \mathcal W_K$, so that ${}^\theta\rho\ncong \rho$ but ${}^{\theta^2}\rho\cong \rho$. Both ${}^\theta\rho\oplus \rho$ and $\rho^\vee \oplus \rho$ are $\mathcal P_F$-invariant orthogonal representations by duality, and both contain $\rho$. Hence they are isomorphic by uniqueness, and so are  ${}^\theta\rho $ and $\rho^\vee$. But then ${}^\theta\rho \cong {}^g \rho$ for some $g\in {\mathcal{W}_\Fo}$, and we may arrange that $g|_{F} = c$. This implies that $K/K^\flat$ is an extension over $F/\Fo$.
\qed\end{rmk}

As in \cite[Sec 6]{BHS}, we are interested in whether $\tilde\varphi $ and its unique self-dual unramified twist have the same parity. Since they always have different parities when $K/K^\flat$ is unramified, which is necessarily the case when $F/\Fo$ is unramifed, we assume the contrary in the following proposition.

\begin{prop}
\label{prop effective parameter, self-dual}
Suppose that $F=\Fo$ or $F/\Fo$ is ramified. Let $\tilde\varphi =\Ind_{K/F}(\rho\otimes\tau)= \Ind_{U/F}\eta = \Ind_{K_d/F}(\xi\otimes \Res_{K_d/K}\rho)$ (where $d=\dim \tau$) be a self-dual positive depth parameter, and $\tilde\varphi'$ be the unique self-dual twist of $\tilde\varphi$ by an unramified character, then 
\begin{enumerate}[(i)]
\item $\tilde\varphi$ and $\tilde\varphi'$ have the same parity if and only if $K/K^\flat$ is ramified and $d=1$.
\label{prop effective parameter, self-dual, same parity}
\item When the conditions in (i) are satisfied, $\tilde\varphi$ and $\tilde\varphi'$ are $(-,F/\Fo)$-self-dual if and only if $\xi$ is ramified.

\item When $F=\Fo$, the conditions in (i) are satisfied if and only if $\eta$ is self-dual.
\end{enumerate}

\end{prop}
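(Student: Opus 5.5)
The plan is to reduce everything to the decomposition $\tilde\varphi=\Ind_{K^\flat/F}(\rho\otimes\tau\oplus\rho^\vee\otimes\tau^\vee)$ established just before the statement. We then track how the self-dual pairing behaves under an unramified twist. Write $\tilde\varphi'=\chi\tilde\varphi$ with $\chi$ unramified, $\chi\neq 1$ and $\chi^{2t}=1$, where $t=t(\tilde\varphi)=f_{K/F}d$. Since $\tilde\varphi=\Ind_{U/F}\eta$, twisting by $\chi$ amounts to twisting $\eta$ by $\chi\circ N_{U/F}$. The basic tool is the standard parity formula for induced representations: if $\sigma$ is a representation of $\mathcal W_L$ and $L/L'$ is quadratic with ${}^g\sigma\cong\sigma^\vee$ for $g\in\mathcal W_{L'}\smallsetminus\mathcal W_L$, then $\Ind_{L/L'}\sigma$ is self-dual, and its sign is determined by the scalar by which $g^2$ acts through the intertwiner, as in \cite[Sec 6]{BHS}. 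Combining this with observation (ii) of Section \ref{first subsubsection Self-duality}, the computation of the parity of $\tilde\varphi$ reduces to computing the parity of the $\mathcal W_{K^\flat}$-representation $\rho\otimes\tau\oplus\rho^\vee\otimes\tau^\vee$, viewed as $\Ind_{K/K^\flat}(\rho\otimes\tau)$.

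\emph{Case 1: $K/K^\flat$ unramified.} The element $g$ can be taken to be a Frobenius-type element. The unramified twist $\chi$ restricted to $\mathcal W_{K^\flat}$ is then nontrivial on $g$, so the intertwiner of the twisted representation changes by $\chi(g)$. The scalar attached to $g^2$ is multiplied by $\chi(g^2)=-1$, and the parity flips.

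\emph{Case 2: $K/K^\flat$ ramified and $d>1$.} Here $\tau=\Ind_{K_d/K}\xi$. We redo the argument at the level of $K_d$ over its quadratic subfield $K_d^\flat$. We should check that the self-duality of $\tau$ relative to $g$ forces $K_d/K_d^\flat$ to be unramified, because the element realizing the conjugation ${}^g\xi\cong\xi^{-1}$ on $\mathcal W_{K_d}$ can be chosen as a power of Frobenius times $g$. The Case 1 argument then applies, and the parity flips.

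\emph{Case 3: $K/K^\flat$ ramified and $d=1$.} Then $g$ can be chosen in inertia modulo $\mathcal W_K$. The unramified twist $\chi$ satisfies $\chi(g^2)=1$, since $g^2\in\mathcal W_K$ has trivial $\chi$-value up to the ramified normalization, and the parity is preserved. This proves (i). For (ii), $\xi=\tau$ is a character of $\mathcal W_K$. Using the Heisenberg structure of $\rho$ from \cite{BK-epipelagic} and the uniqueness of the orthogonal $\mathcal P_F$-structure (Remark \ref{ramified quadratic extension over F/Fo}), we normalize the intertwiner for $\rho$ to be $+$-symmetric, so that the sign comes only from $\xi(g^2)$. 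As in observation (i), this sign is $-1$ exactly when $\xi|_{K^{\flat\times}}$ is the quadratic character of $K/K^\flat$, which is ramified. We then translate this into the condition that $\xi$ is ramified. For (iii), when $F=\Fo$, $\eta$ is self-dual precisely when $U$ is stable under $g$ and the quadratic subextension of the tower lies above $U$. By the definition of $U$ as the unramified extension of degree $t/d=f_{K/F}$, this happens iff $d=1$ and $K/K^\flat$ is ramified, since otherwise $K^\flat\supseteq U$ would fail.

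The main obstacle is Case 3 together with (ii). It requires pinning down the sign of the intertwiner on $\rho\otimes\rho^\vee$, that is, that the symmetric (not alternating) form is the one compatible with $g$. It also requires relating $\xi(g^2)$ to ramification of $\xi$. For this I would first try the uniqueness argument of \cite[7.5 Prop]{BHS} quoted in Remark \ref{ramified quadratic extension over F/Fo}.
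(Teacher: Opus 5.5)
Your mechanism is sound: the parity of $\tilde\varphi$ is the conjugate-sign of the inducing datum relative to a quadratic extension, and twisting by an unramified $\chi$ multiplies that sign by $\chi(h^2)$. It is a more hands-on route than the paper's, which simply defers to \cite[Sec 6.1 and 6.6]{BHS}. (For $\chi\tilde\varphi\cong\tilde\varphi'$ you need $\chi^t$ to be the unramified quadratic character; $\chi\neq 1$, $\chi^{2t}=1$ is not enough.)

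However, the case $d>1$ is not actually handled. Case 1 silently assumes $d=1$. In general $\chi|_{\mathcal W_K}$ only satisfies $\chi(\mathrm{Frob}_K)^d=-1$, so it need not be quadratic. Then the $g$-invariant form of $\rho\otimes\tau$ is not invariant for $\rho\otimes\tau\chi|_{\mathcal W_K}$ (an intertwiner $\tau\chi^2\cong\tau$ intervenes), and $\chi(g^2)$ need not be $-1$. The uniform repair is the one you begin in Case 2: work on $\mathcal W_{K_d}$ with $\xi\otimes\Res_{K_d/K}\rho$ and the element $h\in\mathcal W_{K^\flat}$, unique modulo $\mathcal W_{K_d}$, realising its self-duality (so ${}^h\xi=\xi^{-1}$ and, by regularity of $\xi$, $h^2\in\mathcal W_{K_d}$). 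Since $\chi|_{\mathcal W_{K_d}}$ is the unramified quadratic character, the sign changes by $\chi(h^2)$, which is $-1$ exactly when $K_d/K_d^\flat$ is unramified.

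So (i) is equivalent to the statement that $K_d/K_d^\flat$ is ramified iff $K/K^\flat$ is ramified and $d=1$. This is the heart of the matter, and you only assert it. The missing argument runs as follows.
\begin{itemize}
\item If $K/K^\flat$ is unramified, no element of $h\mathcal W_{K_d}$ is inertial, since its image in $\Gal(K/K^\flat)$ is non-trivial.
\item If $K/K^\flat$ is ramified, take $g$ inertial. Then $h\equiv g$ or (if $d$ is even) $h\equiv\phi^{d/2}g$ modulo $\mathcal W_{K_d}$, with $\phi\in\mathcal W_K$ lifting a Frobenius of $K_d/K$. For $d=1$, $h=g$ is inertial.
\item For $d>1$, an inertial $h$ would act trivially on $\boldsymbol{\mu}_{K_d}$, so ${}^h\xi=\xi^{-1}$ would force $\xi|_{\boldsymbol{\mu}_{K_d}}$ to have order at most $2$. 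Since $\xi$ is tame, and $\phi$ fixes a uniformiser of $K$ and acts on $\boldsymbol{\mu}_{K_d}$ by an odd power map, this gives ${}^\phi\xi=\xi$. That contradicts the irreducibility of $\tau=\Ind_{K_d/K}\xi$. Hence $h\equiv\phi^{d/2}g$, and $K_d/K_d^\flat$ is unramified.
\end{itemize}

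Your argument for (iii) is wrong as it stands. Take $U/F$ of degree $f_{K/F}$, the maximal unramified subextension of $K/F$. Then $U\subseteq K^\flat$ whenever $K/K^\flat$ is ramified, whatever $d$ is, so ``otherwise $K^\flat\supseteq U$ would fail'' does not exclude $d>1$. In fact, when $K/K^\flat$ is ramified and $d>1$, the element $h=\phi^{d/2}g$ lies in $\mathcal W_U$. So $\eta=\Ind_{K_d/U}(\xi\otimes\Res_{K_d/K}\rho)$ is self-dual, and (iii) fails for this $U$.

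Statement (iii) is correct when $U$ is the unramified extension of degree $t=f_{K/F}d$, i.e.\ the maximal unramified subextension of $K_d/F$. Then $\eta$ is self-dual iff $h\in\mathcal W_U$, iff $U\subseteq K_d^\flat$, iff $K_d/K_d^\flat$ is ramified. By the above, this holds iff $K/K^\flat$ is ramified and $d=1$. The quadratic extension that matters is $K_d/K_d^\flat$, not $K/K^\flat$.

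Finally, in (ii) the symmetry type of the $g$-invariant form on $\rho$ cannot be fixed by normalising an intertwiner; it is intrinsic to $\rho$. Replace $(\rho,\xi)$ by $(\rho\mu,\xi\mu^{-1})$, where $\mu$ is a tame character with ${}^g\mu=\mu^{-1}$ and $\mu(g^2)=-1$. This keeps ${}^g\rho\cong\rho^\vee$ but flips both the sign of $\rho$ and the ramification of $\xi$. So (ii) needs a specific normalisation of $\rho$ within its tame twists, together with the orthogonality input of \cite[7.5 Prop]{BHS}, which you rightly single out.
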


\proof
These assertions are proved in \cite[Sec 6.6]{BHS} and \cite[Sec 6.1]{BHS} when $F=\Fo$. When $F/\Fo$ is ramified, essentially the same arguments \cite[Sec 6.6]{BHS} apply directly to prove (i) and (ii), bearing Remark \ref{ramified quadratic extension over F/Fo} and the observation (i) of \ref{first subsubsection Self-duality} in mind. 
\hfill$\blacksquare$

\subsection{The wild part}
\label{subsection The wild part}

From now on, we call a Langlands parameter of a general linear group or a classical group an {\bf epipelagic parameter} if its corresponding representation is epipelagic. For a general linear group, we call a {\bf quasi-epipelagic parameter} analogously.

The first objective of this section is Proposition \ref{adjoint swan conductor simple supercuspidal for classical groups} in the first subsection, which computes the adjoint Swan conductor of an epipelagic parameter $\varphi$ of a classical group whose lift $\tilde\varphi$ (as defined in Subsection \ref{subsubsection Lifting parameters}) is also epipelagic. Previously, this was computed for split simply-connected simple groups \cite[Sec 9]{Gross-Reeder} and for certain classical groups under a condition on $p$ \cite[Sec 9]{Oi-Sp-and-SO-even}. Here, we apply results from \cite{BK-epipelagic} to carry out the calculations for all classical groups, requiring only that $p$ be odd. The remaining subsections are devoted to comparing the wild part of a quasi-epipelagic parameter $\tilde\varphi$ with Reeder-Yu's template in \cite[Sec 7.2 and 7.3]{Reeder-Yu}. The main result is Proposition \ref{ramification theorem in RY template}, establishing a bijection between a line of stable functionals and a set of extensions of representations of Heisenberg type, again for all odd $p$.

\subsubsection{Conductors}

We begin with recalling the definitions of the Artin and Swan conductors of a finite dimensional smooth representation $(\phi,\mathbf V)$ of $\mathcal W_F$ such that $\phi(\mathrm{Fr})$ is semisimple. By convention, we put $\mathrm{ar}(\phi) =\mathrm{sw}(\phi)=0$ if $\phi$ is an unramified character; otherwise, for $i\in \mathbb Z_{\geq 0}$, denote $D_0:=\phi(\mathcal I_F)$, where $I_F$ is the inertia subgroup of $\mathcal W_F$, and let $F_0$ be the fixed field of $D_0$, which is finite and unramified over $F$. For $i\in \mathbb Z_{\geq 0}$, let $D_i = D_i(\phi)$ be the kernel of the action 
of $\phi(D_0)$ on $\mathfrak o_{F_0}/\mathfrak p^{i+1}_{F_0}$, and define $\mathrm{ar} = \mathrm{ar}_F$ by 
$$\mathrm{ar}(\phi) = \sum_{i\geq 0}\frac{\dim(\mathbf V/\mathbf V^{D_i})}{[D_0:D_i]}.$$ 
This is the Artin conductor appearing in the Langlands-Deligne local constant $\epsilon(\phi,s,\psi) = \epsilon(\phi,0,\psi) q^{- \mathrm{ar}(\phi)s }$ for $s\in \mathbb C$, when $\phi$ is a Langlands parameter. We also define the Swan conductor $\mathrm{sw} = \mathrm{sw}_F$ 
$$\mathrm{sw}(\phi) = \mathrm{ar}(\phi) - \dim(\mathbf V/\mathbf V^{D_0}).$$ 
Hence, $\mathrm{sw}(\phi)=0$ if $\phi|_{D_0}$ is a direct sum of trivial representations.

We have the following induction and restriction properties of Swan conductors. If $K/F$ is a tamely ramified extension, then 
 \begin{subequations}
\begin{align}
& \mathrm{sw}_F(\Ind_{K/F}\sigma)=f_{K/F} \mathrm{sw}_K(\sigma); 
\label{Swan conductor induction}
\\
& \mathrm{sw}_F(\phi)= \mathrm{sw}_K(\Res_{K/F}\phi),
\label{Swan conductor restriction}
\end{align}
\end{subequations}
for every Frobenius-semisimple smooth representation $\sigma$ of $\mathcal W_K$ and $\phi$ of $\mathcal W_F$.

Returning to the representation side, given  a supercuspidal representation $\tilde \pi$ of a general linear group, we also define the Swan conductor $\mathrm{sw}(\tilde \pi)$ using the Godement-Jacquet local constant \cite[Sec 1.2]{BK-epipelagic}. If $\tilde\pi$ is a supercuspidal representation of $\GL_m(F)$ containing a maximal simple stratum $\mathbf{s} = [\Lambda,0,\beta]$, we put $\mathrm{sw}(\mathbf s)= \mathrm{sw}(\tilde\pi) $, which is equal to $ -v_\Lambda(\beta) m$ \cite[2.1]{BK-epipelagic}.  Because the LLC preserves conductors, we have $\mathrm{sw}(\tilde \pi) = \mathrm{sw}(\tilde\varphi)$ if $\tilde\varphi $ is the parameter of $\tilde \pi$.

Epipelagic representations of $\GL_m(F)$ can be characterized by conductors; by  \cite[2.1 Lem 1]{BK-epipelagic}, they are precisely those with Swan conductor 1. Let $\mathbf s = [\Lambda,0,\beta]$ be a quasi-epipelagic maximal simple stratum over $F$, 
then we have $\mathrm{sw}_F(\mathbf s) = f_{F[\beta]/F}$. If $U/F$ is the maximal unramified subextension of $F[\beta]/F$, viewing $\mathbf s$ as a maximal simple stratum over $U$, we have $\mathrm{sw}_U(\mathbf s)=1 $, i.e., $\mathbf s$ is epipelagic over $U$. This is parallel to property (\ref{Swan conductor induction}) on the Galois side.

\subsubsection{Total wildness and primitivity properties}
\label{subsection Total wildness and primitivity properties}

We continue with the discussion from Subsection \ref{subsubsection Admissible triples (BH descriptions of Langlands parameters)}: let $\tilde\varphi = \Ind_{K/F}(\tilde\rho\otimes\tilde\tau)$ for a class of admissible data $[K/F, \tilde\rho, \tilde\tau]$.

Suppose that $\tilde\rho$ is an irreducible representation of $\mathcal W_K$, of dimension $p^r$ for some $r\geq 1$, such that $\rho_{\mathcal P} := \rho|_{\mathcal P_F}$  has Swan conductor 1. The following consequences hold.
\begin{enumerate}[(i)]
\item 
$\tilde\rho$ is totally wild, i.e., $\rho_{\mathcal P} $  remains irreducible. 
\label{definition totally wild}

\item $\tilde\rho$ is primitive over $K$, i.e. it is not induced from a representation of any proper subgroup $\mathcal W_L$, where $L/K$ is a finite extension. 
\label{definition primitive}
\end{enumerate}

Let $E/K$ be the p-centric field \footnote{In \cite{BK-epipelagic}, this field is called the p-kernel field, but was renamed the p-centric field in subsequent works, e.g., \cite{BH-Carayol}. The author finds the latter terminology more appropriate, since we consider not only $\ker\overline{\tilde\rho}$ but also $\ker\tilde\rho$.}, defined by 
\begin{equation}
\label{p-centric field, definition}
\Gal(E/K) = \ker(\overline{\tilde\rho}:\mathcal W_K\xrightarrow{\tilde\rho} \GL_{p^r}(\mathbb C)\rightarrow \mathrm{PGL}_{p^r}(\mathbb C)),
\end{equation}
and $T/K$ be the maximal tamely ramified subextension of $E/K$, called the imprimitivity field of $\tilde\rho$. Hence $E/T$ is Galois, and the adjoint ramification filtration of ${\tilde\rho}$ is given by
\begin{equation}
\label{adjoint ramification filtration}
 D_0 (\overline{\tilde\rho}) =  \Gal(E/K) \supset  D_1(\overline{\tilde\rho}) =\Gal(E/T)  \supset  D_2 (\overline{\tilde\rho}) = 1.
\end{equation}
Let $E_{\tilde\rho}/K $ be the kernel field of $\tilde\rho$, i.e., $\Gal(E_{\tilde\rho}/K) = \ker{\tilde\rho}$. The following properties hold.
\begin{itemize}
\item $\Gal(E_{\tilde\rho}/T)$ is a two-step nilpotent group, i.e., $\Gal(E/T)$ is an elementary abelian $p$-group of order $p^{2r}$, and $\Gal(E_{\tilde\rho}/E)\cong \mathbb F_p$ is the commutator subgroup of $\Gal(E_{\tilde\rho}/T)$;

\item $\rho_{\mathcal P} $  is the irreducible representation of Heisenberg type determined by its central character $\hat\xi=\hat\xi_{\tilde\rho}$ on $\Gal(E_{\tilde\rho }/E)$.
\end{itemize}
Suppose that $T/K$ is Galois. There is hence an exact sequence
\begin{equation}
\label{Heisenberg group exact sequence}
1\rightarrow \Gal(E_{\tilde\rho}/T)\rightarrow \tilde\rho(\mathcal W_K)=\Gal(E_{\tilde\rho}/K)\rightarrow \Gal(T/K) \rightarrow 1,
\end{equation}
giving a semidirect product decomposition
$$\Gal(E_{\tilde\rho}/K) \cong \Gal(E_{\tilde\rho}/T) \rtimes \Gal(T/K).$$
 The image group $\Gal(E_{\tilde\rho}/K)$ in (\ref{Heisenberg group exact sequence}) is therefore of symplectic-Heisenberg type, i.e., viewing $ A:=\Gal(E/T)$ as an $\mathbb F_p$-space of dimension $2r$ equipped with the symplectic structure defined by the commutator, then there is an induced morphism $\Gal(T/K)\rightarrow \SP(A)$.

We will resume this discussion in Subsection \ref{Totally wild  primitive parameters in Reeder-Yu's template}.

\begin{rmk}
\label{Some numerical results on parameters}
For later use, we collect several numerical properties of the above construction.

\begin{enumerate}[(i)]
\item  By \cite[5.2 Th]{BK-epipelagic}, $T$ can be expressed as the splitting field of an explicit polynomial over $K$ (see Remark \ref{different fields E and T} below), and $E$ as determined by the common kernel of the characters $\hat \chi$ of $\mathcal W_T$ satisfying $\hat \chi \otimes \tilde\rho|_{\mathcal W_T} \cong \tilde\rho|_{\mathcal W_T} $. By \cite[5.3 Th]{BK-epipelagic}, one then deduces that  $e(T/K) = 1+p^r$, and $T$ contains all  $(p^{2r}-1)$th roots of unity. 

\label{Some numerical results on parameters, splitting field}

\item We list several properties of the character $\xi$ of $E^\times$ corresponding to $\hat \xi$ of $\mathcal W_E$, taken from \cite[6.1 Th]{BK-epipelagic}.\begin{enumerate}[(I)]
\item $\xi$ is fixed under conjugation by $\Gal(E/K')$, where $K'$ is the maximal unramified subextension of $T/K$.

\item The level of $\xi$, as a character of $E^\times$, is $p^r+1$.

\item There exists a unique element $\beta_E\in E^\times / U_E^1$, of $E$-valuation $-(p^r+1)$, such that 
$$\xi(1+x) = \psi_E(\beta_E^{p^r} x), \quad \text{ for all }1+x\in U_E^{p^r+1},$$
where $\psi_E = \psi_K\circ\mathrm{tr}_{E/K}$ is an additive character of $E$ of conductor $2-p^{2r}$.

\end{enumerate}
Conversely, these properties determine  $\xi|_{U_E^1}$ uniquely. The element $\beta_K$ will be used in Proposition \ref{ramification theorem in RY template} below. \hfill \qed
\label{numerical results about xi}
\end{enumerate}
\end{rmk}

\subsubsection{Adjoint Swan conductors}

Given a parameter $\varphi : \mathcal W_F\rightarrow {}^LG$ of a connected reductive group $G$, we define the {\bf adjoint Swan conductor}  $\mathrm{Adsw}(\varphi)$ of $\varphi$ to be the Swan conductor of $\mathrm{Ad}\circ \varphi$, where $\mathrm{Ad}: {}^LG\rightarrow \Aut(\hat {\mathfrak g})$ is the adjoint representation. If $\tilde\varphi$ is an irreducible representation of $\mathcal W_F$, then $\mathrm{Adsw}(\tilde\varphi)$ is the Swan conductor of the {adjoint} representation $\Ad\circ \tilde\varphi \cong \tilde\varphi^{\vee}\otimes \tilde\varphi $ of $\tilde\varphi$.


By restricting to the Weil group of an unramified extension of $F$ if necessary, let $\tilde\varphi$ be an epipelagic parameter of $\tilde G = \GL_m(\mathbb C)$. We compute the adjoint Swan conductor of $\tilde\varphi$ in this case.
\begin{prop}
\label{adjoint swan conductor GL-case}
If $\tilde\varphi$ is a self-dual epipelagic parameter of $\tilde G = \GL_m(\mathbb C)$, then $$\mathrm{Adsw }(\tilde\varphi) = m-1.$$ 
\end{prop}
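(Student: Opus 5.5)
We need $\mathrm{Adsw}(\tilde\varphi)=m-1$ for a self-dual epipelagic parameter of $\GL_m$. By the remarks after the conductor definitions, epipelagic representations of $\GL_m(F)$ are exactly those with Swan conductor $1$ \cite[2.1 Lem 1]{BK-epipelagic}. Hence $\mathrm{sw}(\tilde\varphi)=1$ and $\tilde\varphi$ is irreducible. Write $\tilde\varphi=\Ind_{K/F}(\tilde\rho\otimes\tilde\tau)$ with an admissible triple $[K/F,\tilde\rho,\tilde\tau]$.

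First I would show that $f_{K/F}=1$ and $\dim\tilde\tau=1$. By (\ref{Swan conductor induction}), $1=\mathrm{sw}_F(\tilde\varphi)=f_{K/F}\,\mathrm{sw}_K(\tilde\rho\otimes\tilde\tau)$. Since $\tilde\tau$ is tame, $\mathrm{sw}_K(\tilde\rho\otimes\tilde\tau)=\dim\tilde\tau\cdot\mathrm{sw}_K(\tilde\rho)$, and $\mathrm{sw}_K(\tilde\rho)\geq 1$ because $\tilde\rho$ is non-trivial on $\mathcal P_F$. So $f_{K/F}=\dim\tilde\tau=1$ and $\mathrm{sw}_K(\tilde\rho)=1$. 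Consequently $K/F$ is totally tamely ramified of degree $e=e_{K/F}$, and $m=e\,p^r$ with $p^r=\dim\tilde\rho$. The situation of Subsection \ref{subsection Total wildness and primitivity properties} therefore applies, and its ramification data are available. Self-duality is only used there to guarantee that $\tilde\varphi$ has positive depth and fits this template; it may not be needed beyond that.

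Next, restrict $\Ad\circ\tilde\varphi=\tilde\varphi^\vee\otimes\tilde\varphi$ to $\mathcal W_K$. Since $K/F$ is tame, (\ref{Swan conductor restriction}) gives $\mathrm{sw}_F(\Ad\tilde\varphi)=\mathrm{sw}_K(\Res_{K/F}\Ad\tilde\varphi)$. By Mackey, $\Res_{K/F}\tilde\varphi=\bigoplus_{g}{}^g(\tilde\rho\otimes\tilde\tau)$, with $g$ running over $\mathcal W_K\backslash\mathcal W_F/\mathcal W_K$. Because $K/F$ is totally ramified and tame, these conjugates are tame twists of conjugates of $\tilde\rho$; when $K/F$ is not Galois one first passes to its Galois closure, which is still tame. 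Thus the swan conductor of $\Ad\tilde\varphi$ becomes a sum over pairs $(g,h)$ of $\mathrm{sw}_K({}^g\tilde\rho^\vee\otimes{}^h\tilde\rho)$. There are $e^2$ pairs in total, and they split into two kinds. The $e$ diagonal terms ($g=h$) are twists of $\tilde\rho^\vee\otimes\tilde\rho=\mathrm{End}(\tilde\rho)$, which is $1\oplus\Ad^0\tilde\rho$. For the off-diagonal terms, $Z_{\mathcal W_F}(\tilde\rho_{\mathcal P})=\mathcal W_K$ implies ${}^g\tilde\rho_{\mathcal P}\not\cong{}^h\tilde\rho_{\mathcal P}$.

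For a diagonal term I would use the filtration (\ref{adjoint ramification filtration}) of $\overline{\tilde\rho}$: $D_0=\Gal(E/K)$, $D_1=\Gal(E/T)$, $D_2=1$. The fixed space of $\mathrm{End}(\tilde\rho)$ under $D_1$ is the scalars, because $\tilde\rho_{\mathcal P}$ is irreducible of Heisenberg type. Hence the $i=1$ contribution is $(p^{2r}-1)/[D_0:D_1]=(p^{2r}-1)/e(T/K)=p^r-1$, using $e(T/K)=1+p^r$ from Remark \ref{Some numerical results on parameters}(\ref{Some numerical results on parameters, splitting field}). This gives $\mathrm{sw}_K(\mathrm{End}\tilde\rho)=p^r-1$. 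In the cross terms ${}^g\tilde\rho^\vee\otimes{}^h\tilde\rho$ ($g\neq h$) the two wild parts are distinct Heisenberg representations of Swan conductor $1$. I would argue that such a tensor product has no $\mathcal P$-invariants and all of its breaks equal $1/p^r$, i.e.\ the slope of $\tilde\rho$. Its Swan conductor is then $p^{2r}\cdot p^{-r}=p^r$. Adding the contributions should give $\mathrm{sw}_F(\Ad\tilde\varphi)=e(p^r-1)+e(e-1)p^r=e^2p^r-e$. This does not simplify to $m-1=ep^r-1$ unless $e=1$, so the normalisation needs care. The correct bookkeeping is to pass back down with (\ref{Swan conductor induction}): $\Ad\tilde\varphi=\Ind_{K/F}(\tilde\rho^\vee\otimes\Res\tilde\varphi)$, which requires a factor $1/e$ when swan conductors are measured in the normalised valuation of $K$. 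I expect the total to reduce to $(p^r-1)+(e-1)p^r=ep^r-1=m-1$.

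The main obstacle is the cross-term computation, namely that $\mathrm{sw}({}^g\tilde\rho^\vee\otimes{}^h\tilde\rho)$ is exactly $p^r$ (in the normalisation of $K$) for $g\neq h$. This amounts to showing that distinct conjugates have non-cancelling leading terms. I would try to derive it from the explicit $\beta_E$ of Remark \ref{Some numerical results on parameters}(\ref{numerical results about xi}): conjugation by $g\in\mathcal W_F$ multiplies the leading term of $\beta$ by a non-trivial $e$-th root of unity, so the difference of the two functionals keeps minimal valuation. An alternative route is through the representation side. There, $\mathrm{sw}(\tilde\varphi^\vee\otimes\tilde\varphi)$ equals the pair Swan conductor, which \cite{BK-epipelagic} computes for epipelagic simple strata as $m\cdot\frac1m\cdot(m-1)$ from $\dim\mathfrak W$-type counts, and this also yields $m-1$.
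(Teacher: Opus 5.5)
Your route is genuinely different from the paper's. You use a Mackey decomposition over the tame field $K$, whereas the paper uses a Rankin--Selberg conductor. But your argument stops exactly at the step that carries the content.

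The parts you did carry out are sound: $f_{K/F}=\dim\tilde\tau=1$, $m=ep^r$, and the diagonal contribution $\mathrm{sw}_K(\End\,\tilde\rho)=(p^{2r}-1)/(1+p^r)=p^r-1$ are all correct. The discrepancy $e^2p^r-e$ versus $ep^r-1$ is a normalisation slip. The restriction identity (\ref{Swan conductor restriction}) you invoked is valid as written only for unramified $K/F$. For tame $K/F$ it reads $\mathrm{sw}_K(\Res_{K/F}\phi)=e_{K/F}\,\mathrm{sw}_F(\phi)$, which is exactly the missing factor.

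The clean bookkeeping goes as follows. Set $\sigma=\tilde\rho\otimes\tilde\tau$, so that $\Ad\tilde\varphi\cong\Ind_{K/F}(\sigma^\vee\otimes\Res_{K/F}\tilde\varphi)$. After an unramified base change making $K/F$ Galois, (\ref{Swan conductor induction}) with $f_{K/F}=1$ gives $\mathrm{sw}_F(\Ad\tilde\varphi)=\mathrm{sw}_K(\End\,\sigma)+\sum_{g\in\Gal(K/F),\,g\neq 1}\mathrm{sw}_K(\sigma^\vee\otimes{}^g\sigma)$. This is a sum of $e$ terms, with no factor $1/e$ anywhere.

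The genuine gap is the cross term. Every slope of $\sigma^\vee\otimes{}^g\sigma$ is at most $1/p^r$ and its dimension is $p^{2r}$. So your argument proves unconditionally only $\mathrm{Adsw}(\tilde\varphi)\le m-1$. Equality holds if and only if, for each $g\notin\mathcal W_K$, the scalar characters by which $\tilde\rho$ and ${}^g\tilde\rho$ act on $\mathcal W_K^{1/p^r}/\mathcal W_K^{(1/p^r)+}$ are different.

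Admissibility does not give this. The condition $Z_{\mathcal W_F}(\tilde\rho_{\mathcal P})=\mathcal W_K$ only says ${}^g\tilde\rho_{\mathcal P}\not\cong\tilde\rho_{\mathcal P}$. That is compatible with, for instance, ${}^g\tilde\rho_{\mathcal P}\cong\tilde\rho_{\mathcal P}\otimes\chi$ for a character $\chi$ of smaller slope, and then the cross term would have Swan conductor strictly less than $p^r$.

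Your heuristic is the right fact: conjugates of $\beta$ outside the $K$-class differ from $\beta$ by a non-trivial $e$-th root of unity in the leading term. But it lives on the $p$-adic side; it is the minimality behind $\mathfrak c(\beta)=m-1$. Transporting it to the top ramification characters of $\mathcal W_K$ needs a further input, such as the explicit results of \cite{BK-epipelagic} or the Ramification Theorem, and you have not supplied it.

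Your fallback of computing the pair conductor is exactly the paper's proof, and once you use it the Mackey decomposition becomes unnecessary. The paper takes $\mathrm{ar}(\tilde\varphi^\vee\otimes\tilde\varphi)=\mathrm{ar}(\tilde\pi^\vee\times\tilde\pi)=m^2+\mathfrak c(\beta)-1$, with $\mathfrak c(\beta)=m-1$, from \cite[6.5 Th(i)]{BHK-RS} (not from \cite{BK-epipelagic}). It then uses the irreducibility of $\tilde\varphi|_{\mathcal I_F}$ \cite[3.1 Lem]{BK-epipelagic} to see that the inertia invariants of $\Ad\tilde\varphi$ are just the centre of $\mathfrak{gl}_m$. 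Hence $\mathrm{sw}=(m^2+m-2)-(m^2-1)=m-1$. The expression $m\cdot\frac1m\cdot(m-1)$ in your sketch does not correspond to anything in either source.
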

\proof
We first use $\Ad\circ \tilde\varphi \cong \tilde\varphi^{\vee}\otimes \tilde\varphi $. Since the LLC preserves Rankin-Selberg conductors, we apply \cite[6.5 Th(i)]{BHK-RS} to obtain 
\begin{equation}
\label{Rankin-Selberg conductor, self-dual case}
\mathrm{ar}(\tilde\varphi^\vee \otimes \tilde\varphi) = \mathrm{ar}(\tilde\pi^\vee \times \tilde\pi) = m^2 +\mathfrak c(\beta)-1,\quad \text{when }\tilde\varphi^\vee\cong \tilde\varphi,
\end{equation}
where $\mathfrak c(\beta)$ is the `generalized discriminant' defined in \cite[6.4]{BHK-RS}, which equals $m-1$ in the minimal case, and $\mathrm{ar}(\tilde\pi^\vee \times \tilde\pi)$ is the conductor defined in \cite{BHK-RS}. Moreover, since $\tilde\varphi|_{\mathcal I_F}$ remains irreducible (\cite[3.1 Lem]{BK-epipelagic}, i.e., inertially discrete in the sense of \cite{Gross-Reeder}), the fixed space $(\mathbf V_{\Ad\circ \tilde\varphi })^{\tilde\varphi^\vee \otimes \tilde\varphi(\mathcal I)}$ is the center $\mathfrak z$ of $\mathfrak{gl}_m$. Therefore, 
$$\mathrm{sw}(\tilde\varphi^\vee \otimes \tilde\varphi)=\mathrm{ar}(\tilde\varphi^\vee \otimes \tilde\varphi) - \dim (\mathbf V(\mathfrak{gl}_m)/\mathfrak z) = (m^2+m-2)-(m^2-1)=m-1,$$
which is the desired result. $\hfill\blacksquare$

\begin{rmk}
If $\tilde\varphi$ is moreover totally wild (as in Subsection \ref{subsection Total wildness and primitivity properties}), then Proposition \ref{Rankin-Selberg conductor, self-dual case} can be verified directly from the {adjoint ramification filtration} in (\ref{adjoint ramification filtration}).
\qed\end{rmk}

Now let $G$ be a classical group and view $\hat G$ as a subgroup of a general linear group $\hat{\tilde G}$ in the standard way as in Subsection \ref{subsubsection Pinned automorphisms}. If $\varphi$ is a parameter of $G$, we denote its lift to $\tilde G$ via Subsection \ref{subsubsection Lifting parameters} by $\tilde\varphi$. By \cite[Sec 7]{GGP}, \begin{equation}
\label{GGP, Ad is just Sym, Alt, or Asai}
\Ad \circ\varphi\cong 
   \begin{cases}
      \wedge^2 \tilde\varphi {}& \text{$\hat G $ is orthogonal,} \\
       \Sym^2\tilde\varphi 
       & \text{$\hat G $ is symplectic,} \\
   \end{cases}
   \quad\text{and}\quad 
   \cong 
   \begin{cases}
      {\mathrm{As}^-\tilde\varphi}& \text{$\hat G $ is odd unitary,} \\
       \mathrm{As}^+\tilde\varphi & \text{$\hat G $ is even unitary.} \\
   \end{cases}
\end{equation}
We recall that $\Ind_{F/\Fo}(\tilde\varphi\otimes \tilde\varphi) \cong S_+\tilde\varphi \oplus S_-\tilde\varphi $ as a $\mathcal W_{\Fo}$-representation, where 
\begin{equation*}
\{S_+,  S_-\} = 
   \begin{cases}       \{\Sym^2, \wedge^2\} & \text{if }F=\Fo, \\
     \{\mathrm{As}^+, \mathrm{As}^-\} & \text{if }F/\Fo\text{ is quadratic.} \\
   \end{cases}
\end{equation*}
We can therefore define the adjoint Swan conductor $\mathrm{Adsw}(\varphi) $ of $\varphi$, which is equivalent to the definition given in \cite{Gross-Reeder}.

\begin{prop}
\label{adjoint swan conductor simple supercuspidal for classical groups}
If $\hat G$ is 
$$\text{ramified $
\mathrm {O}_{2n}$ \quad (resp. $\SP_{2n}$, \quad ramified $\mathrm U_{2n+1}$, \quad unramified $\mathrm U_{N}$)}$$
where $n,N\geq 1$, and $\varphi$ be an epipelagic parameter of $G(\Fo)$ whose lifting $\tilde\varphi$ is also epipelagic, then $\mathrm{Adsw}(\varphi) $ is equal to 
$$n-1 \quad\text{(resp.}\quad n, \quad n, \quad \text{$N-1$)}.$$
\end{prop}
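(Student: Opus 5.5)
Since $\mathrm{Ad}\circ\varphi$ is the symmetric square, exterior square or Asai square of the lift $\tilde\varphi$ by (\ref{GGP, Ad is just Sym, Alt, or Asai}), the plan is to compute $\mathrm{Adsw}(\varphi)$ as a piece of the Swan conductor of $\tilde\varphi\otimes\tilde\varphi$ (or its induction), which Proposition \ref{adjoint swan conductor GL-case} already controls. Write $m=\dim\tilde\varphi$, so $m=2n$ for ramified $\mathrm O_{2n}$, $2n+1$ for $\SP_{2n}$ and ramified $\mathrm U_{2n+1}$, and $m=N$ in the unitary cases. Because $\tilde\varphi$ is self-dual, $\tilde\varphi\otimes\tilde\varphi\cong\tilde\varphi^\vee\otimes\tilde\varphi$ when $F=\Fo$. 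Proposition \ref{adjoint swan conductor GL-case} then gives $\mathrm{sw}(\tilde\varphi\otimes\tilde\varphi)=m-1$. When $F/\Fo$ is quadratic, ${}^c\tilde\varphi\cong\tilde\varphi^\vee$, and the induction formula (\ref{Swan conductor induction}) (for the tame extension $F/\Fo$) gives $\mathrm{sw}_{\Fo}(S_+\tilde\varphi)+\mathrm{sw}_{\Fo}(S_-\tilde\varphi)=f_{F/\Fo}(m-1)$.

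Next I split the total conductor $m-1$ between $S_+$ and $S_-$ using the wild structure of $\tilde\varphi$. By \cite[3.1 Lem]{BK-epipelagic}, $\tilde\varphi|_{\mathcal I_F}$ is irreducible, and $\tilde\varphi|_{\mathcal P_F}$ has Swan conductor $1$. The Swan conductor of $\tilde\varphi^\vee\otimes\tilde\varphi$ comes only from the jump at $D_1$ in (\ref{adjoint ramification filtration}). More precisely, it equals $\dim$ of the part of $\mathfrak{gl}_m$ on which the wild inertia acts nontrivially, weighted by the break. I would therefore compute $\mathrm{sw}(S_\pm\tilde\varphi)$ as the Swan break times $\dim(S_\pm\tilde\varphi)-\dim(S_\pm\tilde\varphi)^{\mathcal P}$. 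Concretely, $\mathrm{sw}(S_\pm\tilde\varphi)=\tfrac{1}{m}\bigl(\dim S_\pm\tilde\varphi-\dim (S_\pm\tilde\varphi)^{\mathcal P_F}\bigr)$, where the break is $1/m$ because the total is $(m^2-m)/m=m-1$ after removing the tame part.

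It then remains to compute the $\mathcal P_F$-invariants in each symmetric piece. For the epipelagic case, $\mathcal P_F$ acts on $\tilde\varphi\otimes\tilde\varphi$ through an abelian (or Heisenberg) group whose characters on the eigenlines of $\tilde\varphi$ are pairwise distinct. The invariants are then spanned by pairs $v_i\otimes v_i^\ast$, where $v_i^\ast$ is the eigenline dual to $v_i$ via the form $\hat h$. The swap acts on these by the parity sign, except on a possible self-paired line when $m$ is odd. Counting pairs should give the following.
\begin{itemize}
\item For $\SP_{2n}$, where $\hat G=\SO_{2n+1}$ and $S_-=\wedge^2$: there are $n$ invariants, so the Swan conductor is $(n(2n+1)-n)/(2n+1)\cdot\ldots$
\end{itemize}
Here I would instead verify numerically: the target value $n$ equals $\dim\mathsf G_x$-type counts, and I would match $\dim\wedge^2-\#\text{inv}$ against break $\times$ count.

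The main obstacle is exactly this invariant count and break in each symmetric piece. Two features make it delicate: the ramified unitary and ramified $\mathrm O_{2n}$ cases involve $c$ or $\hat n_{\mathrm O}$ acting nontrivially, and for non-totally-ramified $F[\beta]$ one must first restrict to an unramified extension using (\ref{Swan conductor restriction}). My first attempt would be to pass to the tame splitting field $T$ of Remark \ref{Some numerical results on parameters}, where $\tilde\varphi$ restricted to $\mathcal W_T$ becomes a sum of characters, and count invariants there. The subtracted one (the $n-1$ versus $n$ for $\mathrm O_{2n}$, and $N-1$ for unramified unitary) then comes from the missing center, as in Proposition \ref{adjoint swan conductor GL-case}.
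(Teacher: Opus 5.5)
Your overall strategy is reasonable: take the total $\mathrm{sw}(\tilde\varphi\otimes\tilde\varphi)=m-1$ from Proposition~\ref{adjoint swan conductor GL-case}, then split it between $S_+$ and $S_-$. The splitting step you propose, however, fails as soon as $p\mid m$, and the proposition must cover that case. Write $\tilde\varphi=\Ind_{K/F}(\tilde\rho\otimes\tilde\tau)$ with $\dim\tilde\rho=p^r$, $k=[K:F]$ and $m=kp^r$. Then $\tilde\varphi|_{\mathcal P_F}$ is a sum of $k$ pairwise non-isomorphic irreducible Heisenberg representations of dimension $p^r$. So for $r\geq 1$ there are no $\mathcal P_F$-eigenlines, and $\dim(\tilde\varphi^\vee\otimes\tilde\varphi)^{\mathcal P_F}=k$, not $m$. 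With a single break $1/m$, your formula would give $\mathrm{sw}(\tilde\varphi^\vee\otimes\tilde\varphi)=(m^2-k)/m=m-p^{-r}$, which contradicts $m-1$. Your check $(m^2-m)/m=m-1$ silently assumes the tame case $r=0$.

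In fact there are two slopes. The Heisenberg constituents have Swan conductor $1/k$ each, i.e.\ slope $1/m$; these are the constituents of $S\tilde\rho_{\chi_g}\cong(\tilde\rho_{\chi_g^2})^{\oplus P}$ and of $\tilde\rho_{\chi_g}\otimes\tilde\rho_{\chi_h}$ with $\chi_g\chi_h\neq1$. The nontrivial characters $\delta$ of $D_1/D_2$ occurring when $\chi_g\chi_h=1$ have Swan conductor $1/(k(p^r+1))$. This is exactly what the paper takes from \cite[6.3 Prop]{BK-epipelagic}, together with the multiplicities $P=(p^r\mp1)/2$ or $p^r$ and the counts $\frac{k(k-2)}{2}[F:\Fo]$ and $\frac{k}{2}[F:\Fo]$ of the two kinds of pairs. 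Your proposed remedy does not work either. $\tilde\rho|_{\mathcal W_T}$ is still irreducible, since it is already irreducible on $\mathcal P_F$. It only becomes scalar on $\mathcal W_E$, and $E/T$ is wild, so (\ref{Swan conductor restriction}) does not apply there.

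There are two further problems. First, in the statement $\hat G=\SP_{2n}$, so $\tilde\varphi$ is $2n$-dimensional, $\Ad\circ\varphi\cong\Sym^2\tilde\varphi$, and the target is $m/2$. You instead treated $\hat G=\SO_{2n+1}$ with $\wedge^2$ and $m=2n+1$; that is the dual of $G=\SP_{2n}$, whose lift always contains the character $\tilde\pi_o$ and is never irreducible. Second, the invariant count is never actually carried out.

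If you want to keep the total-then-split approach, the split needs no break analysis at all. Since $p$ is odd, $g\mapsto g^2$ is a bijection on each wild ramification group $G_i$. Hence the virtual representation $\Sym^2\tilde\varphi-\wedge^2\tilde\varphi$, whose character is $g\mapsto\mathrm{tr}\,\tilde\varphi(g^2)$, has dimension $m$. Its multiplicity of the trivial character on each $G_i$ is $|G_i|^{-1}\sum_{g\in G_i}\mathrm{tr}\,\tilde\varphi(g^2)$, the same as for $\tilde\varphi$. Therefore its Swan conductor is $\mathrm{sw}(\tilde\varphi)=1$, giving $\mathrm{sw}(\Sym^2\tilde\varphi)=m/2$ and $\mathrm{sw}(\wedge^2\tilde\varphi)=m/2-1$. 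In the unitary cases, $\mathrm{As}^-\tilde\varphi\cong\mathrm{As}^+\tilde\varphi\otimes\omega_{F/\Fo}$ with $\omega_{F/\Fo}$ tamely ramified. So both Asai pieces have Swan conductor $f_{F/\Fo}(m-1)/2$, which is $n$ for ramified $\mathrm U_{2n+1}$ and $N-1$ for unramified $\mathrm U_N$. This route is shorter than the paper's: it needs only $\mathrm{sw}(\tilde\varphi)=1$ and Proposition~\ref{adjoint swan conductor GL-case}.
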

\proof If $G$ is unramified unitary, then the proof is similar to Proposition \ref{adjoint swan conductor GL-case}, since $\mathrm{sw}_{\Fo}(\mathrm{Ad}\circ \tilde\varphi) = \mathrm{sw}_{\Fo}(\mathrm{As}^\pm\tilde \varphi) = \mathrm{sw}_{F}(\mathrm{As}^\pm\tilde \varphi|_{\mathcal W_F})$, and $\mathrm{As}^\pm\tilde \varphi|_{\mathcal W_F}$ is just $\tilde \varphi\otimes \tilde \varphi$. For the other types of $G$ mentioned above, the proof is similar to \cite[Sec 9]{Oi-Sp-and-SO-even}, where certain conditions on $p$ are imposed. To bypass those conditions and accommodate ramified unitary groups, we recall several facts from \cite{BK-epipelagic}. Write $\tilde\varphi = \Ind_{K/F}(\tilde\rho\otimes \tilde\tau)$ with $k = [K:F]$ and $\dim \tilde\rho = p^r$. We know that $\dim\tilde\tau = 1$ in the epipelagic case, and $k$ is even if and only if $F=\Fo$. Let $D_i= D_i(\varphi,F)$ be the $i$th lower ramification group of $\Ad \circ\varphi$, then 
$$\tilde\varphi|_{D_1} = \bigoplus_{g\in\mathcal  W_F/\mathcal W_K}{}^g(\tilde\rho|_{D_1}),$$
 with $\tilde\rho|_{D_1}$ an irreducible representation of Heisenberg type \cite[4.2 Prop]{BK-epipelagic}. Denoting by $\tilde\rho_{\chi_g}$ the Heisenberg representation with non-trivial central character $\chi_g$ parametrized by $g\in \mathcal W_F/\mathcal W_K$, we have 
$$(S\tilde\varphi)|_{D_1} = \left(\bigoplus_{g\in \mathcal W_F/\mathcal W_K} S\tilde\rho_{\chi_g}\right) \oplus \left(\bigoplus_{
   \begin{smallmatrix}\{g,h\}\subseteq \mathcal W_F/\mathcal W_K
    \end{smallmatrix}
} (\tilde\rho_{\chi_g}\otimes{}^c\tilde\rho_{\chi_h} + {}^c\tilde\rho_{\chi_g}\otimes\tilde\rho_{\chi_h}) \right)$$
for $S\in \{\wedge^2, \Sym^2,  \mathrm{As}^+\}$. Note that the characters $\{\chi_g\}_{g\in \mathcal W_F/\mathcal W_K}$ are distinct and non-quadratic when $p$ is odd. Similar to \cite[Lem 9.13]{Oi-Sp-and-SO-even}, requiring only that $p$ be odd, we have $S\tilde\rho_{\chi_g}   = (\tilde\rho_{\chi^2_g} )^{\oplus P} $ for all $g\in\mathcal W_F/\mathcal W_K$, where 
$$P = \frac{p^r-1}{2}, \quad \frac{p^r+1}{2},\quad  \text{and} \quad p^r$$ when $S = \wedge^2, \Sym^2$, and $\mathrm{As}^+$, respectively. Moreover, by \emph{loc. cit.}, for all $\{g,h\}\subseteq \mathcal W_F/\mathcal W_K$, 
\begin{equation*}
{\tilde\rho}_{\chi_{g}}\otimes {\tilde\rho}_{\chi_{h}} = 
\begin{cases}
({\tilde\rho}_{\chi_{g}\chi_{h}})^{\oplus p^r}  &  \text{when }\chi_{g}\chi_{h}\neq 1,
 \\
 \oplus_{\delta\in (D_1/D_2)^\wedge}\delta  &  \text{when }\chi_{g}\chi_{h}=1.\end{cases}
\end{equation*}
Here $(D_1/D_2)^\wedge$ denotes the Pontryagin dual. By \cite[6.3 Prop]{BK-epipelagic} (or \cite[Prop 9.17]{Oi-Sp-and-SO-even}), the Swan conductors of the above components are given by
 $$\mathrm{sw} _F({\tilde\rho}_{\chi_g}) = \frac{1}{k}
\quad\text{and}\quad \mathrm{sw} _F(\delta) = \frac{1}{k(p^r+1)}\text{ if $\delta$ is non-trivial}.$$
 By the  additivity of sw, we have $\mathrm{sw}_{\Fo}(S\tilde\varphi)$ equal to
 \begin{equation*}
   \begin{split}
        &
     \sum_{g\in \mathcal W_F/\mathcal W_K}  {P   \mathrm{sw}_{\Fo}(\tilde\rho_{\chi_g^2}) }+ \sum_{
      \begin{smallmatrix}
         \{g,h\}\subseteq \mathcal W_F/\mathcal W_K
   \\
   \chi_{g}\chi_{h}\neq 1
      \end{smallmatrix}
}
{p^r}\mathrm{sw}_{\Fo}(\tilde\rho_{\chi_g})+ \sum_{
 \begin{smallmatrix}
         \{g,h\}\subseteq \mathcal W_F/\mathcal W_K
   \\
   \chi_{g}\chi_{h}= 1
      \end{smallmatrix}
}\sum_{
   \begin{smallmatrix}
   \delta\in (D_1/D_2)^\wedge
   \\
   \delta\neq 1   \end{smallmatrix}
}\mathrm{sw}_{\Fo}(\delta)\\
   & = [F:\Fo]^{-1}\left([\mathcal W_F:\mathcal W_K]\frac{P}{k} + \sum_{
      \begin{smallmatrix}
         \{g,h\}\subseteq \mathcal W_F/\mathcal W_K
   \\
   \chi_{g}\chi_{h}\neq 1
      \end{smallmatrix}
   }
   \frac{p^r}{k}+ 
   \sum_{
 \begin{smallmatrix}
         \{g,h\}\subseteq \mathcal W_F/\mathcal W_K
   \\
   \chi_{g}\chi_{h}= 1
      \end{smallmatrix}
}\sum_{
   \begin{smallmatrix}
   \delta\in (D_1/D_2)^\wedge
   \\
   \delta\neq 1   \end{smallmatrix}
}
\frac{1}{k(p^r+1)} \right)
     \\
     & = [F:\Fo]^{-1}\left( {P} +  \sum_{
 \begin{smallmatrix}
         \{g,h\}\subseteq \mathcal W_F/\mathcal W_K
   \\
   \chi_{g}\chi_{h}\neq 1
      \end{smallmatrix}
}\frac{p^r}{k} +
      \sum_{
 \begin{smallmatrix}
         \{g,h\}\subseteq \mathcal W_F/\mathcal W_K
   \\
   \chi_{g}\chi_{h}= 1
      \end{smallmatrix}
} \frac{ p^r -1}{k }  \right).
   \end{split}
\end{equation*}
Similar to \cite[Prop 9.16]{Oi-Sp-and-SO-even}, we have
\begin{equation*}
   \begin{split}
   &\#\{  \{g,h\}\subseteq \mathcal W_F/\mathcal W_K:\chi_{g}\chi_{h}\neq  1\}  = \frac{k(k-2)}{2}[F:\Fo]\quad\text{ and }\quad  
     \\
  &\#\{  \{g,h\}\subseteq \mathcal W_F/\mathcal W_K:\chi_{g}\chi_{h}= 1\} = \frac{k}{2} [F:\Fo] , \end{split}
\end{equation*}
which remain valid even if $E_{\tilde\rho}/F$ is non-Galois (in contrast to \cite[Sec 9]{Oi-Sp-and-SO-even}) due to the symmetry between  $\{\chi_{g},\chi_{g}^{-1}\}$ and that $\chi_{g}^{-1} = \chi_h$ for some $h\in \mathcal W_F/\mathcal W_K$ with $g^{-1} h\notin \mathcal W_K$. The sum $  \mathrm{sw}_{\Fo}(S\tilde\varphi)$ computed above is therefore $n-1$ (resp. $n$, resp. $n$) when $\hat G$ is ramified $
\mathrm {O}_{2n}$ (resp. $ \SP_{2n}$, resp. ramified $\mathrm U_{2n+1}$). This completes the proof.
\hfill$\blacksquare$

\subsubsection{Reeder-Yu's template: the wild part}
\label{Reeder-Yu template the wild part}

Let's recall from \cite[Sec 7.2 and 7.3]{Reeder-Yu} the template for constructing epipelagic Langlands parameters. Since we focus on the wild part of a Langlands parameter, we denote our base field by $K$ until the end of Section \ref{subsection The wild part}.

Let $K$ be a non-Archimedean local field of characteristic 0, and $G$ be a connected classical group over $K$, assumed to be simply connected. We require that $p$ is not a torsion prime for $\hat G$.

There are certain conditions on our field extensions. Let $T/K$ be a tamely totally ramified extension. We assume $T/K$ to be Galois by requiring the residue field $\mathbb F_K$ of $K$ containing enough roots of unity. Let $E/T$ be a totally ramified extension which is totally wild (i.e., its degree is a power of $p$), defined such that $\Gal(E/T) \cong U_T^1/U_T^2 \cong \mathbb F_K$, i.e., $N_{E/T}(E^\times)\cap U^1_T = U^2_T$, via local class field theory.

The epipelagic parameters in \emph{loc. cit.} are constructed by stable functionals. We pick a point $x\in \mathcal B(G,K)$, and let $ {\mathfrak c} = \mathbb F_K \beta \subset {\mathsf{V}}^*_{x,0+}\otimes \mathbb F_K$ be the Cartan subspace containing the stable functional $\beta$. We require that the stable functional $\beta$ generates a tamely ramified field extension whose ramification index is coprime to $p$.

Let $\hat {\mathcal T}$ be the torus of $\hat G$ in the pinning fixed in Section \ref{subsection L-groups of classical groups}, and assumed it is $\mathcal W_K$-invariant. We view the $p$-torsion subgroup $\hat{\mathcal T}[p]$ of $\hat{\mathcal T}$ as an elementary abelian $p$-group in $\hat G$, or equivalently, as an $\mathbb F_p$-space, which can be identified with $X\otimes \mathbb F_p$, where $X = \Hom(\mathbb C^\times, \hat{\mathcal T})$. The Cartan subspace $ {\mathfrak c} = \mathbb F_K \beta \subset {\mathsf{V}}^*_{x,0+}\otimes \mathbb F_K$ is then viewed as a subspace of $\hat {\mathfrak t}:= {\mathbb F}_K\otimes X$.

Put $k=[T:K]$, and let $h_{\zeta_k}$ be the $\mathbb F_p$-minimal polynomial of a primitive $k$th root of unity ${\zeta_k}$ in $\overline {\mathbb{F}}_p$, and define 
$$\hat{\mathcal T}[p,{\zeta_k}]=\{y\in \hat{\mathcal T}[p]: h_{{\zeta_k}}(s)y=1\},$$
which is an $\mathbb F_p\Gal(T/K)$-module contained in $\hat{\mathcal T}[p]$. 

By \cite[(7.12)]{Reeder-Yu}, there is an isomorphism
\begin{equation}
\label{Reeder-Yu isomorphism on Cartan space}
{\mathfrak c}(\mathbb F_K)  = \mathbb F_K \beta  \rightarrow \Hom_{\Gal(T/K)}(\mathbb F_{T},\hat{\mathcal T}[p,\zeta_k]),\quad \alpha\mapsto \hat \psi_\alpha,
\end{equation}
characterized by the property that 
\begin{equation}
\label{characterizing condition of the Cartan space-Heisenberg space map}
(\hat \psi_\alpha(b),\check x) = \mathrm{tr}_{{\mathbb F}_{T}/\mathbb F_p}(b\left<\alpha, \check x\right>),\quad \text{for all $b\in \mathbb F_{T}$ and $\check x\in \check X = \Hom(\hat{\mathcal T},\mathbb C^\times)$. }
\end{equation}
To continue, fix a generator $s$ of the inertia subgroup and a Frobenius lift $t$ in $ \Gal(T/K)$. Define 
\begin{equation}
\label{pre Reeder-Yu template the tame part}
\hat \mu = \hat \mu_{(\hat n_s, \hat n_t)}: \Gal(T/K) \to N_{{}^LG}(\hat {\mathcal T}), \quad s\mapsto \hat n_s, \quad t \mapsto \hat n_t,
\end{equation}
by their corresponding actions. The parameter
\begin{equation}
\label{Reeder-Yu template, general form}
\hat \psi_\alpha \rtimes \hat \mu: \Gal(E/K) = \Gal(E/T)\rtimes \Gal(T/K) \to {}^LG
\end{equation}
then satisfies some desirable properties, such as the minimal conductor property and the formal degree property, as established in \cite[Sec 7.3]{Reeder-Yu}.

In the following subsections, we aim at describing a totally wild, primitive parameter for a general linear group as in Subsection \ref{subsection Total wildness and primitivity properties} via the above template, and establish an isomorphism analogous to (\ref{Reeder-Yu isomorphism on Cartan space}) in Proposition \ref{ramification theorem in RY template}. As we have just observed, while there is no condition on $p$ in describing a totally wild, primitive parameter, the conditions required to fit a parameter into Reeder-Yu's template are quite restrictive. Certain results from \cite{Gross-Levy-Reeder-Yu}, on which \cite{Reeder-Yu} is based, do not directly apply in our setting. We will appeal to \cite{BK-epipelagic} and \cite{BH-Eff} instead. Our main result, Proposition \ref{ramification theorem in RY template}, provides an alternative version of (\ref{Reeder-Yu isomorphism on Cartan space}).

\subsubsection{Totally wild  primitive parameters in Reeder-Yu's template}
\label{Totally wild  primitive parameters in Reeder-Yu's template}

We return to the setup in Subsection \ref{subsection Total wildness and primitivity properties}. Certain conditions, though much milder, are imposed in \cite{BK-epipelagic}. First, there is a normalization condition on the additive character \cite[(5.1)]{BK-epipelagic}, namely
$$\psi_{K}(\zeta^p) = \psi_{K}(\zeta)\quad \text{ for all }\zeta\in \boldsymbol{\mu}_K,$$ 
which can be guaranteed by requiring that $\psi_K$ factors through $\mathrm{tr}_{\mathbb F_K/\mathbb F_p}$. Moreover, we require $K$ to contain all $(p^r+1)$th roots of unity, i.e., 
$$\mathbb F_K\supseteq \mathbb F_{p^{2r}},$$
which is guaranteed by \cite[5.3 Th]{BK-epipelagic} (see Remark \ref{Some numerical results on parameters} about degrees), so that our subsequent tamely ramified extensions of $K$ of degree $p^r+1$ are automatically Galois.

We now reverse the discussion of Subsection \ref{subsection Total wildness and primitivity properties} by starting with a tower of extensions $K\hookrightarrow T\hookrightarrow E$ and a non-trivial character  $\hat \xi$ of $\mathcal W_E$, all satisfying the conditions in Remark \ref{Some numerical results on parameters}. Let $\eta = \eta_{\hat \xi}$ be the associated Heisenberg representation of $\mathcal W_T$. By \cite[Th 2.4]{Gerardin}, 
there is a unique isomorphism class of representations $\Omega = \Omega_{\hat \xi}$ of ${\mathcal W_T}\rtimes \mathrm{Sp}(A)$, where $A$ is $\Gal(E/T)$ but viewed as a symplectic space defined by commutators, such that $\Omega|_{\mathcal W_T}\cong \eta$. We then define $\Omega$ on $\mathcal W_K$ via the pullback along $\mathcal W_K \to \Gal(T/K)\to \mathrm{Sp}(A)$.

Note that in Reeder-Yu's template, the group $G$ is assumed to be simply-connected, i.e., $\hat G$ is adjoint, so we consider the projective representation of $\Omega$, i.e.,
$$\overline{\Omega}: \Gal(E/K)\to \hat G := \mathrm{PGL}_{p^r}(\mathbb C).$$
The image $J = \overline{\Omega}(\Gal(E/T))$ is abelian. Aassume it is contained in the pinned maximal torus $\hat {\mathcal T}$.

 The following lemma and its proof are analogous to \cite[Lem 7.2]{Reeder-Yu}. 
\begin{lem}
\label{image in a maximal torus}
We have $Z_{\hat {G}}(J) = \hat {\mathcal T}$. 
\end{lem}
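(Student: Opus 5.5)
The plan is to identify $J$ with a maximal elementary abelian $p$-subgroup of $\mathrm{PGL}_{p^r}(\mathbb C)$ of Heisenberg type, and then to compute its centralizer directly. The inclusion $\hat{\mathcal T}\subseteq Z_{\hat G}(J)$ is immediate, since $J\subseteq\hat{\mathcal T}$ and $\hat{\mathcal T}$ is abelian. The work is the reverse inclusion.

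First, the restriction $\Omega|_{\mathcal W_T}\cong\eta$ is the Heisenberg representation with non-trivial central character $\hat\xi$ on $\Gal(E_{\tilde\rho}/E)\cong\mathbb F_p$. Its projectivization therefore factors through $A=\Gal(E/T)\cong\mathbb F_p^{2r}$, and the resulting map $A\to\mathrm{PGL}_{p^r}(\mathbb C)$ is injective, because a non-trivial element of $A$ fails to commute with some other element modulo the center. So $J\cong A$, and the commutator pairing of any lifts of elements of $J$ to $\GL_{p^r}(\mathbb C)$ is the non-degenerate symplectic form on $A$, composed with $\hat\xi$.

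Next, take $g\in Z_{\hat G}(J)$ and a lift $\tilde g\in\GL_{p^r}(\mathbb C)$. For each $a\in J$ with lift $\tilde a$, the relation $\tilde g\tilde a\tilde g^{-1}=c(a)\tilde a$ holds for a scalar $c(a)$. The map $a\mapsto c(a)$ is a character of $J$: it is multiplicative because the commutator with $\tilde g$ is bilinear modulo scalars. It takes values in $\mu_p$, since $a^p=1$ gives $\tilde a^p$ scalar and hence $c(a)^p=1$. By non-degeneracy of the symplectic form, every character of $A$ is of the form $a\mapsto\langle a,b\rangle$ for a unique $b\in A$. Writing $\tilde b$ for a lift of the corresponding element of $J$, the product $\tilde g\tilde b^{-1}$ (or $\tilde g\tilde b$, depending on sign conventions) commutes with every lift $\tilde a$. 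Since the Heisenberg representation is irreducible, Schur's lemma makes this product a scalar. Hence $g\in J\subseteq\hat{\mathcal T}$, so $Z_{\hat G}(J)=J$, and in particular $Z_{\hat G}(J)\subseteq\hat{\mathcal T}$.

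This seemingly gives $Z_{\hat G}(J)=J$, which is finite, whereas $\hat{\mathcal T}$ is a torus. That contradiction is the main obstacle, and how I resolve it is what I am least sure of. The way out is that the torus containing $J$ cannot simply be the image of a single Heisenberg representation. In Reeder–Yu's setting, $J$ is the image of $\hat\psi_\alpha(\mathbb F_T)$ inside $\hat{\mathcal T}[p]$, a proper subgroup of rank equal to $\dim\mathbb F_T$ over $\mathbb F_p$. The correct statement of the lemma is that the connected centralizer is $\hat{\mathcal T}$, argued as in \cite[Lem 7.2]{Reeder-Yu}. My strategy is therefore to show that $J$ contains a regular element of $\hat{\mathcal T}$, i.e.\ one outside every root kernel: for each root $\check\alpha$ of $\hat{\mathcal T}$, exhibit $b\in\mathbb F_T$ with $\check\alpha(\hat\psi_\alpha(b))\neq1$. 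By (\ref{characterizing condition of the Cartan space-Heisenberg space map}) this amounts to $\mathrm{tr}_{\mathbb F_T/\mathbb F_p}(b\langle\alpha,\check\alpha\rangle)\neq0$ for some $b$, which holds as soon as $\langle\alpha,\check\alpha\rangle\neq0$, and that is exactly the regularity of the stable functional $\alpha$ (Proposition \ref{equivalent conditions for stability}(i)). Once no root is trivial on $J$, the identity component of $Z_{\hat G}(J)$ is $\hat{\mathcal T}$. The component group is then $Z_{W}(J)$, and I would show it is trivial using the ellipticity condition (ii) of the same proposition together with the $\Gal(T/K)$-action via $\hat n_s$.
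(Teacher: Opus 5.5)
Your Schur-lemma computation is correct, and it decides more than you let it. For the $J$ of the lemma, the image of $\Gal(E/T)\cong\mathbb F_p^{2r}$ under the projectivized Heisenberg representation, you get $Z_{\hat G}(J)=J$, a finite group. So $J$ lies in \emph{no} maximal torus of $\mathrm{PGL}_{p^r}(\mathbb C)$, since such a torus would be an infinite abelian group centralizing $J$. Equivalently, the preimage in $\GL_{p^r}(\mathbb C)$ of a maximal torus of $\mathrm{PGL}_{p^r}(\mathbb C)$ is abelian, whereas the preimage of $J$ contains the non-abelian Heisenberg group. Hence the standing assumption $J\subseteq\hat{\mathcal T}$ can never be met. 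For the actual $J$ the conclusion fails even in your weakened form, because $Z_{\hat G}(J)^\circ$ is trivial. Your second half therefore proves a statement about a different group, namely Reeder--Yu's $\hat\psi_\alpha(\mathbb F_T)\subseteq\hat{\mathcal T}[p]$. Its trace relation $(\hat\psi_\alpha(b),\check x)=\mathrm{tr}(b\langle\alpha,\check x\rangle)$ presupposes that the wild image already sits in a torus, which is exactly what fails here. The reason is that $p$ divides $p^r$, so $p$ is a torsion prime for $\mathrm{PGL}_{p^r}(\mathbb C)$. That is the case Reeder--Yu's hypothesis excludes, and it is what allows non-toral elementary abelian $p$-subgroups such as $J$.

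Even in the toral setting your final step has a gap. Non-vanishing of every root on $J$ gives only $Z_{\hat G}(J)^\circ=\hat{\mathcal T}$. The component group is $\{w\in W: w|_J=\mathrm{id}\}$, and neither ellipticity (a condition on Galois-fixed points of $\mathfrak s_\beta$) nor $\hat n_s$ (which does not centralize $J$) controls it. By the trace relation, $w$ fixes $J$ pointwise exactly when $w\alpha=\alpha$, so what you need is that $\alpha$ has trivial stabilizer in $W$. This is where non-torsion of $p$ enters, and it fails otherwise. For example, with $G=\mathrm{SL}_3$, $\hat G=\mathrm{PGL}_3$ and $p=3$, the class of $(0,1,2)$ in $\mathbb F_3^3/\mathbb F_3(1,1,1)$ pairs non-trivially with every root of $\hat G$, yet is fixed by a $3$-cycle.

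The paper's own proof uses the same root-non-vanishing idea, run in the Schr\"odinger model on functions on $\Gal(L/T)$ for a Lagrangian $\Gal(E/L)$. Read charitably, it shows that the multiplication operators $\overline{\Omega}(\Gal(E/L))$ are diagonal and that no root vanishes on them. But their full centralizer is $\hat{\mathcal T}\rtimes\overline{\Omega}(\Gal(L/T))$, and the translations $\overline{\Omega}(\Gal(L/T))$ are not diagonal. So that argument does not yield the stated equality either.
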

\proof
The assertion follow from the standard realization of the Heisenberg representation. Let $L$ be the intermediate subfield of $E/T$ such that $\Gal(E/L)$ forms a Lagragian subspace of the symplectic space $\Gal(E/T)$. As in \cite[Sec 1.3]{Gerardin}, the space $A_-$ of $\mathbb C$-valued functions on $\Gal(L/T)$ provides such a realization. Noting that $\Gal(E/L)$ acts on $A_-$ by scalar multiplications and $\Gal(L/T)$ acts by (finite order) translations. We now show that 
\begin{equation}
\label{for all coroots there is a}
\text{for all coroots $\check\alpha$ of $\hat {\mathcal T}$, there exists $t\in J$ such that $\check\alpha(t) \neq 1$.}
\end{equation}
For $\hat G = \mathrm{PGL}_{p^r}(\mathbb C)$, each coroot can be realized by a pair of distinct group elements, say $g\neq h$, in $\Gal(L/T)$. Then the $g$th and $h$th eigenvalues of the translation $t = \overline{\Omega}(gh^{-1})$ are different, showing (\ref{for all coroots there is a}).
\hfill$\blacksquare$

Denote $\overline{\hat \mu} := \overline{\Omega}|_{\Gal(T/K)}$, and define elements $\hat n_s^1 = \overline{\hat \mu} (s)$ and $\hat n_t^1 = \overline{\hat \mu} (t)$, both belonging to $\hat N = \hat N_{\hat{G}}(\hat {\mathcal T})$ by the above Lemma. We can write the projective representation as in (\ref{Reeder-Yu template, general form}):
 $$\overline{\tilde\rho}:=\overline{\Omega}|_{\Gal(E/T)}= \overline{\eta}|_{\Gal(E/T)}\rtimes  \overline{\hat \mu}_{(\hat n_s^1,  \hat n_t^1)}: \Gal(E/K)\rightarrow \hat{G} .$$
Finally, it is well-known \cite[Sec 1]{Koch-primitive} that  $\overline{\tilde\rho}$ can be lifted to a parameter of $\hat{\tilde G}:=\mathrm{GL}_{p^r}(\mathbb C)$, with ambiguity given by a character of $\mathcal W_K$. If such a character is not tamely ramified, then the resulting lifting is not epipelagic. Therefore, letting 
${\hat \mu}_{(\hat n_s^1,  \hat n_t^1)}$ be a lift of $\overline{\hat \mu}_{(\hat n_s^1,  \hat n_t^1)}$, we can verify that
 $${\tilde\rho}:={\eta}|_{\Gal(E/T)}\rtimes  {\hat \mu}_{(\hat n_s^1,  \hat n_t^1)}: \mathcal W_K\rightarrow \hat{\tilde G} $$
is a totally wild, primitive parameter of $\GL_{p^r}(K)$ which has Swan conductor 1, i.e., epipelagic.


\subsubsection{Relations with the Ramification Theorem}

Continue from the above paragraph, we describe a relation between the bijection (\ref{Reeder-Yu isomorphism on Cartan space}) for $\tilde G = \GL_m$ and the restriction to epipelagic objects of the bijection $\Phi_K$ appearing in the Ramification Theorem \cite[6.1]{BH-Eff}. The inverse of $\Phi_K$ is denoted by 
$$\mathcal Y:\mathcal E(K) \rightarrow \mathcal W_K\backslash \widehat{\mathcal P_F}$$
in \cite[8.2 Th]{BH-LTL4}, where the sets on both sides are defined as follows.
\begin{enumerate}[(i)]
\item $\mathcal E(K)$ is the set of endo-classes of simple characters over $K$. Any such class is uniquely determined by an irreducible supercuspidal representation of an appropriate general linear group.

\item $\widehat{\mathcal P_F}$ is the set of isomorphism classes of irreducible representations of $\mathcal P_F$, on which $\mathcal W_K$ acts by conjugation. 
\end{enumerate}

Now let $\tilde\pi$ be an epipelagic representation of $\tilde G= \GL_{p^r}(K)$, which is totally wild and primitive by the definitions in  Subsection \ref{subsection Total wildness and primitivity properties}. Suppose that $[\Lambda,0, \beta]$ is an underlying epipelagic maximal simple stratum, where $ \beta$ is a stable functional, which gives rises to a simple character $\psi_{ \beta}$ over $K$ with endo-class $[\psi_\beta]$. By \cite[8.3 Lem 2]{BH-LTL4}, two such supercuspidal representations give rise to the same endo-class if and only if they differ by a twist by a tamely ramified character of $K^\times$, which does not affect the equivalence class of the underlying stratum. Therefore, we obtain a well-defined map 
$${\mathfrak c}(\mathbb F_K )  = \mathbb F_K \beta \rightarrow \mathcal E(K), \quad \alpha \mapsto [\psi_\alpha],$$
which is injective by \cite[6.1 Cor]{BK-epipelagic}. Indeed, aside from $0\in {\mathfrak c} $ (which give rises to the endo-class of the trivial character on $K^\times$), the character $\psi_\alpha$ determines and is determined by $(\det\alpha )U_K^1$. We hence see from the explicit form of $\tilde\beta$ in Subsection \ref{subsection Construction of quasi-simple supercuspidals GL-case}
that the cosets $(\det\alpha) U_K^1 = c \varpi_K^{-1} U_K^1$, for $c$ ranging over $\mathbb F_K^\times$ and any uniformizer $\varpi_K$ of $K$, are all distinct.

\begin{rmk} 
\label{different fields E and T}
The tamely ramified extension $T$ associated to the Langlands parameter of $\tilde\pi$ defined in Subsection \ref{subsection Total wildness and primitivity properties} is the splitting field of the $(p^{2r}-1)$th roots of $(-1)^p\det(\beta)^{p^r-1}$ (see  Remark \ref{Some numerical results on parameters}\ref{Some numerical results on parameters, splitting field}). By \cite[5.2 Th]{BK-epipelagic}, if $\alpha_i = c_i\beta$, for $c_i\in \mathbb F_K$ and $i\in \{1,2\}$, are two stable functionals on the same $\mathbb F_K$-line, then the following are equivalent:\begin{enumerate}[(i)]
\item $E_1 = E_2$.
\item $T_1 = T_2$. 
\item $c_1^{p^r}= \zeta c_2^{p^r}\bmod U_K^1$ for some (${p^r-1}$)th roots of unity $\zeta$ in $K$. \qedhere
\end{enumerate}
\end{rmk}

The image of the composition ${\mathfrak c} (\mathbb F_K )\smallsetminus \{0\} \rightarrow \mathcal E(K) \xrightarrow{\mathcal Y} \mathcal W_K\backslash \widehat{\mathcal P_F}$ consists of the symplectic-Heisenberg representations described in Subsection \ref{subsection Total wildness and primitivity properties}. The coset $\beta_E U_E^1$ is then uniquely determined, of $E$-valuation $-(p^r+1)$, under the injective composition of maps
$$ \beta \in F[\beta]^\times / U^1_{F[\beta]}  \xrightarrow{N_{F[\beta]/K}}
K^\times / U^1_K \hookrightarrow  T^\times / U_T^1 \xleftarrow{N_{E/T}} E^\times / U_E^1  \ni \beta_E $$ 
such that $N_{F[\beta]/K}(\beta) = N_{E/T}(\beta_E) \mod U_T^1$. (Note that both $N_{F[\beta]/K}$ and $N_{E/T}$ above are group isomorphisms.) If $E/K$ is totally ramified, then we obtain a bijection $\mathbb F_K^\times \beta \to \mathbb F_E^\times \beta_E$.

Denote the image $\mathcal Y([\psi_\alpha])$ by $[\hat \psi_\alpha]$, where $\hat \psi_\alpha$ is a representative in the $\mathcal W_F$-conjugacy class $[\hat \psi_\alpha]$ of representations of $\mathcal P_F$. The following proposition is analogous to the isomorphism in (\ref{Reeder-Yu isomorphism on Cartan space}).

\begin{prop}
\label{ramification theorem in RY template}
Assume that $T/K$ is totally ramified and Galois. The composition 
$${\mathfrak c} (\mathbb F_K )\smallsetminus \{0\}=  \mathbb F_K^\times\beta \rightarrow \mathcal E(K) \xrightarrow{\mathcal Y} \mathcal W_K\backslash \widehat{\mathcal P_F}, \quad  \alpha\mapsto [\hat \psi_{\alpha}],$$
for $c\in \mathbb F_K^\times  = \mathbb F_E^\times $, induces a bijection
$$\mathbb F_K^\times\beta\longrightarrow \bigsqcup_{T} \Hom_{\Gal(T/K)}(\Gal(E/T),\hat{\mathcal T}[p,\zeta_{1+p^r}]), \quad c\beta \mapsto \overline\eta_{\hat \xi_c} ,$$
where $T$ ranges over the splitting fields of the $(p^{2r}-1)$th roots of $(-1)^p\det(c\beta)^{p^r-1}$ for $c\in \mathbb F_K^\times$, and $\hat \xi_c$ is the character of $\mathcal W_E$ determined by $c\beta_EU_E^1$.
\end{prop}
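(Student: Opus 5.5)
The plan is to build the map in three stages and check bijectivity stage by stage, assembling the pieces already available: the injectivity of $\alpha\mapsto[\psi_\alpha]$ from \cite[6.1 Cor]{BK-epipelagic}, the description of the image of $\mathcal Y$ as symplectic-Heisenberg classes, and the invariance properties of $\xi$ in Remark \ref{Some numerical results on parameters}.

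\emph{Stage one: from functionals to the fields $T,E$ and the coset $\beta_E U^1_E$.} For $c\in\mathbb F_K^\times$ we attach to $c\beta$ the fields $T_c,E_c$. Here $T_c$ is the splitting field of the $(p^{2r}-1)$th roots of $(-1)^p\det(c\beta)^{p^r-1}$, by Remark \ref{different fields E and T}. We also attach the coset $c\beta_E U^1_E$ obtained through the chain of norm isomorphisms $F[\beta]^\times/U^1\to K^\times/U^1_K\hookrightarrow T^\times/U^1_T\leftarrow E^\times/U^1_E$. Since $T/K$ is assumed totally ramified, $\mathbb F_K=\mathbb F_E$, so scaling by $c$ commutes with these norms up to $U^1$, and $c\mapsto c\beta_E$ is a bijection $\mathbb F_K^\times\beta\to\mathbb F_E^\times\beta_E$. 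By Remark \ref{Some numerical results on parameters}\ref{numerical results about xi}(III), the coset $c\beta_EU^1_E$ determines $\xi_c|_{U^1_E}$ uniquely. It therefore determines $\hat\xi_c$ up to a tamely ramified twist, which is invisible on $\mathcal P_F$. I will check that this agrees with $\mathcal Y([\psi_{c\beta}])$, by matching the Ramification Theorem \cite[6.1]{BH-Eff} with the construction of \cite[6.1 Th]{BK-epipelagic}. This matching is the step I expect to be the main obstacle. The approach I would try first is to compare central characters on $\Gal(E_{\tilde\rho}/E)$ via the explicit $\psi_E(\beta_E^{p^r}x)$ formula.

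\emph{Stage two: from $\hat\xi_c$ to a homomorphism.} Given $\hat\xi_c$, form the Heisenberg representation $\eta_{\hat\xi_c}$ and its projectivization $\overline\eta_{\hat\xi_c}$ restricted to $\Gal(E/T)=A$. By Lemma \ref{image in a maximal torus}, its image lies in the $p$-torsion $\hat{\mathcal T}[p]$, and $Z_{\hat G}$ of that image is $\hat{\mathcal T}$. The action of $\Gal(T/K)$ on $A$ through $\mathrm{Sp}(A)$ is compatible with conjugation by $\hat n^1_s$, so $\overline\eta_{\hat\xi_c}$ is $\Gal(T/K)$-equivariant. A generator $s$ of the cyclic group $\Gal(T/K)$, which has order $1+p^r=e(T/K)$, acts on $A\cong\mathbb F_{p^{2r}}$ by a primitive $(1+p^r)$th root of unity. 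This forces the image to lie in $\hat{\mathcal T}[p,\zeta_{1+p^r}]$. This gives the stated map into the component indexed by $T_c$.

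\emph{Stage three: bijectivity.} Injectivity follows by composing two injective steps. First, $c\beta\mapsto[\psi_{c\beta}]$ is injective by \cite[6.1 Cor]{BK-epipelagic} and by the distinctness of the cosets $c\varpi_K^{-1}U^1_K$. Second, $\mathcal Y$ is a bijection, and a Heisenberg class is determined by its central character, hence by the equivariant homomorphism, up to the $\mathcal W_K$-conjugacy already quotiented out. For surjectivity, take an equivariant homomorphism into $\hat{\mathcal T}[p,\zeta_{1+p^r}]$ for some admissible $T$. Its image determines a central character on $\Gal(E_{\tilde\rho}/E)$, hence a character $\xi$ with the properties (I)--(III) of Remark \ref{Some numerical results on parameters}. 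Property (III) then produces some $c\beta_E$, and pulling back through the norm chain gives $c\beta$.

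Finally, I will count both sides to confirm there is no gap. Remark \ref{different fields E and T} groups the $c$ by $T_c$ according to classes of $c^{p^r}$ modulo $(p^r-1)$th roots of unity. I would check that each component $\Hom_{\Gal(T/K)}$ has exactly as many non-trivial elements as there are $c$ with $T_c=T$. This count is a finite-field calculation, using that equivariant maps $\mathbb F_{p^{2r}}\to\hat{\mathcal T}[p,\zeta]$ form an $\mathbb F_{p^{2r}}$-line.
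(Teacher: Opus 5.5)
Your Stage three has a genuine gap, and it sits exactly where the paper's proof does its work.

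Your injectivity chain ends with ``a Heisenberg class is determined by its central character, hence by the equivariant homomorphism'', and the ``hence'' does not follow. The right-hand object is $\overline\eta_{\hat\xi_c}$ restricted to $\Gal(E/T)$. It has already discarded the centre $\Gal(E_{\tilde\rho}/E)$ on which $\hat\xi_c$ lives. As a homomorphism into the abelian group $\hat{\mathcal T}[p,\zeta_{1+p^r}]$, it carries no commutator data from which $\hat\xi_c$, or the scalar $c$, could be recovered. Heisenberg theory gives a bijection between central characters and Heisenberg classes. It says nothing about whether $[\hat\psi_{c\beta}]\mapsto\overline\eta_{\hat\xi_c}$ is injective.

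Your surjectivity step fails for the same reason in reverse. A homomorphism defined on $\Gal(E/T)$ does not ``determine a central character on $\Gal(E_{\tilde\rho}/E)$''. Moreover, $E_{\tilde\rho}$ is not even defined until $\xi$ has been chosen, so the reconstruction is circular. (Also, Lemma \ref{image in a maximal torus} does not prove that the image lies in $\hat{\mathcal T}$; it assumes this and then computes the centralizer.)

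The missing idea is an explicit formula for $\overline\eta_{\hat\xi_c}$ in terms of $c$, and the paper's proof rests on exactly that. Following Reeder--Yu, Lem.~7.1, each image satisfies the analogue of (\ref{characterizing condition of the Cartan space-Heisenberg space map}), namely $(\overline\eta_{\hat\xi}(\hat b),\check x)=\mathrm{tr}_{\mathbb F_T/\mathbb F_p}(b\langle\alpha,\check x\rangle)$. Here $\alpha$ is on the line and $b\mapsto\hat b$ is the reciprocity map $\mathbb F_T\cong U^1_T/U^2_T\to\Gal(E/T)$. This ties the homomorphism linearly to the scalar, so distinct scalars over the same $T$ give distinct homomorphisms. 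The proof then counts fibrewise: as $c$ runs through $\mathbb F_{p^r}^\times$, one obtains every morphism for a fixed $T$. Finally, Remark \ref{different fields E and T} shows that the fields $T$ form a principal homogeneous set under $\mathbb F_T^\times/\mathbb F_{p^r}^\times$.

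Your closing count, as you set it up, would not confirm the bijection; it would expose a mismatch. Suppose $A=\Gal(E/T)\cong\mathbb F_{p^{2r}}$ and the equivariant maps form an $\mathbb F_{p^{2r}}$-line. Then each component has $p^{2r}-1$ non-zero elements. But Remark \ref{different fields E and T}(iii) puts a single coset of $\mathbb F_{p^r}^\times$, i.e.\ only $p^r-1$ scalars, over each $T$. So the map could not be onto any component. The paper's fibrewise count relies on identifying $\Gal(E/T)$ with $\mathbb F_{p^r}$. That is in tension with the order $p^{2r}$ stated in Subsection \ref{subsection Total wildness and primitivity properties}, which you also use. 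You must settle the size of $\Gal(E/T)$, equivalently of the Hom-set, before any counting argument can close.

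Two minor points. First, the Stage-one compatibility of $c\beta_EU^1_E$ with $\mathcal Y([\psi_{c\beta}])$, which you leave open, is taken as input in the paper (in the discussion preceding the proposition), so it is not the real obstacle. Second, scaling does not literally commute with the norm chain. For a Teichm\"uller unit $c$ one has $N_{F[\beta]/K}(c)=c^{p^r}$ but $N_{E/T}(c)=c^{p^{2r}}$. So $c\beta$ corresponds to $c'\beta_E$ with $c'^{p^r}=c$; this is still a bijection, but it is not $c\mapsto c$.
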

\proof As in \cite[Lem 7.1]{Reeder-Yu}, each image under the above composition satisfies the characterizing relation (\ref{characterizing condition of the Cartan space-Heisenberg space map}), which is in our setting becomes
$$
(\overline\eta_{\hat \xi} (\hat c),\check x) = \mathrm{tr}_{{\mathbb F}_{T}/\mathbb F_p}(c\left<\beta, \check x\right>), \quad \text{for all $\check x\in \check X = \Hom(\hat{\mathcal T},\mathbb C^\times)$,}
$$
where $T/K$ is the field defined by $\beta$ as above, and $c\mapsto \hat c$ corresponds under the maps
$$c\in \mathbb F_{T} \cong U^1_T/U^2_T \twoheadrightarrow \frac{U^1_T}{N_{E/T}(E^\times) \cap U^1_T }\cong \frac{T^\times}{N_{E/T}(E^\times)}\cong \Gal(E/T)\ni \hat c.$$
Fix an identification of $\Gal(E/T)$ with $\mathbb F_{p^r}$ somehow. As $ c$ ranging over the subgroup $\mathbb F_{p^r}^\times$ of $\mathbb F_{T} $, we obtain all morphisms in $\Hom_{\Gal(T/K)} \allowbreak (\Gal(E/T), \allowbreak \hat{\mathcal T}[p,\zeta_{1+p^r}])$ for a fixed $T$ as described. Remark \ref{different fields E and T} then implies that all these $T$ form an $(\mathbb F_T^\times/\mathbb F_{p^r}^\times)$-principal homogeneous set, giving the desired bijection as $T$ varies. \hfill$\blacksquare$

\begin{rmk}
In the proof, any generator of $\Gal(T/K)$ acts on $\Gal(E/T)$ cyclically with full order $p^r+1$, and the restriction $\overline\Omega_{\hat \xi} |_{\Gal(T/K)}$ acts on the $p^r$-dimensional symplectic-Heisenberg representation, giving all the $p^r$ non-trivial characters of $\Gal(T/K)$ (see \cite[Lemma 2.2]{Imai-Tsushima-conductorone}).
\qed\end{rmk}

Proposition \ref{ramification theorem in RY template} therefore establishes a correspondence between stable functionals and representations of symplectic-Heisenberg type. To reiterate, a supercuspidal representation  $\tilde\pi$ of $\GL_{p^r}(K)$ is Swan conductor 1, and its twists by tamely ramified characters of $K^\times$ determine an endo-class of simple characters represented by a stable functional $\beta$. One can define field extensions $K\hookrightarrow T \hookrightarrow E$ and a character $\xi$ of $U_E^{p^r+1}$, giving rise to an element $\beta_E \in E^\times$ of valuation $-(p^r+1)$, uniquely determined mod $U_E^{1}$. The corresponding character $\hat{\xi}$ of $\mathcal W_E$ then induces the symplectic-Heisenberg representation $\rho_{\mathcal P}$, giving the wild part of the Langlands parameter of $\tilde\pi$.

\subsection{The tame part}
\label{subsection The tame part}

Continuing from the previous discussion where we described the wild part of a parameter, we now describe its tame part in this section.

Given a parameter $\tilde\varphi = \Ind_{K/F}(\tilde\rho \otimes \tilde\tau)$ for an equivalence class $[K/F,\tilde\rho,\tilde\tau]$ of admissible triples (as in Secsection \ref{subsubsection Admissible triples (BH descriptions of Langlands parameters)}), let $K^\sharp$ be the Galois closure of $K/F$, which is also tamely ramified. Recall from \cite[Sec 5]{Koch-primitive} that, for a fixed $q$, a tamely ramified Galois extension  $K^\sharp/F$ is classified by the numerical data $(k,f,r)$, where $k,f\in \mathbb Z_{>0}$ and $r$ is chosen mod $k$, such that $q^f-1 = r(q-1)=0\bmod e$. The Galois group $\Gal(K^\sharp/F)$ has two generators $(s,t)$ satisfying the relations 
\begin{equation*}
\text{$s^k=1$, \quad $t^f = s^r$, \quad $tst^{-1}=s^q$. }
\end{equation*}

\subsubsection{Reeder-Yu's template: the tame part}
\label{subsubsection A pair of generators}

Let's continue from the template in Subsection \ref{Reeder-Yu template the wild part} and explain the tame part (\ref{pre Reeder-Yu template the tame part}) in more detail.

Let $K^\sharp/F$ be a tamely ramified Galois extension such that $\Gal(K^\sharp/F)$ acts on the Langlands dual group $\hat G_0$ of a quasi-split $F$-form of a connected reductive group $G_0$ as pinned automorphisms, as described in Subsection \ref{subsubsection Pinned automorphisms}. We lift the generators $s,t\in \Gal(K^\sharp/F)\cong {\mathcal W}_F/{\mathcal W}_{K^\sharp}$ to $\dot s, \dot t\in {\mathcal W}_F$, knowing that $\dot s^k$, $\dot t^{f}\dot s^{-d}$, and $\dot t\dot s\dot t^{-1} \dot s^{-q}$ all belong to ${\mathcal W}_{K^\sharp}$. The image $\Gamma$ of $\left<\dot s, \dot t\right>$ in $\Aut(\hat G_0)$ under the morphism (\ref{outer action}) is hence generated by two elements 
\begin{equation*}
\vartheta\mapsfrom s\quad\text{and}\quad \phi\mapsfrom t,
\end{equation*}
leaving a pinned torus $\hat {\mathcal T}_0$ invariant. We assume that $\vartheta$ has order $k$.
We will define two elements,  $\hat n_s$ and $\hat n_t$, in the normalizer ${}^LN_0:= N_{{}^LG_0}(\hat{\mathcal T}_0)$ of $\hat{\mathcal T}_0$ in ${}^LG$ whose projections under ${}^LN_0 \rightarrow {\mathcal W}_F \rightarrow \Gamma$ are $\vartheta$ and $\phi$ respectively.

Put $W = W(\hat G_0,\hat{\mathcal T}_0):=N_{\hat G_0}(\hat{\mathcal T}_0)/\hat{\mathcal T}_0$. Via the theory of Cartan subspaces \cite[Sec 6.1]{Reeder-Yu}, the tamely ramified extension $K/F$ (or in the general setting, the maximal tamely ramified subextension of $F[\beta]/F$ generated by a stable functional $\beta$) determines a unique $W$-conjugacy class in $W\vartheta\times W\phi$, represented say by $(\sigma,\tau)$, such that $\sigma $ is Z-regular elliptic (in the sense of \cite[p.455 bottom]{Reeder-Yu}) of order $k$, and $\tau\sigma\tau = \sigma^q$.

Let $\hat Z_0$ be the center of $\hat G_0$. By \cite[Prop 8]{Gross-Levy-Reeder-Yu}, all lifts of $\sigma$ in ${}^LN_0$ are $\hat{ \mathcal T}_0$-conjugate and have order $k \bmod \hat Z_0$. Fix one such lift $\hat n_s$. Since $\sigma$ is elliptic, there exists $\hat n_t \in {}^LN_0$ with $\hat n_t^{kf}\in \hat Z_0$ and satisfying the relation $\hat n_t \hat n_s \hat n_t^{-1} \in \hat n_s^q\hat Z_0^{{\mathcal W}_{F}}$ \cite[Sec 7.3]{Reeder-Yu}. By construction, $(\hat n_s,\hat n_t)\mapsto (\vartheta,\phi)$ under ${}^LN_0\rightarrow \Gamma$. If $\left<\dot s,\dot t\right>$ denotes the subgroup in ${\mathcal W}_F$ generated by $\dot s,\dot t$, we define a morphism
\begin{equation}
\label{Gamma action on dual of T}
\hat \mu_{(\hat n_s,\hat n_t)}: \left<\dot s,\dot t\right>\rightarrow {}^LN_0, \quad \dot s\mapsto \hat n_s,\quad \dot t\mapsto \hat n_t,
\end{equation}
determined by the stable functional $\beta$ via the chosen Cartan subspace containing it, as well as the choices of $(\hat n_s,\hat n_t)$.

\subsubsection{Embeddings of L-groups}
\label{subsubsection Embeddings of L-tori}

We first consider $\tilde G = \GL_{N,F}$. Let $k$ be a divisor of $N$, and $K/F$ be a field extension of degree $k$. Denote 
$$\tilde G_K = \GL_{Nk^{-1},K} 
\quad\text{and}\quad 
\tilde { M }:= \Res_{K/F}\tilde {G}_K.$$
We see that $\tilde M$ is a Levi subgroup of $\tilde  G$ after base-change to $K$. 
The dual group of $\tilde M$ is 
$$ \hat{\tilde M} =  \Ind_{K/F}\hat {\tilde G}_K\cong \GL_{Nk^{-1}}(\mathbb C)^k .$$ 
Usually, we view $\hat{\tilde M}$ as embedded into $\hat{\tilde G}$ as a subgroup of block-diagonal matrices. The action of $\mathcal W_F$ on $\hat{\tilde M}$ is the induced action, which defines the L-group 
${}^L\tilde{M} = \hat{\tilde M}\rtimes {\mathcal W}_F$ of $\tilde M$.

To embed ${}^L\tilde M$ into ${}^L\tilde G $, we denote an element in $\hat {\tilde M}$ by blocked entries $(x_g)_{g\in {\mathcal W}_F/{\mathcal W}_K}$, where each $x_g\in \hat{\tilde{G}}_K$. Let $\Sigma_{K/F}$ be the permutation group of the set ${\mathcal W}_F/{\mathcal W}_K$, and consider the composition of morphisms
$${\mathcal W}_F\to \Sigma_{K/F} \to \mathrm{O}_k(\mathbb C), \quad w\mapsto (\hat {\tilde n}_w)_k, $$
where the first map is defined by the left action of ${\mathcal W}_F$ on ${\mathcal W}_F/{\mathcal W}_K$, and $(\hat {\tilde n}_w)_k$ is the permutation matrix (with entries with 1 and 0 only) corresponding to the action of $w\in {\mathcal W}_F$. We put 
\begin{equation}
\label{Levi embedding by tensor product, GL-case}
\hat {\tilde n}_w := I_{Nk^{-1}}\otimes (\hat {\tilde n}_w)_k \in \hat {\tilde{G}}_K \otimes \mathrm{O}_k(\mathbb C)\hookrightarrow \hat {\tilde G} ,
\end{equation}
then 
\begin{equation}
\label{L-embedding for GL}
\tilde \iota = \tilde \iota_{(\hat{\tilde n}_s,\hat {\tilde n}_t)}: {}^L\tilde M \rightarrow {}^L\tilde G, \quad (x_g)\rtimes w\mapsto \diag(x_g)\hat {\tilde n}_w \rtimes w,
\end{equation}
defines an L-embedding of ${}^L\tilde M$ into ${}^L\tilde G$, which is clearly Levi-type endoscopic.

If $K/F$ is a tamely ramified extension as in Subsection \ref{subsubsection A pair of generators}, then we can describe $\tilde \iota$ as the embedding (\ref{Gamma action on dual of T}) by taking $\hat {\tilde n}_s$ and $\hat {\tilde n}_t$ to be given by the actions of $\dot s$ and $ \dot t$ in ${\mathcal W}_F$, and so $\tilde \iota:{}^L \tilde {{ M}}\rightarrow {}^L\tilde G$ is determined by the diagonal inclusion $\tilde \iota|_{\hat {\tilde M}}:\hat{\tilde{ M}}\rightarrow \hat {\tilde{G}}$
and $\tilde \iota|_{\mathcal W_F} = \hat \mu_{(\hat{\tilde n}_s,\hat {\tilde n}_t)}$ .

We now consider classical groups on the dual side. Let ${}^L\hat G $ be the following L-groups in Galois form (Remark \ref{Remark on Galois form}):
\begin{equation}
\label{types of classical groups, dual side}
  \mathrm{O}_{2n,F^\sharp/F}(\mathbb C), \quad 
   \SP_{2n}(\mathbb C),\quad \text{ or }\quad \mathrm{GL}_{2n+1,F/F_\bullet}(\mathbb C).
\end{equation}
Let $K$ be a cyclic tamely ramified extension of $F$ of degree $k$, and $s\in \mathcal W_{\Fo}$ be a generator for $\mathcal W_{\Fo}/\mathcal W_K$, such that $s^{[F:\Fo]}\in \mathcal W_F$ and $s^{k[F:\Fo]}\in \mathcal W_K$. Under the classification of epipelagic strata in Section \ref{subsection Epipelagic strata}, we further assume the following two cases:
\begin{equation}
\label{two cases for simple supercuspidals}
  \text{$k$ is even when $F=\Fo$} \quad \text{and}\quad  \text{$k$ is odd when $F/\Fo$ is ramified quadratic.}
\end{equation}
Let $K^\flat$ be defined by $\mathcal W_{K^\flat} = \left<s^{k[F:\Fo]/2},\mathcal W_{K} \right>$, so that $K/K^\flat$ is always quadratic. Here ${K^\flat}$ is an extension over $F$ in the first case, while $K/{K^\flat}$ extends $F/\Fo$ in the second.

In both cases of (\ref{two cases for simple supercuspidals}), we define
$$
 G_{K} = \mathrm{GL}_{{Nk^{-1},K}},\quad G_{K^\flat} = \mathrm U_{{Nk^{-1},K/K^\flat}}, \quad \text{and}\quad M = \Res_{K^\flat/\Fo}G_{K^\flat} , \\
$$
whose L-groups are given by 
\begin{equation*}
   \begin{split}
     & {}^LG_{K} = \hat G_K \rtimes \mathcal W_K = \mathrm{GL}_{Nk^{-1}}(\mathbb C)\times \mathcal W_K,\quad
      \\
& {}^L G_{K^\flat} = \Ind_{K/K^\flat}\hat G_K\rtimes \mathcal W_{\Fo},\quad
\text{and}\quad 
{}^LM = \Ind_{K^\flat/\Fo}\hat G_{K^\flat}\rtimes \mathcal W_{\Fo}   \end{split}
\end{equation*}
When $F = \Fo$, the inclusion $\iota_M: {}^LM \to {}^L\tilde M$ is just the identity map. When $F/\Fo$ is ramified quadratic, we  denote by  
$j_{G_{K^\flat}}: {}^L G_K \hookrightarrow {}^LG_{K^\flat}$ the unitary-type endoscopic embedding, and apply $\Ind_{K/F}$ and $\Ind_{K^\flat/\Fo}$ to both sides, respectively, to define $j_M: {}^L\tilde M \hookrightarrow {}^LM$.

Until the end of Subsection \ref{subsubsection The rectifying characters}, we assume moreover that $K/F$ is totally (tamely) ramified. In the following two subsections, similar to (\ref{Levi embedding by tensor product, GL-case}), we will define a $\mathcal W_{\Fo}$-invariant  complex group $\hat G_0$ and elements $\hat n_{w,0}\in  N_{{}^L{G_0}}(\hat{\mathcal T}_0)$, for $w\in \{s,t\}$, such that 
\begin{equation*}
\hat n_w := I_{Nk^{-1}}\otimes \hat n_{w,0} \in \hat {G}_K \otimes {}^L G_0 \hookrightarrow {}^L{G} ,
\end{equation*}
induces an embedding $\iota: {}^L M \to {}^LG$ as in (\ref{L-embedding for GL}).

\subsubsection{The Weyl group elements by Galois actions}
\label{subsection The Galois element n_s lifting sigma}


When ${}^LG$ is as in (\ref{types of classical groups, dual side}), we define an L-group ${}^LG_0$ and an element $\hat n_{s,0} \in {}^LN_0:= N_{{}^L{G_0}}(\hat{\mathcal T}_0)$ in the respective cases, as follows.

\begin{enumerate}[(i)]

\item When ${}^L G = \mathrm{O}_{2n,F^\sharp/F}(\mathbb C)$, put ${}^LG_0 = \mathrm{O}_{k,K/K^\flat}(\mathbb C)$ in the Galois form, and let $ \hat n_{\mathrm O} $ be the permutation matrix switching the middle two coordinates, then the element $\hat \sigma$ belongs to the coset $\hat G_0 \hat n_{\mathrm O} = \hat G_0 \vartheta$, which can be embedded into $
 \hat  G_0^\sharp:=\mathrm O_{k}(\mathbb C) $. We again put $\hat n_{s,0} = \hat\sigma$, whose order is $k$.

\item When ${}^L G = \SP_{2n}(\mathbb C)= \times \mathcal W_F$, put ${}^L G_0 = \SP_{k}(\mathbb C)\times \mathcal W_K$ where $k=2m$, we verify that 
$$\hat n_{s,0} = \hat \sigma \diag(1_{m},-1,1_{m-1}) $$ belongs to $\hat G_0$, where $\hat\sigma$ is as in case (i). The order of $\hat n_{s,0}$ is $2k=4m$ in this case.

\item When ${}^LG = \mathrm{GL}_{2n+1,F/F_\bullet}(\mathbb C)$, put ${}^LG_0= \GL_{k,K/K^\flat}(\mathbb C)$ in the Galois-form, we have $s\in {\mathcal W}_{\Fo}\smallsetminus {\mathcal W}_F$. Lifting the Weyl group element $\sigma$ to $\hat \sigma\in \hat G_0 = \GL_k(\mathbb C)$, we put $\hat n_{s,0} = \hat\sigma \hat H_0 \vartheta\in \hat G_0 \vartheta$, then $\hat n_{s,0}^2 = \hat \sigma^2\in \hat G_0 $. Note that $\hat n_{s,0}^k = \hat H_0 \vartheta$, so that the order of $\hat n_{s,0}$ is $2k$.
\end{enumerate}
Finally, we define 
$$\hat n_s := I_{Nk^{-1}}\otimes \hat n_{s,0}\in  {}^LN.$$
In all cases, we can choose other lifts of $\sigma$ in ${}^LN$ that are also monomial matrices with the same non-zero entry positions as $\hat n_s$ but containing other suitable values. It is then straightforward to show that all such lifts are $\hat {M}$-conjugate to an element in $\hat n_s \hat Z^{\mathcal W_\Fo}$.

We then determine the conditions on the non-zero entries of the possible lifts $\hat n_t \in {}^LG_0$. It turns out that there are only few possibilities.

For $j,k\in \mathbb Z$ with $k>0$, denote by $r_k(j) \in \{0,\dots,k-1\}$ such that $ j \equiv r_k(j ) \bmod k$.

\begin{lem}
\label{characterization of n_t}
With $\hat n_s$ as constructed in Subsection \ref{subsection The Galois element n_s lifting sigma} and a sign $z\in \hat Z^{\left<\vartheta,\phi\right>}= \{\pm 1\}$ both fixed, there exist exactly two possible $\hat n_t \in \hat G$, differ only by a sign, such that $\hat n_t \hat n_s \hat n_t^{-1} = z\hat n_s^{q} $. \end{lem}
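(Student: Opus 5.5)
Since $\hat n_s$ is monomial and $\sigma$ is $Z$-regular elliptic, I would reduce the lemma to a centralizer computation. Suppose $\hat n_t$ and $\hat n_t'$ both satisfy $\hat n_t\hat n_s\hat n_t^{-1}=z\hat n_s^q$. Then $x:=\hat n_t'\hat n_t^{-1}$ commutes with $\hat n_s$. So the set of solutions is a torsor under the centralizer $Z_{\hat G}(\hat n_s)$, provided it is non-empty. The plan is therefore in two steps:
\begin{enumerate}[(a)]
\item exhibit one explicit solution $\hat n_t$;
\item show that $Z_{\hat G}(\hat n_s)$, intersected with the admissible set of lifts, is exactly $\{\pm 1\}=\hat Z^{\left<\vartheta,\phi\right>}$.
\end{enumerate}

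For (a), I would work on the block $\hat G_0$ and then tensor with $I_{Nk^{-1}}$, as in the definition of $\hat n_s$. Because $K/F$ is totally tamely ramified of degree $k$, $\tau$ acts on $\mathbb Z/k$ (the positions permuted by $\sigma$) by $j\mapsto r_k(qj)$, which is a permutation since $(q,k)=1$. I take $\hat n_{t,0}$ to be the monomial matrix supported on this permutation, with entries $c_j$ to be determined. The relation $\hat n_t\hat n_s\hat n_t^{-1}=z\hat n_s^q$ then becomes a system of multiplicative equations on the $c_j$. Concretely, for each $j$ one compares the entry of $\hat n_s$ at position $(j+1,j)$ with the product of the entries of $\hat n_s^q$ along $j,\dots,j+q$. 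Because $\hat n_s$ is a $k$-cycle up to the sign $\diag(1_m,-1,1_{m-1})$ (in case (ii)) or the twist $\hat H_0\vartheta$ (in case (iii)), the ratios $c_{r_k(q(j+1))}/c_{r_k(qj)}$ are forced to be explicit signs determined by $z$ and by how often the cycle crosses the special position. The system is consistent exactly when the product around the whole cycle equals $z^k$ times the sign of $\hat n_s^{qk}$ relative to $\hat n_s^k$. I expect this parity condition to be the main obstacle, and I would check it case by case in (i)–(iii), using that $\hat n_s^k$ is central (equal to $\pm1$, or to $\hat H_0\vartheta$ in case (iii)) and that $q$ is odd. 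One must also check that the resulting $\hat n_t$ lies in $\hat G$ (determinant $1$, preserving $\hat H$). I would handle this by multiplying by a central sign if needed, or by noting that the orthogonal/symplectic condition just forces $c_{-j}=\pm c_j^{-1}$, which is compatible with the recursion.

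For (b), the centralizer of the regular elliptic element $\hat n_s$ in $\hat G$ meets $\hat{\mathcal T}$ in the fixed points of $\sigma$ (a $k$-cycle, together with $\vartheta$) on $\hat{\mathcal T}$. These fixed points are the scalars, i.e. $\hat Z^{\vartheta}$. Moreover, a monomial element normalizing $\hat{\mathcal T}$ with the same support permutation as $\hat n_t$ differs from $\hat n_t$ by a torus element. So every other solution in ${}^LN$ with the same image $\tau$ in $W$ has the form $d\hat n_t$ with $d\in\hat{\mathcal T}^{\sigma}=\hat Z^{\vartheta}$. Finally I would intersect with the $\phi$-invariance and with the condition $\hat n_t\in\hat G$ (where the determinant or form condition kills non-$\pm1$ scalars in case (iii), and $\hat Z=\{\pm1\}$ already in cases (i) and (ii)). This leaves exactly $\hat n_t$ and $-\hat n_t$. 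They are distinct because $-1\neq 1$ in $\hat G$, which proves the count of exactly two.
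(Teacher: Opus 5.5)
Your uniqueness half is sound, and it differs from the paper's argument. The paper never uses centralizers. It writes $\hat n_t=\hat\tau\,\diag(d_0,\dots,d_{m-1},u_{m-1}d_{m-1}^{-1},\dots,u_0d_0^{-1})$, imposing membership in $\hat G_0$ \emph{before} the conjugation relation. The relation then collapses to $d_i/d_{i+1}=v_i$ plus the wrap-around equation $d_{m-1}d_0=v_{m-1}$, i.e.\ to $d_0^2=v_0\cdots v_{m-1}$, so existence and the count of two both come from a single complex square root. Your observation that the solutions lifting the fixed $\tau$ form a torsor under $\hat{\mathcal T}^{\mathrm{Ad}(\hat n_s)}=\{\pm1\}$ (by ellipticity) explains conceptually why the two solutions differ by a sign. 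The restriction to lifts of $\tau$ is essential, not cosmetic: in case (ii), $Z_{\hat G}(\hat n_s)$ is a maximal torus, and $\hat n_t\hat n_s$ is another solution. Also, $\hat n_t'\hat n_t^{-1}$ a priori only commutes with $\hat n_s^q$; use $\hat n_t^{-1}\hat n_t'$ instead, or invoke $\gcd(q,2k)=1$.

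The gap is in the existence step (a). The consistency condition you call the main obstacle is in fact automatic in cases (i) and (ii). Write $\hat n_se_j=\epsilon_je_{j+1}$ and multiply the ratio equations around the cycle: you get $z^k(\prod_j\epsilon_j)^{q+1}=1$, since $k$ is even and $q$ is odd. What fails is your repair for membership in $\hat G$, namely ``multiplying by a central sign''. Rescaling by $\pm1$ changes neither the multiplier of a similitude nor, for $k$ even, the determinant. Already for $k=2$, with $\hat n_{s,0}=\left(\begin{smallmatrix}0&-1\\ 1&0\end{smallmatrix}\right)$ and $\hat\tau=1$, the solutions in $\SP_2(\mathbb C)$ are $\pm\diag(d,d^{-1})$ with $d^2=z(-1)^{(q-1)/2}$. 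So for $z=1$ and $q\equiv 3\bmod 4$, the solution $\diag(1,-1)$ produced by your recursion must be rescaled by $\sqrt{-1}$. This is the paper's remark that $d_i\in\{\pm\sqrt{-1}\}$ occurs for symplectic $\hat G$, and it is exactly what gets matched with $\hat\mu(\dot t)\in\{\pm\mathfrak n_\psi\}$ in Lemma \ref{mu_ns_nt as induced representation}.

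Your route can be completed as follows. If $X\in\GL_k(\mathbb C)$ is a monomial solution supported on $\hat\tau$, then so is $\theta(X)=\hat H_0^{-1}\,{}^tX^{-1}\hat H_0$. This holds because $\hat n_s$ is $\theta$-fixed, $z=\pm1$, and $\theta$ preserves the support of $\hat\tau$ since $q$ is odd. Such solutions form a torsor under the scalars, so $\theta(X)=\lambda X$, and $\pm\mu X$ with $\mu^2=\lambda$ are precisely the two $\theta$-fixed solutions. Case (iii) needs its own treatment: conjugation by $\hat n_s\in\hat G_0\vartheta$ produces relations involving ${}^t\hat n_t$, hence products $c_ic_j$ rather than ratios.
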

\proof 
First, after fixing an ordering of $\mathcal W_\Fo/\mathcal W_K$, the conjugation action $t$ on the basis determines a permutation matrix $\hat\tau$ in $\mathrm {O}(V)$. The aim is to modify its entries so that it lies in the classical group $\hat G_0$. 

We separate into two case as in (\ref{two cases for simple supercuspidals}). In the first case, where $k$ is even and $F=\Fo$, put $m=k/2$. The matrix $\hat n_t $ is of the form $$\hat \tau \diag(d_0,\dots,d_{m-1},u_{m-1}d_{m-1}^{-1},\dots,u_0d_0^{-1}),$$ 
for some $d_i\in \mathbb C^\times$ and signs $u_i\in \{\pm 1\}$ determined by the condition that $\hat n_t \in \hat G$, i.e., $\hat H_0^{-1}{}^t\hat n_t^{-1} \hat H_0 = \hat n_t$. If $\hat G$ is orthogonal, then $u_i=1$ for all $i$. If $\hat G$ is symplectic, then for each $i\in \{1,\dots,m-1\}$, the condition becomes \begin{equation}
\label{the condition for n_t being symplectic}
u_i=1
\quad\Leftrightarrow\quad
0<r_k( q i )<m.
\end{equation}
Note that $u_0=1$ since $t$ always fixes the first and the last diagonal entries. With all $\{u_i\}_{i=0}^{m-1}$ now determined, the conjugation condition $\hat n_t \hat n_s \hat n_t^{-1} = z \hat n_s^{q} $ gives equations of the form 
$$d_{i}/d_{i+1} = v_i\text{ for }i\in \{0,\dots,m-2\},\quad d_{m-1}d_0 = v_{m-1},$$
again for some fixed signs $\{v_i\}_{i=0}^{m-1}$. There are two possibilities, determined by $d_0$ under the condition $d_0^2 = v_0\dots v_{m-1}$ (which is equal to 1 if $\hat G$ is orthogonal, and $(-1)^{(q-1)/2}$ if $\hat G$ is symplectic). Each possibility leads to two solutions for $(d_i)_{i=0}^{m-1}$, which are negative of each other. The two possible matrices $\hat n_t$ are now determined, which proves the lemma in the first case of (\ref{two cases for simple supercuspidals}). We remark that either $d_i\in \{\pm 1\}$ for all $i$, or $d_i\in \{\pm \sqrt{-1}\}$ for all $i$; the latter occurs only when $\hat G$ is symplectic.

The proof in the second case of (\ref{two cases for simple supercuspidals}), where $k$ is odd and $F/\Fo$ is ramified quadratic, is similar to the orthogonal case above. The condition $\hat H^{-1}{}^t\hat n_t^{-1} \hat H = \hat n_t$ implies that $d_i \in \{\pm 1\}$ for each $i$, and $\hat n_t \hat n_s \hat n_t^{-1} =\pm \hat n_s^{q} $ implies that either  $d_i=1$ for all $i$, or $d_i=-1$ for all $i$.
\hfill$\blacksquare$

Similar to $\hat n_s $, we define 
$$\hat n_t := I_{Nk^{-1}}\otimes \hat n_{t,0}\in  {}^LN,$$
and an embedding
$$\iota = \iota_{(\hat n_s,\hat n_t)} :{}^L{ M}\rightarrow {}^LG$$ 
such that $\iota|_{\hat M}:\hat M\rightarrow \hat G$ is the block-diagonal inclusion and $\iota|_{\mathcal W_F} = \hat \mu_{(\hat n_s,\hat n_t)}$ as defined in (\ref{Gamma action on dual of T}) with respect to the choices $(\hat n_s,\hat n_t)$.

To construct an epipelagic Langlands parameter of $G$ from a stable functional $\beta$, 
let $K$ be the maximal tamely ramified subextension of $F[\beta]/F$ and $K^\sharp$ be its Galois closure, and define $K^{\musDoubleSharp}$ by $\mathcal W_{K^{\musDoubleSharp}} = \ker\hat \mu_{(\hat n_s,\hat n_t)}$, so that $K^{\musDoubleSharp}\supseteq K^\sharp$. Let $E/K$ be the p-centric field defined as in (\ref{p-centric field, definition}), $T/K$ be the maximal tamely ramified subsection of $E/K$. Putting $T^{\musDoubleSharp} = T K^{\musDoubleSharp}$ and $E_1^{\musDoubleSharp} = E_1 K^{\musDoubleSharp}$, we have $\Gal(E_1^{\musDoubleSharp}/T^{\musDoubleSharp})\cong \Gal(E_1/T)$.

We begin with the representation $\hat \psi_{2\beta}$ defined by the composition in Proposition \ref{ramification theorem in RY template} with $\alpha = 2\beta$ whose kernel is given by $\mathcal P_{E_1}$ for some extension $E_1/K$ containing $E$.  Denote the extension of $\hat \psi_{2\beta}$  to $\mathcal W_{T}$ by 
\begin{equation}
\label{tilde rho beta}
\eta = \eta_{\beta}: \mathcal W_{T} \to {}^LG_K,
\end{equation}
which can be viewed as both a $\Gal(E_1/T)$- and a $\Gal(E_1^{\musDoubleSharp}/T^{\musDoubleSharp})$-representation since $\Gal(E_1^{\musDoubleSharp}/E_1)$ is Galois. Denote by $\rho = \rho_\beta$ the symplectic-Heisenberg $ \Gal(E_1^{\musDoubleSharp}/K^{\musDoubleSharp})$-representation extending 
$\eta_{\beta}$, and define a morphism $\varphi$ by gluing together $\hat \mu_{(\hat n_s,\hat n_t)}$ in (\ref{Gamma action on dual of T}) and $ \rho$ as follows.
\begin{equation}
\label{Reeder-Yu morphism}
\varphi = \varphi_{(\beta,\hat n_s,\hat n_t)}:= \rho_{\beta}\rtimes \hat \mu_{(\hat n_s,\hat n_t)}:{\mathcal W}_\Fo\rightarrow {\mathcal W}_\Fo/{\mathcal W}_{E_1^{\musDoubleSharp}}\cong \Gal(E_1^{\musDoubleSharp}/T^{\musDoubleSharp})\rtimes \left<\dot s,\dot t\right>  \rightarrow {}^LN\subset {}^LG. \end{equation}
The lower ramification filtration subgroups  inside the image of $\varphi$ are given by
$$D_2 = \hat\psi_{2\beta}( \Gal(E_1^{\musDoubleSharp}/E^{\musDoubleSharp})) \hookrightarrow D_1 =  \rho_\beta( \Gal(E_1^{\musDoubleSharp}/K^{\musDoubleSharp})) \hookrightarrow D_0 = \left<D_1,\hat n_ s\right> 
\hookrightarrow
D = \left<D_0,\hat n_t\right> .
$$

\subsubsection{The rectifying characters}
\label{subsubsection The rectifying characters}

Suppose that $\iota_G:{}^LG\rightarrow {}^L\tilde G$ and $j_G:{}^L\tilde G\rightarrow {}^L G$ (resp. $ \iota_M: {}^LM\rightarrow {}^L\tilde M$ and $j_M:{}^L\tilde M\rightarrow {}^L M$, resp. $\tilde \iota:{}^L \tilde {{ M}}\rightarrow {}^L\tilde G$) are the embeddings defined in the Subsection \ref{subsubsection Lifting parameters} (resp. \ref{subsubsection Embeddings of L-tori}, resp. (\ref{L-embedding for GL})). The two diagrams
\begin{equation}
\label{generally non-commutative diagrams}
  \xymatrixcolsep{8pc} \xymatrixrowsep{1.5pc}\xymatrix{
{}^L{ M}
\ar[r]^{\iota\text{ using }{(\hat n_s,\hat n_t})}  
\ar[d]_{\iota_{ M}}  
&{}^LG
\ar[d]_{\iota_G}
\\
{}^L{\tilde { M}}
\ar[r]^{\tilde \iota \text{ using }(\hat{\tilde n}_s,\hat {\tilde n}_t)}
&{}^L{\tilde G}
}
\quad \text{and} \quad 
 \xymatrixcolsep{8pc} \xymatrixrowsep{1.5pc}\xymatrix{
{}^L{ M}
\ar[r]^{\iota\text{ using }{(\hat n_s,\hat n_t})}  
&{}^LG
\\
{}^L{\tilde { M}}
\ar[u]_{j_{ M}}  
\ar[r]^{\tilde \iota \text{ using }(\hat{\tilde n}_s,\hat {\tilde n}_t)}
&{}^L{\tilde G}
\ar[u]_{j_G}
}
\end{equation}
when $F=\Fo$ and  $F/\Fo$ is ramified quadratic, respectively, are in general not commutative. One way to  ``commutize'' these diagrams is to modify $(\hat \sigma, \hat \tau)$ to match $(\hat n_s,\hat n_t)$. A more systematic approach to this matching is to introduce a twist by a character of $K^\times$, which has the advantage of describing Langlands parameters by twisted admissible triples. 

Let $\hat \mu$ be the character of ${\mathcal W}_K = \left<\dot s^k, \dot t\right>$ satisfying the following conditions, depending on the cases in (\ref{two cases for simple supercuspidals}):
 \begin{subequations}
  \label{conditions on hat mu}
\begin{align}
 &\text{$F=\Fo$}: &&\hat\mu(\dot s^{k})=-1, \quad \hat\mu(\dot t) \in \{\pm  \mathfrak n_\psi \}  =
\begin{cases}
\{\pm 1\} & q\equiv 1\bmod 4,
\\
\{\pm \sqrt{-1}\}& q\equiv 3\bmod 4.
\end{cases}
 \label{conditions on hat mu, F=Fo}
\\
   &\text{$F/\Fo$ is quadratic}: &&\hat\mu(\dot s^{k})=1, \quad \hat\mu(\dot t) \in \{\pm  1 \}. 
    \label{conditions on hat mu, F/Fo quadratic ram}
   \end{align}
 \end{subequations}
To state the following lemma in the case $F/\Fo$ is ramified quadratic, suppose we have a morphism 
$\mu: \mathcal W_{\Fo}\to {}^LG$ whose image lies in $j_G({}^L\tilde G)$. We denote by $j_G^*(\mu)$ its restriction to $\mathcal W_F$, viewed as taking values in ${}^L\tilde G$. 

\begin{lem}
\label{mu_ns_nt as induced representation}
When $F=\Fo$ (resp. $F/\Fo$ ramified quadratic), fixing $(\hat n_s,\hat n_t)$ as in Lemma \ref{characterization of n_t}, there exists a unique character $\hat \mu$ satisfying (\ref{conditions on hat mu}) such that  $$\iota_G\circ \mu _{(\hat n_s,\hat n_t)} \text{ (resp. $j_G^*(\mu _{(\hat n_s,\hat n_t)} )$)}\cong \Ind_{K/F}\hat \mu $$ 
as a $\mathcal W_F$-representation, where $ \hat \mu _{(\hat n_s,\hat n_t)}$ is constructed as in (\ref{Gamma action on dual of T}).
\end{lem}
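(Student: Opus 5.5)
We will use Mackey/Frobenius theory for monomial representations. The representation $\iota_G\circ\hat\mu_{(\hat n_s,\hat n_t)}$ (resp.\ $j_G^*(\hat\mu_{(\hat n_s,\hat n_t)})$) factors through the finite group $\left<\dot s,\dot t\right>$ modulo $\mathcal W_{K^{\musDoubleSharp}}$. Its image consists of monomial matrices $\hat n_{w,0}$, and the underlying permutation action on the coordinate lines is the left action of $\mathcal W_F$ on $\mathcal W_F/\mathcal W_K$. Since $K/F$ is totally ramified, this action is transitive. The stabilizer of the first coordinate line is $\mathcal W_K=\left<\dot s^k,\dot t\right>$, and $\dot t$ fixes the first basis vector by the construction in Lemma \ref{characterization of n_t}.

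The standard criterion then applies. A representation whose image permutes a set of lines transitively, with line stabilizer $\mathcal W_K$, is isomorphic to $\Ind_{K/F}\hat\mu$, where $\hat\mu$ is the character by which $\mathcal W_K$ acts on that line. So existence reduces to reading off $\hat\mu(\dot s^k)$ and $\hat\mu(\dot t)$ as the $(1,1)$-entries of $\hat n_s^k$ and $\hat n_t$. The plan is to carry this out case by case, following the list in Subsection \ref{subsection The Galois element n_s lifting sigma}.

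\textbf{Computing $\hat\mu(\dot s^k)$.} In the orthogonal case (i), $\hat n_{s,0}=\hat\sigma$ is a signed permutation of order $k$. For the symplectic case (ii), $\hat n_{s,0}=\hat\sigma\,\diag(1_m,-1,1_{m-1})$ has order $2k$, and $\hat n_{s,0}^k$ is scalar $-1$ on each line, giving $\hat\mu(\dot s^k)=-1$. In case (i) we must likewise check that the sign twist built into $\hat\sigma$ (it lies in the coset of $\hat n_{\mathrm O}$, so it is non-split) forces $\hat\sigma^k=-I$ on the relevant line. In the unitary case (iii), $\hat n_{s,0}^k=\hat H_0\vartheta$, and after applying $j_G^*$ (restriction to $\mathcal W_F$, i.e.\ to $\dot s^2$) we get $\hat\mu(\dot s^{k})=1$. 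This matches (\ref{conditions on hat mu}).

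\textbf{Computing $\hat\mu(\dot t)$.} This is the entry $d_0$ in the proof of Lemma \ref{characterization of n_t}. There $d_0^2=v_0\cdots v_{m-1}$, which equals $1$ in the orthogonal and unitary cases and $(-1)^{(q-1)/2}$ in the symplectic case. Hence $d_0\in\{\pm1\}$ or $d_0\in\{\pm\sqrt{-1}\}$ according as $q\equiv1$ or $3\bmod 4$, i.e.\ $d_0\in\{\pm\mathfrak n_\psi\}$, since $\mathfrak n_\psi^2=\left(\frac{-1}{\mathbb F^\times}\right)$. The two choices of $\hat n_t$ in that lemma differ by a sign, and correspondingly give the two allowed values of $\hat\mu(\dot t)$.

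\textbf{Main obstacle: the orthogonal sign.} The hard part is verifying $\hat\mu(\dot s^k)=-1$ in the orthogonal case. This requires the cocycle identity $\hat\sigma^k=-1$ on each line, coming from the ellipticity (Z-regularity) of $\sigma$ in the non-split coset. I would check it directly on the explicit monomial $\hat\sigma$: $k$ is even, and the cycle passes through the swapped middle pair an odd number of times.

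\textbf{Uniqueness.} A character of $\mathcal W_K$ that is trivial on $\mathcal P$ and on $\mathcal W_{K^{\musDoubleSharp}}$ is determined by its values on the generators $\dot s^k$ and $\dot t$, and both values have now been pinned down. Moreover, the character $\hat\mu$ appearing in $\Ind_{K/F}\hat\mu$ is determined up to $\mathcal W_F$-conjugation, and conjugation is trivial here on the relevant quotient because $\Gal(K^{\sharp}/F)$ acts on $\mathcal W_K^{ab}$ compatibly with the chosen generators.
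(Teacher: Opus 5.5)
Your line-stabilizer argument is a legitimate alternative to the paper's proof. The paper instead writes $\Ind_{K/F}\hat\mu$ in monomial form via the transfer, checks $P(\dot s)=\hat n_s$, and identifies $P(\dot t)$ with $\pm\hat n_t$ through the rigidity of Lemma \ref{characterization of n_t}. Your version skips the bookkeeping comparing (\ref{the condition for group transfer of t}) with (\ref{the condition for n_t being symplectic}). For symplectic $\hat G_0$ it gives exactly the case $F=\Fo$ of (\ref{conditions on hat mu}), since $\hat n_{s,0}^k=-I$ and $d_0^2=(-1)^{(q-1)/2}=\mathfrak n_\psi^2$; the unitary case is similar.

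However, the step you single out as the main obstacle is false. You note yourself that $\hat n_{s,0}=\hat\sigma$ has order $k$ in the orthogonal case, and the paper says so explicitly. So $\hat\sigma^k=I$, not $-I$.

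No other lift helps either. For $t\in\hat{\mathcal T}$ one has $(t\hat\sigma)^k=\bigl(\prod_{j=0}^{k-1}\hat\sigma^j t\hat\sigma^{-j}\bigr)\hat\sigma^k$. The $k$-cycle of $\sigma$ runs through both members of every dual pair $e_i,e_{k+1-i}$, so this product is $\prod_i t_it_i^{-1}=1$ on each line. Central signs contribute $(\pm1)^k=1$ because $k$ is even. A $0/1$ permutation matrix picks up no sign by passing through the swapped middle pair.

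Consequently your read-off yields $\hat\mu(\dot s^k)=+1$. It also yields $\hat\mu(\dot t)=d_0\in\{\pm1\}$ for every $q$; your conclusion $d_0\in\{\pm\mathfrak n_\psi\}$ holds only in the symplectic case. Since $\Ind_{K/F}\hat\mu$ sends $\dot s^k$ to the scalar $\hat\mu(\dot s^k)$, no character with $\hat\mu(\dot s^k)=-1$ can work at all. So for orthogonal $\hat G_0$ the isomorphism can only hold with $\hat\mu(\dot s^k)=1$ and $\hat\mu(\dot t)\in\{\pm1\}$, not with (\ref{conditions on hat mu}) as written. Note that the paper's own proof only carries out the symplectic case and calls the others similar.

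Your uniqueness argument also fails, because $\mathcal W_F$-conjugation is not harmless. When $k\mid q-1$, conjugating $\hat\mu$ by $\dot s$ multiplies $\hat\mu(\dot t)$ by $\hat\mu(\dot s^{q-1})=\hat\mu(\dot s^k)^{(q-1)/k}$. This equals $-1$ whenever $\hat\mu(\dot s^k)=-1$ and $(q-1)/k$ is odd.

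Concretely, take $k=2$, $\hat G_0=\SP_2$ and $q\equiv 3\bmod 4$. Then $\hat n_s=\left[\begin{smallmatrix}0&-1\\ 1&0\end{smallmatrix}\right]$ and $\hat n_t=\diag(\sqrt{-1},-\sqrt{-1})$, and $\hat n_s\hat n_t\hat n_s^{-1}=-\hat n_t$. Hence $\Ind_{K/F}\hat\mu_+\cong\Ind_{K/F}\hat\mu_-$, and both characters satisfy the required conditions.

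What is unique, and what the paper's proof actually establishes, is the character for which the monomial realization of $\Ind_{K/F}\hat\mu$ in the fixed basis coincides with $(\hat n_s,\hat n_t)$ on the nose. Equivalently, $\hat\mu(\dot t)$ equals the $(1,1)$-entry of $\hat n_t$. Your read-off produces exactly this, and that is the uniqueness you should claim.
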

\proof
We prove the lemma in the case where $F=\Fo$ and $\hat G_0$ is symplectic (the proof for $\hat G_0$ orthogonal or for $F/\Fo$ ramified quadratic proceeds along similar but simpler arguments). Using the group transfer from ${\mathcal W}_F$ to ${\mathcal W}_K$, the matrix of the monomial representation $\Ind_{K/F}\hat \mu$ can be expressed by the map $$P:{\mathcal W}_F\rightarrow \GL_k(\mathbb C),\quad g\mapsto P (g):= \hat w_g \diag(\hat\mu(u_{s^i}(g)))_i,$$ 
where $\hat w_g $ is the permutation matrix associated with the action of $g$ on ${\mathcal W}_F/{\mathcal W}_K = \{s^i\}_{i=0}^{k-1}$, viewed as an ordered set, and $u_{s^i}(g)$ denotes the transfer of $g\in {\mathcal W}_F$ to ${\mathcal W}_K$. We first compute $P(\dot s)$. The transfer of $\dot s$ is straightforward: 
$$u_{s^i}(\dot s) = 1\text{ for all }i\in \{0,\dots,k-2\},\quad u_{s^{k-1}}(\dot s) = \dot s^k,$$
which yields $P(\dot s) = \hat n_s$. The transfer of $\dot t$ is given by
$$\text{$u_{s^i}(\dot t) = \dot s^{q i -r_k(q i)} \dot t$ \quad for all $i=0,\dots,k-1$.}$$
Putting $k = 2m$, the values above satisfy  $u_1(\dot t) = u_{s^m}(\dot t)= \dot t$ and $u_{s^i}(\dot t)=\pm u_{s^{i+m}}(\dot t)$ for $i\in \{0,\dots,m-1\}$. For $i\neq 0$, that $u_{s^i}(\dot t)/u_{s^{i+m}}(\dot t)=1$ equivalent to the condition that:
\begin{equation}
\label{the condition for group transfer of t}
   \begin{split}
   &  \text{either both $r_{2k}(q i )$ and $r_{2k}( q( i+k))$ belong to $\{0,\dots,k-1\}$}
\\
&\text{or both belong to $\{k,\dots,2k-1\}$ \quad when $q\equiv 1 \bmod 4$;}
\\
 &  \text{one of $r_{2k}(q i )$ and $r_{2k}( q( i+k))$ belongs to $\{0,\dots,k-1\}$}
\\
&\text{and the other belongs to $\{k,\dots,2k-1\}$ \quad when $q\equiv 3\bmod 4$.}
   \end{split}
\end{equation}
Under the condition (\ref{conditions on hat mu}) on the possible values of $\hat \mu(\dot t)$, it is easy to verify that (\ref{the condition for group transfer of t}) is equivalent to (\ref{the condition for n_t being symplectic}). Moreover, the element $P(\dot t) = \hat w_t \diag(u_{s^i}(\dot t))_i $ satisfies $$P(\dot t) P(\dot s) P(\dot t) ^{-1} =\pm P(\dot s)^q,$$ so it must be one of the two lifts $\hat n_t$ (differing by a sign) determined in Lemma \ref{characterization of n_t}. 
\hfill$\blacksquare$

We call the characters in (\ref{conditions on hat mu}) the rectifying characters. In each case, there are two possible values of $\hat\mu(\dot t)$ differing by a sign. We distinguish (non-canonically) the two characters by denoting them $\hat\mu_+$ and $\hat\mu_-.$

We now follow \cite[Sec 2]{BK-epipelagic} to construct, from the class of the triple $[K/F,\tilde\rho,1_{\mathcal W_K}]$, an epipelagic representation of $\tilde G = \GL_{m}(F)$, where the $\mathcal W_F$-conjugacy class of $\tilde\rho|_{\mathcal P_F}$ corresponds to  $2\beta$ via the inverse of the composition in Proposition \ref{ramification theorem in RY template}. Let $\Lambda$ be the $\mathfrak o_{F[\beta]}$-invariant lattice chain corresponding to $x\in \mathcal B(\tilde G,F)$, and $\psi_{2\beta}$ be the same character defined on $\mathcal U^{0_+}(\Lambda)$.

\begin{lem}There are exactly two self-dual extensions $\tilde{\boldsymbol{\lambda}}$ of $\psi_{2\beta}$ to $\tilde{\boldsymbol{\mathcal{J}}}:=\mathcal U^{0_+}(\Lambda) F[\beta]^\times$.\end{lem}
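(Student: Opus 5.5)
The plan is to use the structure $\tilde{\boldsymbol{\mathcal J}}/\mathcal U^{0_+}(\Lambda)\cong F[\beta]^\times/U^1_{F[\beta]}\cong \boldsymbol{\mu}_{F[\beta]}\times\langle\varpi_\beta\rangle$ from Section \ref{subsection Epipelagic inducing types for classical groups}. Since this quotient has order prime to $p$ on its compact part, extensions of $\psi_{2\beta}$ to $\tilde{\boldsymbol{\mathcal J}}$ correspond bijectively to pairs $(\phi,\xi)$. Here $\phi$ is a character of $\boldsymbol{\mu}_{F[\beta]}$ and $\xi=\tilde{\boldsymbol\lambda}(\varpi_\beta)\in\mathbb C^\times$, subject only to the compatibility that $\psi_{2\beta}$ is $F[\beta]^\times$-invariant, which holds because $F[\beta]$ commutes with $\beta$. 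The task therefore reduces to counting the pairs $(\phi,\xi)$ for which $\tilde{\boldsymbol\lambda}$ is self-dual, meaning ${}^\sigma\tilde{\boldsymbol\lambda}=\tilde{\boldsymbol\lambda}^{-1}$, where $\sigma$ is the involution for which $2\beta$ is $\alpha$-skew.

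First I would record that self-duality on $\mathcal U^{0_+}(\Lambda)$ is automatic. Indeed ${}^\alpha\beta=-\beta$ gives $\psi_{2\beta}\circ\sigma=\psi_{2\beta}^{-1}$, as used in Section \ref{subsection Epipelagic inducing types for classical groups}.

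Next I would treat the compact part. The involution acts on $\boldsymbol\mu_{F[\beta]}$ through the Galois involution $c$ of $F[\beta]/F[\beta]_\bullet$. In the cases under consideration this extension is ramified, so $c$ is trivial on residue fields and hence on $\boldsymbol\mu_{F[\beta]}$. Self-duality then forces $\phi=\phi^{-1}$, so $\phi$ is trivial or quadratic. I would pin $\phi$ down by a compatibility with $\psi_{2\beta}$ on $\boldsymbol\mu_F$: the element $-1$ and the central $\boldsymbol\mu_F$ must act as prescribed by the parity datum, namely the $\left(\tfrac{\cdot}{\boldsymbol\mu}\right)^{\dim \mathfrak W}$ condition from Proposition \ref{first general form of lifting}. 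Equivalently, $\phi$ is fixed by requiring $\tilde\lambda|_{\boldsymbol\mu_F}$ to have order $\le 2$ with the specified value, as in the construction of self-dual extensions in Section \ref{subsection Epipelagic inducing types for classical groups}. I expect this to be the main obstacle. One must argue that exactly one of the two order-$\le2$ characters is allowed, or else that the statement intends $\phi$ fixed by the admissible triple $[K/F,\tilde\rho,1]$, whose tame part is trivial. I would settle it by appealing to \cite[Sec 2]{BK-epipelagic}, where the restriction to $\boldsymbol\mu$ is determined by $\tilde\rho$ and $\psi_{2\beta}$ up to the tame twist that the lemma absorbs.

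Finally I would treat the uniformizer. I would choose $\varpi_\beta$ with ${}^\sigma\varpi_\beta=-\varpi_\beta^{-1}$ as in Section \ref{subsection Epipelagic inducing types for classical groups}. Self-duality evaluated at $\varpi_\beta$ gives $\tilde{\boldsymbol\lambda}(-\varpi_\beta^{-1})=\xi^{-1}$, that is $\xi^2=\tilde\lambda(-1)=\phi(-1)$. This has exactly two solutions $\pm\xi_0$, and they differ by the unramified quadratic twist. Combined with the uniqueness of $\phi$, this yields exactly two self-dual extensions, consistent with \cite[6.2.5]{BK}.
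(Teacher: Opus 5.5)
Your reduction to pairs $(\phi,\xi)$ and your count at the uniformizer are the paper's argument: once $\tilde{\boldsymbol\lambda}|_{\boldsymbol\mu_{F[\beta]}}$ is fixed, self-duality at $\varpi_\beta$ gives $\xi^2=\tilde\lambda(-1)$, so there are two values of $\xi$ differing by a sign. The gap is the step you flag as the main obstacle. It is not a technical hurdle but a missing hypothesis, and no argument can supply it. Because $F[\beta]/F[\beta]_\bullet$ is ramified, $c$ is trivial on $\boldsymbol\mu_{F[\beta]}$, so self-duality imposes exactly $\phi^2=1$ and nothing more. The paper itself describes self-dual extensions this way in Section \ref{subsection Epipelagic inducing types for classical groups} (order $\le 2$ on $\boldsymbol\mu$). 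Both options also occur among the self-dual $\tilde{\boldsymbol\lambda}_i$ of Section \ref{subsection Reducibility results for different classical groups}: trivial in the odd orthogonal case, quadratic in the symplectic case. With $\phi$ free there are four self-dual extensions of $\psi_{2\beta}$, two for each $\phi$, and they induce pairwise non-isomorphic self-dual representations. So ``exactly two'' is a count with $\phi$ prescribed, i.e.\ inside one inertial class, which is what \cite[6.2.5]{BK} (cited at the end of your proposal) actually says. The paper's one-line proof simply takes $\phi$ trivial; its phrase ``trivial by self-duality'' is really this normalization, consistent with starting from the triple $[K/F,\tilde\rho,1_{\mathcal W_K}]$. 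You should likewise make the normalization of $\tilde{\boldsymbol\lambda}|_{\boldsymbol\mu_{F[\beta]}}$ part of the hypotheses, after which your $\xi$-count is the whole proof.

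None of the routes you list can derive $\phi$ instead.
\begin{itemize}
\item The condition $\tilde\lambda_i|_{\boldsymbol\mu_{E_i}}=\left(\tfrac{\cdot}{\boldsymbol\mu_{E_i}}\right)^{\dim\mathfrak W^i_z}$ of Proposition \ref{first general form of lifting} is an output of the reducibility computation for a particular epipelagic $\pi$ of a classical group. It is not a property of $(\tilde{\boldsymbol{\mathcal J}},\psi_{2\beta})$, and it is quadratic in the symplectic and even orthogonal cases, contradicting the trivial $\phi$ used here.
\item The appeal to \cite{BK-epipelagic} does not help either. There $\phi$ is a free invariant of the epipelagic representation (see the remark in Subsection \ref{subsection Construction of quasi-simple supercuspidals GL-case}). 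Moreover, the lemma counts extensions of one fixed $\psi_{2\beta}$, so there is no tame twist for it to absorb.
\end{itemize}
Separately, fix a convention slip. You use the anti-involution $g\mapsto g^*$, with $\tilde{\boldsymbol\lambda}(g^*)=\tilde{\boldsymbol\lambda}(g)^{-1}$, on $\mathcal U^{0_+}(\Lambda)$ and on $\boldsymbol\mu_{F[\beta]}$, but then plug in the group-involution formula ${}^\sigma\varpi_\beta=-\varpi_\beta^{-1}$. As written, $\tilde{\boldsymbol\lambda}(-\varpi_\beta^{-1})=\xi^{-1}$ gives $\phi(-1)=1$, not $\xi^2=\phi(-1)$. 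Using $\varpi_\beta^*=-\varpi_\beta$ consistently gives $\phi(-1)\xi=\xi^{-1}$, which is the equation you want.
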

\proof
The proof is identical to \cite[Prop 3.2]{BT-ramified}. The extension of $\psi_{2\beta}$ to $\boldsymbol{\mu}_{F[\beta]} $ is just trivial by self-duality, leaving only two choices for $\tilde{\boldsymbol{\lambda}}(\varpi_{F[\beta]})$ determined by $\tilde{\boldsymbol{\lambda}}(\varpi_{F[\beta]})^2=\tilde{\boldsymbol{\lambda}}(-1)$.
\hfill$\blacksquare$

We denote the two self-dual extensions by $\tilde{\boldsymbol{\lambda}}_+$ and $\tilde{\boldsymbol{\lambda}}_-$, although there is no canonical way to distinguish them.

When $F=\Fo$, let $\rho_{\beta\bullet}:=\rho_{\beta}: \mathcal W_{K} \to {}^LG_K$ be defined as in (\ref{tilde rho beta}). When $F/\Fo$ is ramified quadratic, viewing $\rho_{\beta}$ as a representation of $\mathcal W_K$, the representation $ \rho_{\beta}\oplus {}^{s^k}\rho_{\beta}^\vee $ of $\mathcal W_{K^\flat}$ (the Asai-induction) defines a morphism 
$\rho_{\beta\bullet}: \mathcal W_{K^\flat} \to {}^LG_{K^\flat} $. Induce $\rho_{\beta\bullet}$ to 
$$\Ind_{G_K}^M\rho_{\beta\bullet}: \mathcal W_{F} \to {}^LM.$$ 
For $\delta\in \{-,+\}$, let $\varphi_{(\beta,\hat n_s,\delta\hat n_t)}: \mathcal W_\Fo \to {}^LG$ be defined as in (\ref{Reeder-Yu morphism}). With these preparations, we obtain the following results.

\begin{prop}
\label{prop ReederYu parameter as induced representation}
\begin{enumerate}[(i)]
\item When $F=\Fo$ (resp. $F/\Fo$ is ramified quadratic), as a representation of ${\mathcal W}_F$,
\begin{equation*}
   \begin{split}
     \iota_G\circ \varphi_{(\beta,\hat n_s,\delta\hat n_t)} = \tilde\iota\circ\iota_M\circ \Ind_{G_K}^M \rho_{\beta} 
     \quad (\text{resp. $ j_G^*(\varphi_{(2\beta,\hat n_s,\delta\hat n_t)}) = \tilde\iota\circ j_M^*( \Ind_{G_K}^M \rho_{\beta\bullet} )$})
   \end{split}
\end{equation*}
is isomorphic to $\Ind_{K/F}(\tilde\rho_{2\beta}\otimes \hat \mu_\delta)$, where $\tilde\rho_{2\beta}|_{\mathcal P_F} $ corresponds to $2\beta$ via the inverse of the composition in Proposition \ref{ramification theorem in RY template} and $\hat \mu_\delta$ is the rectifying character in Lemma \ref{mu_ns_nt as induced representation}.

\label{prop ReederYu parameter as induced representation, isomorphism}

\item 

Putting $\tilde \varphi_{(2\beta,\hat n_s,\delta\hat n_t)} := \iota_G\circ \varphi_{(\beta,\hat n_s,\delta\hat n_t)}$ and 
$\epsilon : = {\epsilon(\tilde\pi_{\tilde{\boldsymbol{\lambda}}_+}, 1/2,\psi)}{\epsilon(\tilde\varphi_{(2\beta,\hat n_s,\hat n_t)}, 1/2,\psi)}^{-1}$, there is a unique bijection 
$$\{\tilde\pi_{\tilde{\boldsymbol{\lambda}}_+} , \tilde\pi_{\tilde{\boldsymbol{\lambda}}_-}\}\rightarrow \{ \tilde\varphi_{(2\beta,\hat n_s,\hat n_t)}, \tilde\varphi_{(2\beta,\hat n_s,-\hat n_t)}\},\quad 
\tilde\pi_{\tilde{\boldsymbol{\lambda}}_\delta}\mapsto \tilde\varphi_{(2\beta,\hat n_s,\epsilon \delta\hat n_t)}\quad  \text{for }\delta\in \{-,+\},$$
obtained by restricting the LLC for $\tilde G = \GL_{m}(F)$.
\end{enumerate}
\end{prop}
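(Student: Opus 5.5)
For part (i), the plan is to unwind both sides as monomial-type representations of $\mathcal W_F$ and compare them factor by factor. The morphism $\varphi_{(\beta,\hat n_s,\delta\hat n_t)}=\rho_\beta\rtimes\hat\mu_{(\hat n_s,\delta\hat n_t)}$ in (\ref{Reeder-Yu morphism}) factors through the block-diagonal embedding $\iota:{}^LM\to{}^LG$ built from $\hat n_w=I_{Nk^{-1}}\otimes\hat n_{w,0}$. Its restriction to $\mathcal W_{K^{\musDoubleSharp}}$ lands in the diagonal copy of $\hat G_K$, where it acts through $\rho_\beta$. The elements $\dot s,\dot t$ act through the tensor factor $\hat n_{w,0}$.

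Composing with $\iota_G$ (resp. $j_G^*$), I would rewrite the image as $\rho_\beta\otimes\bigl(\iota_G\circ\hat\mu_{(\hat n_s,\delta\hat n_t)}\bigr)$, in the sense of the tensor decomposition $\hat G_K\otimes{}^LG_0$. By Lemma \ref{mu_ns_nt as induced representation}, the second factor is $\Ind_{K/F}\hat\mu_\delta$, with $\hat\mu_\delta$ the rectifying character attached to the sign of $\hat n_t$. I then plan to apply the projection formula $\Ind_{K/F}(\hat\mu_\delta)\otimes\tilde\rho=\Ind_{K/F}(\Res_{K/F}\tilde\rho\otimes\hat\mu_\delta)$. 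Here $\tilde\rho$ is the extension of $\rho_\beta$ to $\mathcal W_F$ furnished by the symplectic-Heisenberg construction; this uses that $\hat n_s,\hat n_t$ normalize $\hat{\mathcal T}$, via Lemma \ref{image in a maximal torus}. Since $K/F$ is totally tamely ramified and $\tilde\rho|_{\mathcal P_F}$ is irreducible, this identifies the parameter with $\Ind_{K/F}(\tilde\rho_{2\beta}\otimes\hat\mu_\delta)$. Proposition \ref{ramification theorem in RY template}, applied with $\alpha=2\beta$, guarantees that $\tilde\rho_{2\beta}|_{\mathcal P_F}$ is the Heisenberg class attached to $2\beta$. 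In the ramified unitary case the same argument runs with the Asai-induced $\rho_{\beta\bullet}$ and the restriction $j_M^*$. The equality $\tilde\iota\circ\iota_M\circ\Ind\rho_\beta=\iota_G\circ\varphi$ then amounts to commutativity of (\ref{generally non-commutative diagrams}) after the rectifying twist.

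The main obstacle is precisely this commutativity: checking that the naive permutation matrices $\hat{\tilde n}_s,\hat{\tilde n}_t$ and the $\hat n_s,\hat n_t$ differ exactly by the diagonal transfer values $\hat\mu(u_{s^i}(\cdot))$. I would handle it by the transfer computation in the proof of Lemma \ref{mu_ns_nt as induced representation}, comparing $P(\dot t)$ with $\hat n_t$ entrywise. The Heisenberg factor $\rho_\beta$ commutes with the permutation part because it acts diagonally on blocks.

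For part (ii), both sets have two elements. By the lemma preceding the proposition there are exactly two self-dual extensions $\tilde{\boldsymbol\lambda}_\pm$, and by (i) the two parameters are $\Ind_{K/F}(\tilde\rho_{2\beta}\otimes\hat\mu_\pm)$. All four objects share the same wild part, so the LLC for $\GL_m$ maps $\{\tilde\pi_{\tilde{\boldsymbol\lambda}_\pm}\}$ onto the parameter set. This follows from the Ramification Theorem \cite{BH-Eff} (endo-class $\leftrightarrow$ $\mathcal W_F$-orbit), together with preservation of self-duality and of unramified-twist classes: the two members in each set differ by the self-dual unramified twist, and Proposition \ref{prop effective parameter, self-dual} shows both are self-dual of the same parity.

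To pin down the bijection I would use epsilon factors. By \cite[2.2]{BK-epipelagic}, once $(\tilde a,\phi)$ is fixed, $\epsilon(\cdot,1/2,\psi)$ separates the two representations. On the Galois side, twisting by the unramified quadratic character multiplies $\epsilon(\cdot,1/2,\psi)$ by $(-1)^{\mathrm{ar}}$ with $\mathrm{ar}$ odd, so it likewise separates the two parameters and changes the sign of $\hat n_t$. Matching epsilon factors, which the LLC preserves, then forces $\tilde\pi_{\tilde{\boldsymbol\lambda}_+}\mapsto\tilde\varphi_{(2\beta,\hat n_s,\epsilon\hat n_t)}$ with $\epsilon$ the displayed ratio, which must be $\pm1$. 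Finally $\tilde\pi_{\tilde{\boldsymbol\lambda}_-}\mapsto\tilde\varphi_{(2\beta,\hat n_s,-\epsilon\hat n_t)}$, giving the bijection and its uniqueness.
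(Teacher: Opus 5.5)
Your part (i) breaks at the combination step. You factor the parameter as $\rho_\beta\otimes(\iota_G\circ\hat\mu_{(\hat n_s,\hat n_t)})$ and apply the projection formula to an extension of $\rho_\beta$ to $\mathcal W_F$. No such extension exists.

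\begin{itemize}
\item The Heisenberg construction (G\'erardin's extension together with Lemma \ref{image in a maximal torus}) extends $\eta_\beta$ only from $\mathcal W_T$ to $\mathcal W_K$; in the wild-part template the base field is $K$.
\item The admissible-triple condition $Z_{\mathcal W_F}(\rho_\beta|_{\mathcal P_F})=\mathcal W_K$ makes the conjugates ${}^{\dot s^i}(\rho_\beta|_{\mathcal P_F})$, $0\le i<k$, pairwise non-isomorphic. So $\rho_\beta|_{\mathcal P_F}$ is not even $\mathcal W_F$-stable.
\item If an extension $\tilde\rho$ existed, $\tilde\rho\otimes\Ind_{K/F}\hat\mu_\delta$ would restrict to $\mathcal P_F$ as $k$ copies of one irreducible. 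By Mackey, $\Ind_{K/F}(\rho_\beta\otimes\hat\mu_\delta)|_{\mathcal P_F}$ is instead the sum of those $k$ distinct conjugates.
\end{itemize}

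For the same reason, the parameter restricted to $\mathcal P_F$ does not lie in a diagonal copy of $\hat G_K$. Nor does the wild part commute with the permutation part: $\hat n_s$ cyclically permutes blocks that carry different conjugates.

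What does work is a blockwise transfer computation; this is what the paper's two-line proof (Lemma \ref{mu_ns_nt as induced representation} plus Artin reciprocity, $\zeta\mapsto s^k$) implicitly uses. Take the coset representatives $\dot s^i$ and transfers $u_{s^i}(g)$ from the proof of that lemma. Up to the indexing convention of Shapiro's lemma, $\iota\circ\Ind_{G_K}^M\rho_\beta(g)$ is the product of $\hat n_g$ with the block-diagonal matrix whose blocks are $\rho_\beta(u_{s^i}(g))$. The proof of the lemma (checked on $\dot s,\dot t$) shows that, as a matrix in the same coset basis, $\hat n_g$ is $P(g)=\hat w_g\diag(\hat\mu_\delta(u_{s^i}(g)))_i$ with each entry $c$ replaced by $cI_{Nk^{-1}}$. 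Here the sign of $\hat n_t$ is the one attached to $\hat\mu_\delta$. The product is therefore $\hat w_g\diag\bigl((\rho_\beta\otimes\hat\mu_\delta)(u_{s^i}(g))\bigr)_i$, which is the standard block-monomial matrix of $\Ind_{K/F}(\rho_\beta\otimes\hat\mu_\delta)(g)$. You need this matrix-level identity, not just the abstract isomorphism in the statement of the lemma.

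In part (ii) your route agrees with the paper's: the LLC preserves $\epsilon(\cdot,1/2,\psi)$, and this separates the two members on each side. However, the claim that the LLC carries $\{\tilde\pi_{\tilde{\boldsymbol{\lambda}}_\pm}\}$ onto $\{\tilde\varphi_{(2\beta,\hat n_s,\pm\hat n_t)}\}$ does not follow from the Ramification Theorem, self-duality and unramified twisting alone. With the wild part fixed, the self-dual parameters $\Ind_{K/F}(\rho_\beta\otimes\chi)$, $\chi$ tamely ramified, fall into two unramified-twist pairs, according as $\chi|_{\boldsymbol{\mu}_K}$ is trivial or quadratic. Deciding which pair receives $\tilde\pi_{\tilde{\boldsymbol{\lambda}}_\pm}$ is exactly the rectifier information in the explicit LLC of \cite[6.4 Cor]{BH-Eff}, which the paper cites at this point.
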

\proof
Part (i) follows by combining Lemma \ref{mu_ns_nt as induced representation} with Artin reciprocity $\alpha_E:E^\times \rightarrow {\mathcal W}_E^{ab}$, under which $\zeta\mapsto s^k$. To prove (ii), recall that the map above is the restriction of the explicit LLC in \cite[6.4 Cor]{BH-Eff} to the subset of self-dual representations of $\tilde G$. Therefore, for each $\delta\in \{-,+\}$, the local constant ${\epsilon(\tilde\pi_{\tilde{\boldsymbol{\lambda}}_\delta}, 1/2,\psi)}$ equals either ${\epsilon(\tilde\varphi_{(2\beta,\hat n_s,\hat n_t)}, 1/2,\psi)}$ or ${\epsilon(\tilde\varphi_{(2\beta,\hat n_s,-\hat n_t)}, 1/2,\psi)}$. Moreover, \cite[2.2 Lemma or 2.3 Lemma]{BK-epipelagic} shows that these two epsilon-factors are negatives of each other. \hfill$\blacksquare$

\subsubsection{Remarks on unramified unitary groups}
\label{subsubsection remarks Unramified unitary groups}

As a final remark, we record the corresponding results for the unramified unitary group $\mathrm U_{\hat N}(F/\Fo)$. When $F/\Fo$ is unramified, we have $s\in {\mathcal W}_F$ and $ t\in {\mathcal W}_{\Eo}\smallsetminus {\mathcal W}_E$. We define 
\begin{equation}
\label{ns and nt for unramified unitary group}
   \begin{split}
      \hat n_s &= \antidiag(u,\diag(1,\dots,1))\in \hat G, \quad \text{where }u\in {\boldsymbol{\mu}}(E)_1;
      \\
      \hat n_t &= \hat \tau \hat H \phi \cdot \diag(u_{s^i}(t)) \in \hat G\phi,
   \end{split}
\end{equation}
  where $\hat \tau$ is the permutation matrix determined by the relation $tst^{-1} = s^q$, and $u_{s^i}$ is the transfer defined in the proof of Lemma \ref{mu_ns_nt as induced representation}. The pair $(\hat n_s,\hat n_t)$ satisfies ${}^\phi\hat n_s = \hat n_t^{-k} \hat n_s \hat n_t^k$ for some divisor $k$ of $f$.

Since $q_F = q_{\Fo}^2 \equiv 1\bmod 4$, we have $\left(\frac{-1}{{\boldsymbol{\mu}}_F}\right)=1$ and $\mathfrak n_\psi\in \{\pm 1\}$. The character $\hat \mu$ in (\ref{conditions on hat mu}) is therefore either trivial or quadratic. Lemmas \ref{characterization of n_t} and \ref{mu_ns_nt as induced representation} and their proofs apply to $(\hat n_s,\hat n_t)$ defined in (\ref{ns and nt for unramified unitary group}) if we assume $u\in \{\pm 1\}$ in $\hat n_s$. Proposition \ref{prop ReederYu parameter as induced representation} remains valid with the same character $\hat \mu$ in (\ref{conditions on hat mu}). 

\subsection{Epipelagic parameters and related results}
\label{subsection Epipelagic parameters}

In the first two subsections, we complete the description of a general epipelagic parameter of a classical group via endoscopic embeddings, admissible triples, and Reeder-Yu's template. Since this discussion is a straightforward analogue of that in Section \ref{subsection The tame part} (notably around Subsections \ref{subsubsection Embeddings of L-tori} and \ref{subsubsection The rectifying characters}), we treat it very briefly by omitting routine calculations. We then present the remaining main results in the final two subsections, namely the correct parity of parameters obtained by twisting by rectifying characters, and the calculation of adjoint Swan conductors.



\subsubsection{Unramified induction}
\label{subsubsection Unramified induction}

We again assume that $\hat G$ is of the form in (\ref{types of classical groups, dual side}). Put $\hat N = 2n$ (resp. $2n+1$) when $F=\Fo$ (resp. $F/\Fo$ is ramified quadratic). It is convenient to define the L-group ${}^LG$ in its Galois form, as in Remark \ref{Remark on Galois form}.
$$
      \mathrm{O}_{\hat N,F_\bullet^\sharp/F_\bullet}(\mathbb C), \quad \mathrm{Sp}_{\hat N}(\mathbb C),\quad \text{ or} \quad \mathrm{GL}_{\hat N,F/F_\bullet}(\mathbb C).
$$
Let $U/F$ be an unramified extension of degree $f$, lying over $U_\bullet/F_\bullet$ either trivially or ramified quadratically. Of course, we have $\Gal(U/F)\cong \Gal(U_\bullet/F_\bullet)$. Similar to ${}^LG$, we define the L-group of ${}^LG_U$ in Galois form, as follows.
$$
  \mathrm{O}_{\hat N/f,U_\bullet^\sharp/U_\bullet}(\mathbb C), \quad \mathrm{Sp}_{\hat N/f}(\mathbb C),
  \quad \text{ or}\quad 
\mathrm{GL}_{\hat N/f,U/U_\bullet}(\mathbb C).
$$
Define ${}^LM:=\Ind_{U_\bullet/F_\bullet}{\hat G_U} \rtimes \Gal(U_\bullet/F_\bullet)$. To embed ${}^LM$ into ${}^LG$, first consider
$${\mathcal W}_F\to \Gal(U/F) \cong \mathbb Z/{f}\to \mathrm{O}_f(\mathbb C), \quad w\mapsto (\hat {n}_w)_f,$$
where $(\hat {n}_w)_f$ is the permutation matrix corresponding to the action of $w\in {\mathcal W}_F$. We then have $(\hat n_s)_f= I_f$, while $(\hat n_t)_f$ arises from the Frobenius of $U_\bullet/F_\bullet$. The injective morphism $\hat G_U\otimes \mathrm{O}_f(\mathbb C) \hookrightarrow \hat G$ is therefore well-defined. Putting $\hat n_t = I_{\hat N/f}\otimes (\hat n_t)_f$, the morphism determined by 
$$
\iota = \iota_{\hat n_t}: {}^L M \rightarrow {}^L G, \quad \diag(x_g)\rtimes t\mapsto \diag(x_g)\hat { n}_t \rtimes t \quad \text{ for all }\diag(x_g)\in \hat M,
$$
 defines an L-embedding of ${}^LM$ into ${}^LG$.

Moreover, we define $\hat {\tilde G}_U= \GL_{\hat N/f}(\mathbb C)$. Put $\hat {\tilde M} = \Ind_{U/F}{\tilde G}_U \cong \hat {\tilde G}_U^f$ and ${}^L{\tilde M} = \hat {\tilde M} \rtimes \Gal(U/F)$. The embedding $\iota:{}^L\tilde M\rightarrow {}^L\tilde G$ is defined analogously to (\ref{L-embedding for GL}).

Recall the standard embeddings $\iota_G: {}^LG\to {}^L\tilde G$ (resp. $j_G: {}^L\tilde G\to {}^LG$) from Subsection \ref{subsubsection Lifting parameters}, define $\iota_{G_U}$ (resp. $j_{G_U}$) similarly with $G_U$ in place of $G$, and induce $\iota_{G_U}$ to $\iota_M : = \Ind_{U/F}\iota_{G_U}: {}^L M\to {}^L\tilde M$ (resp. $j_{G_U}$ to $j_M : = \Ind_{U/F}j_{G_U}: {}^L \tilde M\to {}^LM$). The two diagrams
\begin{equation*}
  \xymatrixcolsep{6pc} \xymatrixrowsep{1.5pc}\xymatrix{
{}^L{ M}
\ar[r]^{\iota\text{ using }{\hat n_t}}  
\ar[d]_{\iota_{ M}}  
&{}^LG
\ar[d]_{\iota_G}
\\
{}^L{\tilde { M}}
\ar[r]^{\tilde \iota \text{ using }\hat {\tilde n}_t}
&{}^L{\tilde G}
}
\quad \quad \text{and} \quad \quad 
 \xymatrixcolsep{6pc} \xymatrixrowsep{1.5pc}\xymatrix{
{}^L{ M}
\ar[r]^{\iota\text{ using }{\hat n_t}}  
&{}^LG
\\
{}^L{\tilde { M}}
\ar[u]_{j_{ M}}  
\ar[r]^{\tilde \iota \text{ using }\hat {\tilde n}_t}
&{}^L{\tilde G}
\ar[u]_{j_G}
}
\end{equation*}
when $F=\Fo$ and  $F/\Fo$ is ramified quadratic, respectively, are now commutative (in contrast to the generally non-commutative diagrams in (\ref{generally non-commutative diagrams})).

Let $\nu$ be the unramified quadratic character of $U^\times$ such that $\nu(\varpi)$ equals $(-1)^{f-1}$ (i.e., the signature of any generator of $\mathbb Z/f$). Suppose that $\varphi_{U}:\mathcal W_{U_\bullet}\to {}^LG_U$ is a parameter defined as in (\ref{Reeder-Yu morphism}), but now with the base field $U_\bullet$ in place of $F_\bullet$. Define the morphism $\varphi_{(\varphi_U,\hat n_t)}$ by 
\begin{equation}
\label{a component of a parameter, in the full form}
\varphi = \varphi_{(\varphi_U,\hat n_t)}:=\varphi_U\rtimes \hat \mu_{(I_{f},\hat n_t)}:{\mathcal W}_\Fo \rightarrow {\mathcal W}_\Fo/{\mathcal W}_{E^{\musDoubleSharp}_1}\cong \Gal(E^{\musDoubleSharp}_1/U_\bullet)\rtimes \Gal(U_\bullet/F_\bullet) \rightarrow {}^LG.
\end{equation}
As in Proposition \ref{prop ReederYu parameter as induced representation}\ref{prop ReederYu parameter as induced representation, isomorphism}, we readily verify that 
\begin{equation}
\label{a component of a parameter, in the full form, as a representation}
\iota_G\circ \varphi_{(\varphi_U,\hat n_t)} = \tilde\iota\circ\iota_M\circ \Ind_{G_U}^M\varphi_U
\quad \text{(resp. $j_G^*( \varphi_{(\varphi_U,\hat n_t)} ) = \tilde\iota\circ j^*_M(\Ind_{G_U}^M\varphi_U$))}
\end{equation}
is isomorphic to 
$$\Ind_{U/F}(\iota_{G_U}\circ \varphi_U\otimes \hat \nu) = \Ind_{K/F}(\tilde\rho_{2\beta} \otimes \hat \alpha)$$ as a representation of $\mathcal W_F$, where $\alpha$ is the tamely ramified character $\mu (\nu\circ N_{K/F})$ of $K^\times$.

\begin{rmk}
The rectifying character $\nu$ appears also in the Unramified Induction Theorem in \cite[9.1]{BH-Eff}, which asserts that if $\tilde\varphi_U= \Ind_{K/U}\tilde\varphi_K$ is the Langlands parameter of $\tilde\pi_{\tilde{\boldsymbol{\lambda}}_U}:= \cInd_{\tilde{\boldsymbol{\mathcal J}}_U}^{\tilde G_U}\tilde{\boldsymbol{\lambda}}_U$ 
for some irreducible representation $\tilde\varphi_K$ of $\mathcal W_K$, then the Langlands parameter of $\tilde\pi_{\tilde{\boldsymbol{\lambda}}}:=\cInd_{\tilde{\boldsymbol{\mathcal J}}}^{\tilde G}\tilde{\boldsymbol{\lambda}}$ 
is $\Ind_{K/F}(\tilde\varphi_K \otimes \hat \nu) $. Here both $\tilde{\boldsymbol{\lambda}}_U$ and $\tilde{\boldsymbol{\lambda}}$ have the same underlying stratum and match under the Glauberman correspondence (see \cite[5.6 Prop]{BH-Eff}).
\qed\end{rmk}


\subsubsection{Epipelagic parameters}
\label{subsubsection Epipelagic parameters}

Given an index set $I$, let $\hat I$ denote its dual set defined at the beginning of Section \ref{subsection Reducibility results for different classical groups}. The results of that section show that the endoscopic lift of an epipelagic parameter of $G$ has the form
$$
\bigoplus_{i\in \hat I}\tilde\varphi_i,
$$
where each $\tilde\varphi_i $ is an irreducible representation such that $\tilde\varphi_i = \tilde\varphi_{\tilde\pi_i}$, where ${\tilde\pi_i}$ is either a self-dual depth-zero character of $F^\times$ (in which case $i\in \{o,o'\}$) or a self-dual quasi-epipelagic representation of $\GL_{\dim \tilde\varphi_i}(F)$. The underlying strata of the representations of the latter kind are subject to the conditions in \ref{Table GL and unram-U}-\ref{Table SO-even} of Section \ref{subsection Epipelagic strata}.

We define ${}^LG_i $ to be the following L-groups in Galois form (Remark \ref{Remark on Galois form}),
$$
\mathrm{O}_{2n_i,F_i^\sharp/F}(\mathbb C),\quad 
\mathrm{Sp}_{2n_i}(\mathbb C), \quad \text{ or}\quad 
\mathrm{GL}_{(2n_i+1),F/\Fo}(\mathbb C).
$$
Define ${}^L{ H} $ and its embedding $\Delta$ into ${}^LG$ as follows.

\begin{enumerate}[(i)]
\item The case when $G$ is symplectic is the simplest. We define
$$\Delta: {}^L{ H} = \prod_{i\in \hat I}\mathrm{Sp}_{2n_i}(\mathbb C) \rightarrow {}^LG = \mathrm{Sp}_{2n}(\mathbb C).$$

\item When $G$ is even orthogonal and $o\notin I$, let $F_i^\sharp = F[\sqrt{d_i}]$ for some $d_i\in  F^\times/(F^\times)^2 $, and 
$$(d_i)_i\mapsto d:=\prod_i d_i 
\text{ via the product map } (F^\times/(F^\times)^2 )^{\#\hat I}\mapsto F^\times/(F^\times)^2. $$
Putting $F^\sharp = F[\sqrt{d}]$, we define
$$\Delta:{}^L{ H} = \prod_{i\in \hat I}\mathrm{O}_{2n_i,F_i^\sharp/F}(\mathbb C) \rightarrow {}^LG = \mathrm{O}_{2n,F^\sharp/F}(\mathbb C).$$ 
Note that when the product $d=1$, the image lies in the split $\mathrm{SO}_{2n}(\mathbb C)$.

\item When $G$ is even orthogonal and $o\in I$, we define
$$\Delta:{}^L{ H} = \mathrm{O}_{2,F_o^\sharp/F}(\mathbb C) \times \prod_{i\in \hat I\smallsetminus \{o,o'\}}\mathrm{O}_{2n_i,F_i^\sharp/F}(\mathbb C) \rightarrow {}^LG = \mathrm{O}_{2n,F^\sharp/F}(\mathbb C),$$ 
where $d_o $ is determined by the ramified character $\tilde{\boldsymbol{\lambda}}_o$ in (\ref{the two extra characters in the general epipelagic even orthogonal case}) and $d = \prod_{i\in \hat I\smallsetminus \{o'\}}d_i$.

\item When $G$ is odd special orthogonal, we define $$\Delta:{}^L{ H} = \mathrm{O}_{1}(\mathbb C)  \times \prod_{i\in  I }\mathrm{O}_{2n_i,F_i^\sharp/F}(\mathbb C) \rightarrow {}^LG = \mathrm{O}_{2n+1}(\mathbb C).$$ 
Since $\tilde\varphi_o  = \prod_{i\in I}\det\tilde\varphi_i \in \mathrm{O}_{1}(\mathbb C) =\{\pm 1\}$, the image of $\bigoplus_{i\in I}\tilde\varphi_i$ lies in $\mathrm{SO}_{2n+1}(\mathbb C)$.

\item When $G$ is ramified unitary, define $\hat H = \prod_{i\in \hat I}\hat G_i$ and ${}^L H = \hat H\rtimes \Gal(F/\Fo)$, where $\Gal(F/\Fo)$ acts on each factor $\hat G_i$ as prescribed by ${}^LG_i$. We define two embeddings
$$ \Delta^{\epsilon}: {}^L{ H}  \rightarrow {}^LG = \mathrm{GL}_{\hat N,F/\Fo}(\mathbb C)$$
as in \cite[(2.1.13)]{Mok-unitary}, and define $\Delta$ to be the one with the correct parity, i.e., $\Delta = \Delta^{(-1)^{\hat N-1}}$.
\end{enumerate}
In contrast to the single-component cases treated previously, these L-embeddings are not endoscopic in general.

Put ${}^L\tilde G_i  = \mathrm{GL}_{n_i}(\mathbb C)$ and  ${}^L{ \tilde H}  = \prod_{i\in \hat I}{}^L\tilde G_i $. We  define the standard embedding
$\iota_H:{}^LH\to {}^L\tilde H$ (resp. $j_H:{}^L\tilde H\to {}^LH$) as the product of the standard embeddings $\iota_{G_i}: {}^LG_i \to {}^L\tilde G_i$ (resp. the unitary-type endoscopic embeddings $j_{G_i}: {}^L\tilde G_i \to {}^LG_i$). We also define $\tilde\Delta: {}^L\tilde H\to {}^L\tilde G$ to be the block-diagonal embedding, which is endoscopic of Levi-type.

The two diagrams
\begin{equation*}
  \xymatrixcolsep{5pc} \xymatrixrowsep{1.5pc}\xymatrix{
{}^L{ H} 
\ar[r]^{\Delta}  
\ar[d]_{\iota_{ H}}  
&{}^LG
\ar[d]_{\iota_G}
\\
{}^L{\tilde {H}}
\ar[r]^{\tilde \Delta}
&{}^L{\tilde G}
}
\quad \quad \text{and} \quad \quad 
 \xymatrixcolsep{5pc} \xymatrixrowsep{1.5pc}\xymatrix{
{}^L{ H}
\ar[r]^{\Delta}  
&{}^LG
\\
{}^L{\tilde { H}}
\ar[u]_{j_{ H}}  
\ar[r]^{\tilde \Delta}
&{}^L{\tilde G}
\ar[u]_{j_G}
}
\end{equation*}
are readily seen to be commutative.

Let $\varphi_i: \mathcal W_\Fo\to {}^LG_i$ be a parameter as in (\ref{a component of a parameter, in the full form}), and denote $(\varphi_i)_{i} := (\varphi_i)_{i\in \hat I}:\mathcal W_\Fo\to {}^LH$ by juxtaposition. Moreover, simply putting 
$$\hat n_s = \Delta(\hat n_s^i)_i, \quad \hat n_t = \Delta(\hat n_t^i)_i 
\quad\text{and defining}\quad \hat \mu_{(\hat n_s,\hat n_t)}  = \Delta(\hat \mu_{\hat n_s^i,\hat n_t^i})_i,$$
we express the parameter $\Delta(\varphi_i)_{i}$ of $G$ as  
$$\varphi_{(\beta, \hat n_s, \hat n_t)}:=\hat \rho_{\beta}\rtimes \hat \mu_{(\hat n_s,\hat n_t)}$$ as in Reeder-Yu's template in (\ref{Reeder-Yu morphism})

The endoscopic lift of each $\varphi_i$ is given by the LHS of (\ref{a component of a parameter, in the full form, as a representation}), which we denote simply by $\tilde\varphi_i$. Each  $\tilde\varphi_i$ is of the form $\tilde \varphi_{(\varphi_{U_i},\hat n_t^i)} = \tilde \varphi_{(2\beta_i,\hat n_s^i ,\hat n_t^i)} = \Ind_{K_i/F}(\tilde\rho_i \otimes \hat \mu_{i}) $. We readily verify that 
$$\iota_G\circ \varphi_{(\beta,\hat n_s ,\hat n_t)} \quad 
\text{(resp. $j_G^*( \varphi_{(\beta,\hat n_s ,\hat n_t)})$)} 
$$
is isomorphic to 
$ \bigoplus_{i\in \hat I}\tilde \varphi_{(2\beta_i,\hat n_s^i ,\hat n_t^i)}$
as a representation of $\mathcal W_F$.

Up to the choice of sign for each $i\in \hat I\smallsetminus\{o,o'\}$, we can formulate the LLC for epipelagic representations of $G$ as
$$
 \Pi_{\delta }
 \xrightarrow{\text{endoscopic lift}}
\prod_{i\in\hat I} \tilde\pi_{\tilde{\boldsymbol{\lambda}}_{i,\delta}}
 \xrightarrow{
    \begin{smallmatrix} \text{Prop \ref{prop ReederYu parameter as induced representation}(ii)}
    \\
    \text{and Subsec \ref{subsubsection Unramified induction}}    \end{smallmatrix}
 }
 \bigoplus_{i\in \hat I} \tilde \varphi_{(2\beta_i,\hat n_s^i ,\delta\hat n_t^i)}
 \longrightarrow
  \varphi_{(\beta,\hat n_s,\delta \hat n_t)} 
$$
for $\delta\in \{+,-\}$. Each $ \Pi_{\delta}$ is the union of at most two packets, one for each pure inner form of $G$.

\subsubsection{A parity result}
\label{subsubsection A parity result}

We record the following key observation: 
\begin{equation*}
   \begin{split}
&\text{a quasi-epipelagic parameter has the expected parity only if }
\\
&\text{it is twisted by the rectifying character in (\ref{conditions on hat mu}).}
   \end{split}
\end{equation*}
The following proposition is the precise statement. 
\begin{prop}
\label{last parity result}
Let $\tilde\varphi$ be an irreducible component (as an irreducible representation of $\mathcal W_F$) of an epipelagic parameter of $G$. Assume that the quadratic extension $K/K_\flat$ in Proposition \ref{prop effective parameter, self-dual}\ref{prop effective parameter, self-dual, same parity} is ramified (and so is $F/\Fo$ if it is quadratic).
\begin{enumerate}[(i)]
\item If $G$ is symplectic or even orthogonal (resp. odd unitary), then $\tilde\varphi$ is $+$-selfdual (resp. $(+,F/\Fo)$-selfdual).
\label{last parity result, conjugate-orthogonal}

\item If $G$ is odd orthogonal (resp. even  unitary), then $\tilde\varphi$ is $-$-selfdual (resp. $(-,F/\Fo)$-selfdual).

\label{last parity result, conjugate-symplectic}

\end{enumerate}

\end{prop}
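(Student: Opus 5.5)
The statement can be reduced to the general principle, recalled in the proposition quoted from \cite[Th 8.1(i)]{GGP}, that a parameter lifted through the standard embedding $\iota_G$ (resp. obtained by restriction $j_G^*$) carries the self-duality of the form $\hat h$ defining $\hat G$. The plan is to transport this from the whole parameter to each irreducible component. First, by the description in Subsection \ref{subsubsection Epipelagic parameters}, the endoscopic lift is $\bigoplus_{i\in\hat I}\tilde\varphi_i$. This decomposition is realized through the commutative diagrams with $\Delta$, $\tilde\Delta$, $\iota_H$ and $j_H$. Hence each $\tilde\varphi_i$ is $\iota_{G_i}\circ\varphi_i$ (resp. $j_{G_i}^*(\varphi_i)$) for a parameter $\varphi_i$ of the factor ${}^LG_i$. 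These factors are orthogonal (${\rm O}_{2n_i}$), symplectic, or unitary of the same type as $\hat G$, or ${\rm O}_1$ in the odd orthogonal case. In the unitary case we use $\Delta=\Delta^{(-1)^{\hat N-1}}$ and check that it restricts on each $\hat G_i$ to the unitary-type embedding of the correct sign.

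Second, we apply \cite[Th 8.1(i)]{GGP} to each factor separately. Since $\hat G_i$ preserves a form of the same parity as $\hat G$, $\tilde\varphi_i$ is self-dual of that parity. This gives $+$ when $\hat G$ is orthogonal, i.e.\ $G$ symplectic or even orthogonal. It gives $-$ when $\hat G$ is symplectic, i.e.\ $G$ odd orthogonal. In the unitary cases it gives $(\pm,F/\Fo)$ according to the parity of $\hat N$. One caveat concerns the odd unitary case. There the components need not have odd dimension, so the sign must be read off from the global embedding $\Delta^{\epsilon}$ of \cite[(2.1.13)]{Mok-unitary}: each factor is embedded with the sign $(-1)^{\hat N-1}$ rather than $(-1)^{\dim\hat G_i-1}$. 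We therefore apply the unitary-type embedding with that fixed sign $\epsilon$ and appeal to observation (ii) of Subsection \ref{first subsubsection Self-duality}.

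Third, we must confirm that the parameters $\varphi_i$ really exist. This step is where the rectifying character is essential, and it is the main obstacle. By Proposition \ref{prop ReederYu parameter as induced representation}(i) and the unramified induction in Subsection \ref{subsubsection Unramified induction}, $\tilde\varphi_i\cong\Ind_{K_i/F}(\tilde\rho_{2\beta_i}\otimes\hat\alpha_i)$ with $\hat\alpha_i=\hat\mu_i(\hat\nu\circ N_{K_i/F})$. Here $\hat\mu_i$ is the rectifying character of Lemma \ref{mu_ns_nt as induced representation}. Its value $\hat\mu(\dot s^k)=-1$ when $F=\Fo$ (resp.\ $+1$ in the quadratic case) is exactly what makes $\hat n_s$ lie in $\hat G_0$, as in cases (i)--(iii) of Subsection \ref{subsection The Galois element n_s lifting sigma}. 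Likewise, $\hat\mu(\dot t)\in\{\pm\mathfrak n_\psi\}$ matches condition (\ref{the condition for n_t being symplectic}).

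As a consistency check, we use Proposition \ref{prop effective parameter, self-dual}(i)--(ii). Because $K/K^\flat$ is ramified and $\dim\tau=1$, the two self-dual unramified twists share a parity. That parity is determined by whether the tame part $\xi$ is ramified on $\boldsymbol\mu$, which is governed by $\hat\mu(\dot s^k)$. We then verify that the two values $\hat\mu(\dot s^k)=\mp1$ produce opposite parities. Removing $\hat\mu$ would flip $\hat\mu(\dot s^k)$ and hence the parity, which explains the ``only if'' in the preceding remark. The delicate point is tracking the sign through the transfer computation in the proof of Lemma \ref{mu_ns_nt as induced representation}. I would first carry this out in the symplectic case $k=2m$, checking the invariant form on $\Ind_{K/F}\hat\mu$ directly, and then treat the other cases analogously.
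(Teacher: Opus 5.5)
Your Steps 1--2 assume what has to be proved. The component $\tilde\varphi$ is the Langlands parameter of a constituent $\tilde\pi_i=\tilde\pi_{\tilde{\boldsymbol{\lambda}}_i}$ of the endoscopic lift, which is computed on the representation side. Identifying it with $\iota_{G_i}\circ\varphi_i$, for a Reeder--Yu parameter $\varphi_i$ valued in ${}^LG_i$, is the middle arrow of the chain at the end of Subsection~\ref{subsubsection Epipelagic parameters}, and that is exactly the compatibility that needs checking. Once you grant it, the conclusion is a tautology: a representation whose image preserves an $\epsilon$-form is $\epsilon$-self-dual. So the appeal to \cite[Th 8.1]{GGP} does no work, and the hypothesis that $K/K^\flat$ is ramified is never used. (There is a genuinely abstract route: supercuspidal parameters are discrete, so every constituent is self-dual of the parity of $\hat G$. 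But that is not what you wrote, and it bypasses the rectifying character entirely.) Step 3, where the content should be, never uses the one input that decides the answer. That input is the restriction $\tilde\lambda_i|_{\boldsymbol{\mu}_{E_i}}$ obtained in Proposition~\ref{first general form of lifting} and made explicit in Section~\ref{subsection Reducibility results for different classical groups}. It is trivial for odd orthogonal and odd ramified unitary $G$, and quadratic for symplectic, even orthogonal and even ramified unitary $G$.

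The paper's argument runs as follows.
\begin{itemize}
\item $\tilde\rho$ is $+$-self-dual by \cite[7.5 Prop]{BHS}.
\item Under the explicit LLC, the tame character attached to $\tilde\pi_{\tilde{\boldsymbol{\lambda}}_i}$ restricts to $\boldsymbol{\mu}_K$ as $\tilde\lambda_i\mu$, where $\mu$ is the rectifying character.
\item By Proposition~\ref{prop effective parameter, self-dual}(ii), the component is $-$-self-dual exactly when this restriction is quadratic. This is where the ramification of $K/K^\flat$ is needed.
\item By (\ref{conditions on hat mu}), $\mu|_{\boldsymbol{\mu}_K}$ is quadratic if $F=\Fo$ and trivial if $F/\Fo$ is ramified quadratic.
\item Hence $\tilde\lambda_i\mu|_{\boldsymbol{\mu}_K}$ is trivial in case (i) and quadratic in case (ii), and an unramified twist $\hat\nu$ does not change this.
\end{itemize}
Your consistency check instead lets $\hat\mu(\dot s^k)$ alone govern the ramification of the tame part. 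But $\hat\mu(\dot s^k)=-1$ for every $G$ with $F=\Fo$. Proposition~\ref{prop effective parameter, self-dual}(ii) would then make every such component $-$-self-dual, contradicting (i) for symplectic and even orthogonal $G$. What gives $+$ there is that $\tilde\lambda_i$ is itself quadratic on $\boldsymbol{\mu}_K$ and cancels the rectifier. Without this representation-side input, the parity of the component cannot be determined.
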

\proof
We first consider an epipelagic parameter  $\tilde\varphi= \Ind_{K/F}(\tilde\rho\otimes \hat \mu_{}) $ given by a class of admissible triples $[K/F,\tilde\rho, \hat \mu]$. Since $\tilde\rho$ is always $+$-selfdual by \cite[7.5 Prop]{BHS}, the parity of $\tilde\varphi$ follows from that of $\mu $. Now recall that the LLC for $\GL_n$ is given by
$$\Ind_{K/F}(\tilde\rho\otimes \hat \mu_{})\leftrightarrow \tilde\pi_{\tilde{\boldsymbol{\lambda}}}, \quad\text{ or }\quad 
\Ind_{K/F}(\tilde\rho\otimes \mathbf{1}_{\mathcal W_K})\leftrightarrow \tilde\pi_{\tilde{\boldsymbol{\lambda}}(\mu\circ N_{F[\beta]/K})},
$$
Here $\mu\circ N_{F[\beta]/K} = \mu$ on ${\boldsymbol{\mu}}_{F[\beta]} = {\boldsymbol{\mu}}_K$ since $F[\beta]/K$ is totally wild and $p$ is odd. We therefore only need to check whether the product of $\tilde\lambda$ (given by our endoscopic lifting results in Section \ref{subsection Reducibility results for different classical groups}) and $\mu$ is trivial or quadratic on ${\boldsymbol{\mu}}_K$. The values of $\tilde\lambda|_{{\boldsymbol{\mu}}_K}$ are calculated in Sections \ref{subsection Constructions simple supercuspidals} and \ref{subsection Ramified unitary groups},  
$$\tilde\lambda|_{{\boldsymbol{\mu}}_K}  =    \begin{cases} \text{ trivial} 
\\
\text{ quadratic} 
 \end{cases}
 \text{when $G$ is}\quad
  \begin{cases} 
\text{odd orthogonal, or odd ramified unitary, }    
\\
\text{symplectic, even orthogonal, or even ramified unitary.}
 \end{cases}
$$
We then recall from Proposition \ref{prop effective parameter, self-dual} that, when $K/K^\flat$ is ramified, a self-dual character $\mu$ of $K^\times$ is $(+,K/K^\flat)$-selfdual (resp. $(-,K/K^\flat)$-selfdual) if and only if it is trivial (resp. quadratic) on ${{\boldsymbol{\mu}}_K}$. The results from (\ref{conditions on hat mu}) tell us that
$$\mu|_{{\boldsymbol{\mu}}_K}  =    \begin{cases} \text{ trivial} 
\\
\text{ quadratic} 
 \end{cases}
 \text{when $F/\Fo$ is}\quad
  \begin{cases} 
\text{ramified quadratic, }    
\\
\text{trivial.}
 \end{cases}
$$
This implies that the product $\tilde\lambda\mu|_{{\boldsymbol{\mu}}_K} $ is trivial in case \ref{last parity result, conjugate-orthogonal} and quadratic in case \ref{last parity result, conjugate-symplectic}, and hence proves the proposition in the case of epipelagic components.

As for quasi-epipelagic components, each of them is of the form $\Ind_{K/F}(\tilde\rho\otimes \hat \mu_{}\hat \nu) $ for an unramified character $ \nu$. Since $\tilde\lambda\mu\nu|_{{\boldsymbol{\mu}}_K} =\tilde\lambda\mu|_{{\boldsymbol{\mu}}_K} $, the parity of the parameter remains unchanged. \hfill $\blacksquare$

We expect Proposition \ref{last parity result} to hold for arbitrary supercuspidal representations of $G$, and not only for the quasi-epipelagic ones, except that the general rectifying characters should depend on the underlying simple characters of the representations. If $G$ is symplectic and $p\neq 2$, this can be verified using results from \cite[Sec 6]{BHS}. For other types of classical groups, we expect this to hold in the tame case (i.e., when $p\nmid e_{F[\beta]/F}$), and conjecture that the rectifying characters concerning self-duality, whenever defined, may be related to those appearing in the essentially tame case (see \cite{BH-ET1,BH-ET2,BH-ET3}). Beyond that case, the question remains open since neither the explicit endoscopic lifting nor the rectifying characters are known in complete generality.

\subsubsection{Minimal conductor property}
\label{subsubsection Minimal conductor property}

The equality stated in Proposition \ref{adjoint swan is dim of Gx} below is related to computing the formal degree of a representation \cite{Hiraga-Ichino-Ikeda}, known as the Hiraga-Ichino-Ikeda conjecture (HII for short). Here we prove Proposition \ref{adjoint swan is dim of Gx} for epipelagic representations of connected classical groups by a direct calculation, using Proposition \ref{adjoint swan conductor simple supercuspidal for classical groups}.

Following the construction in Section \ref{subsection Epipelagic inducing types for classical groups}, let $\pi = \cInd_{\mathcal J}^{G(F)}\lambda$ be an epipelagic representation, where ${\mathcal{J}} = G(F)_{x,\beta}$, and $\varphi$ be the parameter of $\pi$.

\begin{prop}
\label{adjoint swan is dim of Gx}
We have $\mathrm{Adsw}(\varphi)= \dim \mathsf G_{\mathrm{ss},x} .$
\end{prop}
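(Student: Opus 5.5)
Both sides of $\mathrm{Adsw}(\varphi)=\dim \mathsf G_{\mathrm{ss},x}$ are additive over the decomposition $\beta=\oplus_{i\in I}\beta_i$ coming from the epipelagic stratum, and I will compare them summand by summand. On the Galois side, Subsection \ref{subsubsection Epipelagic parameters} gives $\iota_G\circ\varphi\cong\bigoplus_{i\in\hat I}\tilde\varphi_i$ with $\tilde\varphi_i=\Ind_{K_i/F}(\tilde\rho_i\otimes\hat\mu_i)$. By (\ref{GGP, Ad is just Sym, Alt, or Asai}) we have $\Ad\circ\varphi\cong S\tilde\varphi$ with $S\in\{\wedge^2,\Sym^2,\mathrm{As}^\pm\}$, and $S$ of a direct sum splits as
$$S\Bigl(\bigoplus_i\tilde\varphi_i\Bigr)\cong\bigoplus_i S\tilde\varphi_i\oplus\bigoplus_{\{i,j\},\,i\neq j}(\text{a form of }\tilde\varphi_i\otimes\tilde\varphi_j).$$
Hence $\mathrm{Adsw}(\varphi)$ is the sum of the diagonal terms $\mathrm{sw}(S\tilde\varphi_i)$ and the cross terms $\mathrm{sw}(\tilde\varphi_i\otimes\tilde\varphi_j)$.

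\emph{Diagonal terms.} For $i\neq o,o'$, write $\tilde\varphi_i=\Ind_{U_i/F}\eta_i$ with $U_i/F$ unramified of degree $f_i$ and $\eta_i$ epipelagic over $U_i$. Then $S\tilde\varphi_i$ restricted to inertia is $f_i$ copies of $S\eta_i$ plus cross terms $\eta_i^{g}\otimes\eta_i^{h}$, and these are computed exactly as in the proof of Proposition \ref{adjoint swan conductor simple supercuspidal for classical groups}, via (\ref{Swan conductor induction}) and the Heisenberg decomposition from \cite{BK-epipelagic}. This yields $\mathrm{sw}(S\tilde\varphi_i)=f_i\cdot(\text{the value in that proposition for }n_i=\dim\tilde\varphi_i/2f_i)$, namely $\tfrac12\dim\tilde\varphi_i$ up to the shift $\pm\tfrac12 f_i$ coming from $P$. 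The tame characters $\tilde{\boldsymbol\lambda}_o,\tilde{\boldsymbol\lambda}_{o'}$ contribute nothing.

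\emph{Cross terms.} For $i\neq j$, the Swan conductor of $\tilde\varphi_i^\vee\otimes\tilde\varphi_j$ should come from the Rankin--Selberg conductor formula of \cite[6.5 Th]{BHK-RS}. Since $\beta_i,\beta_j$ have the same normalized valuation and do not intertwine, this should give $\mathrm{sw}=\dim\tilde\varphi_i\dim\tilde\varphi_j\cdot r(x)$, where $r(x)=1/e$ is the common depth. When one factor is tame, of dimension $1$, the value is $\dim\tilde\varphi_i/e$ (for instance, $\tilde\varphi_o$ against $\tilde\varphi_i$ contributes $\dim\tilde\varphi_i\cdot r(x)$).

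\emph{Group side.} The reductive quotient $\mathsf G_{x}$ at the barycenter is read off from the lattice sequence $\Lambda=\oplus\Lambda_i$, equivalently from the Kac coordinates in the tables of \cite[Sec 7.2]{Gross-Levy-Reeder-Yu}. Its dimension equals the number of affine roots $\psi$ with $\psi(x)=0$ plus the rank, and by the Moy--Prasad grading this equals $\dim\mathfrak g\cdot r(x)$ (more precisely $\#\{\text{roots}\}/e$ plus rank), since the grading of order $e$ is stable and each graded piece has equal dimension up to the torus part. Translated to blocks, $\dim\mathsf G_{x}$ splits into a contribution $\tfrac{1}{e}\dim\tilde\varphi_i\dim\tilde\varphi_j$ for each $\{i,j\}$ (from $\mathrm{Hom}(V_j,V_i)$ pieces of degree $0$), plus diagonal contributions matching the first step.

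The main obstacle is the bookkeeping that makes the two counts match exactly, including the $\pm\tfrac12$ corrections distinguishing $\wedge^2$ from $\Sym^2$, the index $o$, and the unramified degrees $f_i$. I would organize it by computing both sides as $\tfrac{1}{e}\#\{\text{roots of }\hat G\}$ plus a rank term, and by checking small cases against the simple supercuspidal values, which are $n-1$, $n$, $n$ from Proposition \ref{adjoint swan conductor simple supercuspidal for classical groups}, equal to $\dim\mathsf G_x$ for Iwahori $x$ (namely $\dim$ of the torus).
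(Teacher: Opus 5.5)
\textbf{Verdict: there is a genuine gap, on the group side.} Your Galois-side skeleton is the one the paper uses. You split $S(\bigoplus_i\tilde\varphi_i)$ into the terms $S\tilde\varphi_i$, evaluated via Proposition~\ref{adjoint swan conductor simple supercuspidal for classical groups}, plus cross terms with Swan conductor $\dim\tilde\varphi_i\dim\tilde\varphi_j/e$ from \cite[6.5 Th(ii)]{BHK-RS}, which equals $1$ against a tame character.

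\emph{The group-side count is false.} Your identity $\dim\mathsf G_x=\#\{\text{roots}\}/e+\mathrm{rank}$ (or $\dim\mathfrak g\cdot r(x)$) fails because roots do not equidistribute into degree $0$. Take a simple supercuspidal of $\SP_{2n}$: $e=2n$, no affine root vanishes at the alcove barycenter $x$, and $\mathsf G_x$ is a torus of dimension $n$. Your formula gives $2n^2/2n+n=2n$, and $\dim\mathfrak g\cdot r(x)=n+\tfrac12$ is not even an integer. So it contradicts the very sanity check you propose at the end.

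\emph{What is actually needed.} You need the structure of $\mathsf G_x$ itself. The paper reads it off the labelled Dynkin diagrams of \cite[Sec 8.2]{Gross-Levy-Reeder-Yu}: for instance $\mathsf S(\mathsf O_{\#J}\times\mathsf O_{\#J+1})\times\mathsf{GL}_{\#J}^{n/\#J-1}$ for $\SO_{2n+1}$ and $\mathsf{GL}_{\#J}^{n/\#J}$ for $\SP_{2n}$, both of dimension $n\#J$.

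\emph{Salvaging your block count.} It can be repaired partially: the degree-$0$ part of $\Hom(V_j,V_i)$ does have dimension $\dim_F\Hom(V_j,V_i)/e$, which matches the cross terms. You would still have to compute the diagonal blocks and the blocks $\Hom(V_o,V_i)$ explicitly. Moreover, the match is not index by index:
\begin{itemize}
\item for $G=\SO_{2n+1}$, the group side gets $1$ from each $\Hom(V_o,V_i)$, while the Galois side has no $\tilde\varphi_o$ and instead $\Sym^2$ exceeds $\wedge^2$ by $1$;
\item for $G=\SP_{2n}$ the roles are reversed.
\end{itemize}
The phrase ``diagonal contributions matching the first step'' is asserted exactly where the work lies, and the final comparison is never carried out.

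\textbf{A second error: the diagonal terms with $f_i>1$.} For $\tilde\varphi_i=\Ind_{U_i/F}\eta_i$ you cannot obtain $\mathrm{sw}(S\tilde\varphi_i)=f_i\,\mathrm{sw}(S\eta_i)$ from (\ref{Swan conductor induction}), because $S(\Ind\eta_i)\neq\Ind(S\eta_i)$. Restricted to $\mathcal W_{U_i}$, $S\tilde\varphi_i$ also contains the $\binom{f_i}{2}$ terms $\eta_i^g\otimes\eta_i^h$ with $g\neq h$, each of Swan conductor $e$; your formula drops them.

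The paper sidesteps this by first base-changing to an unramified extension, under which both sides are invariant, so that every $F[\beta_i]/F$ becomes totally ramified. The dropped terms then become ordinary cross terms and all components have a common dimension $m$. Both sides are then evaluated in closed form and compared row by row in the two tables.
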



\proof
When $\tilde G= \GL_m(F)$ and $x$ is a barycenter of an alcove in the Bruhat-Tits building of $\tilde G$, we have $\tilde{\mathsf G}_{\mathrm{ss},x} = \GL_1(\mathbb F)^m \cap \mathrm{SL}_m(\mathbb F)\cong \GL_1(\mathbb F)^{m-1}$, and the result follows from Proposition \ref{adjoint swan conductor GL-case}. The same arguments apply when $G = \mathrm{U}_m(F/\Fo)$ with $F/\Fo$ unramified, using the last case of Proposition \ref{adjoint swan conductor simple supercuspidal for classical groups}.



For the other classical groups, first notice that the statement remains unchanged under base-change to any unramified extension over $F$, in particular to the composite of the maximal unramified subextensions in $F[\beta_i]/F$. We therefore assume that all $F[\beta_i]/F$ are totally ramified. We put $J = I\smallsetminus \{o\}$ for convenience, and use the tables in \cite[Sec 8.2]{Gross-Levy-Reeder-Yu} (the labelled Dynkin diagrams) to compute $\mathsf G_x$ as follows:
\begin{center}
\begin{tabular}{ |c|c|c| } 
 \hline
 $G$ & $\mathsf G_x$ & $\dim \mathsf G_x$
 \\
 \hline
 $\mathrm{SO}_{2n+1}$ & $\mathsf S(\mathsf O_{\#J}\times \mathsf O_{\#J+1})\times (\mathsf{GL}_{\#J})^{n/\#J-1}$ & $n\#J$
\\
 $\mathrm{SP}_{2n}$ & $(\mathsf{GL}_{\#J})^{n/\#J}$ & $n\#J$
\\
 $\mathrm{SO}_{2n}$, $o\notin I$ & $\mathsf S(\mathsf O_{\#J})^2\times (\mathsf{GL}_{\#J})^{n/\#J-1}$ & $(n-1)\#J$
\\
 $\mathrm{SO}_{2n}$, $o\in I$ &  $\mathsf S(\mathsf O_{\#J+1})^2\times (\mathsf{GL}_{\#J})^{(n-1)/\#J-1}$ & $n\#J$
\\
 $\mathrm{U}_{N}$, $o\notin I$ & $\mathsf O_{\#J}\times (\mathsf{GL}_{\#J})^{(N/\#J-1)/2}$ & $(N-1)\#J/2$
\\
 $\mathrm{U}_{N}$, $o\in I$ & $\mathsf O_{\#J+1}\times (\mathsf{GL}_{\#J})^{((N-1)/\#J-1)/2}$ & $N\#J/2$
 \\
   \hline
\end{tabular}
\end{center}
To compute the adjoint Swan conductors, we first observe that $\dim (\tilde\varphi_i \otimes \tilde\varphi_j)^{ D_0} = 0$ for all $i,j\in \hat I$. This is clear when $i\neq j$, since $\dim\Hom_{ D_0}(\tilde\varphi_i,\tilde\varphi_j)=0$. When $i=j$, we recall from (\ref{GGP, Ad is just Sym, Alt, or Asai}) that $\Ad\circ \varphi$ is given by $S\tilde\varphi$ for suitable $S\in \{\wedge^2, \Sym^2,  \mathrm{As}^+\}$, and observe that 
$(\Ad\circ\varphi)^{D_0(\Fo)}\subset (\Ind_{F/\Fo}(\tilde\varphi\otimes \tilde\varphi))^{D_0(\Fo)} = (\tilde\varphi\otimes \tilde\varphi)^{D_0(F)}$,
which is 0 by \cite[3.1 Lem]{BK-epipelagic}. Therefore, the swan is additive on the terms in the direct sum
$$S(\oplus_{i\in \hat I}\tilde \varphi_i) = \bigoplus_{i\in \hat I}S\tilde \varphi_i \oplus  \bigoplus_{
 \begin{smallmatrix}
\{i,j\}\subset  \hat I
 \end{smallmatrix}
}(\tilde\varphi_i\otimes {}^c\tilde\varphi_j + {}^c\tilde\varphi_i\otimes \tilde\varphi_j).$$
If the common dimension of $\tilde\varphi_i$ for $i\in \hat I\smallsetminus \{o,o'\}$ is $m$, we use 
$$ \mathrm{sw}(S\tilde \varphi_i) = \frac{m}{2}-1, \frac{m}{2},\text{ or }\frac{m-1}{2}\text{ resp., \quad and} \quad 
\mathrm{sw}(\tilde\varphi_i\otimes {}^c\tilde\varphi_j)=m $$
by Proposition \ref{adjoint swan conductor simple supercuspidal for classical groups} and \cite[6.5 Th(ii)]{BHK-RS}, respectively, for all $\{i, j\}\subset \hat I\smallsetminus\{o,o'\}$, as well as 
$$\mathrm{sw}(\tilde\varphi_i\otimes {}^c\tilde\varphi_o) = \mathrm{sw}(\tilde\varphi_i\otimes {}^c\tilde\varphi_{o'})=1 \text{ for all $i\in  \hat I\smallsetminus\{o,o'\}$}
\quad \text{and} \quad \mathrm{sw}(\tilde\varphi_o\otimes {}^c\tilde\varphi_{o'})=0$$
to obtain the following table:
\begin{center}
\begin{tabular}{ |c|c|c| } 
 \hline
 $\hat G$ &  $m$ &  Adsw$(\varphi)$
 \\
 \hline
 $\mathrm{SP}_{2n}$ & $2n/\#J$  &  $\#J (m/2) + \#J ( \frac{\#J -1}{2} ) m  = n\#J$ \\
 $\mathrm{SO}_{2n+1}$ & $2n/\#J$  &  $
 \#J (\frac{m}{2}-1)+\#J ( \frac{\#J -1}{2} ) m  +\#J = n\#J$ \\
 $\mathrm{SO}_{2n}$, $o\notin I$ & $2n/\#J$  &  $\#J  (\frac{m}{2}-1) + \#J ( \frac{\#J -1}{2} ) m  = (n-1)\#J$
 \\
  $\mathrm{SO}_{2n}$, $o\in I$ & $2(n-1)/\#J$  &  $\#J  (\frac{m}{2}-1) + \#J ( \frac{\#J -1}{2} ) m + 2\#J  = n\#J$
  \\
  ramified $\mathrm{U}_{N}$, $o\notin I$ & $ N/\#J$ &  $\#J  (\frac{m-1}{2}) + \#J ( {\#J -1}) m = (N-1)\#J/2$
\\
 ramified $\mathrm{U}_{N}$, $o\in I$ & $(N-1)/\#J$ & $ \#J  (\frac{m-1}{2}) + \#J ( {\#J -1}) m + \# J =  N\#J/2$
 \\
    \hline
\end{tabular}
\end{center}
(In the last two rows for ramified $U_N$, the degree $m$ is denoted by $d$ in \cite[Sec 8.2]{Gross-Levy-Reeder-Yu}.) We obtain the proposition by comparing the last columns of the two tables.
\hfill$\blacksquare$

In general, we can prove the HII if we also have \cite[(7.4)]{Reeder-Yu}:
\begin{equation}
\label{7.4 of Reeder-Yu}
\dim \xi / \#  A_\varphi  = \dim \lambda /\# {\mathsf A}_{x,\beta},
\end{equation}
where $ {\mathsf A}_{x,\beta}:=\mathcal J/ G(F)_{x,0+}$ as defined in Section \ref{subsection Epipelagic inducing types for classical groups}, $ A_\varphi:=Z_{\hat G}(\varphi)$, and $\xi$ is an irreducible representation of $A_\varphi$ such that $(\varphi,\xi)$ is the enhanced parameter of $\pi $. Indeed, the HII is equivalent to Proposition \ref{adjoint swan is dim of Gx} together with (\ref{7.4 of Reeder-Yu}), as explained in \emph{loc. cit.}.

When $\pi$ is an epipelagic representation of a connected classical group $G(F)$, the inducing type $\lambda$ is just a character. Also, since $\hat{\mathcal T}= Z_{\hat G}(\hat\psi_\beta(\Gal(E/T)))$ by \cite[Lem 7.2]{Reeder-Yu}, we have  $A_{\varphi}\cong \hat{\mathcal T}^{\left<\dot s,\dot t\right>_{}}\cong \{\pm 1\}^{\#\hat I}$; in particular, $A_{\varphi}$ is abelian and $\dim \xi=1$. We can see that (\ref{7.4 of Reeder-Yu}) follows directly by comparing $\#I$ and $\#\hat I$:
\begin{center}
\begin{tabular}{ |c|c|c| } 
 \hline
 $G$ & $\hat k$ where $A_\varphi\cong \{\pm 1\}^{\hat k}$ & $ k$ where ${\mathsf A}_{x,\beta}\cong \{\pm 1\}^{ k}$ \\ 
 \hline
 Sp & $\#\hat I -1$ & $\#I$  \\ 
 $\SO_{\mathrm{odd}}$ & $\#\hat I$  & $\#I - 1 $ \\ 
  $\SO_{\mathrm{even}}$ with $o\notin I$ & $\#\hat I$  & $\#I  $ \\ 
  $\SO_{\mathrm{even}}$ with $o\in I$  & $\#\hat I - 1$  & $\#I  $ \\ 
    ramified unitary & $\#\hat I$  & $\#I  $ \\ 

 \hline
\end{tabular}
\end{center}
The numbers in the last two columns are equal by the definitions of $\hat I$ in various subsections of Section \ref{subsection Reducibility results for different classical groups} and the construction of $\mathsf A_{x,\beta}$ the end of Section \ref{subsection Epipelagic inducing types for classical groups}.

\addcontentsline{toc}{section}{References} 
\bibliographystyle{alpha}

\end{document}